\theoremstyle{plain}
\newtheorem{theorem}{Theorem}[section]
\newtheorem{lemma}[theorem]{Lemma}
\newtheorem{serreth}[theorem]{Serre's Theorem}
\newtheorem{proposition}[theorem]{Proposition}
\newtheorem{corollary}[theorem]{Corollary}
\theoremstyle{definition}
\newtheorem{definition}[theorem]{Definition}
\newtheorem{example}[theorem]{Example}
\newtheorem{examples}[theorem]{Examples}
\newtheorem{notation}[theorem]{Notation}
\newtheorem{remark}[theorem]{Remark}
\newtheorem{remarks}[theorem]{Remarks}
\newtheorem{sit}[theorem]{}
\def\Aut{\operatorname{Aut}}
\def\SAut{\operatorname{SAut}}
\def\Pic{\operatorname{Pic}}
\def\Quot{\operatorname{Frac}}
\def\ML{\operatorname{ML}}
\def\supp{\operatorname{supp}}
\def\Lie{\operatorname{Lie}}
\def\SL{\operatorname{SL}}
\def\Spec{\operatorname{Spec}}
\def\Spf{\operatorname{Spf}}
\def\spec{\operatorname{Spec}}
\def\Stab{\operatorname{Stab}}
\def\codim{\operatorname{codim}}
\def\rk{\operatorname{rk}}
\def\id{{\mathrm{id}}}
\def\PP{{\mathbb P}}
\def\O{{\mathcal O}}
\def\Q{{\mathcal H}}
\def\cT{{\mathcal T}}
\def\ZZ{{\mathbb Z}}
\def\CC{{\mathbb C}}
\def\QQ{{\mathbb Q}}
\def\NN{{\mathbb N}}
\def\N{{\mathbb N}}
\def\G{{\mathbb G}}
\def\m{{\mathfrak m}}
\def\A{{\mathbb A}}
\def\AA{{\mathbb A}}
\def\UU{{\mathbb U}}
\def\K{{\mathbb K}}
\def\TTT{{\mathbb T}}
\def\yE{y_{\scriptscriptstyle E}}
\def\embed{\hookrightarrow}
\def\Aff{\operatorname{Aff}}
\def\reg{{\rm reg}}
\def\sing{{\rm sing}}
\def\GL{\operatorname{GL}}
\def\SL{\operatorname{SL}}
\def\PSL{\operatorname{PSL}}
\def\Arc{\operatorname{Arc}}
\def\Pui{\operatorname{Pui}}
\def\T{\mathrm{T}}
\def\fps{\mathrm{fps}}
\def\ML{\mathrm{ML}}
\def\cI{\mathcal{I}}
\def\Tt{{\mathcal T}}
\def\cO{{\mathcal O}}
\def\0{\circ}
\def\bangle#1{\langle #1 \rangle}
\def\amalgGT{\varinjlim(T,\mathcal{G})}
\theoremstyle{definition}
\theoremstyle{plain}
\renewcommand{\a}{\mathbb A}
\newcommand{\ka}{\mathbb K}
\newcommand{\p}{\mathbb P}
\newcommand{\F}{\mathcal F}
\DeclareMathOperator{\Sing}{Sing}
\DeclareMathOperator{\mult}{mult}
\DeclareMathOperator{\ord}{ord}
\DeclareRobustCommand{\ie}{i.\,e.~}
\newcommand{\nlin}{\unitlength1mm\begin{picture}(0,9.25)
                       \put(0,0.75){\line(0,1){8.5}}
                      \end{picture}}
\newcommand{\vlin}[1]{\hspace{0.75mm}\unitlength1mm\begin{picture}(#1,0)
                       \put(0,0){\line(1,0){#1}}
                      \end{picture}\hspace{0.75mm}\rule[-3mm]{0mm}{4mm}}
\newcommand{\lin}{\vlin{8.5}}
\newcommand{\llin}{\vlin{11.5}}
\newcommand{\co}[1]{\unitlength1mm\begin{picture}(0,8)
    \put(0,0){\circle{1.5}}
    \put(0,3){\makebox(0,5)[b]{$#1$}}
                      \end{picture}}
\newcommand{\mybox}{\unitlength1mm\begin{picture}(0,1.5)
    \put(-0.75,-0.75){\line(0,1){1.5}}
    \put(-0.75,-0.75){\line(1,0){1.5}}
    \put(0.75,0.75){\line(0,-1){1.5}}
    \put(0.75,0.75){\line(-1,0){1.5}}
    \end{picture}}
\newcommand{\xbox}{\unitlength1mm\begin{picture}(0,1.5)
    \put(0,0){$\mybox$}
    \put(-0.75,0){\line(1,0){1.5}}
    \put(0,-0.75){\line(0,1){1.5}}
    \end{picture}}
\newcommand{\xboxo}[1]{\unitlength1mm\begin{picture}(0,8)
    \put(0,0){\xbox}
    \put(0,3){\makebox(0,5)[b]{$#1$}}
                      \end{picture}}
\newcommand{\cu}[1]{\unitlength1mm\begin{picture}(0,8)
    \put(0,0){\circle{1.5}}
    \put(0,-7){\makebox(0,4)[b]{$#1$}}
    \end{picture}
      \rule[-7mm]{0mm}{7mm}}
\newcommand{\cou}[2]{\unitlength1mm\begin{picture}(0,8)
    \put(0,0){\circle{1.5}}
    \put(0,3){\makebox(0,5)[b]{$#1$}}
    \put(0,-7){\makebox(0,4)[t]{$#2$}}
      \end{picture}
      \rule[-7mm]{0mm}{7mm}}
\newcommand{\cshiftup}[2]{\unitlength1mm\begin{picture}(0,9.25)
                       \put(0,10){\cou{#1}{#2}}
                      \end{picture}}
\newcommand{\xbou}[2]{\unitlength1mm\begin{picture}(0,8)
    \put(0,0){\xbox}
    \put(0,2){\makebox(0,5)[b]{$#1$}}
   \put(0,-7){\makebox(0,5)[b]{$#2$}}
      \end{picture}
      \rule[-7mm]{0mm}{7mm}}
\newcommand{\xbshiftup}[2]{\unitlength1mm\begin{picture}(0,9.25)
                       \put(0,10){\xbou{#1}{#2}}
                      \end{picture}}
\begin{document}

\title[Automorphism groups of affine surfaces]{On
automorphism groups of affine surfaces}

\author{S.~Kovalenko, A.~Perepechko,
M.~Zaidenberg}

\address{Mathematisches Institut, Albert-Ludwigs-Universit\"at Freiburg, Albertstra{\ss}e 19, 79104 Freiburg, Germany}
\email{sergei.kovalenko@rub.de}

\address{Kharkevich Institute for Information Transmission Problems, 19 Bolshoy Karetny per., 127994 Moscow, Russia}
\email{perepeal@gmail.com}

\address{Universit\'e Grenoble Alpes,
Institut Fourier,
CS 40700,
38058 Grenoble cedex 09, France}
\email{Mikhail.Zaidenberg@ujf-grenoble.fr}

\thanks{{\bf Acknowledgements:} The second author was supported by the SPbGU grant for post-doctoral students no. 6.50.22.2014 and partially granted by the ``Dynasty'' foundation.
He also expresses his gratitude to the Institut Fourier, Grenoble. The third author is grateful to the INdAM (Istituto Nazionale di Alta Matematica "F. Severi") in Rome, 
where a part of the work was done. The second and the third authors are grateful to the Max-Planck Institute for Mathematics, Bonn, for hospitality and generous support.}

\begin{abstract} This is a survey on
the automorphism groups in various classes of affine algebraic surfaces and the algebraic group actions on such surfaces. Being infinite-dimensional,
these  automorphism groups share some important features of algebraic groups. At the same time, they can be studied from the viewpoint of the combinatorial group theory, 
so we put a special accent on group-theoretical aspects (ind-groups,  amalgams, etc.).\ We provide different approaches to classification, prove certain new results, 
and attract attention to several open problems.  
\end{abstract}
\maketitle

\tableofcontents

\section{Introduction}

Our aim is to give a comprehensive survey of the automorphism groups of affine algebraic surfaces and algebraic group actions on such surfaces.
 We are using several different classifications of surfaces, according to the Makar-Limanov invariant, to the rank of the automorphism group, etc. However, our ultimate goal is to approach a reasonable classification
of the automorphism groups themselves. These groups are often infinite-dimensional, and even
`wild' in a sense, so, we do not reach the final goal.  Nevertheless, we are trying to put
some order in our present knowledge on the subject, and to indicate difficult open problems.
 The authors  apologize for incompleteness of the reference list and of the overview of the cited literature. For instance, we do not touch upon
the recent progress in studies of complete algebraic vector fields on affine surfaces,  the Lie algebras of algebraic vector fields, the related groups of (biholomorphic) automorphisms, etc.,
see, e.g., \cite{AKL, AKP, Do, KKL, Kr, KR, KL, Le} and the references therein. 

We  provide also several new results, especially concerning the automorphism groups of surfaces of classes $(\ML_1)$ and $(\ML_2)$, 
where one possesses by now a rather complete knowledge.  By contrast, we are far from a good understanding of the automorphism groups of surfaces of class $(\ML_0)$, 
that is, of Gizatullin surfaces.

Before passing to the content of the paper, we recall some general notions and facts.

\subsection{Classification 
according to the Makar-Limanov invariant}
\label{sss: ML-classification}

\begin{sit}\label{sit: ML-inv} Let $X$ be a normal affine variety over an algebraically closed field $\K$ of characteristic zero. 
The {\em special automorphism group}
$\SAut X$ is  the subgroup of $\Aut X$ generated by all its
one-parameter unipotent subgroups (\cite{AFKKZ}). This group is trivial if and only if
$X$ does not admit any nontrivial $\G_{\rm a}$-action.
The {\em Makar-Limanov invariant} $\ML(X)=\cO_X(X)^{\SAut X}$ is the subalgebra of invariants of $\SAut X$.
The normal affine varieties can be classified
according to the transcendence degree of the Makar-Limanov invariant or, which is the same, according to the {\em Makar-Limanov complexity} of $X$,
that is, the codimension of a general $\SAut X$-orbit. Recall (\cite{AFKKZ}) that these orbits are locally closed subvarieties in $X$. 
One says that a variety $X$ is \emph{of class} (ML$_i$), $i\in\{0,1,\ldots,\dim X\}$, if its Makar-Limanov complexity equals $i$. 

We restrict below to the case $\dim X=2$. A normal affine surface $X$ is of class
\begin{itemize}
\item (ML$_2$) if $X$ does not admit any $\A^1$-fibration
over an affine curve, see \cite[Rem. 1.7]{FKZ-deformations} or \cite[Lem. 1.6]{FZ-LND};
\item  (ML$_1$) if $X$ admits a unique such fibration;
\item (ML$_0$) if $X$ admits at least two distinct such fibrations.
\end{itemize}
The surfaces of class  (ML$_0$) are also called {\em Gizatullin surfaces}, cf.\ Definition \ref{def: giz}. 
The surfaces of classes $(\ML_0)$ and $(\ML_1)$ are {\em $\G_a$-surfaces}, which means that they admit an effective algebraic action of 
the additive group $\G_a$ of the field $\K$, while the $\ML_2$-surfaces do not admit such an action.
In this article we consider the additive (resp. multiplicative) group $\G_a$ (resp. $\G_m$) of the field as an algebraic group over $\K$.
\end{sit}

\begin{sit}\label{sit: LND} A derivation $\partial$ of a ring $A$ is called {\em locally nilpotent} if for any $a\in A$, $\partial^n a=0$ for some $n=n(a)\in\NN$.
For  a normal affine surface $X$, the coordinate ring $\mathcal{O}_X(X)$
\begin{itemize}\item does not admit any nonzero locally nilpotent derivation if $X\in ({\rm ML}_2)$;
\item  admits a unique such derivation up to a factor, which is a rational function on $X$, if
$X\in ({\rm ML}_1)$;  \item admits two non-proportional such derivations if $X\in ({\rm ML}_0)$.
\end{itemize}
\end{sit}

\begin{sit}\label{rem: NC versus SNC} There is a combinatorial counterpart of the ML-classification, see  Lemma \ref{lem: admissible-chains}.
Let $X$ be a normal affine surface, and let $V$ be a completion of $X$ with boundary divisor $D=V\setminus X$. 
Assume that $(V,D)$ is a minimal NC-completion, that is, $V$ is smooth near $D$ and $D$ is a normal crossing divisor such that 
no (smooth, rational) $(-1)$-component of  $D$ can be contracted without losing the NC-property. Let $\Gamma_D$ be the weighted dual graph of $D$. 
If $X$ admits an $\A^1$-fibration, then
 $(V,D)$ is in fact an SNC-completion, and  $\Gamma_D$ is a tree.

A vertex of $\Gamma_D$ is called a \emph{rupture vertex} if either it is of degree at least three, or the corresponding component of $D$ is irrational. 
The complement in $\Gamma_D$ to all rupture vertices consists of connected components called \emph{segments}.
A graph without rupture vertices consisting of a single linear segment is called a \emph{chain}.

A linear weighted graph is called {\em admissible} if all its weights are $\le -2$. 
Via birational transformations of $(X,D)$, any non-admissible linear segment of $\Gamma_D$ can be transformed into a segment with an end vertex of weight $0$.
\end{sit}

\begin{lemma}[{\cite[Rem.\ 1.7]{FKZ-deformations}}]\label{lem: admissible-chains}
The graph $\Gamma_D$ as in {\rm \ref{rem: NC versus SNC} }
\begin{itemize}\item has only admissible extremal
linear segments for $X\in ({\rm ML}_2)$;  
\item  is non-linear and has a non-admissible
extremal linear segment for $X\in ({\rm ML}_1)$ different from $\A^1\times\A^1_*$ 
\footnote{The surface $X=\A^1\times\A^1_*$ of class (ML$_1$) admits an SNC-completion $(X,D)$
with $\Gamma_D$ being the linear chain with weights $[[0,0,0]]$.};
\item is a chain (a {\em zigzag}) non-transformable into the linear chain with a sequence of weights
$[[0,0,0]]$ for $X\in ({\rm ML}_0)$.  \end{itemize}
\end{lemma}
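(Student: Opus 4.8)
The plan is to reduce the trichotomy to a single geometric dictionary between $\A^1$-fibrations of $X$ over affine curves and extremal $0$-vertices of the dual graph, and then to read the three cases off the shape of $\Gamma_D$.

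The first and central step is to establish this dictionary. Given an $\A^1$-fibration $\pi\colon X\to B$ over an affine curve $B$, I would extend it, after resolving indeterminacies, to a fibration $\bar\pi\colon V\to\bar B$ of a suitable completion by rational curves over the smooth completion $\bar B$ of $B$, whose general member $F$ meets $X$ in an affine line and meets $D$ transversally in a single point. Passing to the minimal SNC model one gets $F^2=0$, a unique horizontal boundary component serving as a section of $\bar\pi$, and all remaining components of $D$ vertical. By a sequence of inner elementary transformations (blow-ups on $D$ followed by contractions of the resulting $(-1)$-components) I can slide this $0$-fiber into the boundary, producing a completion in which $\Gamma_D$ carries an extremal vertex of weight $0$, that is, a non-admissible extremal linear segment in the sense of \ref{rem: NC versus SNC}. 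Conversely, a tip $C_0$ of weight $0$ is the fiber class of such a fibration; since $C_0$ is a leaf meeting the rest of $D$ in one point, the adjacent component is a section and the restriction to $X$ is an $\A^1$-fibration over an affine curve. This yields: $X$ admits an $\A^1$-fibration if and only if, after the transformations of \ref{rem: NC versus SNC}, $\Gamma_D$ has a non-admissible extremal linear segment.

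It then remains to count. The class $(\ML_2)$ is by definition the absence of any $\A^1$-fibration, hence by the dictionary the absence of any non-admissible extremal segment; equivalently all extremal linear segments are admissible. To separate $(\ML_1)$ from $(\ML_0)$ I would invoke Gizatullin's structure theorem: a normal affine surface carrying two inequivalent $\A^1$-fibrations has linear boundary, $\Gamma_D$ being a zigzag. Thus a non-linear $\Gamma_D$ with a non-admissible extremal segment supports exactly one fibration, which is the case $(\ML_1)$, whereas a chain supports a second one, obtained from the opposite end, which is $(\ML_0)$. The degenerate chain $[[0,0,0]]$ must be excluded: its two extremal $0$-tips are linearly equivalent and define one and the same pencil, namely the surface is $\A^1\times\A^1_*$ and its only $\A^1$-fibration is the projection to $\A^1_*$, so it belongs to $(\ML_1)$ despite being linear; this is exactly the clause ``non-transformable into $[[0,0,0]]$'' in the $(\ML_0)$ statement.

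The main obstacle is the $(\ML_1)$/$(\ML_0)$ separation, which rests on Gizatullin's theorem and on a careful control of the inner elementary transformations. The delicate points are to show that a rupture vertex genuinely blocks a second independent fibration, so that all $0$-tips reachable from a single branch of a non-linear graph define equivalent fibrations; dually, to verify that a zigzag other than $[[0,0,0]]$ really carries two inequivalent ones; and to track how the elementary transformations act on the weighted graph while preserving both the surface $X$ and the equivalence class of the fibration. This is where the real work lies, the remaining combinatorial bookkeeping being routine once the geometric dictionary is in place.
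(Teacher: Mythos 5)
The paper offers no proof of this lemma to compare against: it is imported verbatim from \cite[Rem.\ 1.7]{FKZ-deformations}, so your proposal has to be judged on its own merits. On those terms, your route is the natural one and is essentially sound: the dictionary between $\A^1$-fibrations over affine curves and non-admissible extremal segments (equivalently, after the moves of \ref{rem: NC versus SNC}, a $0$-tip of $\Gamma_D$) is exactly the mechanism behind the statement, and it settles the $(\ML_2)$ case outright; your analysis of the exceptional chain $[[0,0,0]]$ (both tips lie in one ruling, so they define a single pencil and $X\cong\A^1\times\A^1_*$) is also correct.

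Two points need sharpening. First, in the converse direction of the dictionary you assert but do not prove that all components of $D-C_0$ other than the one adjacent to the $0$-tip are vertical; this is easy but should be said: $C_0$ is a full fiber of $\Phi_{|C_0|}$ and $D-C_0$ meets it in one point, so the horizontal part of $D-C_0$ has fiber degree one and is a single section, whence general fibers meet $D$ once and restrict to $\A^1$ on $X$. Second, and more substantially, the $(\ML_1)$/$(\ML_0)$ separation is not ``bookkeeping'': both of your ``delicate points'' (a rupture vertex blocks a second fibration; a non-admissible chain other than $[[0,0,0]]$ carries two distinct ones) are precisely the content of Gizatullin's theorem, quoted in this paper as Theorem \ref{thm: Giz} and due to \cite{Gi1, Be, Du}, together with the invariance of the listed graph properties under the elementary transformations relating different minimal NC-completions, which is \cite{FKZ-graphs}. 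Since these results are independent of the present lemma, citing them as black boxes is legitimate and closes the argument; attempting to reprove them ``by hand'' inside this sketch would not be realistic. With those two emendations your proof is correct.
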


\subsection{Classification according to rank}\label{ss: classif-rank}
The {\em rank} of an ind-group $G$ acting morphically and effectively on a variety $X$ is the maximal dimension of an algebraic torus contained in $G$. 
This rank does not exceed the dimension of $X$, and $X$ is toric in the case of equality. A surface $X$ with ${\rm rk}\Aut X\ge 1$ is called a {\em $\G_m$-surface}. 
The rank distinguishes toric surfaces, non-toric $\G_m$-surfaces, and surfaces without $\G_m$-actions; indeed, their ranks are $2,1$, and $0$, respectively.

\subsection{A general classification scheme}\label{ss: general scheme}
The two independent classifications of normal affine surfaces, according to the rank of the automorphism group and according to the Makar--Limanov complexity as defined before,
give altogether 9 classes of  affine surfaces denoted $(\ML_i,r)$, $(i,r)\in\{0,1,2\}^2$, where $r$ is the rank of $\Aut X$ and $i$ the Makar-Limanov complexity of $X$.

To describe the automorphism groups of affine surfaces and the algebraic group actions on them, one applies various means. Some of them found their place in our survey. The content of the present notes is as follows.
\begin{itemize}
\item In Section \ref{sec: generalities} we introduce different classes of groups:
ind-groups, nested ind-groups, amalgams, bearable groups.
\item In Section \ref{sec: first examples} we study classical examples of affine surfaces, including the toric surfaces, 
along with a presentation of their automorphism groups as amalgams.
\item Section \ref{sec: prelim} contains generalities on algebraic group actions on affine surfaces; 
\item in Subsection \ref{ss: DPD} we classify affine surfaces of rank $\ge 1$, along with one-parameter groups acting on such surfaces,  in terms of the DPD presentation.
\item In Section \ref{sec: Giz} we consider the automorphism groups of the $\ML_0$-surfaces, also called Gizatullin surfaces. 
We provide a classification of their one-parameter subgroups. For some particular classes of Gizatullin surfaces, we describe their automorphism groups as amalgams. 
The structure of the automorphism groups of general Gizatullin surfaces remains mysterious.
\item In Sections \ref{sec: dJ} we study the automorphism groups of $\A^1$-fibrations as nested ind-groups.
\item Sections \ref{sec:fiber-arc} -- \ref {sec: dJ-surf} deal with $\A^1$-fibrations on affine surfaces. 
In Section \ref{sec:fiber-arc} we introduce Puiseux arc spaces and study the actions of automorphisms on these spaces. 
\item In Section \ref{sec: dJ-surf} this techniques is applied in order to describe the automorphism groups of $\A^1$-fibrations on affine surfaces in more detail. 
The neutral component  of such an automorphism group occurs to be a metabelian nested ind-group of rank at most two, 
while the component group\footnote{That is, the group formed by the connected components.} is at most countable,  see Theorems~\ref{th:jonq-mu}, \ref{th:jonq-mu-general}, 
and Corollary \ref{cor: nested}.
\end{itemize}

\par\medskip\noindent
\textbf{Acknowledgment.}
The authors  are grateful to the referees for a thorough reading and numerous 
useful remarks that allowed to improve essentially the presentation  and to remove several inaccuracies that had slipped into the first draft of the paper.

\section{Ind-groups, amalgams, and all this}
\label{sec: generalities}
\subsection{Ind-groups}\label{ss: ind-grps}
Recall (cf.\ \cite{Ka3, Kr, Ku, Sh1, Sh2, Sta}) that an {\em ind-variety} is a union of an ascending sequence of 
algebraic varieties $X_i$ with closed embeddings $X_i\subset X_{i+1}$. 
An \emph{algebraic subvariety} of such an ind-variety $X$ is a subvariety of some $X_i$. An {\em ind-group} $G$
is an inductive limit
$G=\varinjlim \Sigma_i$ of an increasing sequence of algebraic varieties
$$\Sigma_1\subset \Sigma_2\subset\ldots\subset \Sigma_n\subset\ldots$$ with closed
embeddings
$\Sigma_i\subset \Sigma_{i+1}$, where $G$ is endowed with a group structure such
that for each pair
$(i,j)\in\NN^2$
the multiplication $(f,g)\mapsto f\cdot g^{-1}$
yields a morphism $\Sigma_i\times \Sigma_j\to \Sigma_{n(i,j)}$ 
for some $n(i,j)\in\NN$.
If all the $\Sigma_i$ are affine algebraic varieties, 
then $G=\varinjlim \Sigma_i$ is called an \emph{affine ind-group}.
 In particular, an (affine) ind-group is an (affine) ind-variety. 

The {\em neutral component} $G^\circ$ of an ind-group $G=\varinjlim \Sigma_i$ is defined as the
inductive limit of the connected components of $\Sigma_i$ passing through the neutral element $e\in G$.

A {\em morphism of ind-groups} $G=\varinjlim \Sigma_i$ and $G'=\varinjlim \Sigma'_i$ is a group homomorphism $\phi\colon G\to G'$ such that for any $i\ge 1$, 
$\phi(\Sigma_i)\subset \Sigma'_j$ for some $j=j(i)\ge 1$, and $\phi|_{\Sigma_i}\colon \Sigma_i\to \Sigma'_j$ is a morphism of varieties. 
Clearly, $\phi(G^\circ)\subset {G'}^\circ$.

 Two ind-group structures on the same abstract group $G$ are  \emph{equivalent} if the identity map yields an isomorphism of the corresponding ind-groups. 

A subgroup $H\subset G$ of an ind-group $G=\varinjlim \Sigma_i$ is {\em closed} if for any $i\ge 1$, the intersection $H\cap\Sigma_i$ is closed in $\Sigma_i$. 
In the latter case $H=\varinjlim (H\cap\Sigma_i)$ is an ind-group.

One says that an ind-group $G=\varinjlim \Sigma_i$ {\em acts morphically} on a variety $X$ if there is an action $G\times X\to X$ of $G$ on $X$ such that 
for each $i\in\NN$ the restriction $\Sigma_i\times X\to X$ is a morphism of algebraic varieties.  

The following proposition is well known;
see, e.g., \cite[Lem.\ 2.2]{Kraft-notes}\footnote{When the preprint of this paper was finished, Hanspeter Kraft acknowledged the third author  that Proposition \ref{prop: ind-str} and some other results in Section 2 will appear in a more general form in a forthcoming paper \cite{FK}, which is an extended version of \cite{Kraft-notes}. We thank Hanspeter Kraft for this information and for sending a preliminary version of \cite{FK}.} and \cite[Prop.\ 2.5]{Kr}. The first (unpublished) proof of (a) is due to Bialynicki-Birula; cf.\ also \cite[Rem.\ after Cor.\ 1.2]{Ka1}. 

\begin{proposition}\label{prop: ind-str} Let $X$ be an affine algebraic variety, and let $I\subset  \cO_X(X)$ be a proper ideal. The the following hold.
\begin{itemize}\item[(a)]
The automorphism group $\Aut X$ possesses a structure
of an affine ind-group acting morphically on $X$. 

\item[(b)] Let
$\Aut (X,I)$ be the subgroup of $\Aut X$ of all automorphisms of $X$
leaving $I$ invariant.
Then $\Aut (X,I)$ is a closed ind-subgroup in $\Aut X$.
%
\end{itemize}
\end{proposition}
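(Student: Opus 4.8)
The plan is to realize $\Aut X$ as an increasing union of affine varieties built from bounded-degree data, following the classical construction attributed to Bialynicki-Birula. Write $A=\cO_X(X)$ and fix generators $x_1,\dots,x_N$ of $A$, equivalently a closed embedding $X\hookrightarrow\A^N$. As $A$ is a finitely generated $\K$-algebra it is Noetherian, so we have a presentation $A=\K[x_1,\dots,x_N]/\mathfrak a$ with $\mathfrak a=(r_1,\dots,r_m)$. Let $A_{\le d}\subset A$ be the finite-dimensional subspace spanned by the images of monomials of degree $\le d$. An endomorphism $\phi$ of $X$ is the same as a $\K$-algebra endomorphism $\phi^*$ of $A$, determined by the tuple $(\phi^*(x_1),\dots,\phi^*(x_N))\in A^N$; such a tuple defines an endomorphism precisely when $r_j(\phi^*(x_1),\dots,\phi^*(x_N))=0$ in $A$ for all $j$. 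For fixed $d$ these are polynomial equations in the coordinates of a point of $(A_{\le d})^N$, so the set $E_d$ of endomorphisms with $\phi^*(x_i)\in A_{\le d}$ for all $i$ is a closed affine subvariety of $(A_{\le d})^N$.

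First I would put a variety structure on the exhausting pieces. Define $\Sigma_d\subset E_d\times E_d$ to be the closed subset of pairs $(\phi^*,\psi^*)$ with $\psi^*\circ\phi^*=\phi^*\circ\psi^*=\id_A$, i.e.\ $\phi^*(\psi^*(x_i))=\psi^*(\phi^*(x_i))=x_i$ for all $i$; these are again polynomial conditions, so $\Sigma_d$ is an affine variety. Its first projection identifies it with the set of automorphisms $\phi$ for which both $\phi$ and $\phi^{-1}$ have degree $\le d$. Bounding the degree of the inverse as well is essential and is the subtle point: it is what makes the $\Sigma_d$ closed and stable under inversion. One checks that $\Sigma_d=\Sigma_{d+1}\cap(E_d\times E_d)$ is a closed embedding, coming from $A_{\le d}\subset A_{\le d+1}$, and that $\Aut X=\bigcup_d\Sigma_d$, since for any automorphism the finitely many elements $\phi^*(x_i)$ and $(\phi^{-1})^*(x_i)$ all lie in $A_{\le d}$ once $d\gg 0$.

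Next I would verify the group and action axioms. Composition is contravariant substitution: if $\phi^*(x_k)=\sum_\alpha c_{k\alpha}m_\alpha$ with $m_\alpha$ monomials of degree $\le i$ and $\psi\in\Sigma_j$, then $(\phi\circ\psi)^*(x_k)=\sum_\alpha c_{k\alpha}\psi^*(m_\alpha)\in A_{\le ij}$, with coordinates polynomial in those of $\phi$ and $\psi$. Since inversion on $\Sigma_d$ is merely the swap $(\phi^*,\psi^*)\mapsto(\psi^*,\phi^*)$, a degree-preserving morphism, the map $(f,g)\mapsto f\circ g^{-1}$ is a morphism $\Sigma_i\times\Sigma_j\to\Sigma_{ij}$, so $n(i,j)=ij$ works and $\Aut X=\varinjlim\Sigma_d$ is an affine ind-group. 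Finally, on $\Sigma_d\times X$ the action sends $(\phi,x)$ to the point whose $k$-th coordinate is $(\phi^*(x_k))(x)=\sum_\alpha c_{k\alpha}m_\alpha(x)$, polynomial in both arguments, hence morphic. This proves (a). (A change of generators induces mutually inverse morphisms of bounded degree by the same substitution estimate, so the structure is independent of the choices up to equivalence, though only existence is needed here.)

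For (b), write $I=(g_1,\dots,g_s)$, again using that $A$ is Noetherian. Since $\phi^*$ is bijective, $\phi$ preserves $I$ if and only if $\phi^*(g_j)\in I$ and $(\phi^{-1})^*(g_j)\in I$ for all $j$. I would fix $d$ and set $D=d\cdot\max_j\deg g_j$. On $\Sigma_d$ each $\phi^*(g_j)$, and likewise $(\phi^{-1})^*(g_j)$, lies in $A_{\le D}$ and depends polynomially on the coordinates of $\Sigma_d$. For an element of $A_{\le D}$, membership in $I$ is equivalent to membership in the finite-dimensional subspace $I\cap A_{\le D}$, that is, to vanishing in the finite-dimensional quotient $A_{\le D}/(I\cap A_{\le D})$, a system of linear equations and hence a closed condition. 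Thus $\Aut(X,I)\cap\Sigma_d$ is closed in $\Sigma_d$ for every $d$, and by the closedness criterion recalled before the proposition, $\Aut(X,I)=\varinjlim(\Aut(X,I)\cap\Sigma_d)$ is a closed ind-subgroup of $\Aut X$. The technical heart is the construction in the first two paragraphs, controlling degree growth under composition via the intrinsic filtration $A_{\le d}$ and the symmetric degree bound on $\phi$ and $\phi^{-1}$; once this is in place, part (b) is comparatively soft, reducing to the fact that membership in a fixed ideal becomes linear after truncation to bounded degree.
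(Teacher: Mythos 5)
Your proof is correct and follows essentially the same route as the paper's: the same bounded-degree pieces (your $E_d$ are the paper's $W_d$, in the dual algebra formulation), the same identification of $\Sigma_d$ with pairs of mutually inverse bounded-degree endomorphisms to control the inverse, the same degree bound $dd'$ for composition with inversion as the coordinate swap, and the same linear-truncation argument for part (b). The only differences are cosmetic—you phrase everything via $\K$-algebra endomorphisms rather than morphisms $X\to\A^n$, and in (b) you impose the condition on both $\phi$ and $\phi^{-1}$, which is if anything slightly more careful than the paper's one-sided condition $b_i\circ g\in I$.
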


\begin{proof} (a) Let $A=\cO_X(X)$. Fixing a closed embedding $X\hookrightarrow\A^n$, consider the following objects:
$$A_d=\{p|_X\,|\,p\in\K[x_1,\ldots,x_n],\,\,\deg p\le d\}\subset A\,,$$
$$V_d=A_d^n\subset {\rm Mor}\,(X,\A^n)\,,$$
$$W_d=\{\varphi\in V_d\,|\,\varphi(X)\subset X\}\subset {\rm Mor}\,(X,X)\,.$$
Clearly, $V_d$ is a finite-dimensional subspace of the $\K$-vector space ${\rm Mor}\,(X,\A^n)$, and $W_d$ is a closed (affine) algebraic subvariety in $V_d$. 
The map $$\Phi_{d,d'}\colon W_d\times W_{d'}\to W_{dd'},\quad (\varphi,\psi)\mapsto \psi\circ\varphi\,,$$
is a morphism of algebraic varieties. Hence 
$$\widetilde\Sigma_d:=\Phi_{d,d}^{-1}(\id_X)\subset W_d\times W_{d}$$ is a closed algebraic subvariety in $W_d\times W_{d}$ for any $d\ge 1$. 
Consider the natural embeddings $\Aut X\subset {\rm Mor}\,(X,X)\subset {\rm Mor}\,(X,\A^n)$. We have
$$\widetilde\Sigma_d=\{(\varphi,\varphi^{-1})\,|\,\varphi, \varphi^{-1}\in W_d\cap\Aut X\}\,.$$  
Let $\Sigma_d={\rm pr}_1(\widetilde\Sigma_d)\subset W_d\cap\Aut X$. The morphism 
$${\rm pr}_1\colon \widetilde\Sigma_d\to\Sigma_d,\quad (\varphi,\varphi^{-1})\mapsto \varphi\,,$$
is one-to-one. This allows to introduce a structure of an affine algebraic variety on 
$\Sigma_d$ borrowed from the one on $\widetilde\Sigma_d$,  so that $\Sigma_d\cong\widetilde\Sigma_d$.

\smallskip

\noindent {\bf Claim.} \emph{With this algebraic structure on $\Sigma_d$, the following hold. 
\begin{itemize}
\item[(i)] $\Aut X=\bigcup_{d=1}^\infty\Sigma_d$;
\item[(ii)] $ \Sigma_d\subset \Sigma_{d'}$ is a closed embedding  for any $d\le d'$;
\item[(iii)] $\Sigma_d\to\Sigma_d,\quad\varphi\mapsto\varphi^{-1}\,,$  is a morphism; 
\item[(iv)] $\Phi_{d,d'}|_{\Sigma_d\times \Sigma_{d'}}\colon \Sigma_d\times \Sigma_{d'}\to \Sigma_{dd'}$ is a morphism; 
\item[(v)] $\Sigma_d\times X\to X\subset\A^n$, $(\varphi, x)\mapsto \varphi(x)$, is a morphism.
\end{itemize}
Consequently,  $\Aut X=\varinjlim\Sigma_d$ is an ind-group acting morphically on $X$. }

\smallskip 

\noindent \emph{Proof of the claim.} Statement (i) is immediate. 

(ii) follows from the fact that $\widetilde\Sigma_d=\widetilde\Sigma_{d'}\cap (W_d\times W_d)$ is closed in $W_{d'}\times W_{d'}$. 

The map in (iii)
amounts in interchanging the coordinates in $\widetilde\Sigma_d\subset W_d\times W_{d}$. Hence this map is an automorphism of $\Sigma_d$. 

Note that the map
$$\widetilde\Sigma_d\times\widetilde\Sigma_{d'}\to
\widetilde\Sigma_{dd'},\quad \left((\varphi,\varphi^{-1}), (\psi,\psi^{-1})\right)\mapsto (\psi\circ\varphi, \varphi^{-1}\circ \psi^{-1})\,,$$ is a morphism. This implies (iv).

In turn, (v) follows from the fact that the map
$$(V_d\times V_{d})\times (\A^n\times\A^n)\to \A^n\times\A^n,
\quad \left((\varphi,\psi), (x,y)\right)\mapsto (\varphi(x), \psi(y))\,,$$ is a morphism.\qed

\smallskip

(b) Let $I=(b_1,\ldots,b_k)$, where $b_i\in A=\cO_X(X)$, $i=1,\ldots,k$.
Clearly, $g\in\Aut (X,I)$ if and only if $b_i\circ g\in I$ $\forall i=1,\ldots,k$.
We claim that the latter condition defines a closed subset of $\Sigma_d$ for any $d\ge 1$. 
Indeed, by (a), given $d$ and $i$, there exists $m=m(d,i)$ such that $b_i\circ g\in A_m$
for any $g\in\Sigma_d$. Consider the map $\psi_i\colon \Aut X\to A$, $g\mapsto b_i\circ g$. 
By virtue of (a), $\psi_{i,d}=\psi_i|_{\Sigma_d}\colon\Sigma_d\to A_m\subset A$ is a morphism. 
Since $I\cap A_m$ is a linear subspace of the finite-dimensional vector space $A_m$, 
the inverse image $\psi_{i,d}^{-1}(I\cap A_m)=\psi_i^{-1}(I)\cap\Sigma_d$ is closed in $\Sigma_d$, as stated.
It follows that $\Aut (X,I)\subset \Aut X$ is a closed ind-subgroup.
\end{proof}

\begin{remarks}\label{rem: Ramanujam-connected} 1. Let $X$ and $R$ be algebraic varieties. 
According to \cite{Ra}, a map $R\to\Aut X$ is called  an {\em algebraic family of automorphisms} of $X$ 
if the action $R\times X\to X$, $(r,x)\mapsto r(x)$, is a morphism of varieties. We call such an algebraic family \emph{affine} if $R$ is an affine variety.

2. The {\em neutral component of $\Aut X$ in the sense of Ramanujam} is the union of all the irreducible subvarieties in $\Aut X$ passing through 
the neutral element $e\in\Aut X$. 
Clearly, this neutral component coincides with $\Aut^\circ X$ defined at the beginning of this section.
\end{remarks}

\begin{lemma}\label{lem:family} 
Let $X$ be an affine algebraic variety, and let $\tau\colon R\to\Aut X$ be an algebraic family of automorphisms of $X$. 
Consider an ind-group structure $\Aut X=\varinjlim \Sigma_d$  introduced in the proof of Proposition \ref{prop: ind-str}.a. 
Then the image $\tau(R)\subset\Aut X$ is contained in $\Sigma_d$ for some $d\in\N$, and the map $R\to\Sigma_d$ is a morphism. 
Consequently, $\tau\colon R\to\Aut X$ is a morphism of ind-varieties. 
 \end{lemma}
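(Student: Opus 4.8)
The plan is to reduce to $R$ affine, bound uniformly the degrees of the automorphisms $\tau(r)$ by expanding the action morphism, and then deal with the inverses, which is the only delicate point.

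First I would reduce to the case where $R$ is affine. Since $R$ is of finite type it is covered by finitely many affine open sets $R_1,\dots,R_s$, and each restriction $\tau|_{R_\alpha}$ is again an algebraic family, because $R_\alpha\times X\hookrightarrow R\times X\to X$ is a morphism. If each $\tau|_{R_\alpha}$ factors as a morphism $R_\alpha\to\Sigma_{d_\alpha}$, then, putting $d=\max_\alpha d_\alpha$ and using the closed embeddings $\Sigma_{d_\alpha}\subset\Sigma_d$, the set map $R\to\Sigma_d$ is a morphism on each $R_\alpha$; as these morphisms agree on overlaps (they all equal $r\mapsto\tau(r)$), they glue to a morphism $R\to\Sigma_d$. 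Hence it suffices to treat affine $R$.

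So assume $R$ affine, and keep the notation $A=\cO_X(X)$, $A_d$, $V_d$, $W_d$, $\Sigma_d$ of the proof of Proposition \ref{prop: ind-str}.a, together with the fixed closed embedding $X\hookrightarrow\A^n$. The action $\mu\colon R\times X\to X\subset\A^n$ is a morphism, so each coordinate function $\mu_j=x_j\circ\mu$ lies in $\cO(R\times X)=\cO(R)\otimes_\K A$. Writing $\mu_j=\sum_{k}g_{jk}\otimes a_{jk}$ as a finite sum with $g_{jk}\in\cO(R)$ and $a_{jk}\in A$, one gets for every $r\in R$ that $\tau(r)^*x_j=\mu_j(r,\cdot)=\sum_k g_{jk}(r)\,a_{jk}$ lies in the fixed finite-dimensional subspace $\Span_\K\{a_{jk}\}_k\subset A$. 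Choosing $d_0$ so large that all the $a_{jk}$ lie in $A_{d_0}$, we obtain $\tau(r)\in W_{d_0}$ for all $r$ (recall $\tau(r)(X)\subset X$). Moreover, expressing the $a_{jk}$ in a fixed $\K$-basis of $A_{d_0}$ shows that the coordinates of $\tau(r)$ in $V_{d_0}=A_{d_0}^n$ are regular functions of $r$; hence $r\mapsto\tau(r)$ is a morphism $\eta\colon R\to W_{d_0}$.

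The remaining, and main, point is to control the inverses, i.e.\ to show that the inverse family $(r,x)\mapsto\tau(r)^{-1}(x)$ is again an algebraic family. Equivalently, the morphism $\Psi\colon R\times X\to R\times X$, $(r,x)\mapsto(r,\tau(r)(x))$, which is bijective since each $\tau(r)$ is an automorphism of $X$, should be an isomorphism; its inverse is then $(r,y)\mapsto(r,\tau(r)^{-1}(y))$ and provides the required morphism $R\times X\to X$. To prove $\Psi$ is an isomorphism I would, after passing to irreducible components of $R$ and normalizing, reduce to the situation where source and target are normal and $\Psi$ is a bijective birational morphism (birationality in characteristic zero following from bijectivity via separability of the generic fibre); then Zariski's main theorem applies and $\Psi$ is an isomorphism. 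This is the step I expect to be the main obstacle, precisely because the target need not be normal a priori, so some care is needed (or an appeal to the standard fact that a bijective morphism onto a normal variety in characteristic zero is an isomorphism). Granting it, the second component of $\Psi^{-1}$ is the inverse action, which is a morphism, so the same degree-bounding argument as above, now applied to $r\mapsto\tau(r)^{-1}$, yields some $d_1$ with $\tau(r)^{-1}\in W_{d_1}$ for all $r$ and with $r\mapsto\tau(r)^{-1}$ a morphism $R\to W_{d_1}$. Finally, set $d=\max(d_0,d_1)$. Then $\tau(r),\tau(r)^{-1}\in W_d$ for all $r$, so $\tau(r)\in\Sigma_d$, and $r\mapsto(\tau(r),\tau(r)^{-1})$ is a morphism $R\to\widetilde\Sigma_d\cong\Sigma_d$. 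Thus $\tau(R)\subset\Sigma_d$ and $R\to\Sigma_d$ is a morphism, whence $\tau\colon R\to\Aut X=\varinjlim\Sigma_d$ is a morphism of ind-varieties.
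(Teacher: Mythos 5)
Your degree-bounding step and the final assembly are exactly the paper's proof: the paper likewise expands the action morphism (via polynomial representatives of the $\alpha^*(a_i)$ in $\cO_R(R)[x_1,\ldots,x_n]$) to get a uniform bound $d$ and a morphism $R\to W_d$, does the same for the inverse family to get some $d'$, and combines the two into a morphism $R\to\widetilde\Sigma_{\max\{d,d'\}}\cong\Sigma_{\max\{d,d'\}}$ (your preliminary reduction to affine $R$ is harmless but unnecessary, since $\cO(R\times X)=\cO_R(R)\otimes\cO_X(X)$ already holds for arbitrary $R$ when $X$ is affine). The one step where the two proofs diverge is the crux you identified: that $r\mapsto\tau(r)^{-1}$ is again an algebraic family. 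The paper does not prove this; it cites Ramanujam's theorem \cite{Ra}.

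Your replacement argument for that step has a genuine gap. Normalizing and applying Zariski's main theorem can at best show that the induced map $\widetilde\Psi$ on normalizations is an isomorphism; this does not descend to $\Psi$, because a map whose composition with the normalization is a morphism need not itself be a morphism. Concretely, let $C$ be the cuspidal cubic with normalization $\nu\colon\A^1\to C$, and consider $\id\times\nu\colon\A^1\times\A^1\to\A^1\times C$. This morphism is bijective, birational, compatible with the projections to $\A^1$, and becomes an isomorphism after normalizing source and target --- i.e.\ it has every property your argument actually uses --- yet it is not an isomorphism. (Two further problems of the same nature: the lift $\widetilde\Psi$ need not remain bijective, since non-injectivity can appear along fibers of the normalization map; and your fallback fact that a bijective morphism onto a normal variety in characteristic zero is an isomorphism is unavailable precisely because $R\times X$ need not be normal, as you concede. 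The reduction to irreducible components of $R$ has the same descent defect: a map that is a morphism on each closed component need not be a morphism.) The moral of the counterexample is that bijectivity and birationality of $\Psi$ can never suffice: any correct proof must use the hypothesis that $\Psi$ restricts to an isomorphism on every fiber $\{r\}\times X$, which your argument discards the moment bijectivity has been recorded. This hypothesis is what Ramanujam's proof exploits; alternatively, one can argue that $\Psi$ is flat by the fiberwise criterion of flatness (each $\Psi_r=\tau(r)$ is an isomorphism and $R\times X$ is flat over $R$), hence \'etale, and a bijective \'etale morphism of varieties over $\K$ is an isomorphism. So either quote \cite{Ra} for this step, as the paper does, or replace your normalization argument by one of these; as written, the step is unproved.
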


\begin{proof} We use the notation from the proof of Proposition \ref{prop: ind-str}.a.
Fix a closed embedding $X\hookrightarrow\A^n$, which corresponds to a choice of  generators $a_1,\ldots,a_n$ of the $\K$-algebra $A=\cO_X(X)$. 
We have $A=\K[x_1, \ldots , x_n]/I$, where $I\subset \K[x_1, \ldots , x_n]$ is the ideal of relations. 

 We claim that  the   action morphism 
$\alpha\colon R\times X\to X$ extends to a morphism $F=(F_1,\ldots,F_n)\colon R\times\A^n\to\A^n$, where $F_i\in\cO_R(R)[x_1,\ldots,x_n]$ for $i=1,\ldots,n$. 
Indeed, considering the induced homomorphism $$\alpha^*\colon A=\K[X] \to \K[R \times X]=\cO_R(R)\otimes A=
(\cO_R(R) \otimes \K[x_1, \ldots , x_n])/I\,,$$ we choose for every $i=1,\ldots,n$ a representative $F_i\in\cO_R(R)[x_1,\ldots,x_n]$ of 
the element $\alpha^*(a_i)\in\cO_R(R)[a_1,\ldots, a_n]$. This gives a desired extension $F=(F_1,\ldots,F_n)$ of $\alpha$.

 Let $d=\max_{i=1,\ldots,n}\deg F_i$. 
Then $F(r, \cdot)\in W_d$ for any fixed $r\in R$. This defines a morphism $\tau\colon R\to W_d$. 
The family $$\tau'\colon R\to\Aut X,\quad  \tau'\colon r\mapsto \tau(r)^{-1}\,,$$ is again algebraic (\cite{Ra}). 
Thus, $\tau'(R)\subset W_{d'}$ for some $d'\in\N$, and $\tau'\colon R\to W_{d'}$ is a morphism. 
This yields a morphism $$R\to\widetilde\Sigma_{\max\{d,d'\}}, \quad  r\mapsto (\tau(r),\tau'(r))\,.$$ 
Finally,  $\tau\colon R\to\Sigma_{\max\{d,d'\}}\cong \widetilde\Sigma_{\max\{d,d'\}}$ is a morphism, see the  proof of Proposition \ref{prop: ind-str}.a.
\end{proof}

The following corollary is immediate (cf.\ \cite[Prop.\ 2.5]{Kraft-notes}).

\begin{corollary} Up to equivalence, the structure  of an affine ind-group on $\Aut X$ introduced in the  proof of Proposition \ref{prop: ind-str}.a 
does not depend on the choice of a closed embedding $X\hookrightarrow\A^n$. 
\end{corollary}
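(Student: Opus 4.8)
The plan is to show that any two ind-group structures on $\Aut X$ coming from two closed embeddings $X\hookrightarrow\A^n$ and $X\hookrightarrow\A^m$ are equivalent, that is, the identity map on the underlying abstract group $\Aut X$ is an isomorphism of the two resulting ind-groups. By the symmetry of the situation, it suffices to prove that the identity map is a morphism in one direction; applying the same argument with the roles of the two structures interchanged then yields that the inverse is also a morphism, and hence that the identity is an isomorphism of ind-groups. So the whole statement reduces to checking that $\id\colon \Aut X\to\Aut X$ is a morphism when the source carries the structure $\varinjlim\Sigma_d$ built from the first embedding and the target carries the structure $\varinjlim\Sigma'_{d'}$ built from the second.

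To verify that this identity map is a morphism of ind-groups, I would invoke the definition: I must check that for each $d\ge 1$ the image $\id(\Sigma_d)=\Sigma_d$ lands in some $\Sigma'_{d'}$ with $d'=d'(d)$, and that the restricted map $\Sigma_d\to\Sigma'_{d'}$ is a morphism of algebraic varieties. The key observation is that the inclusion $\Sigma_d\hookrightarrow\Aut X$ is itself an affine algebraic family of automorphisms of $X$ in the sense of Remark \ref{rem: Ramanujam-connected}.1: indeed, by part (v) of the Claim in the proof of Proposition \ref{prop: ind-str}.a, the action map $\Sigma_d\times X\to X$ is a morphism, and $\Sigma_d$ is an affine variety. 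This is exactly the hypothesis needed to apply Lemma \ref{lem:family}, with $R=\Sigma_d$ and $\tau$ the tautological inclusion.

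Thus I would apply Lemma \ref{lem:family} to the family $\tau\colon \Sigma_d\hookrightarrow\Aut X$, where $\Aut X$ is now equipped with the ind-group structure $\varinjlim\Sigma'_{d'}$ associated to the second embedding $X\hookrightarrow\A^m$. The conclusion of the lemma is precisely that $\tau(\Sigma_d)\subset\Sigma'_{d'}$ for some $d'\in\N$, and that the induced map $\Sigma_d\to\Sigma'_{d'}$ is a morphism of varieties. Since $d\ge 1$ was arbitrary, this establishes that $\id\colon\varinjlim\Sigma_d\to\varinjlim\Sigma'_{d'}$ is a morphism of ind-groups. Interchanging the two embeddings and repeating the argument shows that $\id\colon\varinjlim\Sigma'_{d'}\to\varinjlim\Sigma_d$ is also a morphism, so the identity is an isomorphism of ind-groups and the two structures are equivalent.

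The main obstacle is conceptual rather than computational: one must recognize that Lemma \ref{lem:family} is exactly the tool that converts ``algebraic family of automorphisms'' into ``morphism into a single $\Sigma'_{d'}$,'' and that each stratum $\Sigma_d$ of one structure qualifies as such a family by property (v). Once this is seen, no genuine calculation remains, which is why the corollary is labelled immediate. The only point requiring mild care is confirming that $\Sigma_d$ is genuinely an affine variety (so that the family is affine, as in Remark \ref{rem: Ramanujam-connected}.1, though in fact Lemma \ref{lem:family} is stated for an arbitrary algebraic variety $R$ and does not even require affineness), which follows from the construction of $\Sigma_d\cong\widetilde\Sigma_d$ as a closed subvariety of $W_d\times W_d$ in the proof of Proposition \ref{prop: ind-str}.a.
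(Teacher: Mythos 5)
Your proof is correct and follows exactly the route the paper intends: the corollary is stated as "immediate" precisely because Lemma \ref{lem:family}, applied to each stratum $\Sigma_d$ viewed (via property (v) of the Claim) as an algebraic family of automorphisms, shows the identity map is a morphism into the other ind-structure, and symmetry gives equivalence. Your write-up simply makes explicit the argument the paper leaves to the reader.
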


\begin{definition}\label{def:semisimple}
Let $X$ be an affine variety. An element $g\in\Aut X$ will be called \emph{semisimple} if there exists a finite-dimensional $g$-stable\footnote{Abusing notation, 
we denote by the same letter $g$ the induced automorphism $f\mapsto f\circ g$ of the algebra $\cO_X(X)$.} subspace $V\subset \O_X(X)$
 which contains a system of generators $a_1,\ldots,a_m$ of $\O_X(X)$ and such that $g|_{V}\in\GL(V)$ is semisimple.
\end{definition}

Recall that an {\em algebraic quasitorus} is  a product of an algebraic torus and a finite Abelian group.

\begin{lemma}\label{lem: ss-elements}
An element $g\in\Aut X$ is semisimple if and only if $g$ is contained in a  closed algebraic quasitorus  $T\subset\Aut X$. 
\end{lemma}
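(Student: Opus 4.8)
The plan is to prove both implications of the equivalence, with the backward direction being routine and the forward direction carrying the real content.

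For the implication that an element of a closed algebraic quasitorus is semisimple, I would argue as follows. A closed algebraic quasitorus $T\subset\Aut X$ is an algebraic group acting morphically on $X$ (Proposition \ref{prop: ind-str}.a), so the induced $T$-action on $A=\cO_X(X)$ by $f\mapsto f\circ t$ is locally finite and rational: every element of $A$ lies in a finite-dimensional $T$-stable subspace on which $T$ acts algebraically. Since $A$ is finitely generated, I choose generators $a_1,\dots,a_m$, enclose each in such a finite-dimensional submodule, and let $V$ be their sum. Then $V$ is finite-dimensional, $T$-stable (hence $g$-stable), and contains a system of generators. Because $T$ is diagonalizable, the rational $T$-module $V$ splits into a direct sum of character eigenspaces, so $g|_V$ is diagonal, in particular semisimple; by Definition \ref{def:semisimple}, $g$ is semisimple.

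For the converse I would linearize. Start from a $g$-stable finite-dimensional $V\subset A$ containing generators, with $g|_V$ semisimple, and fix a basis $b_1,\dots,b_N$ of $V$; since $V$ contains a generating set, the $b_i$ generate $A$ and define a closed embedding $X\hookrightarrow\A^N$ whose coordinate functions span $V$. The $g$-stability of $V$ means that the comorphism $f\mapsto f\circ g$ sends $b_i$ to $\sum_j c_{ij}b_j$, so $g$ is the restriction to $X$ of the linear automorphism $\tilde g\in\GL_N$ with matrix $(c_{ij})$; as $g|_V$ is semisimple, $\tilde g$ is semisimple. Let $H=\{h\in\GL_N\mid h(X)=X\}$, a closed algebraic subgroup of $\GL_N$. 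Then $\tilde g\in H$, and the Zariski closure $T_0=\overline{\langle\tilde g\rangle}\subset H$ is a commutative algebraic group all of whose elements are semisimple, hence a quasitorus. Restriction yields an algebraic family $\rho\colon T_0\to\Aut X$, $h\mapsto h|_X$, with $\rho(\tilde g)=g$, and $\rho$ is injective because $X$ linearly spans $\A^N$, so the only element of $H$ fixing $X$ pointwise is the identity.

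It remains to see that $G'=\rho(T_0)$ is a \emph{closed} algebraic quasitorus. The subgroup $L=\{\phi\in\Aut X\mid \phi(V)=V\}$ is a closed ind-subgroup, since on each $\Sigma_d$ the conditions $b_i\circ\phi\in V$ cut out a closed set exactly as in the proof of Proposition \ref{prop: ind-str}.b, $V$ being a finite-dimensional, hence closed, subspace of the relevant $A_m$. On $L$ the restriction $r\colon\phi\mapsto\phi|_V$ is an injective morphism of ind-groups into $\GL(V)$, so $G'=r^{-1}(T_0)$ is a closed ind-subgroup. By Lemma \ref{lem:family} the algebraic family $\rho$ lands in a single $\Sigma_d$, whence $G'$ lies in one algebraic layer and is an algebraic group; then $r\colon G'\to T_0$ is a bijective morphism of algebraic groups, an isomorphism in characteristic zero, so $G'$ is a quasitorus containing $g$. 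The main obstacle is precisely this last step: transferring the quasitorus structure from the linear model $T_0\subset\GL_N$ to a genuinely closed algebraic subgroup of the ind-group $\Aut X$. The crux is to realize $G'$ as $r^{-1}(T_0)$ inside the closed subgroup $L$ and to invoke Lemma \ref{lem:family} to confine $G'$ to one $\Sigma_d$, so that bijectivity of $r$ upgrades to an isomorphism of algebraic groups.
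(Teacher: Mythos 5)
Your proof is correct and follows essentially the same route as the paper's: for the forward direction, both linearize via the $g$-stable generating subspace $V$ (your basis-dependent embedding $X\hookrightarrow\A^N$ with $\GL_N$ is the paper's embedding $X\embed V^*$ with $\GL(V^*)$), take the Zariski closure of the cyclic group generated by the semisimple linear extension to obtain a quasitorus, restrict it back to $X$ as an injective algebraic family, confine it to a single $\Sigma_d$ by Lemma \ref{lem:family}, and establish closedness by exploiting that limit points preserve $V$ and that the quasitorus is closed in the linear group; for the converse, both invoke local finiteness of algebraic group actions on $\cO_X(X)$. Your packaging of the closedness step through the closed ind-subgroup $L=\{\phi\mid\phi(V)=V\}$ and the morphism $r$ is just a more structural phrasing of the paper's direct limit-point argument.
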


\begin{proof} Assume that $g\in\Aut X$ is semisimple, and let $V\subset \O_X(X)$ be as in \ref{def:semisimple}. Then 
$g^*\in\GL(V^*)$ is contained in an algebraic torus $T'\subset\GL(V^*)$. Let  $T^*$  be the Zariski closure in $T'$ of the cyclic group $\langle g^*\rangle\subset T'$ 
generated by $g^*$. Then $T^*\subset T'$
 is an algebraic quasitorus. Consider the natural embedding $\phi\colon X\embed V^*$. 
Clearly, $g^*$ leaves invariant the image $\phi(X)\subset V^*$, and $g^*\circ\phi=\phi\circ g$. Hence also $T^*$ stabilizes $\phi(X)$. 
This yields an injective affine algebraic family $T^*\hookrightarrow \Aut X$. The image, say, $T \subset\Aut X$ of $T^*$ is an algebraic quasitorus containing $g$. 
By Lemma \ref{lem:family}, $T\subset \Sigma_d$ for some $d\in\N$.  We claim that $T$ is closed in $\Aut X$. 
Indeed, let $t\in \bar T\subset\Aut X$. Then both $\bar T$ and $t$ leave the subspace $V$ invariant, and $t^*\in \overline{T^*}\subset\GL(V^*)$. 
However, the quasitorus  $T^*\subset\GL(V^*)$ is closed, hence $t^*\in T^*$, and so, $t=\phi^{-1}\circ t^*\circ\phi\in T$. Thus, $T=\bar T$ is closed in $\Aut X$. 

To show the converse, recall
(see, e.g., \cite[\S 8.6]{Hu})  
that any algebraic group $G$ acting morphically on $X$ acts  {\em locally finitely} on $\cO_X(X)$, that is, each finite-dimensional subspace of $\cO_X(X)$ extends 
to a finite-dimensional  $G$-invariant subspace. This implies that any $g\in\Aut X$ contained in a closed algebraic quasitorus in $\Aut X$ is semisimple.
\end{proof}

\subsection{Nested ind-groups}\label{sss: nested grps}
\begin{definition}\label{def: nest}
We say that a group $G$ is
a {\em nested ind-group}
if $$G=\varinjlim G_i,\quad\mbox{where}\quad
G_1 \subset\ldots \subset G_i\subset G_{i+1}\subset\ldots $$
is  an increasing sequence of algebraic groups  and their closed embeddings.
The {\em rank} of a nested  ind-group $G$
is defined as ${\rm rk}\, G=\lim_{i\to\infty} {\rm rk}\, G_i$.
If all the $G_i$ are unipotent,
then we say that the nested ind-group $G$ is unipotent. The {\em
unipotent radical} $R_u(G)$ is the largest closed normal ind-subgroup of $G$
such that any element $g\in R_u(G)$ is unipotent in $G_i$ for all $i$
sufficiently large.\footnote{Cf.\ the notions of a locally linear ind-group
 and of
its unipotent radical in \cite[I.3]{DPW}.}
\end{definition}

\begin{remarks}\label{rem: nested sbgrp}
1. An algebraic group is a nested ind-group. By contrast, the ind-group $\ZZ$ is not a nested ind-group.

2.
A closed subgroup of
a nested ind-group is nested. If $G=\varinjlim G_i$ is nested, then also its neutral component $G^\circ=\varinjlim G^\circ_i$ is.

3. If $G=\varinjlim G_i$ and $G_i$ is connected for any $i\in\NN$, then $G$ is. Conversely, if $G=\varinjlim G_i$ is connected as an ind-group, then $G=\varinjlim G^\circ_i$. 
\end{remarks}

\begin{definition}\label{def: Cartan}
Let $G$ be a nested ind-group, and let $T$ be a maximal torus in $G$ of finite rank. The \emph{Cartan subgroup} $C^\circ_G(T)$ associated to $T$ is 
the neutral component of the centralizer $C_G(T)$ of $T$ in $G$. 

Clearly, $C^\circ_G(T)$  is a closed nested ind-subgroup of $G$, cf.\ Remarks \ref{rem: nested sbgrp}.2--\ref{rem: nested sbgrp}.3. 
Note that if $\rk G=0$, i.e., $G^\circ$ is unipotent, then $T=\{1\}$ and $C^\circ_G(T)=G^\circ$. 
In a semisimple algebraic group $G$ one has $C^\circ_G(T)=T$, and in a product $\tilde G=G\times U$, where $G$ is semisimple and $U$ is unipotent,   
one has $C^\circ_{\tilde G}(T)=T\times U$. 
\end{definition}

In a nested ind-group of automorphisms, one has the following analog of the Levi decomposition for a connected algebraic group (\cite{Mo}; cf.\ \cite[Thm.\ 4.10]{LZ}). 

\begin{theorem}\label{thm: Mostow} 
Let 
$G=\varinjlim G_i$ 
be a connected nested ind-group such that the sequence $\rk G_i$ is bounded above. Then $G$ admits a Levi decomposition $G=R_u(G)\rtimes L$, 
where $L$ is a maximal reductive algebraic subgroup in $G$ and 
$R_u(G)$ is the unipotent radical of $G$. Furthermore, any semisimple element $g\in G$ is contained in a maximal torus of $G$, and any two such tori are conjugated  in $G$. 
\end{theorem}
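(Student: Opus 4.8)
The plan is to exploit the boundedness of $\rk G_i$ to show that the Levi factors of the $G_i$ stabilize, and then to assemble the Levi decompositions of the $G_i$ into one for $G$. First I would invoke Remark \ref{rem: nested sbgrp}.3 to assume that every $G_i$ is a connected algebraic group. Since $G_i\subset G_{i+1}$ is a closed embedding, a maximal torus of $G_i$ is a torus of $G_{i+1}$, so $\rk G_i\le\rk G_{i+1}$; being bounded above, this sequence stabilizes, say $\rk G_i=r$ for $i\ge N$. Each $G_i$ has a Levi decomposition $G_i=R_u(G_i)\rtimes L_i$ with $L_i$ connected reductive of rank $\le r$. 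As a connected reductive group of bounded rank has bounded dimension (finitely many semisimple types of rank $\le r$, central torus of dimension $\le r$), the dimensions $\dim L_i$ are bounded. Using Mostow's theorem (\cite{Mo}) that in characteristic zero every reductive subgroup of a connected algebraic group lies in a Levi subgroup, I would build inductively a nested chain $L_N\subset L_{N+1}\subset\ldots$ of Levi factors: at each step $L_i\subset G_{i+1}$ is reductive, hence contained in some Levi factor $L_{i+1}$ of $G_{i+1}$. Since the $L_i$ are connected of bounded dimension, this chain stabilizes, say $L_i=:L$ for all $i\ge M$ and some $M\ge N$.

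Next I would show that the unipotent radicals nest as well. Write $U_i=R_u(G_i)$ and, for $i\ge M$, let $\pi_{i+1}\colon G_{i+1}\to L$ be the retraction with kernel $U_{i+1}$ afforded by $G_{i+1}=U_{i+1}\rtimes L$. Restricting $\pi_{i+1}$ to $G_i$ gives a surjection onto $L$ (it is the identity on $L\subset G_i$) whose kernel $G_i\cap U_{i+1}$ is a normal unipotent subgroup of $G_i$ of dimension $\dim G_i-\dim L=\dim U_i$; its identity component therefore coincides with $U_i$, whence $U_i\subseteq U_{i+1}$. I then set $R_u(G)=\varinjlim_{i\ge M}U_i$. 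Passing to the limit in $G_i=U_i\rtimes L$ yields $G=R_u(G)\rtimes L$: every $g\in G$ lies in some $G_i$ and factors as $u\cdot l$, while $R_u(G)\cap L=\bigcup_i(U_i\cap L)=\{1\}$, and normality of each $U_i$ in $G_i$ gives normality of $R_u(G)$ in $G$. A short check with the retractions $\pi_i$ shows $R_u(G)\cap G_i=U_i$, so $R_u(G)$ is closed, consists of unipotent elements, and is maximal among closed normal ind-subgroups with this property (a normal subgroup of $G_i$ all of whose elements are unipotent is unipotent, hence connected in characteristic zero, hence lies in $U_i$); this identifies it with the unipotent radical of Definition \ref{def: nest}. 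Finally $L$ is maximal reductive: any reductive algebraic subgroup of $G$, having finitely many irreducible components each contained in some $G_j$ (an irreducible variety is not a strictly increasing union of proper closed subsets), lies in some $G_i$, and if it contains $L=L_i$ it must equal the maximal reductive subgroup $L_i$ of $G_i$.

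For the last two assertions I would argue levelwise. If $g\in G$ is semisimple, i.e.\ semisimple as an element of $G_i$ for $i$ large (independent of $i$, as closed embeddings preserve the Jordan decomposition), then choosing $i\ge M$ with $\rk G_i=r$ the element $g$ lies in a maximal torus $T$ of $G_i$; this $T$ has rank $r=\rk G$ and is therefore a maximal torus of $G$ containing $g$. Given two maximal tori $T,T'$ of $G$, each is algebraic and hence contained in some $G_i$; taking $i\ge M$ with $T,T'\subset G_i$ and $\rk G_i=r$, both are maximal tori of the connected algebraic group $G_i$ and so are conjugate by an element of $G_i\subset G$.

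The main obstacle, and the crux of the argument, is the stabilization of the decompositions, namely proving that both the Levi factors and the unipotent radicals can be organized into nested chains. The Levi part relies essentially on Mostow's embedding theorem together with the bound on $\dim L_i$ coming from bounded rank, and the unipotent part on the dimension count for $G_i\cap U_{i+1}$; once these are in place the passage to the limit and the torus statements are routine.
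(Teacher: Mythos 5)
Your proof is correct and follows essentially the same route as the paper's: reduce to connected $G_i$, use the rank bound to bound $\dim L_i$, stabilize the Levi factor via Mostow's theorem (the paper fixes $L=L_k$ of maximal dimension rather than building a nested chain), prove $U_i=U_{i+1}\cap G_i$ for large $i$, pass to the limit $R_u(G)=\varinjlim U_i$, and check the torus statements levelwise in a $G_i$ of full rank. One small caveat: your parenthetical argument that an algebraic subgroup of $G$ lies in some $G_i$ (via ``an irreducible variety is not a strictly increasing union of proper closed subsets'') needs $\K$ uncountable to be valid, but in the paper's framework this containment is immediate from the definition of an algebraic subvariety of an ind-variety, so the proof is unaffected.
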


\begin{proof}
Due to Remark \ref{rem: nested sbgrp}.3 one may assume that $G_i$ is connected for any $i\ge 1$. 
For any $i\ge 1$ consider a Levi decomposition $G_i=U_i\rtimes L_i$, where $U_i=R_u(G_i)$. Since $G_i$ is connected, $L_i$ is connected as well for any $i\ge 1$. Since 
the ranks of the Levi factors $L_i$ are uniformly bounded, the dimensions
$\dim L_i$  are uniformly bounded, too. If $L=L_k$ is of maximal dimension, then $L$ is a Levi subgroup of $G_i$ for any $i\ge k$. Thus, $G_i=U_i\rtimes L$ $\forall i\ge k$.

Let us show that $U_i=U_{i+1}\cap G_i$ for $i\ge k$. Indeed, since $L\subset G_i\subset U_{i+1}\rtimes L$, given $(u,l)\in (U_{i+1}\rtimes L)\cap G_i$, 
one has $(u,1)=(u,l)\cdot (1,l^{-1})\in G_i$.
Thus, $U_i=U_{i+1}\cap G_i$. Clearly, $R_u(G)=\varinjlim U_i$, and so, the first assertion follows. 
The remaining conclusions hold because they hold for any connected algebraic subgroup $G_i$,  $i\ge 1$, see, e.g., \cite[Prop.\ 19.4 and Cor.\  21.3.A]
{Hu}. 
\end{proof}

\begin{corollary}\label{cor:quasi-proj}
The conclusions of Theorem \ref{thm: Mostow} hold for any connected nested ind-group $G=\varinjlim G_i$, which acts morphically and faithfully\footnote{The latter assumption was omitted in the previous version. We thank Hanspeter Kraft for indicating this.} on a quasi-projective variety $X$. 
\end{corollary}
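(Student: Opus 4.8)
The plan is to deduce this from Theorem~\ref{thm: Mostow} by checking that its only additional hypothesis, the boundedness of the sequence $\rk G_i$, holds automatically once $G$ acts morphically and faithfully on a quasi-projective variety $X$. Apart from this boundedness condition the hypotheses of the corollary and of the theorem coincide, so the entire content to be established is the implication \emph{faithful morphic action on a quasi-projective variety} $\Rightarrow$ \emph{the ranks $\rk G_i$ are uniformly bounded}; granting that, the conclusions (the Levi decomposition $G=R_u(G)\rtimes L$, the conjugacy of maximal tori, and the fact that every semisimple element lies in a maximal torus) follow verbatim from Theorem~\ref{thm: Mostow}.

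For the boundedness, I would first note that for each $i$ the subgroup $G_i\subset G$ inherits a morphic and faithful (equivalently, effective) action on $X$: an element of $G_i$ acting trivially on $X$ is an element of $G$ acting trivially, hence the neutral element. Choosing a maximal torus $T_i\subset G_i$, so that $\dim T_i=\rk G_i$, the restricted action makes $T_i$ an algebraic torus acting morphically and effectively on $X$. Now I would invoke the rank estimate recorded in Subsection~\ref{ss: classif-rank}, according to which the maximal dimension of an algebraic torus acting morphically and effectively on a variety $X$ does not exceed $\dim X$. Applied to $T_i$ this gives $\rk G_i=\dim T_i\le\dim X$ for every $i$, so the sequence $(\rk G_i)_{i\ge 1}$ is bounded above by $\dim X$, as required.

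The one step that genuinely uses the quasi-projectivity of $X$, and which I regard as the heart of the matter, is precisely this rank estimate $\dim T_i\le\dim X$ for a faithful torus action. Its underlying mechanism is that a torus acting faithfully on (an irreducible component of) $X$ has trivial generic isotropy, whence a generic orbit already has dimension $\dim T_i$ and therefore $\dim T_i\le\dim X$; the quasi-projective hypothesis is what guarantees the good behaviour of orbits and generic stabilizers --- e.g.\ through equivariant linearization --- that the estimate of Subsection~\ref{ss: classif-rank} rests upon. Beyond securing this uniform bound, no further argument is needed: the proof is completed by a direct appeal to Theorem~\ref{thm: Mostow} applied to $G=\varinjlim G_i$.
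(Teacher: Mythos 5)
Your proposal is correct and follows essentially the same route as the paper: the paper's proof likewise consists of the single observation that the ranks $\rk G_i$ are bounded by $\dim X$, invoking Proposition \ref{prop: basic facts}(a) together with Remark \ref{rem:quasi-proj} (the same rank estimate you quote from Subsection \ref{ss: classif-rank}), after which Theorem \ref{thm: Mostow} applies verbatim. The only minor discrepancy is your closing speculation about equivariant linearization: the paper's mechanism for the rank bound is exactly the rigidity-of-subtori / trivial-generic-isotropy argument you describe first, and no linearization is needed.
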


\begin{proof}
It suffices to note that  the ranks $\rk G_i$ are bounded by $\dim X$ 
due to Proposition \ref{prop: basic facts}.a and Remark \ref{rem:quasi-proj}. 
\end{proof}

 \begin{remark}\label{rem:Levi-factor} In the notation of Theorem \ref{thm: Mostow}, consider for any $i\ge 1$ the subgroup
$G_i^\prime=R_u(G_i)\rtimes L$ of $G$. It is easily seen that $G=\varinjlim G_i=\varinjlim G_i^\prime$.
So, we  may assume in the sequel that $G_i=R_u(G_i)\rtimes L$ share the same Levi factor for all $i\ge 1$.
\end{remark}

\begin{corollary}\label{cor: abel-unip-rad}
In the notation and convention of Theorem \ref{thm: Mostow} and Remark \ref{rem:Levi-factor}, suppose that the unipotent radical $R_u(G)$ is Abelian. 
Then there exists a decomposition 
$R_u(G)=\bigoplus_{j=1}^{\infty} H_j$ such that $H_j$ is normal in $G$ for all $j\ge 1$ and $R_u(G_i)=\bigoplus_{j=1}^{i} H_j$.
\end{corollary}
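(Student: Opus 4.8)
The plan is to reduce the statement to the complete reducibility of rational representations of the reductive Levi factor $L$. By Theorem~\ref{thm: Mostow} together with Remark~\ref{rem:Levi-factor} we have $R_u(G)=\varinjlim U_i$ with $U_i=R_u(G_i)$ and $G_i=U_i\rtimes L$ for every $i$, and the proof of Theorem~\ref{thm: Mostow} gives the inclusions $U_i\subset U_{i+1}$ (indeed $U_i=U_{i+1}\cap G_i$). Since the base field has characteristic zero and $R_u(G)$ is assumed abelian, each $U_i$ is an abelian unipotent algebraic group, hence a vector group. Writing $W_i$ for $U_i$ viewed as a finite-dimensional $\K$-vector space, I obtain an increasing chain $W_1\subset W_2\subset\cdots$ of vector spaces with union $W=R_u(G)$.

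First I would identify the conjugation action of $G$ on $R_u(G)$ with a linear action of $L$. As $R_u(G)$ is abelian, conjugation by its own elements is trivial, so the conjugation action factors through $G/R_u(G)\cong L$; concretely, in the semidirect product $G_i=U_i\rtimes L$ the factor $L$ acts on $U_i=W_i$ by the defining rational representation. Moreover each $W_i=R_u(G_i)$ is $L$-invariant, because $L\subset G_i$ normalizes the unipotent radical $U_i$ of $G_i$. Hence the $W_i$ form an increasing chain of finite-dimensional rational $L$-submodules of $W$.

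Next, using that $L$ is reductive and therefore linearly reductive in characteristic zero, I would split off invariant complements inductively: set $H_1=W_1$, and for $i\ge 2$ choose an $L$-submodule $H_i\subset W_i$ with $W_i=W_{i-1}\oplus H_i$. By induction $W_i=\bigoplus_{j=1}^{i}H_j$, whence $R_u(G)=W=\bigoplus_{j=1}^{\infty}H_j$ and $R_u(G_i)=U_i=\bigoplus_{j=1}^{i}H_j$, as required. Each $H_j$ is a linear subspace of the vector group $W_j=U_j$, hence a closed vector subgroup of $G$. For normality in $G$, I would write an arbitrary $g\in G$ as $g=u\,l$ with $u\in R_u(G)$ and $l\in L$: conjugation by $u$ fixes $H_j$ pointwise (since $H_j\subset R_u(G)$ and $R_u(G)$ is abelian), while conjugation by $l$ preserves $H_j$ (since $H_j$ is an $L$-submodule), so $gH_jg^{-1}=H_j$.

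The only genuine input beyond bookkeeping is the complete reducibility of $L$-modules, where the reductivity of the Levi factor enters. I do not expect a serious obstacle; the one point to get right is that each $U_i$ is simultaneously a vector group and an $L$-submodule, so that the splitting can be carried out compatibly with the filtration $U_1\subset U_2\subset\cdots$, yielding the $H_j$ with all the asserted properties.
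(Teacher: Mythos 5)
Your proof is correct and takes essentially the same route as the paper's: the paper splits the filtration $\Lie U_{i-1}\subset \Lie U_i$ using complete reducibility of the adjoint representation of the Levi factor $L$ and then exponentiates (using that $U_i$ is Abelian), while you identify each Abelian unipotent $U_i$ directly with a vector group and split the conjugation representation of $L$ --- the same argument, since in characteristic zero these two viewpoints coincide via $\exp$. Your explicit check that each $H_j$ is normal in $G$ (writing $g=ul$ and treating the two factors separately), which the paper compresses into ``Now the assertions follow,'' is a correct and welcome piece of bookkeeping.
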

\begin{proof} Let as before $U_i=R_u(G_i)$.
Since the adjoint representation of $L$ on the Lie algebra  $\Lie G_i$ is completely reducible, and the subalgebras $\Lie U_i$ and $\Lie U_{i-1}$ are $L$-stable, 
there is an $L$-stable subspace $V_i\subset \Lie U_i$  complementary to  $\Lie U_{i-1}$. By our assumption, $U_i$ is Abelian. 
Hence the Lie subalgebra $V_i$ corresponds to a subgroup $H_i\subset U_i$ normalized by $L$ and $U_i=U_{i-1}\oplus H_i$. Now the assertions follow.
\end{proof}

\begin{example}\label{rem: deJonq}
Let $X$ be a normal affine surface, let $\mu\colon X\to B$ be an $\A^1$-fibration over a smooth affine curve $B$, 
and let $\Aut(X,\mu)$ be the group of all automorphisms of $X$ preserving $\mu$. 
Then the neutral component $\Aut^\circ (X,\mu)$ is a nested ind-group with an Abelian unipotent radical, see Corollary~\ref{cor: nested}. 
Hence Corollary \ref{cor: abel-unip-rad} applies in this case. 
\end{example}

\begin{lemma}\label{lem:nest-mor}
Let $\phi\colon G\to H$ be a morphism of nested ind-groups $G=\varinjlim  G_i$ and $H=\varinjlim H_j$.
Assume that the orders $|G_i/G_i^\0|$, $i\in\NN$, are bounded above.
Then $\phi(G)$ is a closed nested ind-subgroup in $H$.
\end{lemma}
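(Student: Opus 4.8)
The plan is to first produce the natural ind-structure on $\phi(G)$ from the images of the pieces $G_i$, and then to reduce the asserted closedness to a stabilization statement for increasing chains of algebraic subgroups inside the fixed pieces $H_j$.

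First I would set $K_i:=\phi(G_i)$. Since $\phi$ is a morphism of ind-groups, the restriction $\phi|_{G_i}\colon G_i\to H_{j(i)}$ is a homomorphism of algebraic groups for a suitable index $j(i)$; hence its image $K_i$ is a closed algebraic subgroup of $H_{j(i)}$, and therefore a closed algebraic subgroup of $H$ (it is closed in $H_{j(i)}$, which is in turn closed in $H$). The inclusions $K_i\subseteq K_{i+1}$ are closed embeddings of algebraic groups, so $\phi(G)=\varinjlim K_i$ carries a structure of nested ind-group. It then remains to prove that it is closed in $H$, i.e.\ that $\phi(G)\cap H_j$ is closed in $H_j$ for every $j$.

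Next I would reduce to the neutral component. Writing $N$ for the given bound on $|G_i/G_i^\circ|$, the surjection $G_i\twoheadrightarrow K_i$ carries $G_i^\circ$ onto the connected subgroup $\phi(G_i^\circ)$, which has finite index in $K_i$; thus $K_i^\circ=\phi(G_i^\circ)$ and $|K_i/K_i^\circ|\le N$. Passing to the limit, $\phi(G)/\phi(G)^\circ$ is a direct limit of finite groups each of order $\le N$, hence is itself finite of order $\le N$, and so $\phi(G)$ is a finite union of cosets $h\cdot\phi(G)^\circ$. As left translation by an element $h\in H$ is an automorphism of the ind-variety $H$ (multiplication being morphic on the pieces), each coset is closed if and only if $\phi(G)^\circ$ is closed. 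Hence it suffices to prove that $\phi(G)^\circ=\varinjlim K_i^\circ$ is closed. For this last step I would use that, in a nested ind-group, each $H_j$ is an algebraic subgroup, so every intersection $K_i^\circ\cap H_j$ is a closed algebraic subgroup of $H_j$, and these form an increasing chain in $i$. Their dimensions are bounded by $\dim H_j$ and are non-decreasing, so the neutral components $(K_i^\circ\cap H_j)^\circ$ stabilize; closedness of $\phi(G)^\circ\cap H_j=\varinjlim(K_i^\circ\cap H_j)$ then reduces to showing that the whole chain $K_i^\circ\cap H_j$ stabilizes, that is, that the component orders $|(K_i^\circ\cap H_j)/(K_i^\circ\cap H_j)^\circ|$ remain bounded in $i$.

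The hard part will be exactly this bound. An increasing chain of algebraic subgroups of the fixed group $H_j$ need \emph{not} stabilize in general (for instance $\mu_{2}\subset\mu_{4}\subset\mu_{8}\subset\cdots$ in $\G_m$), the failure being an unbounded growth of the component groups of the intersections. The crux of the proof is therefore to establish a uniform bound on $|(K_i^\circ\cap H_j)/(K_i^\circ\cap H_j)^\circ|$, ruling out such ``roots of unity'' growth; this is where the hypothesis that the orders $|G_i/G_i^\circ|$ are bounded must be exploited, together with the compatibility of the two filtrations under $\phi$ and the morphicity of multiplication on the pieces of $H$. Once such a uniform bound is in hand, the increasing chain $K_i^\circ\cap H_j$ has bounded dimension and bounded component order, hence stabilizes; consequently $\phi(G)^\circ\cap H_j$ is closed for all $j$, $\phi(G)^\circ$ is closed, and by the coset reduction $\phi(G)$ is a closed nested ind-subgroup of $H$.
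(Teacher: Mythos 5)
Your first three paragraphs are correct and are, in fact, a more careful version of what the paper does: $K_i=\phi(G_i)$ is a closed algebraic subgroup of some $H_{j(i)}$ with $K_i^\circ=\phi(G_i^\circ)$ and $|K_i/K_i^\circ|$ bounded, the coset argument reduces everything to $\phi(G)^\circ$, and for fixed $j$ the increasing chain $K_i^\circ\cap H_j$ has stabilizing neutral components, so the whole lemma hinges on a uniform bound for the component orders $|(K_i^\circ\cap H_j)/(K_i^\circ\cap H_j)^\circ|$. But that bound is precisely what you never prove: your last paragraph only announces that the hypothesis ``must be exploited'' to get it. So the proposal is an honest reduction to an open claim, not a proof. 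For comparison, the paper's own proof is the same reduction with the claim simply asserted: it says that for fixed $j$ the subgroups $\phi(G_i)\cap H_j$ have bounded dimensions \emph{and bounded numbers of connected components}, hence stabilize -- the boundedness of the component groups is stated without any justification, i.e.\ the paper stops at exactly the point you flagged as the hard part.

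The deeper problem is that this missing claim is false, and with it the lemma as stated; no argument can close the gap from the stated hypotheses. For $n\ge 1$ let $\chi_n$ be the character of $\G_m^{n+1}$ given by $\chi_n(x_1,\dots,x_{n+1})=\prod_{k=1}^{n+1}x_k^{(-1)^{k-1}2^{\,n+1-k}}$, and put $T_n=\ker\chi_n$. Since the exponent vector of $\chi_n$ has last entry $\pm1$, it is primitive, so $T_n$ is a \emph{connected} subtorus of dimension $n$; and since $\chi_{n+1}(x,1)=\chi_n(x)^2$, one has $T_n\times\{1\}\subset T_{n+1}$. Take $H=\varinjlim_n \G_m^{n}$ with the coordinate embeddings $x\mapsto(x,1)$, take $G=\varinjlim_n T_n$, and let $\phi$ be the inclusion. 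Then $G$ and $H$ are nested ind-groups, every $G_n=T_n$ is connected (so $|G_n/G_n^\circ|=1$), and $\phi$ is a morphism of ind-groups with $\phi(G_n)=T_n\subset H_{n+1}$. Nevertheless, with $H_1=\G_m\times\{1\}\times\cdots$,
\[
\phi(G_n)\cap H_1=T_n\cap H_1=\bigl\{x_1\in\G_m : x_1^{2^n}=1\bigr\}=\mu_{2^n},
\]
so the intersections have $2^n$ components, and $\phi(G)\cap H_1=\bigcup_n\mu_{2^n}$ is an infinite proper subgroup of $\G_m$, hence not Zariski closed in $H_1$. Thus $\phi(G)$ is not a closed ind-subgroup of $H$, although all hypotheses of the lemma hold. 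The statement becomes true only under additional assumptions that rule out this ``roots of unity'' growth, e.g.\ that the groups involved are unipotent (in characteristic zero all algebraic subgroups of unipotent groups are connected, so the chains stabilize by dimension alone), or that the filtration of the image is induced, i.e.\ $\phi(G_i)=\phi(G)\cap H_{j(i)}$.
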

\begin{proof}
For a fixed index $j$ consider the increasing sequence of algebraic subgroups $\phi(G_i)\cap H_j$, $i\in\NN$, of the group $H_j$. 
Since their dimensions and the numbers of connected components are bounded, this sequence stabilizes. Hence $\phi(G)\cap H_j$ is  a (closed) algebraic subgroup.
\end{proof}

Let $X$ be an algebraic variety, and let $G=\varinjlim G_i$ be a connected nested ind-subgroup of $\Aut X$. 
Then $G$ is algebraically generated in the sense of \cite[Def.\ 1.1]{AFKKZ}.  The following result is an analog of \cite[Prop.\ 1.7]{AFKKZ} for nested ind-groups.

\begin{proposition}
Let $X$ be an affine variety, and let $G=\varinjlim  G_i$ be a connected nested ind-group, which is a closed subgroup of $\Aut X$. 
Then there exists  $i\ge 1$ such that any $G$-orbit in $X$ coincides with a $G_i$-orbit. 
\end{proposition}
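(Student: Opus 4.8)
The plan is to show that the ascending chain of orbit maps stabilizes by first stabilizing the orbit-\emph{dimension} function uniformly in the point, and then promoting equality of dimensions to equality of orbits. By Remark \ref{rem: nested sbgrp}.3 I may assume that every $G_i$ is a \emph{connected} algebraic group, so that each $G_i$-orbit $G_i\cdot x$ is an irreducible locally closed subvariety and the function $x\mapsto\dim(G_i\cdot x)$ is lower semicontinuous (a standard fact for algebraic group actions). For each fixed $y\in X$ the closures $\overline{G_1\cdot y}\subseteq\overline{G_2\cdot y}\subseteq\cdots$ form an ascending chain of irreducible closed subsets of the Noetherian space $X$, hence stabilize; thus $G\cdot y=\bigcup_i G_i\cdot y$ is irreducible, $\overline{G\cdot y}=\overline{G_{i}\cdot y}$ for $i$ large depending on $y$, and in particular $\dim(G\cdot y)\le\dim X$. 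Note that this already yields, for each individual orbit, an index with $G\cdot x=G_{i(x)}\cdot x$; the whole difficulty, and the step I expect to be the main obstacle, is to make this index independent of the orbit.

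To obtain a uniform index I would argue as follows. For each integer $d$ with $0\le d\le\dim X$ put $U_{d,i}=\{x\in X:\dim(G_i\cdot x)\ge d\}$, which is open by lower semicontinuity. Since $G_i\cdot x\subseteq G_{i+1}\cdot x$ these sets increase in $i$, and by the previous paragraph $\bigcup_i U_{d,i}=\{x:\dim(G\cdot x)\ge d\}$. Because $X$ is Noetherian, every ascending chain of open sets terminates, so there is $i_d$ with $U_{d,i}=\bigcup_j U_{d,j}$ for all $i\ge i_d$. Setting $i_0=\max_{0\le d\le\dim X} i_d$, a finite maximum, I get that for every $i\ge i_0$ the level sets $\{x:\dim(G_i\cdot x)\ge d\}$ and $\{x:\dim(G\cdot x)\ge d\}$ coincide for all $d$, which forces $\dim(G_i\cdot x)=\dim(G\cdot x)$ for \emph{all} $x\in X$ simultaneously.

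It then remains to upgrade this equality of dimensions to an equality of orbits for every $i\ge i_0$. Fix such an $i$ and a point $x$, and write $O=G\cdot x$, an irreducible set with irreducible closure $\overline O$ of dimension $\delta=\dim O$. For any $y\in O$ one has $G\cdot y=O$, hence $\dim(G_i\cdot y)=\dim(G\cdot y)=\delta$; since $G_i\cdot y$ is irreducible, locally closed, and contained in $\overline O$, its closure must equal $\overline O$, so $G_i\cdot y$ is a dense open subset of $\overline O$. Now $O$ is partitioned into such $G_i$-orbits, any two of which are either equal or disjoint, whereas two dense open subsets of the irreducible space $\overline O$ necessarily meet; therefore all these $G_i$-orbits coincide. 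In particular $G_i\cdot x=G_i\cdot y$ for every $y\in O$, giving $O\subseteq G_i\cdot x\subseteq G\cdot x=O$ and hence $G_i\cdot x=G\cdot x$. Thus the single index $i_0$ works for every orbit, as required. I would finally remark that the pointwise input is exactly the elementary orbit structure for algebraically generated groups in the spirit of \cite{AFKKZ}, the new content being the Noetherian uniformization supplied by the finitely many chains $U_{d,i}$.
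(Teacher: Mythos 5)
Your proof is correct, and it takes a genuinely different route from the paper's. The paper first establishes the pointwise statement (for each $x$ the chain $G_i\cdot x$ stabilizes, using that the boundaries $\overline{G_i\cdot x}\setminus G_i\cdot x$ form a decreasing chain of closed sets), and then makes the index uniform via structure theory: it invokes the Levi decompositions $G_i=R_u(G_i)\rtimes L$ with a common Levi factor (Theorem \ref{thm: Mostow} via Corollary \ref{cor:quasi-proj}), reduces to the case where all $G_i$ are unipotent, and there uses Rosenlicht's lemma together with the fact that orbits of unipotent groups on \emph{affine} varieties are closed and isomorphic to affine spaces, finishing by induction on $\dim X$ applied to the complement of the dense open set $\Omega$. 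You avoid all of this machinery: lower semicontinuity of $x\mapsto\dim(G_i\cdot x)$ plus the ascending chain condition on the open sets $U_{d,i}$ in the Noetherian space $X$ gives a single index $i_0$ beyond which the orbit dimensions of $G_i$ and of $G$ agree at every point, and then the purely topological observation that a $G$-orbit $O$ is partitioned into $G_i$-orbits, each of which is dense and open in $\overline{O}$, forces there to be exactly one such orbit. Your argument is shorter, needs no structure theory of nested ind-groups, and, notably, never uses affineness of $X$: it works verbatim for any variety on which $G$ acts morphically, whereas the paper's route genuinely uses $X$ affine (closedness of unipotent orbits) and the rank bound supplied by Corollary \ref{cor:quasi-proj}; what the paper's longer route buys is structural information (common Levi factor, reduction to the unipotent radical) that is reused elsewhere in the text. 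Two minor points: your parenthetical claim that closure stabilization ``already yields'' the pointwise equality $G\cdot x=G_{i(x)}\cdot x$ is too quick as stated---one needs precisely the decreasing-boundary argument the paper spells out---but since your main line never uses this pointwise equality, nothing is affected; and the closing attribution to \cite{AFKKZ} is inessential, as the only input there is the classical fact that orbits of a connected algebraic group are irreducible and locally closed with lower semicontinuous dimension.
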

\begin{proof} We may suppose that $ G_i\subset\Aut X$ is a closed, connected algebraic subgroup for any $i\ge 1$.  
We show first that for any $x\in X$, the
$G$-orbit $G x\subset X$ coincides with a $G_i$-orbit $G_i x$ for some $i\gg 1$.
Indeed,  the sequence $\{\dim G_i x\mid i=1,\ldots\}$ stabilizes, 
hence $\overline{G_N x}=\overline{G_{N+1} x}=\ldots$ for some $N\ge 1$. 
The decreasing sequence of closed subsets 
$\{\overline{G_i x}\setminus G_i x\mid i=N,\ldots\}$ also stabilizes, say, on  an $M$th step, where $M\ge N$. 
Thus, $G x = \bigcup_{i=1}^{\infty} G_i x=G_{M} x$.

According to Corollary \ref{cor:quasi-proj} and Remark \ref{rem:Levi-factor} 
we may suppose that $G_i=U_i\rtimes L$ for any $i\ge 1$, where $U_i=R_u(G_i)$ 
is the unipotent radical and $L$ is the Levi factor of $G$. Furthermore, we have $G=U\rtimes L$, 
where $U=\varinjlim  U_i$ is the unipotent radical of $G$. If $i\ge 1$ is such that $Ux=U_ix$ for any $x\in X$,
then also $Gx=G_ix$  for any $x\in X$, as stated. 
Thus, it suffices to prove the proposition assuming that $G=U$ and $G_i=U_i$, $i\in\N$, are unipotent groups. 

Let $$m=\max_{x\in X} \{\dim U x\}\quad \mbox{and}\quad m_i=\max_{x\in X} \{\dim U_i x\}\,.$$ By the first part of the proof, 
$m=m_{i_0}$ for some $i_0\ge 1$. By the Rosenlicht Lemma, there is a dense, open subset $\Omega\subset X$ such that $\dim U_{i_0} x=m_{i_0}=m=\dim Ux$ for any $x\in \Omega$. 
It is well known that any orbit of a unipotent algebraic group acting on an affine variety is closed and isomorphic to an affine space 
(see, e.g., Proposition \ref{prop: basic facts}.d below). 
It follows that $U_{i_0} x\cong\A^m$. By the first part of the proof, $U x=U_jx$  for some $j\ge 1$. Hence also $Ux\cong\A^m$. 
Since $U_{i_0}x\subset Ux$, it follows that $Ux=U_{i_0}x$ for any $x\in \Omega$. Indeed, an open subset of an affine space isomorphic to 
an affine space coincides with the ambient affine space. 

Let $X_1,\ldots,X_k$ be the irreducible components of $X\setminus \Omega$. Assuming that $\dim X>0$ one has $\dim X_j<\dim X$ for any $j=1,\ldots,k$. 
By induction on $\dim X$ we may suppose that for any $X_j$, $j=1,\ldots,k$, the orbits of $U|_{X_j}$ coincide with those of $U_{i_j}|_{X_j}$ for some $i_j\ge 1$ and 
for any $j=1,\ldots,k$. Then the same conclusion holds for $X$ with $i=\max\{i_0,i_1,\ldots,i_k\}$.
\end{proof}

\subsection{Amalgams} \label{ss: amalgams}
Recall (\cite{Se0}, \cite{Se}) that a {\em tree of groups} $(T, \mathcal{G})$ consists in
a combinatorial tree  $T$ along with  a collection $\mathcal{G}$ of vertex groups
$(G_P)_{P\in {\rm vert}\,T}$, edge groups $(G_\nu)_{\nu\in  {\rm edge}\,T}$, and
for each edge $\nu=[P,Q]$ of $T$, monomorphisms $G_\nu\to G_P$ and $G_\nu\to G_Q$
identifying $G_\nu$ with (common) subgroups of the vertex groups  $G_P$ and $G_Q$. We will suppose that any $G_\nu$ is a proper  subgroup of $G_P$ and $G_Q$.

Given such a tree of groups $(T, \mathcal{G})$, one can construct a unique group $G=\amalgGT$
called the {\em free amalgamated product}, or simply the {\em amalgam} of
$(T, \mathcal{G})$,
where $G$  is  freely generated by the subgroups
$(G_P)$ and $(G_\nu)$ with unified subgroups $G_P\cap G_Q=G_\nu$
for each $\nu=[P,Q]\in {\rm edge}\,T$.
We refer to \cite[Ch. I, \S\S 4,5]{Se}
for the existence and uniqueness of
the amalgam $G=\amalgGT$, its presentation and the universal property.
A subgroup $H\subset G$ is called of \emph{bounded length} if there exists an integer $N>0$ 
such that each element of $H$ can be decomposed into a product of at most $N$ elements of the vertex and edge groups.

For the reader's convenience, we sketch a proof of the following theorem. 

\begin{serreth}[{\cite[Ch. I, \S 4.3, Thm.\ 8 and \S 4.5, Exerc.\ 2]{Se}}]\label{th:Serre}
Any subgroup of bounded length
of an amalgam $G=\amalgGT$ is contained in a conjugate to one
of the  factors $G_P$, where $P\in {\rm vert}\,T$. 
\end{serreth}

\begin{proof} We follow the lines of the proof of Theorem 8 in \cite[Ch. I, \S 4.3]{Se}.
Let $\cT$ be a graph such that
\begin{itemize}\item each vertex in ${\rm vert}\,\cT$ is a left coset $g\cdot G_P$, $g\in G$, $P\in {\rm vert}\,T$;
\item each edge in ${\rm edge}\,\cT$ is a left coset $g\cdot G_\nu$, $g\in G$, $\nu=[P,Q]\in {\rm edge}\,T$.
\end{itemize}
Abusing notation, we let $P_g=g\cdot G_P$ and $\nu_g=g\cdot G_\nu$. Thus, $\nu_g=[P_g,Q_g]$. It is known 
(see \cite[Ch.\ I, \S 4, Thm.\ 10]{Se}, \cite[0.6-0.8]{Wr1}) that $\cT$ is a tree containing $T$ as a subtree. 
Indeed, consider a (reduced) word $g=a_1\cdot\ldots\cdot a_n$, where $a_i\in \bigcup_{Q\in {\rm vert}\,T} G_Q$ are such that 
$a_i, a_{i+1}$ do not belong to the same vertex group $G_Q$, and $a_n\not\in G_P$. 
Then the coset $g\cdot G_P$ can be joint with $G_P$ via a sequence of cosets $\{g_k\cdot G_P\}_{k=1,\ldots,n}$, where $g_k=a_k\cdot\ldots\cdot a_n$, so that $g=g_1$. 
This gives a path $(P,P_{g_n},\ldots,P_{g_1}=P_g)$ joining the vertices $P$ and $P_g$ in $\cT$. Hence the graph $\cT$ is connected. 
The absence of cycles in $\cT$ follows from the fact that $G=\amalgGT$ is a free amalgamated product.

There is a natural action of $G$ on $\cT$, 
$$h\colon P_g\mapsto P_{hg}, \quad \nu_g\mapsto \nu_{hg}\,,\quad\mbox{for}\quad h\in G\,$$ with a fundamental domain $T$. 
Under this action, the stabilizers of vertices are conjugated subgroups of the vertex groups:
$$\Stab_G (P_g)=g\cdot G_P\cdot g^{-1}\subset G\,.$$ Thus, a subgroup $H\subset G$ fixes a vertex $P_g$ if and only if $g^{-1}\cdot H\cdot g\subset G_P$. 
Similarly, $H$ fixes an edge $\nu_g$  if and only if $  g^{-1}\cdot  H\cdot  g \subset G_\nu$. 
It follows that, if $\nu_g=[P_g,Q_g]$ is fixed by $H$, then the both vertices $P_g,Q_g$ are fixed as well. 
The latter means that $H$ acts on the set of edges of $\cT$ without reversions.

To prove the theorem, it suffices to show that any subgroup $H\subset G$ of bounded length fixes some vertex $P_g\in {\rm vert}\,\cT$.
We claim that in fact any orbit $H\cdot P_g\subset {\rm vert}\,\cT$, where  $P_g\in {\rm vert}\,\cT$, contains a fixed point of $H$. 
Indeed, suppose that the lengths of the elements $h\in H$ are bounded by $l\in\N$. 
Then the diameter of the orbit $H\cdot P_g$ is bounded by $2l$ with respect to the graph metric on $\cT$. 
The subtree $\Q_g\subset\cT$ spanned by the orbit  $H\cdot P_g$ is stable under the action of $H$. 
Hence, the set of extremal vertices of $\Q_g$ is stable as well, along with the adjacent extremal edges. 
Suppressing the extremal vertices and edges of $\Q_g$ yields an $H$-stable subtree $\Q'_g\subset\Q_g$ of diameter ${\rm diam}\,\Q'_g={\rm diam}\,\Q_g-2\le 2l-2$. 
Continuing in this way, one arrives at a nonempty, $H$-stable subtree $\Q^{(k)}_g\subset\Q_g$ of diameter $\le 1$, 
which consists then either of a single vertex, or of a single edge. Anyway, its vertices are fixed under the action of $H$.
\end{proof}

\begin{remark}[{\em Pushing forward amalgamated free product structures}$\,$]\label{rem: amalgams}
Let $X'\to X$ be an \'etale
    Galois covering with the Galois group $\Gamma$, where $X$ and $X'$ are affine algebraic varieties and $\Gamma$ is finite.
    Assume that every automorphism $\alpha\in\Aut X$ admits a lift
    to an automorphism $\widetilde\alpha\in\Aut X'$. By the Monodromy Theorem,
    the latter holds, for instance, if $\K=\mathbb{C}$ and $X'\to X$ is the (finite) universal covering.
    Clearly, under this assumption the subset  in $\Aut X'$ of all lifts of the automorphisms in
    $\Aut X$ coincides with the normalizer ${\rm Norm}_{\Aut X'}(\Gamma)$ of $\Gamma$ in
$\Aut X'$. Furthermore,
    we have $\Aut X\cong {\rm Norm}_{\Aut X'}(\Gamma)/\Gamma$.

    Assume that $\Aut X'$ admits a structure of an amalgamated free product
$\Aut X'\cong A'*_{C'} B'$,
where $C'=A'\cap B'\supset \Gamma$. Then we have the inclusion
\begin{equation}\label{eq: normalizer} {\rm Norm}_{\Aut X'}(\Gamma)\supseteq
\left({\rm Norm}_{A}(\Gamma)\right)*_{{\rm Norm}_{C}(\Gamma)}
\left({\rm Norm}_{B}(\Gamma)\right)\,.\end{equation} This
inclusion can be strict, in general. However, in case of equality
the following holds (cf.\ \cite[Lem.~4.14]{AZ}).
\end{remark}

\begin{lemma}\label{lem: pushing amalgam} In the setup as before, assume that  the equality holds in {\rm (\ref{eq: normalizer})}. Then $\Aut X\cong A*_C B$ is an amalgam of
$A={\rm Norm}_{A'}(\Gamma)/\Gamma$ and $B={\rm Norm}_{B'}(\Gamma)/\Gamma$
along the joint subgroup $C=A\cap B={\rm Norm}_{C'}(\Gamma)/\Gamma$.
\end{lemma}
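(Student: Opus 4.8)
The plan is to deduce the statement from the isomorphism $\Aut X \cong \mathrm{Norm}_{\Aut X'}(\Gamma)/\Gamma$ recorded in Remark~\ref{rem: amalgams}, combined with a general fact on quotients of an amalgam by a subgroup that is normal in both factors and contained in the amalgamated subgroup. Write $N_{A'} = \mathrm{Norm}_{A'}(\Gamma)$, $N_{B'} = \mathrm{Norm}_{B'}(\Gamma)$, and $N_{C'} = \mathrm{Norm}_{C'}(\Gamma)$. By the equality assumed in (\ref{eq: normalizer}) we have $\mathrm{Norm}_{\Aut X'}(\Gamma) = N_{A'} *_{N_{C'}} N_{B'}$, so it suffices to understand the quotient of this amalgam by $\Gamma$.

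First I would check the elementary facts. Since $C' = A' \cap B'$, one has $N_{A'} \cap N_{B'} = \mathrm{Norm}_{A' \cap B'}(\Gamma) = N_{C'}$, which is exactly the amalgamated subgroup. Moreover $\Gamma \subset C'$ and $\Gamma$ normalizes itself, so $\Gamma \subset N_{C'}$; and $\Gamma$ is normal in each of $N_{A'}$, $N_{B'}$, $N_{C'}$ by the very definition of the normalizer.

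The key step is the following: because $\Gamma$ is contained in the amalgamated subgroup $N_{C'}$ and is normal in both factors $N_{A'}$ and $N_{B'}$, which together generate the amalgam $N \coloneq N_{A'} *_{N_{C'}} N_{B'}$, the subgroup $\Gamma$ is normal in $N$, and the quotient satisfies
\[
N/\Gamma \;\cong\; (N_{A'}/\Gamma) *_{N_{C'}/\Gamma} (N_{B'}/\Gamma).
\]
I would prove this isomorphism via the universal property in both directions rather than by a normal-form computation. On one hand, the composites $N_{A'} \to N_{A'}/\Gamma \to P$ and $N_{B'} \to N_{B'}/\Gamma \to P$ into $P \coloneq (N_{A'}/\Gamma) *_{N_{C'}/\Gamma} (N_{B'}/\Gamma)$ agree on $N_{C'}$, hence induce a homomorphism $N \to P$ killing $\Gamma$, and therefore a map $\bar\Psi \colon N/\Gamma \to P$. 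On the other hand, the inclusions of $N_{A'}/\Gamma$ and $N_{B'}/\Gamma$ as the images $q(N_{A'})$, $q(N_{B'})$ in $N/\Gamma$, where $q \colon N \to N/\Gamma$ is the quotient map, agree on $N_{C'}/\Gamma$ and induce $\Phi \colon P \to N/\Gamma$. Since $\Phi$ and $\bar\Psi$ restrict to mutually inverse maps on the generating images of each factor, they are mutually inverse isomorphisms.

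Finally I would identify the three groups. Combining the steps, $\Aut X \cong N/\Gamma \cong A *_C B$ with $A = N_{A'}/\Gamma$ and $B = N_{B'}/\Gamma$. Inside the amalgam $N$ the factors meet in $N_{A'} \cap N_{B'} = N_{C'}$; applying $q$ and using the equality $q(N_{A'}) \cap q(N_{B'}) = q(N_{C'})$, which holds precisely because $\Gamma \subset N_{C'}$, gives $A \cap B = N_{C'}/\Gamma = C$, as claimed. The main obstacle is the normality of $\Gamma$ in the amalgam together with the quotient isomorphism; both, however, are standard consequences of the universal property once the inclusion $\Gamma \subset N_{C'}$ and the normality of $\Gamma$ in each factor are in place, so that the geometry of the \'etale covering enters only through Remark~\ref{rem: amalgams}.
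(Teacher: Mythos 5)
Your proof is correct. One point of comparison is moot here: the paper itself states Lemma \ref{lem: pushing amalgam} without proof, deferring to the reference cited in Remark \ref{rem: amalgams} (namely \cite[Lem.~4.14]{AZ}), so there is no in-paper argument to measure you against; your write-up in fact supplies the missing argument.

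Your route is the natural one and all the steps check out: (i) the identifications ${\rm Norm}_{A'}(\Gamma)\cap{\rm Norm}_{B'}(\Gamma)={\rm Norm}_{C'}(\Gamma)$ and ${\rm Norm}_{A'}(\Gamma)\cap C'={\rm Norm}_{C'}(\Gamma)$, which are what make the right-hand side of (\ref{eq: normalizer}) a genuine amalgam sitting inside $\Aut X'$ (via the normal form theorem for $A'*_{C'}B'$); (ii) the general fact that if $N=N_{A'}*_{N_{C'}}N_{B'}$ and $\Gamma\subset N_{C'}$ is normal in both factors, then $\Gamma\lhd N$ and $N/\Gamma\cong (N_{A'}/\Gamma)*_{N_{C'}/\Gamma}(N_{B'}/\Gamma)$, which you prove cleanly by the two universal-property maps being mutually inverse on generators; and (iii) the identification $A\cap B=C$ inside the quotient, which indeed hinges on $\Gamma\subset N_{C'}$ (if $a\Gamma=b\Gamma$ with $a\in N_{A'}$, $b\in N_{B'}$, then $b^{-1}a\in\Gamma\subset N_{B'}$ forces $a\in N_{A'}\cap N_{B'}=N_{C'}$). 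The only presentational remark worth making is that you use implicitly that the hypothesis ``equality holds in (\ref{eq: normalizer})'' means the normalizer ${\rm Norm}_{\Aut X'}(\Gamma)$ coincides with the subgroup generated by the two factor-normalizers, and that this subgroup is canonically isomorphic to the abstract amalgam; your step (i) is exactly the justification for this, so it would be worth stating that dependence explicitly rather than in passing. With that said, the argument is complete and correct, and the geometry of the covering enters only through the isomorphism $\Aut X\cong{\rm Norm}_{\Aut X'}(\Gamma)/\Gamma$, exactly as the paper intends.
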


\subsection{Bearable automorphism groups}\label{ss: bearable}
Let us introduce the following notions.

\begin{definition}\label{def: bearable}   Let $G=\amalgGT$ be an
amalgam of a tree  $(T, \mathcal{H})$ of groups $H_P$, $P\in {\rm vert}\,T$. We say that $G$ is {\em $\alpha$-bearable}, 
where $\alpha$ is a cardinal number (e.g., {\em finitely bearable}, {\em countably bearable}, etc.), if  
\begin{itemize}
\item
${\rm vert}\,T$ has cardinal at most $\alpha$; 
\item $H_P$ is a nested ind-group for any $P\in {\rm vert}\,T$;
\item  any edge group is a proper subgroup  of the corresponding vertex groups.
\end{itemize}

\noindent A group will be called \emph{bearable} if it is $\alpha$-bearable for some cardinal $\alpha$, and  {\em unbearable} otherwise.
\end{definition}

\begin{remarks}\label{rem: ind-topology} 
1. It is easily seen that a nontrivial bearable group $G=\amalgGT$ is a nested ind-group if only if ${\rm vert}\,T$ consists of a single vertex.

2. A connected bearable group of automorphisms of an affine algebraic variety $X$ is algebraically generated in the sense of \cite[Def.\ 1.1]{AFKKZ}. 
Hence its orbits are locally closed smooth subvarieties of $X$ (\cite[Prop.\ 1.3]{AFKKZ}).

3. For a smooth affine surface $X$, the group $\SAut(X)$ generated by its one-parameter unipotent subgroups (see \ref{sit: ML-inv}) can have an open orbit in $X$, 
which is not closed. The corresponding examples are due to Gizatullin, Danilov, and the first author, see \cite{Ko1} and the references therein. 

4.
In all known examples of bearable automorphism groups of affine surfaces, the edge groups are linear algebraic groups. 
In these examples, infinite-dimensional nested groups are the automorphism groups of $\A^1$-fibrations. 
The intersection of two such groups preserves a pair of distinct $\A^1$-fibrations, and so, occurs to be an algebraic group (usually a quasitorus); 
cf., e.g., Example \ref{sss: sing-toric}. 
\end{remarks}

It is well known (\cite[Cor.\ 4.2]{Ka1}, \cite{Wr1}) that the conclusion of Serre's Theorem  \ref{th:Serre} holds for any algebraic subgroup of $\Aut \A^2$, 
where $\Aut \A^2$ is endowed with its usual amalgam structure.  More generally, we have the following analog \ref{serre} of Serre's Theorem \ref{th:Serre}. 
It will be used on several occasions in what follows. We adopt the following convention. 

\begin{sit} {\bf Convention.} Till the end of this section, that is, in \ref{serre}--\ref{lem: add-classes}, 
and also in \ref{cor: alg-grps-solv}, \ref{prop: 4.14}, and \ref{cor: unbearable}, we suppose that the ground field $\K$ is uncountable. \end{sit}

\begin{proposition}\label{serre}   Let $G=\amalgGT$ be a countably bearable group equipped with a structure of an ind-group $G=\varinjlim \Sigma_i$. 
Then any algebraic subgroup\footnote{By an algebraic subgroup $H$ in an ind-group $G$ we mean an algebraic subvariety of $G$, which is also a subgroup of $G$, 
such that the both structures on $H$ give an algebraic group structure.} $H\subset G$ is conjugated 
to an algebraic subgroup $H'$ of one of the nested ind-groups $G_P$, $P\in {\rm vert}\,T$. 
If, in addition, the vertex groups $G_P$, $P\in {\rm vert}\,T$, are closed in $G$, then $H$ and $H'$ are as well.
\end{proposition}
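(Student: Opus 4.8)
The plan is to reduce the statement to Serre's Theorem~\ref{th:Serre} by showing that every algebraic subgroup $H\subset G$ is of bounded length in the amalgam $G=\amalgGT$. Once this is established, $H$ is contained in a conjugate $gG_Pg^{-1}$ of a vertex group, and the remainder is a matter of transporting the algebraic structure (and, where relevant, the closedness) along the conjugation $c_g\colon x\mapsto g^{-1}xg$, which is an automorphism of the ind-group $G$ since multiplication and inversion are morphisms.

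To prove boundedness of length, first note that, $H$ being an algebraic subvariety of $G$, it is contained in a single term $\Sigma_i$ of the filtration and is closed there. Denote by $G^{(n)}\subset G$ the set of elements expressible as products of at most $n$ elements of the vertex groups $G_P$. Writing each nested vertex group as $G_P=\varinjlim_k G_{P,k}$ with $G_{P,k}$ algebraic, and using that ${\rm vert}\,T$ is countable, we get
\[
G^{(n)}=\bigcup_{(P_1,\dots,P_n)}\ \bigcup_{(k_1,\dots,k_n)} G_{P_1,k_1}\cdots G_{P_n,k_n},
\]
a countable union. By the ind-group axioms the iterated multiplication map $G_{P_1,k_1}\times\cdots\times G_{P_n,k_n}\to G$ lands in some $\Sigma_m$ and is a morphism of varieties, so by Chevalley's theorem each such product is a constructible subset of $G$ contained in $G^{(n)}$. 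Hence $G=\bigcup_n G^{(n)}$ is a countable union of constructible sets, each of bounded length.

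Now intersect with $H$. The neutral component $H^\circ$ is an irreducible variety equal to the union of the countably many constructible sets $H^\circ\cap\bigl(G_{P_1,k_1}\cdots G_{P_n,k_n}\bigr)$. Since $\K$ is uncountable, an irreducible variety is not a countable union of proper closed subsets; hence one of these pieces, say $C$, is dense in $H^\circ$ and therefore contains a dense open $U\subseteq H^\circ$ with $U\subseteq G^{(n_0)}$ for some $n_0$. For the connected algebraic group $H^\circ$ this forces $H^\circ=U\cdot U\subseteq G^{(2n_0)}$, so $H^\circ$ is of bounded length; as $H/H^\circ$ is finite, choosing coset representatives of finite length shows that $H$ itself is of bounded length. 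By Serre's Theorem~\ref{th:Serre}, $H\subseteq gG_Pg^{-1}$ for some $g\in G$ and $P\in{\rm vert}\,T$; set $H'=g^{-1}Hg\subseteq G_P$, an algebraic subgroup of $G$ because $c_g$ is an ind-automorphism.

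It remains to see that $H'$ is an algebraic subgroup of the nested ind-group $G_P=\varinjlim_k G_{P,k}$ and to treat closedness. Repeating the uncountability argument inside $G_P$—using that each $G_{P,k}$ meets $H'$ in a closed subset of $H'$—gives $H'\subseteq G_{P,k_0}$ for a single $k_0$, so $H'$ is an algebraic subgroup of the algebraic group $G_{P,k_0}$, hence of $G_P$. Finally, if every $G_P$ is closed in $G$, then so is each $G_{P,k_0}$ (being closed in the closed ind-subgroup $G_P$), whence $H'$, closed in $G_{P,k_0}$, is closed in $G$; conjugating back, $H=gH'g^{-1}$ is closed as well. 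The main obstacle is the middle step: one must ensure that the amalgam's length filtration is genuinely compatible with the given ind-structure—that the vertex-group strata $G_{P,k}$ embed as algebraic subvarieties of the $\Sigma_i$ and that the ind-group multiplication confines each bounded-length product to a single $\Sigma_m$ with constructible image—so that the uncountability of $\K$ can be brought to bear.
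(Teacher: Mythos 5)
Your proposal is correct and shares the paper's overall skeleton---prove that $H$ has bounded length in the amalgam, invoke Serre's Theorem~\ref{th:Serre}, and transport the algebraic structure and closedness by conjugation, which is an automorphism of the ind-variety $G$---but the mechanism you use for the crucial boundedness step is genuinely different. The paper forms, for each finite sequence $\tau$ of vertex-group strata, the constructible image $R_\tau$ of the multiplication morphism $\Sigma_{P_1,n_1}\times\cdots\times\Sigma_{P_l,n_l}\to G$, arranges these images into an increasing countable chain covering $G$, and then applies a general covering lemma (Lemma~\ref{lem:constructible-cover} and Corollary~\ref{cor:constructible-cover}, proved by Noetherian induction using uncountability of $\K$): \emph{any} algebraic subvariety of $G$ lies inside a single $R_\tau$, hence has bounded length. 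You instead exploit the group structure of $H$: since $H^\circ$ is irreducible and $\K$ is uncountable, one of the countably many constructible pieces covering $H^\circ$ is dense, so it contains a dense open $U\subseteq H^\circ$ of length $\le n_0$, whence $H^\circ=U\cdot U\subseteq G^{(2n_0)}$, and finiteness of $H/H^\circ$ finishes the job. Your route is more elementary and self-contained, but it applies only to subgroups, whereas the paper's lemma yields the stronger fact that every algebraic subvariety has bounded length---a fact the paper reuses (e.g.\ implicitly in Proposition~\ref{lem: add-classes}). Both arguments rest on the same unstated compatibility between the amalgam structure and the ind-structure, namely that the strata $G_{P,k}$ and their finite products sit as constructible subsets of the $\Sigma_i$; you flag this honestly as the main obstacle, while the paper assumes it tacitly when declaring $R_\tau$ constructible. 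Your closing step (forcing $H'$ into a single $G_{P,k_0}$ via the closed subgroups $H'\cap G_{P,k}$, then deducing closedness in $G_P$ and in $G$ when the vertex groups are closed) is sound and matches the paper's final paragraph.
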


In the proof we use the following simple lemma.

\begin{lemma}\label{lem:constructible-cover} 
Let $E$ be an algebraic variety, and let $A_1\subset A_2\subset\ldots\subset E$ be an increasing sequence
of constructible subsets such that $E=\bigcup_{i\in\N} A_i$. Then $E\subset A_k$ for some $k\in\NN$.
\end{lemma}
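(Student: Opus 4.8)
The plan is to exploit the fact that an uncountable field $\K$ cannot be written as a countable union of proper algebraic subvarieties, and more generally that an irreducible variety over an uncountable field is not a countable union of proper closed subsets. The statement to prove is: if $E$ is an algebraic variety and $A_1\subset A_2\subset\ldots$ is an increasing sequence of constructible subsets with $E=\bigcup_{i}A_i$, then $E\subset A_k$ for some $k$. The convention in force is that $\K$ is uncountable, and that hypothesis is exactly what makes the claim true.

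First I would reduce to the case where $E$ is irreducible. Since $E$ has finitely many irreducible components $E_1,\ldots,E_r$, it suffices to handle each component separately: if for each $j$ there is an index $k_j$ with $E_j\subset A_{k_j}$, then $E\subset A_{\max_j k_j}$ because the sequence is increasing. So assume $E$ is irreducible.

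Next, recall that each constructible set $A_i$ can be written as a finite union of locally closed sets, hence its closure $\overline{A_i}$ is a closed subset and $A_i$ contains a dense open subset of $\overline{A_i}$; in particular, for an irreducible $E$, either $\overline{A_i}=E$ or $\overline{A_i}$ is a proper closed subset. Set $Z_i=E\setminus A_i$; these form a decreasing sequence of sets whose intersection is empty. The key step is the following dichotomy: either some $A_k$ is dense in $E$ (so $\overline{A_k}=E$), or every $A_i$ is contained in a proper closed subset $\overline{A_i}\subsetneq E$. In the latter case $E=\bigcup_i \overline{A_i}$ expresses the irreducible variety $E$ as a countable union of proper closed subsets, which is impossible over an uncountable field. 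Hence some $A_k$ is dense. Once $A_k$ is dense and constructible, it contains a dense open subset $U\subset E$, so $E\setminus A_k\subset E\setminus U$ is contained in a proper closed subset of $E$, of strictly smaller dimension; I would then finish by Noetherian induction on $\dim E$, applying the inductive hypothesis to each irreducible component of $E\setminus U$ (using the traces $A_i\cap(E\setminus U)$) and taking the maximum of the resulting indices together with $k$.

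The main obstacle is establishing the uncountable-field input: that an irreducible variety over an uncountable field is not a countable union of proper closed subvarieties. This is where the hypothesis on $\K$ is genuinely used, and it is the only nontrivial ingredient; the rest is bookkeeping with constructible sets and Noetherian induction. I would justify it by reducing to the affine case and to the statement that $\K$ itself is not a countable union of proper Zariski-closed (hence finite) subsets, which holds precisely because $\K$ is uncountable, and then bootstrapping via a generic line or via the fact that a nonzero regular function vanishes on a proper closed set so that countably many such sets cannot cover an irreducible affine variety whose coordinate ring has uncountably many elements in general position.
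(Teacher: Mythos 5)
Your proof is correct and follows essentially the same route as the paper: both hinge on the fact that over an uncountable field $\K$ a variety is not a countable union of proper closed subsets, use this to produce some $A_{k_0}$ whose closure is all of $E$ (respectively, of the relevant irreducible piece), and then descend to the proper closed set left over after removing a dense open subset of $A_{k_0}$, terminating by Noetherianity and taking the maximum of the finitely many indices produced. The only cosmetic difference is that you reduce to irreducible $E$ at the outset and phrase the descent as induction on dimension, whereas the paper keeps $E$ as is and builds a strictly descending chain of closed subsets, passing to irreducible components at each step.
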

\begin{proof}
Since the increasing sequence of closed subsets $\bar A_1\subset\bar A_1\subset\ldots$ exhausts $E$ and the base field $\K$ is uncountable, 
there exists $k_0>0$ such that $\bar A_{k_0}=E$. Indeed, otherwise $\dim\bar A_i< \dim E$ for any $i\ge 1$, and so, $E$ is a  countable union of closed subsets of smaller dimension, which is impossible.

The complement $E'=\overline{E\setminus  A_{k_0}}$ is a proper closed subset of $E$. Applying the same argument to the ascending sequence of constructible subsets 
$A_i^\prime=A_i\cap E'_j$, $i=1,2,\ldots$, and to any irreducible component $E_j$ of $E'$, $j=1,\ldots,l$, one can find $k_1>0$ such that $E'=\overline{A_{k_1}^\prime}$. 
Continuing in this way, we construct a strictly  descending sequence $E\supset E'\supset E''\supset\ldots$ of closed subsets of $E$. 
Since $E$ is Noetherian, this sequence is finite.
Thus, $E\subset A_k$ for $k=\max\{k_0,k_1,\ldots,k_n\}$, where $n+1$ is the length of the constructed descending sequence. 
\end{proof}
\begin{corollary}\label{cor:constructible-cover}
Let $H=\varinjlim H_i$ be an ind-variety, $A_1\subset A_2\subset\ldots\subset H$ be an increasing sequence
of constructive subsets such that $H=\bigcup_{i=1}^{\infty} A_i$, and $E\subset H$ be an algebraic subset.
Then $E\subset A_k$ for some $k\in\NN$.
\end{corollary}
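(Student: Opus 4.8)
The plan is to reduce the statement to Lemma~\ref{lem:constructible-cover} by intersecting the given covering with the fixed algebraic subset $E$. The crucial observation is that $E$, being an algebraic subset of the ind-variety $H=\varinjlim H_i$, is by definition a subvariety of some single layer $H_j$; in particular $E$ is itself an algebraic variety, which is exactly the kind of object to which Lemma~\ref{lem:constructible-cover} applies.

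First I would fix an index $j$ with $E\subset H_j$ and set $A_i':=A_i\cap E$ for each $i\in\N$. These form an increasing sequence of subsets of $E$, and since $H=\bigcup_{i}A_i\supset E$ one has $\bigcup_{i}A_i'=E\cap\bigcup_i A_i=E$. Thus $(A_i')_{i\in\N}$ is an exhausting increasing sequence of subsets of the variety $E$, so the hypotheses of Lemma~\ref{lem:constructible-cover} are met once constructibility is checked.

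The key verification is that each $A_i'$ is constructible in $E$. By the notion of a constructible subset of an ind-variety, $A_i\cap H_j$ is constructible in $H_j$; writing $A_i'=(A_i\cap H_j)\cap E$ and using that $E$ is a (locally closed) subvariety of $H_j$, one concludes that $A_i'$ is constructible in $E$. With this in hand, Lemma~\ref{lem:constructible-cover}—applicable because the ground field $\K$ is uncountable by the standing Convention—yields an index $k$ with $E\subset A_k'$. Since $A_k'=A_k\cap E\subset A_k$, this gives $E\subset A_k$, as desired.

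The only genuinely delicate point, and hence the main obstacle, is precisely the constructibility of $A_i'$ in $E$: it rests on unwinding the definition of a constructible subset of an ind-variety and on the fact that $E$ lies inside a single layer $H_j$ as a genuine subvariety, so that intersection with $E$ preserves constructibility. Everything else is a direct transcription of Lemma~\ref{lem:constructible-cover} to the sequence $(A_i')$.
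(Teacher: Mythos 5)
Your proof is correct and follows exactly the route the paper indicates: the paper's own proof is just the hint ``apply Lemma~\ref{lem:constructible-cover} to the sequence $E\cap A_i$, $i=1,2,\ldots$'', and your argument is precisely that reduction, fleshed out with the verification that $E$ sits inside a single layer $H_j$ and that each $A_i\cap E$ is constructible in $E$.
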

\begin{proof}[Hint]
Apply Lemma \ref{lem:constructible-cover} to the sequence $E\cap A_i$, $i=1,2\ldots$
\end{proof}

\begin{proof}[Proof of Proposition \ref{serre}] To apply Serre's Theorem \ref{th:Serre}, we need to establish that $H$ is of bounded length in the amalgam $G=\amalgGT$. 
Let $G_P=\varinjlim_n \Sigma_{P,n}$ be the structure of a  nested  ind-group on the vertex group $G_P$ for $P\in {\rm vert}\,T$.
Given a finite
sequence $$\tau=((P_1,n_1),\ldots,(P_l,n_l))\in (({\rm vert}\,T)\times\NN)^l\,,$$
consider the  morphism
$\Sigma_{P_1,n_1}\times \ldots \times \Sigma_{P_l,n_l}\to G$  induced by the multiplication.
Its image $R_\tau$ is a constructible subset of the variety $\Sigma_i$ for some $i\in\NN$.
The amalgam $G=\amalgGT$ is covered by these
constructible sets.  Any two such sets $R_{\tau'}$ and $R_{\tau''}$ are contained in a third one $R_\tau$, and the collection $\{R_\tau\}_\tau$ is countable. 
Hence one can choose an increasing sequence $R_{\tau_1}\subset R_{\tau_2}\subset\ldots\subset G$ such that $G=\bigcup_{i\in\N} R_{\tau_i}$.

Let $E$ be an
algebraic subvariety of $G$. Due to Corollary \ref{cor:constructible-cover}, $E\subset R_{\tau_i}$ for some $\tau_i 
\in (({\rm vert}\,T)\times\NN)^l$, that is, $E$ is of bounded length ($\le l$).

In particular, any algebraic subgroup $H$ of $G$ has bounded length. 
By Serre's Theorem \ref{th:Serre}, $H$ is conjugated to a subgroup, say, $H'$ of a vertex nested ind-group $G_P$ for some $P\in {\rm vert}\,T$. 
A conjugation in an ind-group $G$ is an automorphism of $G$ viewed as an ind-variety. Hence $H'\subset G_P\subset G$  is again an algebraic subgroup. 
Therefore, it is contained in some algebraic subgroup $\Sigma_{P,n}\subset G_P$, and so, is closed in $\Sigma_{P,n}$ and then also in $G_P$. 
It is closed in $G$ provided $G_P$ is closed in $G$, and then also $H$ is closed in $G$.\end{proof}

\subsection{Algebraic subgroups of bearable groups
}\label{sec:reductive-unipotent}

In the sequel we need the following fact. \footnote{The authors are grateful to V.~Arzhantsev, R.~Avdeev, M.~Borovoi, M.~Brion, D.~Panushev, G. Soifer, D.~Timashev, and E.~Vinberg 
for useful discussions and indications.}

\begin{theorem}\label{thm: finite-ss}
Let $G$ be a reductive algebraic group. Then the set of conjugacy classes of connected reductive
subgroups of $G$ is at most countable.
\end{theorem}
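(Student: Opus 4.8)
The plan is to pass from subgroups to Lie subalgebras and then to split a reductive subgroup into its semisimple part and its central torus, treating the two pieces by entirely different arguments: a rigidity argument producing \emph{finitely} many semisimple parts, and a lattice-theoretic argument producing \emph{countably} many central tori. Since $\mathrm{char}\,\K=0$, a connected subgroup $H\subseteq G$ is uniquely determined by its Lie algebra $\mathfrak h=\Lie H\subseteq\mathfrak g=\Lie G$, it is reductive iff $\mathfrak h$ is a reductive Lie algebra, and $H,H'$ are conjugate iff $\mathfrak h,\mathfrak h'$ are conjugate under the adjoint action of $G$. Thus it suffices to show that $G$ has at most countably many orbits on the set of reductive subalgebras of $\mathfrak g$. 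Write such a subalgebra as $\mathfrak h=\mathfrak z\oplus\mathfrak s$, where $\mathfrak s=[\mathfrak h,\mathfrak h]$ is semisimple and $\mathfrak z$ is the central toral radical; correspondingly $H=T\cdot S$, where $S$ is connected semisimple with $\Lie S=\mathfrak s$ and $T=Z(H)^\circ$ is a central torus centralizing $S$.

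For the semisimple part I would invoke a rigidity argument in the spirit of Richardson. Fix $d$ and consider the variety $\mathcal S_d$ of $d$-dimensional subalgebras of $\mathfrak g$, a closed $G$-stable subvariety of the Grassmannian $\mathrm{Gr}(d,\mathfrak g)$ cut out by the Pl\"ucker-type bracket-closedness equations; it has finitely many irreducible components. The tangent space to $\mathcal S_d$ at a point $\mathfrak s$ is the cocycle space $Z^1(\mathfrak s,\mathfrak g/\mathfrak s)$, while the tangent space to the orbit $G\cdot\mathfrak s$ is the coboundary space $B^1(\mathfrak s,\mathfrak g/\mathfrak s)$. When $\mathfrak s$ is semisimple, Whitehead's first lemma gives $H^1(\mathfrak s,\mathfrak g/\mathfrak s)=0$, so these tangent spaces coincide; since the orbit is smooth and its tangent space exhausts that of $\mathcal S_d$ at $\mathfrak s$, the orbit $G\cdot\mathfrak s$ is open in $\mathcal S_d$. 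Two distinct orbits that are both open in a single irreducible component would be disjoint nonempty open sets, which is impossible; hence each component carries at most one semisimple orbit, and summing over the finitely many components and over $0\le d\le\dim\mathfrak g$ yields \emph{finitely} many conjugacy classes of connected semisimple subgroups $S_1,\dots,S_m$.

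For the central torus I would count subtori. After conjugation we may assume $\mathfrak s=\Lie S_k$ for one of the finitely many representatives. Since $H$ normalizes $[H,H]=S_k$ and $T=Z(H)^\circ$ centralizes $S_k$, we have $T\subseteq L_k:=C_G(S_k)$, which is reductive, and $H=T\cdot S_k$. If $gHg^{-1}=H'$ with $[H,H]=[H',H']=S_k$, then $gS_kg^{-1}=S_k$, so $g\in N_G(S_k)$ and $gTg^{-1}=Z(H')^\circ$; thus the $G$-conjugacy classes of reductive subgroups with derived group $S_k$ inject into the $N_G(S_k)$-conjugacy classes of subtori of $L_k$. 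Now every subtorus of $L_k$ is conjugate into a fixed maximal torus $T_k$, and the subtori of $T_k$ correspond to saturated sublattices of the cocharacter lattice $X_*(T_k)\cong\ZZ^{\rk L_k}$, of which there are only countably many; passing to orbits under the finite Weyl group, and a fortiori under the coarser $N_G(S_k)$-action, preserves countability. Combining the finitely many choices of $S_k$ with the countably many subtori classes for each shows that the set of conjugacy classes of connected reductive subgroups of $G$ is at most countable.

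The main obstacle is the semisimple finiteness step: establishing openness of $G\cdot\mathfrak s$ in $\mathcal S_d$ rests on identifying the tangent and obstruction spaces of the deformation problem with Lie-algebra cohomology and then applying Whitehead's lemma, together with the smoothness argument that equality of tangent spaces forces openness. The torus step is routine once the reduction to $L_k=C_G(S_k)$ is in place; the only point needing care there is that $G$-conjugacy of the ambient reductive subgroups genuinely descends to $N_G(S_k)$-conjugacy of their central tori, which is exactly what the identity $g[H,H]g^{-1}=[gHg^{-1},gHg^{-1}]$ guarantees.
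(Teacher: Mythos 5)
Your proof is correct and follows essentially the same route as the paper: the decomposition $H = S\cdot T$ into semisimple derived group and central torus (Borel--Tits), finiteness of conjugacy classes of connected semisimple subgroups by Richardson-type rigidity, and countability of subtorus classes inside $C_G(S)$ coming from the rigidity/countability of subtori of a fixed maximal torus. The only differences are that you sketch a proof of Richardson's finiteness lemma (via $H^1(\mathfrak{s},\mathfrak{g}/\mathfrak{s})=0$ and openness of orbits in the variety of subalgebras), which the paper simply cites, and that your descent from $G$-conjugacy of the subgroups to $N_G(S_k)$-conjugacy of their central tori is stated more carefully than the corresponding step in the paper.
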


The proof is based on the following result of Richardson (\cite[Cor.\ 11.5(b) and Prop.\ 12.1]{Ri0}, see also \cite[Thm.\ 8.1]{Ri}). 
Alternatively, the lemma can be deduced from the classification of semisimple subgroups of reductive groups started in \cite{Dy}.

\begin{lemma}\label{lem: finiteness-ss}
The set of conjugacy classes of connected semisimple algebraic
subgroups of a reductive algebraic group $G$ is finite.
\end{lemma}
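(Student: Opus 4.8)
The plan is to pass from subgroups to Lie subalgebras and then run a rigidity argument in the spirit of Richardson, resting on the vanishing of the first Chevalley--Eilenberg cohomology of a semisimple Lie algebra (Whitehead's lemma). Since $\operatorname{char}\K=0$ and $\K$ is algebraically closed, a connected algebraic subgroup $H\subseteq G$ is determined by its Lie algebra $\mathfrak h=\Lie H\subseteq\g=\Lie G$; moreover $H$ is semisimple precisely when $\mathfrak h$ is a semisimple Lie algebra, and two connected algebraic subgroups are conjugate in $G$ if and only if their Lie algebras lie in one $\operatorname{Ad}(G)$-orbit. Hence it suffices to show that the set of $\operatorname{Ad}(G)$-orbits of semisimple subalgebras of $\g$ is finite. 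As $\dim\mathfrak h\le\dim\g$, it is enough to fix a dimension $d$ and bound the number of orbits of semisimple subalgebras of dimension $d$.

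First I would set up the parameter space. Let $\operatorname{Gr}_d(\g)$ be the Grassmannian of $d$-dimensional subspaces of $\g$, and let $\mathcal A_d\subseteq\operatorname{Gr}_d(\g)$ be the closed subvariety of those subspaces $\mathfrak m$ with $[\mathfrak m,\mathfrak m]\subseteq\mathfrak m$, i.e.\ of $d$-dimensional subalgebras. The group $G$ acts morphically on $\mathcal A_d$ through the adjoint representation, and for a semisimple $\mathfrak h\in\mathcal A_d$ the orbit $O_{\mathfrak h}=\operatorname{Ad}(G)\cdot\mathfrak h$ is a smooth, locally closed, $G$-homogeneous subvariety. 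The key point is to compare two tangent spaces at $\mathfrak h$, both expressed through the Chevalley--Eilenberg complex of the $\mathfrak h$-module $\g/\mathfrak h$. Writing nearby subspaces as graphs of linear maps $\mathfrak h\to\g/\mathfrak h$, the condition of being a subalgebra linearises to the cocycle condition, so $T_{\mathfrak h}\mathcal A_d\subseteq Z^1(\mathfrak h,\g/\mathfrak h)$; differentiating the orbit map $g\mapsto\operatorname{Ad}(g)\mathfrak h$ at the identity sends $X\in\g$ to the class of $y\mapsto[X,y]\bmod\mathfrak h$, which is, up to sign, a coboundary, whence $T_{\mathfrak h}O_{\mathfrak h}=B^1(\mathfrak h,\g/\mathfrak h)$.

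Now I would invoke Whitehead's first lemma: since $\mathfrak h$ is semisimple and $\g/\mathfrak h$ is a finite-dimensional $\mathfrak h$-module, $H^1(\mathfrak h,\g/\mathfrak h)=Z^1/B^1=0$, so $Z^1=B^1$. Combining this with the inclusions above gives
\[
T_{\mathfrak h}O_{\mathfrak h}=B^1(\mathfrak h,\g/\mathfrak h)=Z^1(\mathfrak h,\g/\mathfrak h)\supseteq T_{\mathfrak h}\mathcal A_d\supseteq T_{\mathfrak h}O_{\mathfrak h},
\]
so all these spaces coincide and $\dim T_{\mathfrak h}\mathcal A_d=\dim O_{\mathfrak h}$. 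As $O_{\mathfrak h}$ is a smooth subvariety of $\mathcal A_d$ whose dimension equals that of the Zariski tangent space of $\mathcal A_d$ at $\mathfrak h$, the point $\mathfrak h$ is smooth on $\mathcal A_d$ and $O_{\mathfrak h}$ is open in $\mathcal A_d$. Being open and irreducible, each such orbit is dense in exactly one irreducible component of $\mathcal A_d$, and distinct orbits are disjoint opens and hence lie in distinct components; since $\mathcal A_d$ has finitely many irreducible components, there are only finitely many conjugacy classes of $d$-dimensional semisimple subalgebras. Summing over $d=0,1,\dots,\dim\g$ yields the assertion.

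The main obstacle, and the place to argue carefully, is the deformation-theoretic identification of the two tangent spaces: one must verify that the linearisation of the quadratic subalgebra equation $[\mathfrak m,\mathfrak m]\subseteq\mathfrak m$ at $\mathfrak h$ is exactly the cocycle condition, giving $T_{\mathfrak h}\mathcal A_d\subseteq Z^1(\mathfrak h,\g/\mathfrak h)$, and that the differential of the orbit map has image $B^1(\mathfrak h,\g/\mathfrak h)$. Everything else — the subgroup/subalgebra dictionary in characteristic zero, Whitehead's lemma, and the component count — is formal, but these two tangent-space computations carry the substantive content and constitute the heart of Richardson's rigidity method.
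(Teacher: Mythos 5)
Your proof is correct and coincides with the paper's route: the paper does not reprove this lemma but quotes it from Richardson's rigidity theorem (\cite[Cor.\ 11.5(b) and Prop.\ 12.1]{Ri0}, cf.\ \cite[Thm.\ 8.1]{Ri}), and your argument --- the closed subvariety $\mathcal A_d$ of the Grassmannian, the identifications $T_{\mathfrak h}\mathcal A_d\subseteq Z^1(\mathfrak h,\g/\mathfrak h)$ and $T_{\mathfrak h}O_{\mathfrak h}=B^1(\mathfrak h,\g/\mathfrak h)$ (the latter valid since orbit maps are separable in characteristic zero), Whitehead's lemma forcing each semisimple orbit to be open in $\mathcal A_d$, and the finiteness of irreducible components --- is exactly Richardson's proof. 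As a minor remark, you do not even need that semisimple subalgebras are algebraic: the assignment $H\mapsto\Lie H$ is injective on conjugacy classes of connected subgroups in characteristic zero, so finiteness of the set of $\operatorname{Ad}(G)$-orbits of semisimple subalgebras already bounds the number of conjugacy classes of connected semisimple subgroups.
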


\medskip

\noindent \emph{Proof of Theorem \ref{thm: finite-ss}.}
Let $H\subset G$ be a connected reductive subgroup. Consider the Levi decomposition $H=S\cdot T$, where $T={\rm Rad}\,H$ is a central torus in $H$ and 
$S=[H,H]$ is the commutator subgroup, see \cite[Prop.\ 2.2]{BT}. Since $S$ is semisimple,  its conjugacy class in $G$ is chosen among a finite set of such classes, see the claim. 
The torus $T$ is contained in the centralizer $C_G(S)$. Together with $S$, the centralizer $C_G(S)$ also runs over a finite set of conjugacy classes in $G$, 
along with a maximal  torus in $C_G(S)$ which contains $T$, since any two maximal tori in $C_G(S)$ are conjugated. 
Furthermore, the conjugation by elements of $C_G(S)$ act trivially on $T$, hence $T$ is contained in any maximal torus of $C_G(S)$. Fixing one of them, say,  
$\mathcal{T}$, by the rigidity of subtori of  $\mathcal{T}$ there is at most countable number of possibilities to choose $T\subset \mathcal{T}$. 
The conjugacy class of $H=S\cdot T$ is then also chosen among at most countable number of such classes.
\qed

\medskip

The following extension of Theorem \ref{thm: finite-ss} to the reductive subgroups in countably bearable groups will be used in \S\,\ref{ss: 1-parameter sbgrps}, 
see  Corollary \ref{cor: unbearable}.

\begin{proposition}\label{lem: mult-classes}  Suppose that the ground field $\K$ is uncountable. Then in any countably bearable group $G$,
the set of conjugacy classes of
connected reductive algebraic subgroups
is at most countable.
\end{proposition}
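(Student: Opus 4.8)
The plan is to combine the reduction of algebraic subgroups to the vertex groups furnished by Proposition \ref{serre} with the countability of conjugacy classes of reductive subgroups in a single reductive group provided by Theorem \ref{thm: finite-ss}. Write $G=\amalgGT$ as the amalgam of a tree of nested ind-groups $G_P=\varinjlim_n \Sigma_{P,n}$, $P\in {\rm vert}\,T$, where each $\Sigma_{P,n}$ is an algebraic group and the embeddings $\Sigma_{P,n}\subset \Sigma_{P,n+1}$ are closed. Since $G$ is countably bearable, the index set ${\rm vert}\,T$ is at most countable, and so is each of the families $\{\Sigma_{P,n}\}_n$. Let $\mathcal{C}$ denote the set of $G$-conjugacy classes of connected reductive algebraic subgroups of $G$; the goal is to exhibit $\mathcal{C}$ as an at most countable union of at most countable sets.

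First I would reduce from $G$ to the vertex groups. Let $H\subset G$ be any connected reductive algebraic subgroup. By Proposition \ref{serre} (which is where the uncountability of $\K$ enters, through Lemma \ref{lem:constructible-cover}), $H$ is $G$-conjugate to an algebraic subgroup $H'$ of some vertex nested ind-group $G_P$; since conjugation is an automorphism of $G$ viewed as an ind-variety, $H'$ is again connected, reductive, and algebraic. Being an algebraic subvariety of the ind-variety $G_P=\varinjlim_n \Sigma_{P,n}$, the subgroup $H'$ is contained in some stage $\Sigma_{P,n}$. Thus every class in $\mathcal{C}$ has a representative lying in one of the algebraic groups $\Sigma_{P,n}$, $P\in {\rm vert}\,T$, $n\in\NN$.

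Next I would bound, for each fixed pair $(P,n)$, the number of conjugacy classes contributed by $\Sigma_{P,n}$. This is a mild extension of Theorem \ref{thm: finite-ss} from reductive groups to arbitrary algebraic groups. By Mostow's theorem (\cite{Mo}) the algebraic group $\Sigma_{P,n}$ admits a Levi decomposition $\Sigma_{P,n}=R_u(\Sigma_{P,n})\rtimes L_{P,n}$ with $L_{P,n}$ reductive, and every connected reductive subgroup of $\Sigma_{P,n}$ is conjugate into the fixed Levi subgroup $L_{P,n}$. Moreover, if two connected reductive subgroups of $L_{P,n}$ are $\Sigma_{P,n}$-conjugate, then they are already $L_{P,n}$-conjugate: projecting the conjugating element to $L_{P,n}$ along $R_u(\Sigma_{P,n})$ conjugates one into the other. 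Hence the set of $\Sigma_{P,n}$-conjugacy classes of connected reductive subgroups injects into the set of $L_{P,n}$-conjugacy classes of such subgroups, which is at most countable by Theorem \ref{thm: finite-ss} applied to the reductive group $L_{P,n}$.

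Finally I would assemble the count. For each $(P,n)$ there is a natural coarsening map from $\Sigma_{P,n}$-conjugacy classes of connected reductive subgroups to $G$-conjugacy classes, sending a subgroup to its $G$-conjugacy class; by the previous step its source is at most countable. By the second step, every element of $\mathcal{C}$ lies in the image of one of these maps. Therefore $\mathcal{C}$ is covered by at most countably many (one for each of the at most countably many pairs $(P,n)$) at most countable sets, and is thus itself at most countable. The only genuinely non-formal ingredient, and hence the step I expect to require the most care, is the passage from the reductive case of Theorem \ref{thm: finite-ss} to the arbitrary algebraic group $\Sigma_{P,n}$ via the Levi decomposition and the matching of $\Sigma_{P,n}$- and $L_{P,n}$-conjugacy; everything else is bookkeeping of countable unions built on Proposition \ref{serre}.
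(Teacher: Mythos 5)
Your proof is correct and takes essentially the same route as the paper: Proposition~\ref{serre} (where uncountability of $\K$ enters) to conjugate any algebraic subgroup into a vertex group, the nested structure to place it inside a single algebraic stage, Mostow's theorem to conjugate it into a maximal connected reductive (Levi) subgroup, and Theorem~\ref{thm: finite-ss} to count conjugacy classes there, followed by the countable-union bookkeeping. The only deviation is your injectivity claim for the map from $L_{P,n}$-classes to $\Sigma_{P,n}$-classes, which is correct but not needed --- a surjection from the countable union of the $L_{P,n}$-class sets onto the $G$-classes already suffices, which is exactly how the paper argues.
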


\begin{proof} Any connected reductive algebraic subgroup $F$ in a nested group $H=\varinjlim H_i$ is contained in an algebraic subgroup
$H_i$ for some $i$, and, moreover, in a maximal connected reductive subgroup of $H_i$.  
By the Mostow decomposition theorem, any two maximal connected reductive subgroups are conjugated in $H_i$.
By Theorem \ref{thm: finite-ss}, the conjugacy class of $F$ in $H_i$ runs over at most countable set of such classes. 
It follows that the set of conjugacy classes of connected reductive algebraic subgroups of $H$ is at most countable.

 By Proposition \ref{serre},
any algebraic subgroup of  the countably bearable group $G$ is conjugated
to an algebraic subgroup of one of the countable collection of generating nested ind-groups. 
Hence the set of conjugacy classes of connected reductive algebraic subgroups of $G$ is at most countable.
\end{proof}

\begin{remark}\label{rem: Dani} 
The same argument shows that in an $\alpha$-bearable group $\Aut X$ of rank $r$ the set of conjugacy classes of  $r$-tori has  cardinality at most $\alpha$. 
\end{remark}

\begin{example}\label{ex: Dani}
Consider, for instance, the Danielewski surface $S_n=\{x^ny-(z^2-1)=0\}$ in $\A^3$. 
The group $\Aut S_n$ is 2-bearable, see Theorem \ref{thm: BD2}. However, the group $\Aut (S_n\times \A^1)$ is not finitely bearable. 
Indeed (\cite{Dani}), this group of rank 2 contains a sequence of pairwise non-conjugated 2-tori. Is this group countably bearable?
\end{example}

Similarly, using Proposition \ref{serre} we obtain the following result. Recall that
two $\A^1$-fibrations on an affine variety $X$ are called {\em equivalent} if one can be
transformed into the other by an automorphism of $X$.

\begin{proposition}\label{lem: add-classes} Suppose that the ground field $\K$ is uncountable. Let $X$ be a normal affine variety. 
Assume that $X$ does not admit a unipotent group action with a general orbit of dimension $\ge 2$. 
\footnote{The latter holds, in particular, for any affine surface different from $\A^2$, see Proposition \ref{prop: basic facts}(b).}
If the group $\Aut^{\0} X$ is $\alpha$-bearable for some cardinal $\alpha\le\aleph_0$ \footnote{In particular, $\Aut^{\0} X$ is countably bearable.}, 
then the set of non-equivalent $\A^1$-fibrations on $X$ over affine bases is of cardinality at most $\alpha$.
\end{proposition}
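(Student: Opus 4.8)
The plan is to convert the counting of $\A^1$-fibrations into a counting of fibrations realized by one-parameter unipotent subgroups, and then to run an argument parallel to the one used for reductive subgroups, invoking Proposition \ref{serre} to confine every $\G_a$-subgroup into one of the at most $\alpha$ vertex groups. The starting point is the standard dictionary between $\A^1$-fibrations over affine bases and nontrivial $\G_a$-actions: under the present hypothesis every nontrivial $\G_a$-subgroup $U\subset\Aut^{\0}X$ has general orbits of dimension exactly one (a nontrivial $\G_a$-action on an irreducible variety cannot fix a dense open set, and the orbit dimension cannot reach two by assumption), and the orbit fibration $X\to X/\!/U$ is an $\A^1$-fibration over an affine base; conversely every such fibration is the orbit fibration of some $\G_a$-subgroup of $\Aut^{\0}X$ (a $\G_a$-subgroup, being connected, lies in $\Aut^{\0}X$ automatically). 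Since conjugating $U$ by an automorphism carries its orbit fibration to an equivalent one, the assignment $U\mapsto(\text{orbit fibration of }U)$ induces a \emph{surjection} from conjugacy classes of $\G_a$-subgroups onto equivalence classes of $\A^1$-fibrations. Hence it suffices to bound the number of fibration classes realized by the $\G_a$-subgroups of $\Aut^{\0}X$.

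By Proposition \ref{serre}, every $\G_a$-subgroup of the (countably bearable) group $\Aut^{\0}X$ is conjugate into one of the vertex nested ind-groups $G_P$, $P\in {\rm vert}\,T$, of which there are at most $\alpha$. So I would next bound, for a single vertex group $H=G_P^{\0}=\varinjlim H_i$, the number of equivalence classes of fibrations realized by the $\G_a$-subgroups it contains. Using Corollary \ref{cor:quasi-proj} and Remark \ref{rem:Levi-factor} I would write $H_i=U_i\rtimes L$ with a common Levi factor $L$ and unipotent radicals $U_i$, so that $U=\varinjlim U_i$ is the unipotent radical of $H$. Every $\G_a$-subgroup $P\subset H$, being algebraic, lies in some $H_i$; thus it is enough to bound the fibration classes coming from the $\G_a$-subgroups of a fixed $H_i$ and then take the countable union over $i$.

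The decisive use of the hypothesis is the following \emph{collapsing principle}: if two $\G_a$-subgroups $P,P'$ are both contained in a single unipotent algebraic subgroup of $\Aut^{\0}X$, then they realize the same fibration, because that unipotent group has general orbits of dimension at most one, so all its one-dimensional orbits coincide generically. Applying this, I claim that the fibration realized by a $\G_a$-subgroup $P\subset H_i$ depends only on the image $\bar P$ of $P$ in the reductive quotient $L$. Indeed, if $P$ and $P'$ have the same image $\bar P$, then both lie in $U_i\rtimes\bar P$, which is an extension of the unipotent group $\bar P\cong\G_a$ by the unipotent group $U_i$ and hence unipotent, so $P$ and $P'$ realize the same fibration by the collapsing principle. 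In particular, all $\G_a$-subgroups contained in $U_i$ (those with trivial $\bar P$) realize one single ``vertical'' fibration attached to $H$. If moreover $\bar P$ and $\bar P'$ are conjugate in $L$, then conjugating by the corresponding element of $L\subset H_i$ reduces to the case of equal images; thus the realized fibration class depends only on the $L$-conjugacy class of $\bar P$. Since a reductive group has only finitely many conjugacy classes of one-dimensional unipotent subgroups (finiteness of nilpotent orbits, in the spirit of Lemma \ref{lem: finiteness-ss} and Theorem \ref{thm: finite-ss}), each $H_i$ realizes only finitely many fibration classes, whence $H$ realizes at most countably many, and the at most $\alpha\le\aleph_0$ vertex groups together realize at most $\aleph_0$ classes. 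This yields the bound in the principal case $\alpha=\aleph_0$.

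I expect the main obstacle to be twofold. First, one must set up the dictionary cleanly in the stated generality, guaranteeing that a nontrivial $\G_a$-action has one-dimensional orbits with \emph{affine} quotient and that every $\A^1$-fibration over an affine base arises as an orbit fibration; on surfaces this is the content of the Makar--Limanov theory, but in higher dimension it must be handled with care. Second, the verification that ``equal image in $L$'' forces the same fibration rests on the unipotency of $U_i\rtimes\bar P$ together with the collapsing principle, which is exactly where the hypothesis on orbit dimensions prevents the a priori continuous families of $\G_a$-subgroups inside the unipotent radical from producing more than one fibration. I would also note that obtaining the \emph{sharp} cardinal bound $\le\alpha$ for a \emph{finite} $\alpha$ requires the finer structural input that each vertex group is the automorphism group $\Aut^{\0}(X,\mu)$ of a single fibration (cf.\ Example \ref{rem: deJonq} and Remark \ref{rem: ind-topology}), so that each vertex contributes exactly one fibration and distinct classes inject into ${\rm vert}\,T$; the argument above, by itself, delivers the countable bound.
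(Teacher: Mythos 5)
Your overall strategy---translating $\A^1$-fibrations into one-parameter unipotent subgroups, confining those into vertex groups via Proposition \ref{serre}, and exploiting the hypothesis through a ``collapsing principle'' for unipotent groups---is exactly the skeleton of the paper's proof. But your execution has a genuine gap: the detour through the Levi decomposition $H_i=U_i\rtimes L$ only shows that each algebraic piece $H_i$ realizes \emph{finitely many} fibration classes (one per $L$-conjugacy class of the image $\bar P$), hence each vertex group realizes at most countably many, and the total bound you obtain is $\aleph_0$, not $\alpha$. This does not prove the proposition as stated: when $\alpha$ is finite (say $\Aut^\0 X$ is an amalgam of two nested ind-groups, as for the surfaces in Theorem \ref{thm: ex-bearable}), the conclusion demands at most $\alpha$ fibration classes, and your argument cannot rule out, e.g., three of them. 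You acknowledge this at the end and suggest that the sharp bound needs the extra input that each vertex group is the automorphism group of a single fibration---but that claim is both unproved and, as it turns out, unnecessary, so the gap is real.

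The missing step is simpler than what you did, and it is how the paper closes the argument: inside a single algebraic group $H_i$, \emph{every} unipotent one-parameter subgroup is conjugate in $H_i$ to a subgroup of one fixed maximal unipotent subgroup $U_{\max}\subset H_i$, since any two maximal unipotent subgroups are conjugate. Conjugation carries a fibration to an equivalent one, so if $H_i$ contained two $\G_a$-subgroups acting along non-equivalent fibrations, then $U_{\max}$ itself would contain two such subgroups; your own collapsing principle (general $U_{\max}$-orbits have dimension $\le 1$ by hypothesis) then forces the two fibrations to agree generically, a contradiction. Hence each $H_i$---and therefore each vertex group, the $H_i$ being nested and increasing---contributes at most \emph{one} fibration class, and the total count is at most the number of vertices, i.e., at most $\alpha$. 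This renders your Levi-factor analysis and the appeal to finiteness of nilpotent orbits in $L$ superfluous; the paper phrases the argument as a proof by contradiction (more than $\alpha$ classes would, by pigeonhole, put two non-equivalent ones into a single vertex group), but the content is the maximal-unipotent-subgroup step you skipped.
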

\begin{proof}
Let $\Aut^{\0} X=\amalgGT$,
where $(T,\mathcal{G})$ is a tree of nested ind-groups $(G_P)_{P\in {\rm vert}\, T}$, where ${\rm vert}\, T$ is a set of cardinality $\alpha$.

For any $\A^1$-fibration $\mu\colon X\to Z$, where $Z$ is a normal affine variety,
one can find a one-parameter unipotent subgroup $U_\mu\subset\Aut X$ acting along the fibers of $\mu$.

Assume to the contrary that the set
of pairwise non-equivalent $\A^1$-fibrations on $X$ with affine bases is of cardinality larger than $\alpha$. 
By Proposition \ref{serre}, any one-parameter group $U_\mu$ is conjugated to
a subgroup $U'_\mu$ of one of
the vertex groups $G_P$, $P\in {\rm vert}\, T$. From our assumption,  for some
vertex $P\in {\rm vert}\, T$, the vertex group $G_P$ contains
at least two
one-parameter unipotent subgroup $U'_\mu\subset\Aut X$, which act along pairwise non-equivalent  $\A^1$-fibrations on $X$.

Since $G_P=\varinjlim G_{P,n}$ is a nested ind-group,  any
one-parameter unipotent
subgroup in $G_P$ is contained in some algebraic subgroup $G_{P,n}$.
It follows that for some $n\ge 1$,
the algebraic group $H=G_{P,n}$ contains at least two unipotent one parameter
subgroups $U'_{\mu_1}, U'_{\mu_2}$ acting along two non-equivalent
$\A^1$-fibrations
$\mu_i\colon X\to Z_i$, $i=1,2$.

Let $U_{\max}$ be a maximal unipotent subgroup of $H$. \footnote{This means that $U_{\max}$ is
the set of the unipotent elements in a Borel subgroup of $H$.}
Since any two such subgroups are conjugated,
any one-parameter unipotent subgroup $U\subset H$
is conjugated to a subgroup of $U_{\max}$. Hence $U_{\max}$
contains two
one-parameter unipotent subgroups, say, $U_1, U_2$ acting on $X$ along
two non-equivalent  $\A^1$-fibrations. It follows that the general orbits of $U_{\max}$
in $X$ are at least two-dimensional, contrary to our assumption.
\end{proof}

\section{Automorphism groups and amalgams: the first examples}\label{sec: first examples}
\begin{sit}[\emph{Classical surfaces}]\label{ss: classical surfaces} In this section we describe the automorphism goups of the surfaces
\begin{equation}\label{eq: classical} \AA^2,\quad\AA^1\times\AA_*^1,
\quad (\AA^1_*)^2,\quad
V_{d,e}=\AA^2/\mu_{d,e},\quad\PP^2\setminus C,
\quad\mbox{and}\quad  (\PP^1\times\PP^1)\setminus\Delta\,,\end{equation}
where $C\subset\PP^2$ is a smooth conic, $\Delta\subset \PP^1\times\PP^1$
the diagonal, and for any $1\le e <d$ with
$\gcd(d,e)=1$, $\mu_{d,e}\subset\GL(2,\K)$
stands for the cyclic group $\{{\rm diag}(\zeta,\zeta^e)\,|\,\zeta^d=1\}$.
For any one of these surfaces, its automorphism group carries an amalgam structure.
\end{sit}

\begin{sit}[\emph{Toric affine surfaces}]\label{sit: toric} These are the normal affine surfaces $X$ with the group $\Aut X$ of rank 2.
Any  toric  affine surface is
one from the list  
\begin{equation}\label{eq: toric} \AA^2,\quad\AA^1\times\AA_*^1,
\quad (\AA^1_*)^2,
 \quad\mbox{and}\quad V_{d,e}\,.\end{equation}
The torus action on $X$ comes from the action of the diagonal
2-torus $\TTT$ on $\A^2$ (see \cite{Co}).
The smooth toric surfaces $\A^2$, $\AA^1\times\AA_*^1$, and
$(\AA^1_*)^2$ are the underlying homogeneous spaces
of the solvable algebraic groups $\G_{\rm a}^2$,
$\Aff\A^1\cong\G_{\rm a}\rtimes\G_{\rm m}$, and $\TTT\cong\G_{\rm m}^2$,
respectively.
The toric affine surfaces fall into 3 classes as follows:
\begin{itemize}
\item $(\ML_0, 2)=\{ \AA^2, \,\,V_{d,e}\}$;
\item $(\ML_1, 2)=\{\AA^1\times\AA_*^1\}$;
\item $(\ML_2, 2)= \{(\AA^1_*)^2\}$.
\end{itemize}
In \ref{sit: A2}--\ref{sit: A1star-2} we describe their automorphism groups. 
\end{sit}

\begin{sit}[\emph{The group $\Aut \A^2$}]\label{sit: A2}
By the Jung--van der Kulk
Theorem\footnote{Valid over an arbitrary field.}
(see \cite[Thm.\ 2]{Ka1} and the references therein), $\Aut\A^2$
is the amalgamated free product $A*_C B$ of the affine group $A=\Aff(\AA^2)$ and
the de Jonqu\`eres subgroup $B={\rm J}(\AA^2)$
over their intersection $C=A\cap B$. The solvable group ${\rm J}(\AA^2)$  of rank 2
can be decomposed as ${\rm J}(\AA^2)\cong\mathbb{U}\rtimes \TTT$,
where  the unipotent radical
$ \mathbb{U}={\rm R_u}({\rm J}(\AA^2))$
is Abelian and consists of the triangular transformations
$(x,y)\mapsto (x+f(y),y)$ with $f\in \K[y]$. In particular, ${\rm J}(\AA^2)$ is a nested solvable
ind-group, see Definition \ref{def: nest}. More precisely, ${\rm J}(\AA^2)$ is
an inductive limit of a sequence
of solvable, connected affine algebraic groups of rank 2
with Abelian unipotent radicals.

It follows that any algebraic subgroup $G\subset \Aut\A^2$
is conjugate to a subgroup of one of the factors $\Aff(\AA^2)$ and ${\rm J}(\AA^2)$,
see \cite{DG2}, \cite[4.3--4.4]{Ka2}, \cite{Wr1}.
If $G$ is conjugate to
a subgroup of ${\rm J}(\AA^2)$, then $G$ is solvable of rank $\le 2$.
Hence, if a subgroup $G\subset \Aut\A^2$ is algebraic and non-solvable, then $G$ is
conjugated to a  subgroup of $\Aff(\AA^2)$ (cf.\ \cite[Cor.\ 4.4.]{Ka1}).
Any reductive subgroup
$G\subset\Aut\A^2$ is linearizable, i.e., is conjugated to a  subgroup of $\GL(2,\K)$
(\cite[Thm.~2.3]{KS}). Therefore, it is conjugated either to a subgroup of $\TTT$,
or to $\SL(2,\K)$, or finally to $\GL(2,\K)$.
\end{sit}

\begin{sit}[\emph{Automorphism groups of singular  toric affine surfaces}] \label{sss: sing-toric}
For the toric affine surfaces $X=\AA^{2}/\mu_{d,e}$ of class
($\ML_0$), there are analogs of the Jung-van der Kulk and Kambayashi-Wright Theorems,
see
\cite[Thms.~4.2, 4.15, 4.17]{AZ}. Once again, $\Aut X$
is an amalgamated free product $A^+*_C A^-$, where $C=A^+\cap A^-$.
To describe this decomposition  in more detail,
one has to distinguish between the following cases:
\begin{itemize}
\item[(i)] $e=1$;
\item[(ii)]
$e^2\equiv 1\mod d$ and $e\neq 1$;
\item[(iii)]
$e^2\not\equiv 1 \mod d$.
\end{itemize}
Note that the involution  $\tau: (x,y)\mapsto (y,x)$ acting on $\A^2$
normalizes the cyclic subgroup $\mu_{d,e}$ in $\GL(2,\K)$ in  cases (i) and (ii),
and does not normalize it in case (iii).

In case (i) $X=\AA^2/\mu_{d,1}$ is the {\em Veronese cone} $V_d$, i.e.
the affine cone over a rational normal
curve in $\PP^d$.  Since $\mu_{d,1}\subset\TTT$ is central, the standard
$\SL(2,\K)$-action on $\A^2$ descends to $V_d$.
The complement to the vertex of  $V_d$ is the open
$\SL(2,\K)$-orbit; the same is true for the natural $\GL(2,\K)$-action on $V_d$.
Thus $V_d$ is
a quasihomogeneous $\SL(2,\K)$-variety.

The amalgam structure  $$\Aut X\cong A^+*_C A^-, \quad\mbox{where} \quad C=A^+\cap A^-\,,$$ is naturally related
to that on $\Aut \A^2$, see \ref{sit: A2}.
Consider the normalizer $$\mathcal{N}_{d,e}^+={\rm Norm}_{{\rm J}(\AA^2)}(\mu_{d,e})=
\mathbb{U}_{d,e}\rtimes\TTT\subset {\rm J}(\AA^2)\,,$$ where $$\mathbb{U}_{d,e}=
\left\{(x,y)\mapsto (x+f(y),y)\,\vert\,f\in y^e\K[y^d]\right\}\subset\mathbb{U}\,.$$
In case (i) we have 
$$A^+=\mathcal{N}_{d,1}^+/\mu_{d,1}\quad\mbox{and}\quad A^-=\GL(2,\K)/\mu_{d,1}$$ (cf.\ \cite[\S 11]{DG2}).
Similarly, in case (ii) 
$$A^+=\mathcal{N}_{d, e}^+/\mu_{d,e}\quad\mbox{and}\quad A^-
=\langle \TTT,\tau\rangle/\mu_{d, e}\,.$$
Finally, in case (iii) 
$$A^\pm=\mathcal{N}_{d,e}^\pm/\mu_{d,e}\quad\mbox{with}\quad
\mathcal{N}_{d,e}^-=\tau
\mathcal{N}_{d,e}^+\tau\,.$$
\end{sit}

\begin{sit}[\emph{The group $\Aut (\AA^{1}\times\AA^{1}_*)$}]\label{sit: A1-A1-star}
For the surface $X=\AA^{1}\times\AA^{1}_*$ of class
($\ML_1$) we have
$$\Aut X\cong\Aut^{\0} X\rtimes (\ZZ\rtimes (\ZZ/2\ZZ))=(U\rtimes \TTT)\rtimes (\ZZ\rtimes (\ZZ/2\ZZ))\,,$$
where the factor $\ZZ$ is generated by the transformation
$(x,y)\mapsto (xy,y)$, the factor $\ZZ/2\ZZ$ by the involution
$(x,y)\mapsto (x,y^{-1})$,
and the Abelian unipotent radical $U$ of $\Aut X$ is
$$U=\left\{\exp\left (p(y)\frac{d}{d x}\right)\colon
(x,y)\mapsto (x+p(y), y)\;|\; p(y)\in \K[y,y^{-1}]\right\}\,.$$
The group
$\Aut X$ is solvable, and so, any algebraic group acting effectively on
$\AA^{1}\times\AA^{1}_*$ is as well.  In fact, for any affine surface of class
($\ML_1$) the automorphism group has similar properties, see Section~\ref{sec: dJ}.
\end{sit}

\begin{sit}[\emph{The group $\Aut (\AA^{1}_*)^{2}$}]\label{sit: A1star-2}
If $X=(\AA^{1}_*)^{2}$, then
\[
\Aut X=\left\{(x,y)\mapsto(t_1 x^{a}y^{b}, t_2 x^{c}y^{d})\;\left|\;\left(\begin{smallmatrix} a&b\\ c&d \end{smallmatrix}\right)\in \GL_2(\ZZ), (t_1,t_2)\in\TTT\right.\right\}\cong\TTT\rtimes \GL_2(\ZZ).
\]
Indeed, $\Aut X$ surjects onto the automorphism group of the multiplicative group
$$\cO_X^\times(X)=\{t x^{a}y^{b}\,|\,t\in\K^{*}, a,b\in\ZZ\}$$ of the ring $\cO_X(X)$.
\end{sit}

\begin{sit}[\emph
{The group $\Aut((\PP^{1}\times \PP^{1})\setminus\Delta)$}]\label{sit: Aut P1-P1}
The surface $X'=(\PP^{1}\times \PP^{1})\setminus\Delta$ is isomorphic to the
smooth quadric in $\A^3$ with equation $xy-z^2=-1$.
The group $\Aut X'$ was studied in \cite[10.1]{DG2}, \cite[\S 2.1, Thm.\ 4]{La}, and \cite{ML1}.
The result in
\cite{La} can be  interpreted as follows. There is an amalgam
$$\Aut X'\cong A' *_{C'} B'\quad\mbox{with}\quad C'=A'\cap B'\cong \G_{\rm m}\times\bangle{\tau}\,,$$ where
$\tau:(u,v)\mapsto (v,u)$ is the involution  interchanging the factors of
$\PP^{1}\times \PP^{1}$,
\begin{equation}\label{eq: Aut P1-P1-A}
A'=\Aut (\PP^1\times\PP^1, \Delta)\cong
\PSL(2,\K)\times\bangle{\tau}\,,
\end{equation}  
and
\begin{equation}\label{eq: Aut P1-P1-B}
B'=(U'_\infty\rtimes\G_{\rm m})
\rtimes\bangle{\tau}\quad\mbox{with}
\quad U'_\infty=R_u(B')\cong\K[t'] \,
\end{equation}
being the unipotent radical
of $B'$.  In particular, $A'$  is semisimple
and $B'$ is solvable of rank 1.
In the affine coordinates $(u,v)$ in $\PP^{1}\times \PP^{1}$,
where $u=u_0/u_1$ and $v=v_0/v_1$, we have
$$U'_\infty\rtimes\G_{\rm m}=\left\{
(u,v)\mapsto \left(cu+P, cv+P\right)\,|\, c\in\G_{\rm m},\,\, P\in\K\left[\frac{1}{u-v}\right]\right\}\,.$$
\end{sit}

\begin{sit}[\emph{The group $\Aut(\PP^{2}\setminus C)$}] \label{sit: Aut P2-conic}
For $X=\PP^{2}\setminus C$ the group $\Aut X$ was studied in \cite[\S 2]{DG2}.
By {\it loc.cit.}, there is an amalgam
$\Aut X\cong A *_{C} B$ with $$A=\Aut (\PP^2, C)\cong {\rm SO}(3,\K)\cong\PSL(2,\K)
\,\,\,\, \mbox{and}\,\,\,\,  B=U_\infty\rtimes\G_{\rm m}\,,$$ where
$U_\infty\cong\K[t]$ and $C=A\cap B\cong\Aff(\A^1)$.
\end{sit}

\begin{remark}  \label{rem: Z2-cover}
The amalgam in \ref{sit: Aut P2-conic}
is pushforward of the one in \ref{sit: Aut P1-P1}
via the construction of Remark~\ref{rem: amalgams}.  Indeed, the surface
$X'=(\PP^{1}\times \PP^{1})\setminus\Delta$ is the Galois covering of
$X=\PP^{2}\setminus C$, and more precisely, $X=X'/\ZZ_2$,
where $\ZZ_2=\langle\tau\rangle$. In the affine coordinates $(u,v)$, the quotient morphism
$\PP^{1}\times \PP^{1}\to\PP^2$
is given by the elementary symmetric polynomials in two variables via  the classical Vieta formulas;
cf.\  \cite[Ex.\ 5.1]{FZ-LND}.
The Galois $\ZZ_2$-covering $X'\to X$ being the universal covering,
we have $\Aut X\cong {\rm Norm}_{\Aut X'}(\tau)/\bangle{\tau}$.
A comparison of the explicit formulas in \cite[(2.4.3;l)]{DG2} and \cite[\S 2]{La}
yields the isomorphism $${\rm Norm}_{\Aut X'}(\tau)\cong
 {\rm Norm}_{A'}(\tau) *_{{\rm Norm}_{C'}(\tau)} {\rm Norm}_{B'}(\tau)\,,$$
where $$ {\rm Norm}_{A'}(\tau) = A'\cong
\PSL(2,\K)\times\bangle{\tau}\quad\mbox{and}\quad {\rm Norm}_{B'}(\tau)=
B_0\times\bangle{\tau}\subset B',$$ and where
$$
B_0=\left\{\left. (u,v)\mapsto \left(cu+P,
cv+P\right) \; \right| \;c\in\K^{\times},P\in\K\left[\frac{1}{(u-v)^{2}}\right]\right\}=
U_\infty\times\G_{\rm m}\,$$
and $U_\infty=\K[{t'}^2]\subset U'_\infty=\K[t']$ with $t'=\frac{1}{u-v}$.  Finally,
$$A\cong {\rm Norm}_{A'}(\tau)/\langle\tau\rangle=A'/\langle\tau\rangle\cong
\PSL(2,\K)$$
and
$$B\cong {\rm Norm}_{B'}(\tau)/\langle\tau\rangle=B_0
\cong U_\infty\times\G_{\rm m}\,.$$
\end{remark}

Summarizing the results in
\ref{sit: A2}-\ref{sit: Aut P2-conic} we arrive at the following conclusion.

\begin{theorem}\label{thm: ex-bearable} 
For any surface $X$ in {\rm (\ref{eq: classical})} the neutral component $\Aut^\circ X\cong A^\circ\star_{C^\circ} B^\circ$ is finitely bearable, 
where $A^\circ$, $B^\circ$, and $C^\circ=A^\circ\cap B^\circ$ are connected, and either $A^\circ$ and $B^\circ$ are both solvable nested ind-groups, 
or $A^\circ$ is such a group and $B^\circ$ is a reductive affine algebraic group.
\end{theorem}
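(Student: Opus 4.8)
The plan is to verify Theorem \ref{thm: ex-bearable} by inspecting each of the six surfaces in (\ref{eq: classical}) in turn, using the amalgam decompositions already established in \ref{sit: A2}--\ref{sit: Aut P2-conic} and simply passing to neutral components. Since finite bearability (Definition \ref{def: bearable}) only requires that the underlying tree have at most finitely many vertices, that each vertex group be a nested ind-group, and that edge groups be proper subgroups of their adjacent vertex groups, in each case I would take the two-vertex tree underlying the decomposition $\Aut X\cong A *_C B$ and replace $A,B,C$ by their neutral components $A^\circ,B^\circ,C^\circ$. The first task is to check that passing to neutral components is compatible with the amalgam structure, i.e.\ that $\Aut^\circ X\cong A^\circ *_{C^\circ} B^\circ$ with $C^\circ = A^\circ\cap B^\circ$; this follows because the factors $A,B$ are connected-by-finite and the component groups are discrete, so the neutral component is generated by $A^\circ$ and $B^\circ$ amalgamated along their intersection.

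Next I would classify the two factors in each case according to the dichotomy in the statement. For $\AA^2$ (see \ref{sit: A2}) one has $A^\circ=\Aff(\AA^2)^\circ$ and $B^\circ={\rm J}(\AA^2)$; the de Jonqu\`eres group is an explicitly solvable nested ind-group with Abelian unipotent radical, so this falls under the ``both solvable'' alternative after noting $\Aff(\AA^2)^\circ$ is solvable only up to its ${\rm GL}$-part---so in fact $\AA^2$ is the case where $A^\circ$ is the (reductive-containing) affine group and $B^\circ={\rm J}(\AA^2)$ is the solvable nested ind-group. The singular toric cones $V_{d,e}$ (see \ref{sss: sing-toric}) behave analogously, with $A^+$ a solvable nested ind-group $\mathcal{N}^+_{d,e}/\mu_{d,e}$ and $A^-$ a reductive quotient of ${\rm GL}(2,\K)$ or a torus-with-involution. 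For $\AA^1\times\AA^1_*$ (see \ref{sit: A1-A1-star}) the neutral component is already a single solvable nested ind-group $U\rtimes\TTT$; here the ``tree'' degenerates to one vertex, which is still finitely bearable by Remark \ref{rem: ind-topology}.1. The two del Pezzo-type surfaces $\PP^2\setminus C$ and $(\PP^1\times\PP^1)\setminus\Delta$ (see \ref{sit: Aut P2-conic}, \ref{sit: Aut P1-P1}) realize the mixed case: $A^\circ\cong\PSL(2,\K)$ is semisimple reductive while $B^\circ=U_\infty\rtimes\G_{\rm m}$ is solvable of rank one.

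For each case I would then confirm the three bearability conditions explicitly: the vertex set is $\{A\text{-vertex},B\text{-vertex}\}$ of cardinality two (or one in the degenerate cases), so the tree is finite; each $A^\circ$ and $B^\circ$ is manifestly a nested ind-group, being either a connected affine algebraic group (hence trivially nested) or an inductive limit of connected solvable algebraic groups of bounded rank with Abelian unipotent radical as exhibited in the cited subsections; and $C^\circ=A^\circ\cap B^\circ$ is a proper subgroup of each, which one reads off from the explicit descriptions (e.g.\ $C\cong\Aff(\AA^1)$ sits properly inside both $\PSL(2,\K)$ and $U_\infty\rtimes\G_{\rm m}$). The connectedness of $C^\circ$ and the identity $C^\circ=A^\circ\cap B^\circ$ require a small check that the intersection is already connected, which holds because in every case $C$ itself is connected-by-finite with the same neutral component as its image.

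The main obstacle I anticipate is not any single hard argument but rather the bookkeeping of passing to neutral components uniformly across the six cases, particularly verifying in the singular toric cases (ii) and (iii) that the normalizer quotients $\mathcal{N}^\pm_{d,e}/\mu_{d,e}$ remain solvable nested ind-groups after taking neutral components, and that the involution $\tau$ and the finite cyclic group $\mu_{d,e}$ contribute only to the component group rather than to $\Aut^\circ X$. Once these quotient computations are in place, the solvability of the de Jonqu\`eres-type factors is automatic from their description as $\mathbb{U}\rtimes\TTT$ with $\mathbb{U}$ Abelian, and the reductivity of the $\PSL(2,\K)$ and ${\rm GL}(2,\K)/\mu_{d,1}$ factors is immediate. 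I would therefore organize the proof as a short case analysis, treating the homogeneous cases $\AA^1\times\AA^1_*$ and $(\AA^1_*)^2$ first as degenerate single-vertex or purely reductive instances, then the two del Pezzo surfaces together as the mixed semisimple/solvable case, and finally $\AA^2$ together with the $V_{d,e}$ as the affine-group-versus-de-Jonqu\`eres case.
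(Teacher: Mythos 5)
Your overall strategy---a case-by-case reading of the decompositions in \ref{sit: A2}--\ref{sit: Aut P2-conic} followed by passage to neutral components---is exactly what the paper does (its proof is nothing more than that summary), but your write-up has a genuine gap at the step you yourself single out as the crucial one. The claim that ``the neutral component is generated by $A^\circ$ and $B^\circ$ amalgamated along their intersection'' because the factors are connected-by-finite is false in general, and it fails in one of your six cases: $V_{d,e}$ with $e^2\equiv 1\pmod d$, $e\neq 1$ (case (ii) of \ref{sss: sing-toric}). There $A^+=\mathcal{N}^+_{d,e}/\mu_{d,e}$, $A^-=\langle\TTT,\tau\rangle/\mu_{d,e}$, and the edge group $C=A^+\cap A^-=\TTT/\mu_{d,e}$ \emph{equals} the neutral component of $A^-$. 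Consequently $\langle (A^+)^\circ,(A^-)^\circ\rangle=A^+$, whereas $\Aut^\circ V_{d,e}$ is strictly larger: being normal and containing the connected group $A^+$, it contains $\tau A^+\tau^{-1}$, and for $u\in A^+\setminus C$ the element $\tau u\tau^{-1}$ is a reduced word of length three in the amalgam, hence not in $A^+$. So your recipe outputs the wrong group, and also a degenerate ``amalgam'' whose edge group is not proper, contradicting Definition \ref{def: bearable}. The two-vertex decomposition that actually verifies the theorem here is $\Aut^\circ V_{d,e}\cong A^+ *_{C}\,\tau A^+\tau^{-1}$ (the kernel of the evident homomorphism $\Aut V_{d,e}\to\ZZ/2\ZZ$, identified via Bass--Serre theory, together with a check that $\tau$ is not in the neutral component): an amalgam of two \emph{conjugate} solvable nested ind-groups, not of the neutral components of the given factors. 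Your argument does go through for $(\PP^1\times\PP^1)\setminus\Delta$, but only because there the involution $\tau$ lies in the edge group $C'$ (see \ref{sit: Aut P1-P1}); that is the hypothesis your heuristic secretly needs, and case (ii) does not satisfy it.

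Second, your paragraph on $\AA^2$ establishes neither alternative of the dichotomy. With the Jung--van der Kulk factors, ${\rm J}(\AA^2)$ is indeed a solvable nested ind-group, but $\Aff(\AA^2)$ is neither solvable (it contains $\SL(2,\K)$) nor reductive (its unipotent radical is the translation subgroup $\G_{\rm a}^2$); calling it ``the (reductive-containing) affine group'' does not make it reductive, and your sentence first invokes the ``both solvable'' alternative and then retracts it. Note that the obvious repair---replacing $\Aff(\AA^2)$ by $\GL(2,\K)$---fails: writing ${\rm J}'$ for the full de Jonqui\`eres group, $\sigma$ for the coordinate swap and $t_v$ for the translation by $v$, the word $\sigma\, t_{(0,1)}\,\sigma\, t_{(-1,0)}$ is reduced of length four in $\GL(2,\K)*_{\GL(2,\K)\cap{\rm J}'}{\rm J}'$, yet it equals the identity in $\Aut\AA^2$, so the natural map from that amalgam is not injective. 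Thus the case $X=\AA^2$ needs either a separate argument or an explicit acknowledgement that the dichotomy, as literally stated, does not follow from the decomposition recorded in \ref{sit: A2}; as written, your case analysis silently glosses over exactly the point where the statement is most delicate.
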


\begin{corollary}\footnote{Cf.\ Proposition \ref{prop: 4.14}.}\label{cor: alg-grps-solv}  Suppose that the base field $\K$ is uncountable. 
Let $X$ in  {\rm (\ref{eq: classical})} be one of the surfaces $\A^1\times\A^1_*, (\A^1_*)^2$, or $V_{d,e}$ with $e>1$.  
Then any connected algebraic group $G$ acting effectively on $X$ is solvable.
\end{corollary}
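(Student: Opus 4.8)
The plan is to realize the acting group $G$ as a connected algebraic subgroup of $\Aut^\circ X$ and then to apply the Serre-type Proposition \ref{serre} to an amalgam decomposition of $\Aut^\circ X$ in which \emph{both} vertex groups are solvable. First I would use effectiveness: the action of $G$ is an algebraic family $G\to\Aut X$ in the sense of Ramanujam, so by Lemma \ref{lem:family} it lands in some $\Sigma_d$ and is a morphism of ind-varieties; being an injective group homomorphism (effectiveness), $G$ is thereby identified with an algebraic subgroup of $\Aut X$. Since $G$ is connected, $G\subset\Aut^\circ X$.

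For two of the three surfaces the conclusion is then immediate and no amalgam machinery is needed. If $X=\A^1\times\A^1_*$, then by \ref{sit: A1-A1-star} the whole group $\Aut X$ is solvable, hence so is its subgroup $G$. If $X=(\A^1_*)^2$, then by \ref{sit: A1star-2} one has $\Aut X\cong\TTT\rtimes\GL_2(\ZZ)$, so $\Aut^\circ X=\TTT\cong\G_{\rm m}^2$ is a torus; any connected algebraic subgroup $G\subset\TTT$ is a subtorus and in particular solvable.

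The substantive case is $X=V_{d,e}$ with $e>1$. By \ref{sss: sing-toric} this falls under case (ii) or (iii), and in both cases $\Aut X\cong A^+*_C A^-$ with $A^\pm=\mathcal{N}_{d,e}^\pm/\mu_{d,e}$ (respectively $A^-=\langle\TTT,\tau\rangle/\mu_{d,e}$ in case (ii)). Here each $\mathcal{N}_{d,e}^+=\mathbb{U}_{d,e}\rtimes\TTT$ is a subgroup of the solvable de Jonqu\`eres group ${\rm J}(\A^2)=\mathbb{U}\rtimes\TTT$, and $\mathcal{N}_{d,e}^-=\tau\mathcal{N}_{d,e}^+\tau$ is its $\tau$-conjugate, hence again solvable; moreover $\langle\TTT,\tau\rangle$ has solvable neutral component $\TTT$. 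Since passing to neutral components and to quotients by the finite central group $\mu_{d,e}$ preserves solvability, the decomposition $\Aut^\circ X\cong A^\circ *_{C^\circ} B^\circ$ of Theorem \ref{thm: ex-bearable} is finitely bearable with \emph{both} vertex groups $A^\circ,B^\circ$ solvable nested ind-groups. This is exactly the step where the hypothesis $e>1$ is used: for the Veronese cone $V_{d,1}$ (case (i)) the factor $A^-$ is the reductive group $\GL(2,\K)/\mu_{d,1}$, and indeed $V_{d,1}$ carries a nonsolvable $\SL(2,\K)$-action, so the conclusion genuinely fails there.

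Finally, since $\K$ is uncountable and a finitely bearable group is countably bearable, Proposition \ref{serre} applies to the algebraic subgroup $G\subset\Aut^\circ X$: it is conjugate, inside $\Aut^\circ X$, to an algebraic subgroup of one of the vertex groups $A^\circ$ or $B^\circ$. As both of these are solvable, $G$ is isomorphic to an algebraic subgroup of a solvable group and is therefore solvable. I expect the main obstacle to be precisely the case-by-case check in the previous paragraph, namely verifying that for $e>1$ no reductive amalgam factor (such as the $\SL(2,\K)$ on the Veronese cone) can occur; once this solvability of both factors is secured, the reduction via Proposition \ref{serre} is routine.
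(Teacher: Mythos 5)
Your proposal is correct and takes essentially the same route as the paper: for these surfaces the paper's proof likewise applies Proposition \ref{serre} to the amalgam decomposition of Theorem \ref{thm: ex-bearable}, whose vertex groups $A^\circ$ and $B^\circ$ are both solvable in exactly these cases, so $G$ is conjugate into a solvable factor. Your extra steps (realizing $G$ as an algebraic subgroup of $\Aut^\circ X$ via Lemma \ref{lem:family}, handling $\A^1\times\A^1_*$ and $(\A^1_*)^2$ directly without the amalgam, and spelling out why the $V_{d,e}$-factors are solvable precisely when $e>1$) only make explicit what the paper's one-line proof leaves implicit.
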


\begin{proof}
Indeed,  by Proposition \ref{serre}, $G$ is conjugated to a subgroup of one of the factors $A^\circ$ and $B^\circ$, which are both solvable in these cases.
\end{proof}

\section{Algebraic group actions on affine surfaces}
\label{sec: prelim}

\subsection{Generalities}\label{ss: generalities}
In this section we recall some
general facts about algebraic group actions on affine varieties and their specialization to the case of affine surfaces. 
By a $G$-variety we mean a variety with an effective (regular) action of an (algebraic) group $G$.
The next proposition is well known (see, e.g., \cite{Po1, Po2} and \cite[Lem.\ 2.7 and 2.9]{FZ-LND});
for the reader's convenience we provide either a short argument, or a reference.

\begin{proposition}\label{prop: basic facts}
Let $G$ be a connected affine algebraic group, and let $X$ be a normal affine $G$-variety.
Then the following hold.
\begin{itemize}
\item[(a)]  We have
$\rk G\le \dim X$, and $\rk G=\dim X$ if and only if $X$ is toric.
\item[(b)]  If $G$ is solvable and acts transitively on $X$, then
$X\cong\A^k\times (\A^1_*)^l$ for some $k,l\ge 0$.
\item[(c)]  If $G$ is solvable and acts on $X$ with an open orbit $O$,
then either $O=X$, or $X\setminus O$ is a divisor.
\item[(d)] If $G$ is unipotent and has  an open orbit in $X$, then $X\cong\AA^{n}$.
\item[(e)] If $G$ is reductive and acts with an open orbit, then it has a unique closed orbit, and this orbit
lies in the closure of any other orbit.
\item[(f)] If $G$ is semisimple, then $G$ has no one-dimensional orbit in $X$.
\end{itemize}
\end{proposition}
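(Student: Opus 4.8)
The plan is to establish the six items essentially independently, leaning on standard structure theory, with (b) serving as the backbone for (c) and (d). I would first dispose of (a) by passing to a maximal torus $T\subseteq G$, so that $\rk G=\dim T$ and $T$ acts faithfully because $G$ does. The key observation is that a faithful torus action has finite generic isotropy: if the stabilizer $S$ of a very general point had $\dim S>0$, then $S^\circ$ would be a subtorus whose fixed locus is closed and contains a dense subset, hence equals $X$, so $S^\circ$ acts trivially, contradicting faithfulness. Therefore a generic $T$-orbit has dimension $\dim T$, giving $\rk G=\dim T\le\dim X$. If equality holds, a generic $T$-orbit is dense in the irreducible $X$, so a torus of rank $\dim X$ has a dense orbit, which is exactly the statement that $X$ is toric; the converse is immediate from the definition.

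For (b) I would reduce to the homogeneous case $X\cong G/H$ and write $G=U\rtimes T$ with $U$ the unipotent radical and $T$ a maximal torus. The assertion then becomes the structure theorem for affine homogeneous spaces of connected solvable groups, namely that such a space is isomorphic to $\A^k\times(\A^1_*)^l$; I would either cite this (Rosenlicht, and the references already given in the text) or sketch it by dévissage along a composition series of $G$, where each $\G_{\rm a}$-quotient contributes an $\A^1$-factor and each $\G_{\rm m}$-quotient an $\A^1_*$-factor. This is the step I expect to be the genuine obstacle, since it is the one truly classificatory fact underlying the rest. Granting (b), part (d) is short: a unipotent group is solvable with no nontrivial characters, so the open orbit $O$ carries no nonconstant units and (b) forces $O\cong\A^n$ with $n=\dim X$; moreover orbits of unipotent groups on affine varieties are closed, so the open orbit is also closed, whence $O=X$ by irreducibility and $X\cong\A^n$.

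For (c) I would again use that, by (b), the open orbit $O$ is affine. Set $Y=X\setminus O$, let $Y^{(1)}$ be the union of its codimension-one components, and put $U=X\setminus Y^{(1)}$. Then $U$ is normal with $U\setminus O=Y\cap U$ of codimension $\ge 2$, so algebraic Hartogs yields $\mathcal{O}(U)=\mathcal{O}(O)$. Since $O$ is affine and dense in $U$, the affinization of $U$ equals $O$ and the inclusion $O\hookrightarrow U$ must be an isomorphism, forcing $Y=Y^{(1)}$ to be pure of codimension one (or $Y=\emptyset$, i.e.\ $O=X$). The delicate point here is precisely the purity conclusion, that is, ruling out components of $X\setminus O$ of codimension $\ge 2$, which is exactly what the Hartogs/affinization argument delivers.

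Finally, (e) and (f) are standard. For (e) I would invoke that, $G$ being reductive and $X$ affine, the categorical quotient $\pi\colon X\to X/\!/G=\Spec\mathcal{O}_X(X)^G$ exists and each of its fibres contains a unique closed orbit lying in the closure of every orbit of that fibre; an open, hence dense, orbit forces $\mathcal{O}_X(X)^G=\K$, so $X/\!/G$ is a point, all of $X$ is a single fibre, and the assertion follows. For (f), a one-dimensional orbit is a quasi-affine homogeneous curve, hence isomorphic to $\A^1$ or $\A^1_*$, and in either case the identity component of its automorphism group is solvable. A semisimple group has no nontrivial solvable quotient, so its image in that automorphism group is trivial, meaning $G$ would act trivially on the orbit, which is impossible for a one-dimensional orbit; hence no such orbit exists.
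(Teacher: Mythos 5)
Your proposal is correct and follows essentially the same route as the paper: rigidity of subtori for (a), the Rosenlicht--Popov classification of homogeneous spaces of connected solvable groups as the backbone for (b)--(d) (the paper cites this as \cite[Thm.\ 2]{Po2}), the standard GIT fact on unique closed orbits for (e) (which the paper cites from \cite{Po1}), and the impossibility of a nontrivial semisimple action on the orbit curves $\A^1$, $\A^1_*$ for (f). The one step you should make explicit in (a) is the rigidity/countability of algebraic subtori, which the paper invokes by name: knowing that each very general point has a positive-dimensional stabilizer does not by itself produce a \emph{single} subtorus whose fixed locus is dense --- for that you need that only countably many subtori of $T$ can occur as identity components of stabilizers, so that one of their (closed) fixed loci must be all of $X$, contradicting effectiveness.
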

\begin{proof}
(a) Let $T\subset G$ be a maximal torus. By the rigidity  of algebraic subtori, algebraic subgroups of $T$ form a countable set. Since $T$ acts effectively on $X$,
the isotropy subgroup of $T$ at a generic point of $X$ is trivial due to the
aforementioned rigidity.
Hence $\rk G=\dim T\le\dim X$.
The second assertion in (a) follows by definition of a toric variety.

For (b) see \cite[Thm.\ 2]{Po2}.

(c) follows from (b) since the open orbit is affine in this case.

(d) Let $O$ be the  open orbit of $U$. Since any orbit of a unipotent group acting on an affine variety
is closed, $O=X$. Now the result follows from the corollary of Theorem 2 in {\em loc.cit.}

(e) is proven in \cite[Prop.\ 2]{Po1}.

(f) Since $G$ is semisimple, it admits no non-trivial homomorphism to $\Aff (\A^1)$.
Indeed, otherwise, it would act non-trivially on $\PP^{1}$ with a fixed point. 
Such an action can be  lifted to a non-trivial linear representation of the universal covering group $\tilde G$ in $\GL(2,\K)$
with a trivial one-dimensional subrepresentation, which is impossible.

It follows that $G$ cannot act non-trivially on a curve. Now (f) follows.
\end{proof}

\begin{remark}\label{rem:quasi-proj}
Note that the proof of (a) works equally for any quasi-projective variety with an effective action of $G$.
\end{remark}

\begin{corollary}\label{cor: basic facts}
Let  $X$ be a normal affine $G$-surface. Then the following hold.
\begin{itemize}
\item[(g)]  If $G$ is non-Abelian and unipotent, then $X\cong\A^2$.
\item[(h)] If $G$ semisimple, then it has
 an open orbit in $X$ with a finite complement.
\end{itemize}
\end{corollary}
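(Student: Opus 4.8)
The statement splits into the two independent claims (g) and (h), and for each I would argue through the orbit structure, invoking the relevant items of Proposition~\ref{prop: basic facts}. The plan for (g) is to produce an open orbit and then quote Proposition~\ref{prop: basic facts}(d); the plan for (h) is to combine the absence of one-dimensional orbits with the structure of reductive actions possessing an open orbit.

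For (g): since $G$ is connected and non-Abelian, its Lie algebra $\g=\Lie G$ is non-Abelian nilpotent, so there exist $\partial_1,\partial_2\in\g$ with $[\partial_1,\partial_2]\ne 0$; as $G$ is unipotent, every element of $\g$ acts on $\cO_X(X)$ as a locally nilpotent derivation. First I would check that $\partial_1$ and $\partial_2$ cannot define the same $\A^1$-fibration of $X$. Indeed, two nonzero locally nilpotent derivations with the same general fibers share the same ring of invariants $R=\ker\partial_1=\ker\partial_2$; since $\trdeg \Quot(\cO_X(X))/\Quot(R)=1$, the module of $R$-derivations is generically of rank one, so $\partial_2=f\partial_1$ for some $f\in\Quot(\cO_X(X))$. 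By the replica principle for locally nilpotent derivations ($f\partial_1$ being again locally nilpotent forces $f\in\ker\partial_1$), one gets $\partial_1(f)=0$ and hence $[\partial_1,\partial_2]=\partial_1(f)\,\partial_1=0$, a contradiction. Thus $\partial_1,\partial_2$ define distinct fibrations, so their values are linearly independent at a general point $x\in X$ (otherwise they would be proportional as vector fields, defining the same fibration). Consequently the $G$-orbit of $x$ is two-dimensional, hence open in the surface $X$, and Proposition~\ref{prop: basic facts}(d) yields $X\cong\A^2$.

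For (h): by Proposition~\ref{prop: basic facts}(f) a semisimple group has no one-dimensional orbit in $X$, so every $G$-orbit has dimension $0$ or $2$. If all orbits were points, $G$ would fix $X$ pointwise, contradicting effectiveness; hence some orbit $O$ is two-dimensional, and a two-dimensional irreducible locally closed subset of the irreducible surface $X$ is open and dense. Since a semisimple group is reductive, Proposition~\ref{prop: basic facts}(e) applies: there is a unique closed orbit $Y$, lying in the closure of every orbit. By Proposition~\ref{prop: basic facts}(f) again $\dim Y\ne 1$, and $Y$ cannot be two-dimensional unless $Y=X=O$; so, whenever $X\ne O$, the orbit $Y$ is a single fixed point. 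I would then observe that every orbit is accounted for: a two-dimensional orbit is open and dense, hence equals $O$ by irreducibility (two distinct open dense orbits would meet), while a zero-dimensional orbit is closed and therefore coincides with the unique closed orbit $Y$. Thus $X=O\sqcup Y$ with $Y$ a point, or $X=O$, so in either case $X\setminus O$ is finite, as required.

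The main obstacle is the invariant-theoretic input in (g): the proportionality of two locally nilpotent derivations sharing a fibration, together with the replica principle, which together guarantee that \emph{non-commuting} derivations must produce transverse orbit directions. Once this is in place, both parts reduce cleanly to the corresponding items of Proposition~\ref{prop: basic facts}.
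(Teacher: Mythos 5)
Your proposal is correct, and both halves reach the paper's conclusions, but by routes that differ enough from the paper's own short proof to be worth recording. For (g) the paper argues by contradiction at the group level: if $G$ had no open orbit, its general orbits would be one-dimensional, so every one-parameter subgroup $H\subset G$ would have the same algebra of invariants $\mathcal{O}(X)^{H}=\mathcal{O}(X)^{G}$, whose spectrum is a smooth affine curve by Fieseler's lemma \cite[Lem.\ 1.1]{Fi}; any two one-parameter subgroups then preserve each fiber of the resulting common $\A^1$-fibration, hence act by translations on general fibers and therefore commute, making $G$ Abelian. You run the same dichotomy at the Lie-algebra level instead: two non-commuting locally nilpotent derivations cannot share a kernel, by proportionality over the kernel's fraction field together with the converse replica principle. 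This substitution is correct and standard (the very same facts about proportional locally nilpotent derivations are invoked later in the paper, in the proof of Theorem \ref{thm: U-infty}); it buys you a direct construction of a two-dimensional orbit and avoids Fieseler's lemma, at the price of the algebraic input on LNDs. For (h) the paper simply declares the statement ``immediate from (f)''. Read literally, (f) only excludes one-dimensional \emph{orbits}, so a priori the complement of the open orbit could still contain a curve consisting of fixed points; your appeal to (e) --- an open orbit forces a \emph{unique} closed orbit, and every fixed point is itself a closed orbit, so there is at most one of them --- is precisely what eliminates this possibility. In this respect your argument for (h) is actually more complete than the one-line proof in the paper.
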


\begin{proof}
(g) If $G$ acts with an open orbit on $X$, then the result follows from Proposition~\ref{prop: basic facts}(d).
Otherwise, $G$ acts with one-dimensional general orbits, and so, any one-parameter subgroup $H\subset G$
has the same algebra of invariants:
$\mathcal{O}(X)^H=\mathcal{O}(X)^G$. This algebra is affine and its spectrum $Z$ is a smooth affine curve (see \cite[Lem.~1.1]{Fi}).
Any fiber of the induced $\AA^{1}$-fibration $\pi\colon X\to Z$ is stable under the $G$-action.

If $H'$ is another one-parameter subgroup of $G$,
then the actions of $H$ and $H'$ commute. Hence these subgroups commute, and so,
$G$ is Abelian, contrary to our assumption.

Finally, (h) is  immediate from (f).
\end{proof}

\begin{remark}  The affine plane is exceptional with respect to the property
in (g).  Indeed, the Heisenberg group 
$$H=\left\{\left(\begin{matrix} 1 & a & b\\
0 & 1 & c\\
0 & 0 & 1\end{matrix}\right), \quad a,b,c\in\K\right\}\, $$ is a  non-Abelian
unipotent group acting effectively on $\A^2$ via $(x,y)\mapsto (x+ay+b, y+c)$.
\end{remark}

Let us also mention the following results.

\begin{proposition}[{\rm \cite[Thm.\ 3.3, Cor.\ 3.4]{FZ-unique}}]\label{prop-FZ-unique} 
Suppose that a normal affine surface $X\not\cong\A^1_*\times\A^1_*$ admits two effective $\G_m$-actions with distinct orbits, 
that is, with infinitesimal generators $\delta$, $\tilde\delta$, 
where $\delta\neq\pm \tilde\delta$. Then $X$ admits as well a nontrivial $\G_a$-action. 
Furthermore, if $X$ is not toric, then any two effective $\G_m$-actions on $X$, 
after possibly switching one of them by the automorphism $\lambda\mapsto \lambda^{-1}$ of $\G_m$, 
are conjugate via an automorphism provided by a $\G_a$-action on $X$.
\end{proposition}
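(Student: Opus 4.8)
The plan is to realise the two actions as two $\ZZ$-gradings of the coordinate ring $A=\cO_X(X)$ and to exploit the DPD presentation of $\G_m$-surfaces from Subsection~\ref{ss: DPD}. Write $T_1,T_2$ for the two tori, with infinitesimal generators $\delta,\tilde\delta$; these are semisimple (locally finite, diagonalisable) derivations of $A$, and $T_i$ corresponds to the eigenspace decomposition of $A$ under $\delta$ (resp.\ $\tilde\delta$). Fixing $T_1$, put $A=\bigoplus_n A_n$ with $\delta|_{A_n}=n\cdot\id$, and record the type (elliptic, parabolic, hyperbolic) and DPD data $(C;D_+,D_-)$ of $T_1$. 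The hypothesis $\delta\neq\pm\tilde\delta$, together with ``distinct orbits'', guarantees $T_1\neq T_2$ and that a general point has distinct one-dimensional $T_1$- and $T_2$-orbits; hence the algebraically generated subgroup $\langle T_1,T_2\rangle\subset\Aut X$ has a two-dimensional, and therefore open, orbit on $X$.

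First I would settle the existence of a $\G_a$-action by decomposing $\tilde\delta$ into its $T_1$-homogeneous components. Writing $\tilde\delta=\sum_i\partial_i$ with $\partial_i(A_n)\subset A_{n+i}$, each $\partial_i$ is again a derivation of $A$ and $[\delta,\partial_i]=i\,\partial_i$. Since the two actions have distinct orbits, $\tilde\delta$ is not a scalar multiple of a $T_1$-invariant derivation, so some $\partial_i$ with $i\neq0$ is nonzero. The crucial point is that such a homogeneous component of nonzero degree is locally nilpotent: it is a root vector for the adjoint action of $\delta$, i.e.\ $\exp(s\delta)\,\partial_i\,\exp(-s\delta)=e^{is}\partial_i$, so it points in a ``unipotent direction'' relative to $T_1$, and on a $\G_m$-surface the DPD description forces $\partial_i$ to raise (or lower) degree in a way that terminates on each homogeneous piece. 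Hence $\partial_i$ integrates to a nontrivial $\G_a$-action, which proves the first assertion. (Equivalently, were $X$ to admit no $\G_a$-action it would lie in $(\ML_2)$ by Lemma~\ref{lem: admissible-chains}; but a non-toric $(\ML_2)$-surface carries an essentially unique $\G_m$-action up to inversion, while the only toric $(\ML_2)$-surface, $(\A^1_*)^2$, is excluded by hypothesis — contradicting the presence of two actions with distinct orbits.)

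For the second assertion assume $X$ is not toric, so $\rk\Aut X=1$ by Proposition~\ref{prop: basic facts}(a) and the list in \ref{sit: toric}. I would then use the $\G_a$-action $U=\{g_s=\exp(s\partial_i)\}$ produced above to conjugate $T_1$ onto $T_2$. Conjugation transforms the generator $\delta$ into $\operatorname{Ad}(g_s)\delta=\delta+s[\partial_i,\delta]=\delta-i s\,\partial_i$, the higher brackets vanishing because $[\partial_i,\partial_i]=0$; thus the family $g_sT_1g_s^{-1}$ consists of tori whose generators run through $\delta-is\,\partial_i$. This is precisely the model $\G_a\rtimes\G_m\subset\Aff(\A^1)$, in which all maximal tori are $\G_a$-conjugate, and on the DPD side it amounts to moving the support of $D_+,D_-$ along the fibres of the $\A^1$-fibration associated with $U$. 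Since $\rk\Aut X=1$, $T_2$ is itself a maximal torus, hence — after possibly replacing $\tilde\delta$ by $-\tilde\delta$, which swaps the roles of $D_+$ and $D_-$ — conjugate to $T_1$; matching the homogeneous components of $\tilde\delta$ against $\delta-is\,\partial_i$ pins down the value of $s$ for which $g_sT_1g_s^{-1}=T_2$, yielding the required $\G_a$-conjugacy.

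The main obstacle is the local-nilpotence step and the precise matching in the last paragraph. In the hyperbolic case the grading is two-sided, so a homogeneous component of nonzero degree need not \emph{a priori} be locally nilpotent; one must invoke the DPD classification of homogeneous locally nilpotent derivations on $\G_m$-surfaces (which constrains $C$ to be $\A^1$ and restricts $D_\pm$ accordingly) to ensure that the relevant $\partial_i$ genuinely integrates to a $\G_a$-action. Likewise, the delicate part of the conjugacy statement is not the production of \emph{some} $\G_a$-conjugate torus but the verification that it coincides with the given $T_2$: this requires the rank-one uniqueness of maximal tori (an analogue of Theorem~\ref{thm: Mostow} in the pertinent subgroup of $\Aut X$), together with explicit bookkeeping of how $\exp(s\partial_i)$ transforms the DPD data, in order to force $\tilde\delta$ into the shape $\delta-is\,\partial_i$ up to inversion.
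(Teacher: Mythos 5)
First, a point of order: the survey does not prove this proposition at all — it is quoted verbatim from \cite[Thm.\ 3.3, Cor.\ 3.4]{FZ-unique} — so your attempt can only be judged against the argument of that source and against consistency with the rest of the survey. On both counts there are genuine gaps.

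For the existence of a $\G_a$-action, your overall strategy (decompose $\tilde\delta$ into $\delta$-homogeneous components) is the right one, but two steps fail. First, the implication ``distinct orbits $\Rightarrow$ some $\partial_i\neq 0$ with $i\neq 0$'' is false: take $X=\A^2$, $\delta=x\,\partial/\partial x$, $\tilde\delta=y\,\partial/\partial y$; the orbits are distinct, yet $\tilde\delta$ is $T_1$-homogeneous of degree $0$. The commuting case $[\delta,\tilde\delta]=0$ must be treated separately: there the two tori generate an effective $2$-torus, so $X$ is toric, and since $X\not\cong(\A^1_*)^2$ it is one of $\A^2$, $V_{d,e}$, $\A^1\times\A^1_*$, all of which carry $\G_a$-actions — this is exactly why $(\A^1_*)^2$ is excluded from the hypothesis. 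Your parenthetical fallback cannot substitute for this, because ``a non-toric $(\ML_2)$-surface carries an essentially unique $\G_m$-action'' is itself a case of the statement being proved; indeed, in this survey that uniqueness is \emph{deduced} from the present proposition in the proof of Proposition \ref{prop-Iitaka}, so your argument is circular. Second, local nilpotence: being an eigenvector of $\operatorname{ad}\delta$ with nonzero eigenvalue does \emph{not} make a derivation locally nilpotent (for $\delta=x\,\partial/\partial x-y\,\partial/\partial y$ on $\A^2$, the derivation $x^3y\,\partial/\partial x$ has $\operatorname{ad}\delta$-eigenvalue $1$ but is not locally nilpotent), so the ``unipotent direction'' heuristic proves nothing, and the DPD classification of homogeneous locally nilpotent derivations cannot certify that a given component of $\tilde\delta$ is one. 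The correct and needed lemma is that the two \emph{extreme} homogeneous components $\partial_k,\partial_l$ of a locally finite (in particular semisimple) derivation are locally nilpotent when of nonzero degree, because otherwise the degrees occurring in $\tilde\delta^m(a)$ would grow unboundedly, contradicting local finiteness; this is the analogue for semisimple $\tilde\delta$ of \cite[Lem.\ 2.1]{FZ-LND}, \cite{Re}, and it is the tool used in \cite{FZ-unique}.

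The conjugacy part of your argument has a more fundamental defect. The step ``$T_2$ is a maximal torus, hence conjugate to $T_1$'' presupposes conjugacy of maximal tori in $\Aut X$, which is not available: $\Aut X$ is only an ind-group, Theorem \ref{thm: Mostow} applies to nested ind-groups, and the automorphism group of a Gizatullin surface is in general not nested (Corollary \ref{cor: unbearable}). Worse, Theorem \ref{thm: AG} exhibits non-toric surfaces (Danilov--Gizatullin surfaces $X_d$, $d\ge 4$, and special Gizatullin surfaces) carrying several conjugacy classes of $1$-tori; any argument as soft as yours would ``prove'' these classes coincide, so it cannot be completed — the second assertion depends on the precise hypotheses of \cite[Cor.\ 3.4]{FZ-unique} and fails if one reads it as a statement about arbitrary pairs of tori on arbitrary non-toric surfaces. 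Finally, your matching step is unjustified on its own terms: $\operatorname{Ad}(\exp(s\partial_i))\delta=\delta-is\,\partial_i$ has exactly two homogeneous components, while $\tilde\delta$ may have arbitrarily many, so no single value of $s$ need produce $\pm\tilde\delta$; where the conjugacy statement does hold (e.g.\ for $(\ML_1)$- and $(\ML_2)$-surfaces), the survey obtains it instead from the structure $\Aut^\circ X\cong\UU_\mu\rtimes\G_m^r$ (Theorem \ref{thm: neutral-comp}, Corollary \ref{cor: neutral-comp}), i.e.\ by conjugating inside a concrete solvable nested ind-group, not by a one-step computation.
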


\begin{remark} See also \cite[Thm.\ 1]{AG} for a generalization to higher dimensions. \end{remark}

\begin{proposition}\label{prop-Iitaka} 
If the neutral component $\Aut^\circ X$ of an affine algebraic variety $X$ with $\dim X\ge 2$ is an algebraic group, then $\Aut^\circ X\cong (\G_m)^r$ is an algebraic torus. 
 This is the case, in particular, for surfaces of class $(\ML_2)$. 
\end{proposition}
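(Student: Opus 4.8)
The plan is to establish the dichotomy in two stages: first that an algebraic $\Aut^\circ X$ is forced to be a torus, and then that surfaces of class $(\ML_2)$ satisfy the hypothesis of the first stage. Write $A=\cO_X(X)$ and $G=\Aut^\circ X$, and suppose $G$ is a connected algebraic group which is \emph{not} a torus. The starting point is the structural observation that such a $G$ always contains a one-parameter unipotent subgroup $U\cong\G_a$: if $G$ is not reductive its unipotent radical $R_u(G)$ is nontrivial, while if $G$ is reductive but not a torus its commutator subgroup $[G,G]$ is a nontrivial semisimple group; in the first case a subgroup of $R_u(G)$, in the second a root subgroup, supplies the required $\G_a$. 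Fixing such a $U$, I would argue for a contradiction.

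The core is the standard replication trick for locally nilpotent derivations. The subgroup $U$ is the image of an effective $\G_a$-action, hence corresponds to a nonzero locally nilpotent derivation $\partial$ of $A$ whose infinitesimal generator lies in $\Lie G$. Since $U$ acts faithfully with one-dimensional generic orbits and $\dim X\ge 2$, the ring of invariants $A_0=\ker\partial$ satisfies $\trdeg_\K\Quot(A_0)=\dim X-1\ge 1$, so $A_0$ is infinite-dimensional over $\K$. For every $f\in A_0$ one checks by induction that $(f\partial)^n=f^n\partial^n$, so $f\partial$ is again locally nilpotent; the associated one-parameter group $\exp(t\,f\partial)$ lies in $\Aut^\circ X=G$, whence $f\partial\in\Lie G$. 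As $\partial\ne 0$ and $A$ is a domain, the $\K$-linear map $A_0\to\Lie G$, $f\mapsto f\partial$, is injective, forcing $\dim_\K\Lie G=\infty$. This contradicts the finite-dimensionality of the Lie algebra of the algebraic group $G$, so $G\cong(\G_m)^r$.

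For the $(\ML_2)$ assertion I would invoke \ref{sit: ML-inv} and \ref{sit: LND}: a surface of this class admits no nontrivial $\G_a$-action, equivalently $A$ carries no nonzero locally nilpotent derivation. Consequently $\Aut^\circ X$ contains no one-parameter unipotent subgroup, and by Proposition \ref{prop: basic facts}(a) every connected algebraic subgroup of $\Aut X$ has rank at most $\dim X=2$; hence every connected algebraic subgroup of $\Aut^\circ X$ is a torus of dimension at most two. It then remains only to see that $\Aut^\circ X$ is \emph{itself} an algebraic group, after which the first stage applies verbatim to yield $\Aut^\circ X\cong(\G_m)^r$.

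The main obstacle is precisely this last point: deducing that the absence of $\G_a$-subgroups, together with the rank bound, forces the ind-group $\Aut^\circ X=\varinjlim\Sigma_d^\circ$ to stabilize at finite dimension rather than grow without bound. The growth of the exhausting subvarieties $\Sigma_d^\circ$ is not governed by the subgroup structure alone, so I expect to close this gap by appealing to the structural classification of $(\ML_2)$-surfaces — their minimal completions have only admissible extremal linear segments by Lemma \ref{lem: admissible-chains} — from which the algebraicity of $\Aut^\circ X$ (a finite-rank extension of a torus) is known. Granting algebraicity, the torus conclusion is immediate from the argument of the first stage.
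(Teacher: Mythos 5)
Your first stage is sound: the replication argument (for $f\in\ker\partial$ the derivation $f\partial$ is again locally nilpotent, so $f\mapsto f\partial$ injects the infinite-dimensional space $\ker\partial$ into $\Lie G$) is essentially the standard proof behind the Iitaka/Kraft references that the paper itself cites for the first assertion, so proving it rather than citing it is fine. The only point you leave implicit there is that a connected algebraic group acting faithfully on an affine variety is automatically linear; without this remark, ``not a torus'' does not yet force a $\G_a$-subgroup, since a priori $G$ could have an abelian-variety part. This is standard and easily repaired.

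The genuine gap is in the second stage, and you name it yourself: you never prove that $\Aut^\circ X$ is an algebraic group when $X\in(\ML_2)$, and that is the entire content of the second assertion --- without it, stage 1 has nothing to apply to. Your proposed fix, an appeal to Lemma \ref{lem: admissible-chains} as a source of ``known'' algebraicity, does not work: that lemma is a combinatorial characterization of the classes $(\ML_i)$ in terms of boundary graphs and says nothing about the ind-group $\Aut^\circ X$ stabilizing at finite dimension. The paper closes this gap (for $\rk\Aut X\ge 1$; the rank-$0$ case is explicitly deferred to the forthcoming paper \cite{PZ}) by a mechanism absent from your proposal: by Proposition \ref{prop-FZ-unique} an $(\ML_2)$-surface carries a unique algebraic torus $\TTT$ in $\Aut X$, so $\TTT$ is normal in $G=\Aut^\circ X$; deleting the $\TTT$-fixed points and the finitely many orbits obstructing a quotient gives a $G$-stable open set $U$ with a geometric quotient curve $C=U/\TTT$, hence a homomorphism $G\to\Aut^\circ C$; its kernel is shown to be exactly $\TTT$ by restricting to a general orbit $O\cong\A^1_*$, where $\TTT|_O=\Aut^\circ O\cong\G_m$ leaves no room for anything larger. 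Thus $G$ is an extension of a connected subgroup of the algebraic group $\Aut^\circ C$ by $\G_m$, hence algebraic of dimension $\le 2$, and only then does the first assertion finish the argument (giving $G=\TTT\cong\G_m$ when $r=1$, the case $r=2$ being the toric surface $(\A^1_*)^2$). This normality-plus-quotient step is the missing idea, and no purely subgroup-theoretic or combinatorial substitute for it is offered in your proposal.
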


\begin{proof} 
The first assertion follows easily  by a lemma of Iitaka \cite[Lem.\ 3]{Ii}, see, e.g., \cite[Thm.\ 1.3]{Kr} and \cite[Thm.\ 4.10(a)]{LZ}. 
The second will be proven in a forthcoming paper \cite{PZ}. 
Let us indicate an  independent approach in the particular case of surfaces $X\in (\ML_2)$ of positive rank $r=\rk \Aut X\ge 1$. 

If $r=2$, then $X$ is a toric surface. However, by \cite{Li} the only affine toric surface of class $(\ML_2)$ is the 2-torus $X=(\A^1_*)^2$ with $\Aut^\circ X=(\G_m)^2$. 

Due to Proposition \ref{prop-FZ-unique}, for a surface $X$ of class $(\ML_2)$, 
the group $\Aut X$ contains a unique algebraic torus $\TTT$. Hence, $\TTT \subset G=\Aut^\circ X$ is a normal subgroup. 
Suppose further that $r=1$, that is, $\TTT\cong\G_m$, and so, $X$ is a $\G_m$-surface. 
Consider the set $F$ consisting of the fixed points of $\TTT$ and of a finite union of all those one-dimensional orbits of $\TTT$, 
which make obstacle to existence of a geometric $\TTT$-quotient. Since $G$ normalizes $\TTT$, $F$ is $G$-stable. 
Its complement $U=X\setminus F$ admits a geometric quotient $C=U/\TTT$, where $C$ is an algebraic curve. This yields a homomorphism 
$G\to\Aut^\circ C$. Its kernel $H\supset\TTT$ stabilizes general  $\TTT$-orbits. 
For such an orbit $O\cong\A^1_*$ one has $\TTT|_O=\Aut^\circ O\cong\G_m$. 
Since $\Aut^\circ O\supset H|_O\supset\TTT|_O$, we have $H|_O=\TTT|_O$. It follows that $H=\TTT$. 
Hence $G$ is an extension of $\TTT$ by a connected subgroup of the algebraic group $\Aut^\circ C$, that is, $G$ is an algebraic group of dimension $\le 2$. 
By the first part of the proposition, $\Aut^\circ X=G=\TTT\cong\G_m$.
\end{proof}

\subsection{Quasihomogeneous affine surfaces}\label{ss: ss}
\label{sit: semisimple}
\begin{theorem}[Gizatullin--Popov, \cite{Gi}, \cite{Po2}]\label{th:GP}
A normal affine surface $X$ admitting an action of
an algebraic group with an open orbit whose complement is finite,
is one of the surfaces
\begin{equation}\label{eq: big-orbit}
\AA^2,\quad \AA^1\times\AA_*^1,\quad  (\AA^1_*)^2, \quad V_d, \,\,\,d\ge 2,
\quad\PP^2\setminus C,
\quad\mbox{and}\quad  (\PP^1\times\PP^1)\setminus\Delta\,,
\end{equation}
where $V_d=V_{d,1}$ is a Veronese cone (see \ref{ss: classical surfaces}), $C$ is a smooth conic in $\PP^2$, and $\Delta$
is the diagonal in $\PP^1\times\PP^1$.
\end{theorem}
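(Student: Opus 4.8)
The plan is to reduce everything to the structure of the acting group via its Levi decomposition and to treat the solvable and non-solvable cases separately. First I would replace $G$ by its neutral component $G^\circ$: the open orbit $O$ is irreducible, hence a single $G^\circ$-orbit, so the hypothesis is unchanged and we may assume $G$ connected. Then I write a Levi decomposition $G=R_u(G)\rtimes L$ and let $S=[L,L]$ be the semisimple part of the Levi factor $L$. The whole argument hinges on whether $S$ is trivial.

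If $S$ is trivial, then $G$ is solvable. By Proposition~\ref{prop: basic facts}(c) the complement $X\setminus O$ is then either empty or a divisor; since by hypothesis it is a finite set, it cannot be a divisor, so $O=X$ and $G$ acts transitively. Proposition~\ref{prop: basic facts}(b) now gives $X\cong\A^k\times(\A^1_*)^l$ with $k+l=2$, that is, $X$ is one of $\A^2$, $\A^1\times\A^1_*$, or $(\A^1_*)^2$. This disposes of the solvable case and already produces the first three surfaces of the list.

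Assume now $S$ is nontrivial. Since $S\subset G$ acts faithfully, Proposition~\ref{prop: basic facts}(f) forbids one-dimensional $S$-orbits, so a general $S$-orbit is two-dimensional; thus $S$ itself already acts with an open orbit $O_S\cong S/H$, $\dim H=1$, whose complement is finite by Corollary~\ref{cor: basic facts}(h). Next I bound the rank: $\rk S\le\rk G\le\dim X=2$ by Proposition~\ref{prop: basic facts}(a). The value $\rk S=2$ would force $\rk G=\dim X$, hence $X$ toric; but inspecting the automorphism groups of the affine toric surfaces computed in \ref{sit: toric}--\ref{sit: A1star-2} shows that none of them contains a semisimple subgroup of rank two. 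Indeed, for $\A^2$ and the Veronese cones the reductive subgroups are conjugate into $\GL(2,\K)$, whose semisimple part has rank one, $\A^1\times\A^1_*$ has a solvable automorphism group, and $(\A^1_*)^2$ admits no $\G_a$-action at all and hence no semisimple subgroup. Therefore $\rk S=1$, so $S\cong\SL(2,\K)$ or $S\cong\PSL(2,\K)$.

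It remains to identify the normal affine surfaces carrying an $\SL(2,\K)$- or $\PSL(2,\K)$-action with a two-dimensional orbit of finite complement. Since $S$ itself already acts on $X$ in this way, $X$ is an $S$-quasihomogeneous surface, and the extra factors $R_u(G)$ and the central torus of $L$ are irrelevant for identifying $X$: they act by automorphisms of the already quasihomogeneous $S$-surface and create nothing new. The homogeneous models $S/H$ with $\dim H=1$ are, up to the choice of $H$ (a torus or a one-dimensional unipotent subgroup) and the isogeny $\SL(2,\K)\to\PSL(2,\K)$, the punctured plane $\A^2\setminus\{0\}=\SL(2,\K)/U$ and the smooth affine quadric $\SL(2,\K)/T\cong(\PP^1\times\PP^1)\setminus\Delta$; passing to their normal affine completions with finite boundary and to the quotients by central finite subgroups yields precisely $\A^2$ once more, together with $V_d$ $(d\ge2)$, $\PP^2\setminus C$, and $(\PP^1\times\PP^1)\setminus\Delta$. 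I expect this last step to be the main obstacle: making it rigorous is exactly the classification of quasihomogeneous affine $\SL(2,\K)$-surfaces, where one must control the possible affine completions and the finite stabilizers to be sure the list is complete and free of spurious surfaces. Rather than reprove it, I would invoke Popov's work \cite{Po2} (see also Gizatullin \cite{Gi}).
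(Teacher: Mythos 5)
The paper offers no proof of this theorem at all: it is stated as a known result, attributed to Gizatullin \cite{Gi} and Popov \cite{Po2}, so there is no argument of record to compare yours against step by step. Your proposal is therefore strictly more detailed than the paper's treatment, and its reductions are sound. The solvable half is complete and self-contained given Proposition \ref{prop: basic facts}: a finite nonempty set is never a divisor, so part (c) forces transitivity, and part (b) gives $X\cong\A^k\times(\A^1_*)^l$, i.e.\ $\A^2$, $\A^1\times\A^1_*$, or $(\A^1_*)^2$. The non-solvable half is also correctly organized: effectiveness of $S$ together with Proposition \ref{prop: basic facts}(f) and Corollary \ref{cor: basic facts}(h) yields an open $S$-orbit with finite complement; the rank bound of Proposition \ref{prop: basic facts}(a) plus inspection of the toric surfaces in \ref{sit: toric}--\ref{sit: A1star-2} excludes $\rk S=2$, leaving $S\cong\SL(2,\K)$ or $\PSL(2,\K)$; and once $S$ alone is known to act with an open orbit of finite complement, the rest of $G$ is indeed irrelevant. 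At that point you invoke Popov for the classification of quasihomogeneous affine $\SL(2)$-surfaces, which is exactly the content the paper itself cites (for this step \cite{Po1} is the most pertinent reference), so this is not a gap relative to the paper --- but be aware that what you have written is a reduction to, not a replacement of, the cited classification. Two cosmetic slips: you assert $\dim H=1$ for the isotropy of the open $S$-orbit before knowing $\rk S=1$ (harmless, since it is never used), and for the Veronese cones the reductive subgroups are conjugate into $\GL(2,\K)/\mu_{d,1}$ (and only in the case $e=1$ is there a non-solvable amalgam factor at all), not into $\GL(2,\K)$ itself.
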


Using Proposition \ref{prop: basic facts}(f) we deduce the following corollary.

\begin{corollary}
Let $X$ be a normal affine surface. Then the following conditions are equivalent:
\begin{itemize}
\item[(i)] $X$ admits a nontrivial action of a connected semisimple group;
\item[(ii)] $X$ is spherical, that is, it admits a semisimple group action, such that a Borel subgroup has an open orbit;
\item[(iii)]  $X$ is one of the surfaces
\begin{equation}\label{eq:semisimple-list}
\AA^2,\quad V_d, \,\,\,d\ge 2,
\quad\PP^2\setminus C,
\quad  (\PP^1\times\PP^1)\setminus\Delta.
\end{equation}
\end{itemize}
\end{corollary}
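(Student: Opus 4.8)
The plan is to prove the equivalence of (i), (ii), and (iii) by exploiting the Gizatullin--Popov classification (Theorem \ref{th:GP}) together with Proposition \ref{prop: basic facts}(f). First I would establish the implication (iii)$\Rightarrow$(ii): for each surface in the list \eqref{eq:semisimple-list} one exhibits an explicit semisimple group action with a Borel subgroup having an open orbit. For $\A^2$ one uses the standard $\SL(2,\K)$-action (a Borel being the upper-triangular subgroup, whose orbit is dense); for the Veronese cones $V_d$ one uses the descended $\SL(2,\K)$-action described in \ref{sss: sing-toric}, under which the complement of the vertex is the open orbit, so that $V_d$ is a quasihomogeneous $\SL(2,\K)$-variety; for $\PP^2\setminus C$ and $(\PP^1\times\PP^1)\setminus\Delta$ one uses the $\PSL(2,\K)$-actions identified in \ref{sit: Aut P2-conic} and \ref{sit: Aut P1-P1}, where $A=\Aut(\PP^2,C)\cong\PSL(2,\K)$ and $A'\cong\PSL(2,\K)\times\langle\tau\rangle$ act with open orbit and finite complement. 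In each case one checks that the Borel orbit is already dense, which is the content of sphericity.

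Next I would handle (ii)$\Rightarrow$(i), which is immediate since a spherical surface carries by definition a nontrivial semisimple group action. The remaining and substantive direction is (i)$\Rightarrow$(iii). Suppose $X$ admits a nontrivial action of a connected semisimple group $G$. By Corollary \ref{cor: basic facts}(h), which itself rests on Proposition \ref{prop: basic facts}(f), the group $G$ has an open orbit in $X$ whose complement is finite. Thus $X$ satisfies the hypothesis of the Gizatullin--Popov Theorem \ref{th:GP}, and therefore $X$ is one of the six surfaces in \eqref{eq: big-orbit}, namely $\A^2$, $\A^1\times\A^1_*$, $(\A^1_*)^2$, $V_d$ with $d\ge 2$, $\PP^2\setminus C$, or $(\PP^1\times\PP^1)\setminus\Delta$.

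To conclude (i)$\Rightarrow$(iii) it remains to rule out the two surfaces $\A^1\times\A^1_*$ and $(\A^1_*)^2$ from the Gizatullin--Popov list, since these do not appear in \eqref{eq:semisimple-list}. Here I would argue that neither admits a nontrivial connected semisimple action. For $(\A^1_*)^2$ the full automorphism group is $\TTT\rtimes\GL_2(\ZZ)$ by \ref{sit: A1star-2}, so $\Aut^\circ X=\TTT$ is a torus and contains no nontrivial semisimple subgroup. For $\A^1\times\A^1_*$ the group $\Aut X$ is solvable by \ref{sit: A1-A1-star}, hence again contains no nontrivial semisimple subgroup. This shows that a surface carrying a nontrivial connected semisimple action must lie in the shorter list \eqref{eq:semisimple-list}, completing the cycle of implications.

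The main obstacle I anticipate is not any single deep step but rather the careful verification in the $(\text{iii})\Rightarrow(\text{ii})$ direction that the Borel subgroup, and not merely the whole group $G$, already has a dense orbit; for $\PP^2\setminus C$ and $(\PP^1\times\PP^1)\setminus\Delta$ this requires checking that the $\PSL(2,\K)$-action is not just quasihomogeneous but actually spherical. Since these surfaces are two-dimensional and $\PSL(2,\K)$ has rank one with a two-dimensional Borel, a dimension count shows the Borel orbit is dense as soon as the generic isotropy in the Borel is finite, which one confirms directly from the explicit descriptions in \ref{sit: Aut P2-conic} and \ref{sit: Aut P1-P1}. Thus the equivalence reduces entirely to the Gizatullin--Popov classification, Proposition \ref{prop: basic facts}(f), and the explicit computations of the automorphism groups already recorded in Section \ref{sec: first examples}.
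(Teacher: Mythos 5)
Your proposal is correct and follows essentially the same route as the paper: the paper's (one-line) proof deduces the corollary from Proposition \ref{prop: basic facts}(f) via Corollary \ref{cor: basic facts}(h) and the Gizatullin--Popov Theorem \ref{th:GP}, exactly as you do, with the exclusion of $\AA^1\times\AA^1_*$ and $(\AA^1_*)^2$ and the sphericity checks in \ref{sit: A2}--\ref{sit: Aut P2-conic} left implicit. Your filling-in of those details (solvability of the two excluded automorphism groups, and the dimension count showing the Borel orbit is dense) is accurate and is precisely what the paper's terse argument presupposes.
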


The following proposition is a version of Proposition 4.14 in \cite{FZ-LND}. We provide a new proof.

\begin{proposition}\label{prop: 4.14}  Suppose that the ground field $\K$ is uncountable. 
Let $X$ be a normal affine $G$-surface,
where $G\subset \Aut X$ is
a connected reductive algebraic group different from a torus. Then the pair $(X,G)$ is one of the following:
\begin{itemize}\item $(\AA^2, \GL(2,\K))$ and  $(\AA^2, \SL(2,\K))$;
\item $(V_d, \GL(2,\K)/\mu_{d})$, $d\ge 2$, $(V_d, \SL(2,\K))$, $d\ge 3$ odd,
and $(V_d, \PSL(2,\K))$,
$d\ge 2$ even;
\item $(\PP^2\setminus C, \PSL(2,\K))$ and $( (\PP^1\times\PP^1)\setminus\Delta, \PSL(2,\K))$.
\end{itemize}
Furthermore,
the action of $G$ on $X$ is unique up to a conjugation in the group $\Aut X$.
\end{proposition}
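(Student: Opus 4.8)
The plan is to classify the pairs $(X,G)$ by combining the structural results already established in the excerpt, and then to reduce the uniqueness of the action to a statement about conjugacy classes. First I would invoke Proposition \ref{prop: basic facts}(f) together with Corollary \ref{cor: basic facts}(h): since $G$ is reductive but not a torus, its semisimple part $S=[G,G]$ is nontrivial, and $S$ acts with an open orbit whose complement is finite. By the Gizatullin--Popov Theorem \ref{th:GP}, $X$ must therefore belong to the list \eqref{eq: big-orbit}, and the equivalence in the corollary preceding this proposition restricts $X$ further to the four surfaces in \eqref{eq:semisimple-list}, namely $\AA^2$, $V_d$ ($d\ge 2$), $\PP^2\setminus C$, and $(\PP^1\times\PP^1)\setminus\Delta$. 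This already eliminates $\AA^1\times\AA_*^1$ and $(\AA^1_*)^2$, which admit no semisimple action.

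Next I would pin down, for each of these four surfaces, which reductive groups can act. The key input is the explicit amalgam description of $\Aut^\circ X$ from Section \ref{sec: first examples} (items \ref{sit: A2}, \ref{sss: sing-toric}, \ref{sit: Aut P1-P1}, \ref{sit: Aut P2-conic}), in which one factor is solvable and the other is a reductive algebraic group such as $\GL(2,\K)$, $\SL(2,\K)$, or $\PSL(2,\K)$ (possibly modulo $\mu_d$). Since each $\Aut^\circ X$ here is finitely bearable by Theorem \ref{thm: ex-bearable}, Proposition \ref{serre} applies: any connected reductive algebraic subgroup $G\subset\Aut X$ is conjugate to an algebraic subgroup of one of the amalgam factors. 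A reductive non-torus group cannot land in the solvable factor, so $G$ is conjugate into the reductive factor. Reading off the maximal reductive subgroups of that factor, and noting the parity constraints coming from the quotient by $\mu_d$ in the Veronese case (which is why $\SL(2,\K)$ occurs only for $d$ odd and $\PSL(2,\K)$ only for $d$ even), yields exactly the list of pairs $(X,G)$ in the statement.

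For the uniqueness of the action up to conjugation in $\Aut X$, I would argue that any two effective actions of a fixed $G$ on a fixed $X$ from the list give two reductive subgroups of $\Aut X$, each isomorphic to $G$ and each occurring as a maximal connected reductive subgroup (or a canonical subgroup thereof) of the reductive amalgam factor. By Proposition \ref{serre} both are conjugate into that factor, and within a reductive algebraic group the maximal connected reductive subgroups of a given isomorphism type form a single conjugacy class by the Mostow-type Theorem \ref{thm: Mostow} and Theorem \ref{thm: finite-ss}; more concretely, in $\GL(2,\K)$ or $\PSL(2,\K)$ the relevant $\SL_2$- or $\PSL_2$-type subgroup is unique up to conjugacy. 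This produces the desired conjugacy of the two original embeddings inside $\Aut X$.

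The main obstacle I anticipate is the uniqueness claim rather than the classification. Conjugacy \emph{as abstract subgroups of the amalgam factor} is not automatically the same as conjugacy of the two group \emph{actions} on $X$; one must check that an inner automorphism realizing the abstract conjugacy actually intertwines the two geometric actions, and in the Veronese case one must handle the outer symmetry coming from the factor-swapping involution $\tau$ and the ambiguity $\lambda\mapsto\lambda^{-1}$ in the torus, in the spirit of Proposition \ref{prop-FZ-unique}. Keeping careful track of which maximal reductive subgroup is hit, and verifying that distinct geometric actions do not arise from outer automorphisms unaccounted for by conjugation, is where the real care is needed; the rest is bookkeeping against the explicit amalgam data.
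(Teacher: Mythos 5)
Your proposal is correct and follows essentially the same route as the paper: a nontrivial semisimple subgroup acting with an open orbit of finite complement, the Gizatullin--Popov Theorem \ref{th:GP} to pin $X$ down to the list (\ref{eq:semisimple-list}), then Proposition \ref{serre} applied to the amalgam structures of Section \ref{sec: first examples} to conjugate $G$ into the non-solvable factor, and finally reading off the possible reductive subgroups there (with the parity distinction in the $V_d$ case coming from the central $\mu_{d,1}$). Your caveat distinguishing conjugacy of abstract subgroups from conjugacy of the actions themselves is a fair point that the paper treats only implicitly, but the paper settles uniqueness exactly as you do, via conjugacy inside the reductive amalgam factor (citing \ref{sit: A2}, \ref{sss: sing-toric}, \ref{sit: Aut P1-P1}, \ref{sit: Aut P2-conic} and external references for the $\PSL(2,\K)$ cases).
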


\begin{proof}
Under our assumptions $G$ contains a nontrivial semisimple subgroup. 
Hence $G$ acts with an open orbit, which has a finite complement in $X$, see Corollary~\ref{cor: basic facts}(h). 
By Gizatullin-Popov Theorem~\ref{th:GP} $X$ is one of the list (\ref{eq:semisimple-list}). 
Due to the results cited in Section \ref{sec: first examples},  the group $\Aut X$ is an amalgam of two closed nested ind-groups.
By Proposition \ref{serre},
$G$ is conjugated to a subgroup of the non-solvable factor
in the amalgam decomposition of $\Aut X$.

If $X$ is one of the surfaces
$\PP^2\setminus C$ and
$ (\PP^1\times\PP^1)\setminus\Delta$, then by \ref{sit: Aut P1-P1} and \ref{sit: Aut P2-conic},  $G=
\PSL(2,\K)$, and the $G$-action on $X$  is unique up to conjugation;
see also \cite{Po1}, \cite{DG2}, and \cite[Prop.\ 4.14]{FZ-LND} for alternative proofs.

Similarly, for $X=\A^2$ the assertion follows from \ref{sit: A2}. If
$X=V_d$, $d\ge 2$, then by \ref{sss: sing-toric}, case (i), $G$ is conjugated in  $\Aut X$ to a
subgroup of the non-solvable factor $\GL(2,\K)/\mu_{d,1}$, where
$\mu_{d,1}\cong\ZZ/d\ZZ$ is contained in the center of $\GL(2,\K)$. Hence $G$ is
conjugated either to $\GL(2,\K)/\mu_{d,1}$ itself, or to $\SL(2,\K)$, or to
$\PSL(2,\K)$ canonically embedded in $\GL(2,\K)/\mu_{d,1}$, depending on the parity of $d$.
\end{proof}

\subsection{Actions with an open orbit}\label{sec: open orbit}
Recall that an effective $\G_{\rm m}$-action on a normal affine variety
defines a grading $A=\bigoplus_{j\in\ZZ} A_j$
 on the algebra $A=\mathcal{O}(X)$. For
$\dim X=2$ the $\G_{\rm m}$-action is called {\em elliptic} if
$A_j=0$ $\forall j<0$
and $A_0=\K$, {\em parabolic} if
$A_j=0$ $\forall j<0$ and $A_0\neq\K$, and {\em hyperbolic} if $A_{-1}\neq 0\neq A_1$.

The following result is essentially Proposition 2.10 in \cite{FZ-LND};
cf.\ also \cite[Prop.~2.5]{Be}\footnote{In \cite[Prop.~2.5]{Be} the condition
$\mathcal{O}(X)^{\G_{\rm m}}\neq\mathcal{O}(X)^{\G_{\rm a}}$ is lacking;
furthermore, the proof in \cite{Be}
assumes implicitly smoothness of $X$.}.
For the reader's convenience, we sketch a proof.

\begin{proposition}\label{prop: dense-orbit}
 Let $X$ be a normal affine surface different from the surfaces in {\rm (\ref{eq: classical})}.
 Then the following are equivalent:
\begin{itemize}\item[(i)] $X$ admits an effective action of a connected affine algebraic group
$G$ with an open orbit; \item[(ii)] $X$ admits an action of a semi-direct product
$\G_{\rm a}\rtimes\G_{\rm m}$  with an open orbit; \item[(iii)]
$X$ admits effective $\G_{\rm a}$- and
$\G_{\rm m}$-actions such that $\mathcal{O}(X)^{\G_{\rm m}}\neq\mathcal{O}(X)^{\G_{\rm a}}$.
\end{itemize}
Moreover, any $\G_{\rm m}$-action on $X$ as in {\rm (iii)}  is hyperbolic,
$X$ is of class $(\ML_0\cup \ML_1,1)$, and $X$ is a cyclic quotient of a normalization of a Danielewski surface $\{xy-P(z)=0\}$ in $\A^3$ for some $P\in\K[z]$.
\end{proposition}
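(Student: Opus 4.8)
I would prove the chain of equivalences (i) $\Rightarrow$ (iii) $\Rightarrow$ (ii) $\Rightarrow$ (i) together with the structural consequences. The implication (ii) $\Rightarrow$ (i) is trivial, since $\G_{\rm a}\rtimes\G_{\rm m}$ is itself a connected affine algebraic group. For (iii) $\Rightarrow$ (ii) the idea is to combine the given $\G_{\rm a}$- and $\G_{\rm m}$-actions into a single $\G_{\rm a}\rtimes\G_{\rm m}$-action. First I would normalize the $\G_{\rm m}$-action so that the $\G_{\rm a}$-action is homogeneous of some weight under it: concretely, the locally nilpotent derivation $\partial$ generating the $\G_{\rm a}$-action can be taken to be a weight vector for the induced $\G_{\rm m}$-action on $\operatorname{Der}(A)$, i.e.\ a semisimple conjugation sends $\partial$ to a homogeneous derivation. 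Then $\G_{\rm m}$ normalizes $\G_{\rm a}$ and the two together generate a copy of $\G_{\rm a}\rtimes\G_{\rm m}$ inside $\Aut X$. The condition $\cO(X)^{\G_{\rm m}}\neq\cO(X)^{\G_{\rm a}}$ guarantees the two one-dimensional orbit foliations are distinct, so the combined orbit is two-dimensional, hence open.

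\textbf{The implication (i) $\Rightarrow$ (iii) is where the real work lies.} Suppose a connected affine algebraic group $G$ acts with an open orbit $O\subset X$. Since $X$ is excluded from the list (\ref{eq: classical}), by Theorem~\ref{th:GP} the complement $X\setminus O$ cannot be finite, so $G$ is not quasihomogeneous in the strong sense, and in particular $G$ is not semisimple (a semisimple group would force $X$ into (\ref{eq:semisimple-list}) by Corollary~\ref{cor: basic facts}(h)). Writing the Levi decomposition $G=R_u(G)\rtimes L$, the reductive part $L$ must therefore be a torus, so $G$ is solvable. By Proposition~\ref{prop: basic facts}(c), since the open orbit $O$ is affine and solvable-homogeneous, $X\setminus O$ is a divisor; moreover $O\cong\A^k\times(\A^1_*)^l$ by part (b), and with $\dim X=2$ the only possibility giving a genuinely two-dimensional solvable homogeneous space not already on the list is $O\cong\A^1\times\A^1_*$, i.e.\ $G$ contains both a nontrivial $\G_{\rm a}$ and a nontrivial $\G_{\rm m}$. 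Restricting these subgroups to $X$ yields effective $\G_{\rm a}$- and $\G_{\rm m}$-actions; the inequality $\cO(X)^{\G_{\rm m}}\neq\cO(X)^{\G_{\rm a}}$ records precisely that their orbits through a general point of $O$ are transverse, which holds because $O$ is two-dimensional. The delicate point I expect to be the \emph{main obstacle} is ruling out the degenerate solvable structures and confirming that no nontrivial finite or unipotent obstruction forces $X$ back into the excluded list; this requires care with the classification of two-dimensional homogeneous spaces of solvable groups.

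\textbf{For the final ``Moreover'' clause} I would argue as follows. Given the $\G_{\rm m}$-action from (iii), it cannot be elliptic, since an elliptic action has a unique fixed point and $\cO(X)^{\G_{\rm m}}=\K$, which would force equality with $\cO(X)^{\G_{\rm a}}=\K$ only if there were no nonconstant $\G_{\rm a}$-invariant—but a nontrivial $\G_{\rm a}$-action on a surface always has a one-dimensional invariant ring, contradicting $\cO(X)^{\G_{\rm m}}\neq\cO(X)^{\G_{\rm a}}$ unless the action is non-elliptic. A parabolic action has $\cO(X)^{\G_{\rm m}}=A_0$ of dimension one, whose quotient is a curve with $\A^1$-fibration; comparing with the $\G_{\rm a}$-quotient and using transversality again excludes the parabolic case, leaving the action hyperbolic. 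Hyperbolicity gives a $\ZZ$-grading with $A_{-1}\neq 0\neq A_1$, and the $\G_{\rm a}$-action means $X$ carries an $\A^1$-fibration, placing $X$ in class $(\ML_0\cup\ML_1,1)$. Finally, the DPD-type description of hyperbolic $\G_{\rm m}$-surfaces (invoked from Subsection~\ref{ss: DPD}) presents $X$, after passing to the normalization, as a cyclic quotient of a Danielewski surface $\{xy-P(z)=0\}$; I would read off $P$ from the two boundary divisors of the hyperbolic grading. The hyperbolic and fibration structures together pin down this presentation, completing the proof.
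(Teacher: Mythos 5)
Your implication (iii)$\Rightarrow$(ii) rests on a step that is false as stated: you cannot, in general, conjugate the locally nilpotent derivation $\partial$ into a $\G_{\rm m}$-homogeneous one (``a semisimple conjugation sends $\partial$ to a homogeneous derivation'' is not a theorem, and you give no argument for it). What is true, and what the paper uses, is that if $\partial=\sum_{i=k}^{l}\partial_i$ is the decomposition into homogeneous components for the grading induced by the $\G_{\rm m}$-action, then the \emph{extreme} components $\partial_k,\partial_l$ are again locally nilpotent (\cite[Lem.\ 2.1]{FZ-LND}, \cite{Re}); one then replaces $\partial$ by one of them. But this replacement is exactly where your argument breaks down: hypothesis (iii) concerns the original $\partial$, and the new homogeneous $\G_{\rm a}$-action could a priori be vertical, in which case $\G_{\rm a}\rtimes\G_{\rm m}$ acts with one-dimensional orbits and (ii) fails. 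The paper closes this gap by a degree argument: if $k=l$, then $\partial$ itself is homogeneous and horizontal by (iii); if $k<l$, then at least one of $k,l$ is $\neq -1$, and a vertical homogeneous locally nilpotent derivation must have degree $-1$ (\cite[Thm.\ 3.12]{FZ-LND}), so that component is horizontal. Your proposal contains no substitute for this step.

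Your treatment of the ``Moreover'' clause is also wrong in the elliptic case, and the error is structural: if the $\G_{\rm m}$-action is elliptic, then $\mathcal{O}(X)^{\G_{\rm m}}=\K$ while $\mathcal{O}(X)^{\G_{\rm a}}$ has transcendence degree $1$, so the inequality in (iii) holds \emph{automatically}; there is no contradiction to extract from invariant rings (your sentence first derives the inequality and then claims it is contradicted). The same applies to the parabolic case: $\A^2$ with the parabolic action $t\cdot(x,y)=(x,ty)$ and the derivation $\partial/\partial x$ satisfies (iii). Ellipticity and parabolicity are excluded only by classification: an elliptic or parabolic $\G_{\rm m}$-surface carrying a horizontal $\G_{\rm a}$-action normalized by the $\G_{\rm m}$-action is toric, i.e.\ $\A^2$ or $V_{d,e}$ (\cite[Thms.\ 3.3 and 3.16]{FZ-LND}), hence lies in the excluded list (\ref{eq: classical}). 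Finally, in (i)$\Rightarrow$(iii) you conflate the orbit $O$ with $X$: excluding $X$ from the list does not prevent $O\cong\A^2$ or $O\cong(\A^1_*)^2$; these must be ruled out separately, e.g.\ because for $O\cong\A^2$ the unipotent radical acts transitively on $O$, so $O$ is closed and $X=\A^2$, while $O\cong(\A^1_*)^2$ forces $\rk G=2$ and $X$ toric --- this is precisely the paper's rank argument ($\rk G=1$, after which \cite[Lem.\ 2.9(b)]{FZ-LND} yields a subgroup $\G_{\rm a}\rtimes\G_{\rm m}$ with the same open orbit, giving (i)$\Rightarrow$(ii) directly). Likewise, transversality of your chosen $\G_{\rm a}$ and $\G_{\rm m}$ is not automatic from $\dim O=2$: one must argue that \emph{some} one-parameter unipotent subgroup has invariants different from those of the torus, e.g.\ because otherwise all of $G$ would preserve a single $\A^1$-fibration fiberwise, contradicting the existence of an open orbit.
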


\begin{proof}
(i)$\Leftrightarrow$(ii). As follows from Proposition \ref{prop: 4.14} and Theorem \ref{th:GP},
under our assumption the group $G$ as in (i) does not contain any semisimple subgroup, and so,
is solvable.  Since by assumption $X$ is non-toric,
$\rk G\le 1$. In fact, $\rk G=1$. Indeed, otherwise
$G$ is unipotent and acts with an open orbit. Since the orbits of a unipotent group acting on an affine variety are closed, $G$ is transitive in $X$. 
By Proposition \ref{prop: basic facts}(b), $X\cong\A^2$, which is excluded by our assumption.

The open orbit $O$ of $G$ in $X$
coincides with an open orbit (isomorphic to one of $\AA^2$, $\AA^1\times\AA_*^1$, $(\AA^1_*)^2$) of
a two-dimensional subgroup $H\cong \G_{\rm a}\rtimes\G_{\rm m}$ of $G$
(\cite[Lem.\ 2.9(b)]{FZ-LND}). This gives (i)$\Rightarrow$(ii).
The implication  (ii)$\Rightarrow$(i) is immediate, hence we have (i)$\Leftrightarrow$(ii).

(ii)$\Rightarrow$(iii). By (ii) $X$ is a
$\G_{\rm m}$-surface admitting a \emph{horizontal $\G_{\rm a}$-action}.
The latter means that the general
$\G_{\rm a}$-orbits are not the closures of general $\G_{\rm m}$-orbits, which implies (iii).

(iii)$\Rightarrow$(ii). We claim that any affine variety with effective $\G_{\rm a}$- and
$\G_{\rm m}$-actions as in (iii) possesses an effective action of a semi-direct product
$\G_{\rm a}\rtimes\G_{\rm m}$.
Indeed, let $A=\bigoplus_{i\in\ZZ} A_i$ be the grading of the algebra $A=\mathcal{O}(X)$
induced by the $\G_{\rm m}$-action,
and let $\partial\in {\rm Der} A$
be the locally nilpotent derivation corresponding to the $\G_{\rm a}$-action.
 Write $\partial=\sum_{i=k}^l \partial_i$, where $k\le l$
and $\partial_i\in {\rm Der} A$ is a homogeneous derivation of degree $i$ with
$\partial_k\neq 0\neq\partial_l$. Then $\partial_k,\partial_l$  are again locally nilpotent
(\cite[Lem.\ 2.1]{FZ-LND}, \cite{Re}).
Then the $\G_{\rm a}$-actions on $X$ generated by $\partial_k$ and $\partial_l$
are normalized by the
$\G_{\rm m}$-action. This yields the existence of an
$(\G_{\rm a}\rtimes\G_{\rm m})$-action on $X$; see \cite[Lem.\ 2.2]{FZ-LND}.
Notice that, if $k=l$, then  $\partial=\partial_k=\partial_l$, and the induced
$\G_{\rm a}$-action is horizontal due to our assumption that
$\mathcal{O}(X)^{\G_{\rm m}}\neq\mathcal{O}(X)^{\G_{\rm a}}$. Otherwise, at least
one of the indices $k$ and $l$ is different from $-1$, and again  the induced
$\G_{\rm a}$-action is horizontal, since otherwise the degree of the corresponding
locally nilpotent derivation equals $-1$, see \cite[Thm.\ 3.12]{FZ-LND}. In any case, the associate
$(\G_{\rm a}\rtimes\G_{\rm m})$-action on $X$ has an open orbit, as required in (ii).

Finally we have the equivalences  (i)$\Leftrightarrow$(ii)$\Leftrightarrow$(iii).

To show the last assertions, note that under condition (iii)
the horizontal $\G_{\rm a}$-action on $X$ is normalized by the given
$\G_{\rm m}$-action.
If the latter action were elliptic or
parabolic, then $X$ would be a toric  surface $\A^2$ or $V_{d,e}$, contrary to our assumption,
see \cite[Thms.\ 3.3 and 3.16]{FZ-LND}.
Hence the $\G_{\rm m}$-action on $X$ is hyperbolic, as claimed.

By exclusion $X$ belongs to one of the classes $(\ML_i,1)$, $i=0,1$.
Furthermore, due to  \cite[Cor.\ 3.27
and 3.30]{FZ-LND}, any hyperbolic $\G_{\rm m}$-surface
$X\in (\ML_0)\cup (\ML_1)$
is a cyclic
quotient of the
normalization of some Danielewski surface $x^ny-P(z)=0$ in $\A^3$,
where $P\in \K[z]$ and $n\ge 1$.
\end{proof}

\begin{remark}
If a surface $X\in (\ML_0)$
 is a complete intersection, then it can be realized as a hypersurface $xy-P(z)=0$ in $\A^3$,
where $P\in \K[z]$
is nonconstant,  see \cite{BML}, \cite{Dai3},  \cite{DK}. In particular, $X$ is a hyperbolic $\G_{\rm m}$-surface.
\end{remark}

\subsection{$\G_{\rm m}$-surfaces: Dolgachev-Pinkham-Demazure presentation} \label{ss: DPD}
The $\G_{\rm m}$-surfaces can be described
in terms of their {\em Dolgachev-Pinkham-Demazure presentation}, or {\em DPD presentation}, for short.
Let us recall this description, see \cite{FZ0}.

\begin{definition}\label{def: DPD-construction} In the elliptic and the parabolic cases, the DPD construction associates
to any pair $(C,D)$, where $C$ is a smooth curve and $D$ is an ample $\QQ$-divisor on $C$,
the graded $\K$-algebra $$A=\bigoplus_{j\ge 0} A_j,\quad\mbox{where}\quad A_j
=H^0(C, \mathcal{O}_C(\lfloor jD\rfloor ))\,.$$ The induced effective $\G_{\rm m}$-action
on the normal affine surface $X=\spec A$ is elliptic if $C$ is projective and parabolic otherwise.
In the hyperbolic case, the DPD construction associates to any triple $(C,D_+,D_-)$,
where $C$ is a smooth affine curve and $D_\pm$ are $\QQ$-divisors on $C$ with
$D_++D_-\le 0$, the graded $\K$-algebra $$A=\bigoplus_{j\in\ZZ} A_j,\quad\mbox{where}
\quad A_{\pm j}=H^0(C, \mathcal{O}_C(\lfloor jD_\pm\rfloor ))\, \mbox{ for } j\ge 0.$$
The resulting effective $\G_{\rm m}$-action on $X=\spec A$ is hyperbolic.
In fact,
any normal affine $\G_{\rm m}$-surface
$X$ arises in this way, and the corresponding DPD presentation is unique up to
isomorphisms of pairs $(C,D)$ and of triples $(C,D_+,D_-)$ and up to replacing $D$ (the pair $(D_+,D_-)$, respectively) 
by a linearly equivalent divisor $\hat D$ (by a pair $(D_++D',D_--D')$, where $D'$ is a principal divisor on $C$, respectively), see  \cite[Thms.\ 2.2, 3.2, and 4.3]{FZ0}.
\end{definition}

The classification of the normal affine $\G_{\rm m}$-surfaces according
to the Makar-Limanov complexity is as follows, see \cite{FZ-LND} and \cite[Cor.\ 3.30]{Li}.
We let $\{D\}$ be the fractional part of a $\QQ$-divisor $D$.

\begin{proposition} \label{prop: DPD}
Let $X$ be a normal affine $\G_{\rm m}$-surface with an associate DPD presentation $(C,D)$ for an elliptic or a parabolic $\G_{\rm m}$-action, 
and $(C,D_+,D_-)$ for a hyperbolic one.
Then
\begin{itemize}\item $X\in (\ML_0)$ if and only if one of the following holds:
\begin{itemize}\item $X$ is elliptic,
$C=\PP^1$, and $\{D\}$ is supported in at most two points; \item $X$ is parabolic,
$C=\A^1$, and $\{D\}$ is supported in at most one point; \item $X$ is hyperbolic,
$C=\A^1$, and $\{D_\pm\}$ is supported in at most one point $p_\pm$.\end{itemize}

\item $X\in (\ML_1)$ if and only if one of the following holds:
 \begin{itemize}\item $X$ is parabolic,
and either $C$ is
non-rational, or $\{D\}$ is supported in at least two points; \item $X$ is hyperbolic,
$C=\A^1$, and exactly one of the $\QQ$-divisors $\{D_+\},\,\{D_-\}$ is supported in at most one point.
\end{itemize}

\item $X\in (\ML_2)$ otherwise.
\end{itemize}
\end{proposition}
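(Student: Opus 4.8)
The plan is to reduce everything to the classification of homogeneous locally nilpotent derivations (equivalently, one-parameter $\G_a$-subgroups) of a $\G_m$-surface, and then to count the resulting $\A^1$-fibrations, invoking the criterion of \ref{sit: ML-inv}: $X$ is of class $(\ML_2)$, $(\ML_1)$ or $(\ML_0)$ according as it carries no, exactly one, or at least two inequivalent $\A^1$-fibrations over affine curves. Since the $\G_m$-action grades $A=\O_X(X)=\bigoplus_j A_j$, every LND $\partial$ decomposes as $\partial=\sum_{i=k}^l\partial_i$ into homogeneous components, and by \cite[Lem.\ 2.1]{FZ-LND} its extreme components $\partial_k,\partial_l$ are again LNDs; hence it suffices to analyse homogeneous LNDs and the fibrations they induce, which are determined by the DPD data $(C,D)$ (resp.\ $(C,D_+,D_-)$) as in \cite{FZ-LND,FZ0}.

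First I would dispose of the vertical fibration coming from the DPD base. In the parabolic case $A_0=\O(C)$ and each fibre of $\pi\colon X\to C=\Spec A_0$ is an affine line (the $\G_m$-orbit closure carrying a single fixed point), so $\pi$ is an $\A^1$-fibration over the affine curve $C$; thus a parabolic surface always carries at least one such fibration and can never be of class $(\ML_2)$. In the hyperbolic case the general fibre of $X\to C$ is $\A^1_*$ rather than $\A^1$, so this projection is not an $\A^1$-fibration; since every orbit of a $\G_a$-action is an affine line or a point, every $\A^1$-fibration on a hyperbolic surface is necessarily \emph{horizontal}, i.e.\ transversal to the $\G_m$-orbits. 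In the elliptic case $A_0=\K$, so there is no vertical base at all and again every $\A^1$-fibration is horizontal.

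The heart of the argument is then the existence and enumeration of horizontal homogeneous LNDs in terms of the fractional parts $\{D\}$, $\{D_\pm\}$, for which I would appeal to the explicit computations of \cite{FZ-LND}. For a hyperbolic surface a horizontal homogeneous LND of positive (resp.\ negative) degree exists if and only if $C=\A^1$ and $\{D_+\}$ (resp.\ $\{D_-\}$) is supported in at most one point (the precise pairing of signs being that of \cite{FZ-LND}); moreover such a derivation determines its $\A^1$-fibration uniquely up to equivalence, and the fibrations attached to the two opposite signs are inequivalent. Counting the available signs yields two inequivalent fibrations when both $\{D_\pm\}$ are supported in at most one point (class $(\ML_0)$), exactly one when precisely one of them is (class $(\ML_1)$), and none otherwise (class $(\ML_2)$), which is the hyperbolic part of the statement. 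For a parabolic surface a horizontal LND exists if and only if $C=\A^1$ and $\{D\}$ is supported in at most one point; combined with the always-present vertical fibration this gives a second, inequivalent fibration exactly under this condition, hence $(\ML_0)$ in that case and $(\ML_1)$ otherwise (in the remaining cases, $C$ non-rational or $\{D\}$ supported in at least two points, one checks via rationality, or via the combinatorial criterion of Lemma \ref{lem: admissible-chains}, that no second fibration appears). For an elliptic surface a horizontal LND exists if and only if $C=\PP^1$ and $\{D\}$ is supported in at most two points, and in that situation the presence of one horizontal LND forces a second, non-proportional one (arising from the two boundary points, or from normalising by the $\G_m$-action); thus an elliptic surface is of class $(\ML_0)$ under this condition and of class $(\ML_2)$ otherwise, with no intermediate $(\ML_1)$ stratum. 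Assembling the three cases gives the proposition.

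The main obstacle is precisely this last step: establishing the sharp dictionary between support conditions on $\{D\}$, $\{D_\pm\}$ and the existence of horizontal homogeneous LNDs, and verifying that the resulting $\A^1$-fibrations are pairwise inequivalent so that no overcounting occurs. Two points require care. First, the asymmetry between the elliptic threshold ``at most two points'' and the parabolic and hyperbolic threshold ``at most one point'' must be explained; it reflects the fact that over $C=\PP^1$ one extra point (at infinity) is available to the completion, and it is responsible for the absence of an $(\ML_1)$-stratum in the elliptic case. Second, one must show that in the elliptic case a single horizontal $\G_a$-action is never alone, i.e.\ that one cannot have exactly one $\A^1$-fibration. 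Both points, together with the uniqueness up to equivalence of a homogeneous LND of prescribed sign, are exactly what the homogeneous-derivation analysis of \cite{FZ-LND} supplies; alternatively they can be read off from the shape of the boundary zigzag $\Gamma_D$ attached to the DPD data via Lemma \ref{lem: admissible-chains}, the number of support points of the fractional parts controlling the rupture vertices and hence whether $\Gamma_D$ is a chain, a non-linear tree with a non-admissible extremal segment, or has only admissible extremal segments.
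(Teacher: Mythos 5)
The paper offers no proof of Proposition \ref{prop: DPD} at all: it is stated as a quotation from \cite{FZ-LND} and \cite[Cor.~3.30]{Li}. Your sketch reconstructs essentially the route those references take --- homogeneous decomposition of locally nilpotent derivations, the dictionary between horizontal homogeneous LNDs and support conditions on the fractional parts of the DPD divisors, and a count of the resulting $\A^1$-fibrations against the criterion of \ref{sit: ML-inv} --- and, like the paper, you leave that central dictionary as a citation rather than proving it. So in substance your approach coincides with the one the paper endorses.

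There are, however, two concrete soft spots in the counting step. First, you set out to show the fibrations produced are pairwise \emph{inequivalent}; by \ref{sit: ML-inv} the trichotomy is governed by the number of \emph{distinct} $\A^1$-fibrations, and inequivalence is both more than you need and false in general: on $\A^2$, with hyperbolic DPD data $(\A^1,0,-[p])$, the two de Jonqui\`eres fibrations are distinct yet conjugate under the coordinate swap. What must be checked is distinctness. Second, and more seriously, distinctness itself fails in exactly one hyperbolic case, namely $D_++D_-=0$, i.e.\ $X\cong\A^1\times\A^1_*$: there $C=\A^1$ and $\{D_+\}=\{D_-\}=0$, and horizontal homogeneous LNDs of \emph{every} degree exist (all of the form $\chi^n\partial_u$ on $\cO_X(X)=\K[u][\chi,\chi^{-1}]$), yet they all generate the same fibration $X\to\A^1_*$, so $X$ is of class $(\ML_1)$ even though both fractional parts are supported in at most one point. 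This surface must be carved out explicitly, as the paper does systematically elsewhere (Definition \ref{def: giz}, the footnote to Lemma \ref{lem: admissible-chains}); without excluding it, ``horizontal LNDs of both signs'' does not imply two distinct fibrations. Relatedly, your parabolic argument, run to its end, shows that \emph{no} parabolic surface is of class $(\ML_2)$ (the vertical projection is always an $\A^1$-fibration over an affine curve, carrying a $\G_a$-action), so the complement of the parabolic $(\ML_0)$ locus is all remaining parabolic surfaces --- including those with rational base $C\neq\A^1$, e.g.\ $\A^1\times\A^1_*$ viewed parabolically; your parenthetical restatement of that complement as ``$C$ non-rational or $\{D\}$ supported in at least two points'' quietly drops these cases instead of flagging the discrepancy with the proposition's wording.
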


\begin{remarks}\label{rem: DPD-cases}
1. The elliptic and the parabolic $\G_{\rm m}$-surfaces of class
$(\ML_0)$ are exactly
the nondegenerate toric surfaces\footnote{Recall that a  toric affine variety $X$ is {\em nondegenerate} if any invertible function on $X$ is constant.} $\A^2$ and $V_{d,e}$,
see \cite[Thms.\ 3.3 and 3.16]{FZ-LND}. A hyperbolic $\G_{\rm m}$-surface $X$ of class $(\ML_0)$ is a nondegenerate toric surface if and only if 
$\supp \{D_+\}=\supp \{D_-\}=\{p_0\}$ for some point $p_0\in C=\A^1$, and $(D_+,D_-)=(D_0+\{D_+\},\,-D_0+\{D_-\})$ 
for some integral divisor $D_0$ on $\A^1$ (\cite[Lem.\ 4.2(b)]{FKZ-completions}).

2. There exist smooth surfaces of class $(\ML_0, 0)$, that is, smooth Gizatullin surfaces which do not admit any nontrivial $\G_{\rm m}$-action, 
see \cite[Cor.\ 4.9]{FKZ-completions}.
The subgroup $\SAut X\subset\Aut X$ of such a surface acts on $X$ with an open orbit, while there is no algebraic group action on $X$ with an open orbit.

3.
If $X\in (\ML_1)$ is a parabolic $\G_{\rm m}$-surface, then the
$\G_{\rm a}$-action on  $X$ is {\em vertical}
(or {\em fiberwise}), that is,
$\mathcal{O}(X)^{\G_{\rm m}}=\mathcal{O}(X)^{\G_{\rm a}}$. This follows from \cite[Thm.\ 3.16]{FZ-LND}, cf.\ Remark 1 above.

4. If $X\in (\ML_2)$ is a $\G_{\rm m}$-surface, then $\Aut^\circ X\cong \G_{\rm m}$ or $\G^2_{\rm m}$, see Proposition \ref{prop-Iitaka}.
\end{remarks}

In terms of the DPD presentation,
the criterion of Proposition \ref{prop: dense-orbit} becomes more concrete.

\begin{corollary} Let $X$ be a normal affine surface.
The group $\Aut X$ acts on $X$
with an open orbit if and only if either $X$ is one of the surfaces in {\rm (\ref{eq: classical})}, or $X$
is
of class $(\ML_0)$, or, finally, $X$ is a hyperbolic $\G_{\rm m}$-surface
of class $(\ML_1)$.
\end{corollary}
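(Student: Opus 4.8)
The plan is to prove both implications by reducing everything to the algebraic‑group criterion of Proposition~\ref{prop: dense-orbit}, which already disposes of all surfaces outside the list~(\ref{eq: classical}), while treating the surfaces in~(\ref{eq: classical}) and the Gizatullin surfaces separately. The first step I would take is to pass from $\Aut X$ to its neutral component. For $X\notin(\ML_0)$ the group $\Aut^\circ X$ is a nested ind-group: an algebraic torus $(\G_m)^r$ with $r\le 1$ if $X\in(\ML_2)$ by Proposition~\ref{prop-Iitaka}, and $\Aut^\circ(X,\mu)$ for the unique $\A^1$-fibration $\mu$ if $X\in(\ML_1)$ by Corollary~\ref{cor: nested}. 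Being nested and connected, $\Aut^\circ X$ is algebraically generated, so its orbits are locally closed (\cite[Prop.~1.3]{AFKKZ}); consequently an open $\Aut X$-orbit, being a union of translates of a single $\Aut^\circ X$-orbit of the same dimension, forces $\Aut^\circ X$ to possess a two-dimensional, hence open, orbit. Thus $\Aut X$ and $\Aut^\circ X$ have open orbits simultaneously, and I may work with $\Aut^\circ X$.

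For the ``if'' direction I would argue as follows. Every surface in~(\ref{eq: classical}) is either toric—and then the dense orbit of the diagonal $2$-torus $\TTT\subset\Aut X$ works—or is one of $\PP^2\setminus C$, $(\PP^1\times\PP^1)\setminus\Delta$, which carry a $\PSL(2,\K)$-action with a dense orbit by Theorem~\ref{th:GP} and \S\ref{sec: first examples}. If $X\in(\ML_0)$, then by~\ref{sit: LND} there exist two non-proportional locally nilpotent derivations $\partial_1,\partial_2$; at a general point $x$ their velocity vectors span $T_xX$ (otherwise they would be proportional on a dense set, hence as derivations), so the algebraically generated group $\langle\exp(t\partial_1),\exp(t\partial_2)\rangle\subset\SAut X$ has a locally closed two-dimensional, hence open, orbit (\cite[Prop.~1.3]{AFKKZ}; cf.~\cite{Gi}). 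Finally, let $X$ be a hyperbolic $\G_m$-surface of class $(\ML_1)$; those already lying in~(\ref{eq: classical}) are covered above, and for the remaining ones the equivalence (ii)$\Leftrightarrow$(iii) of Proposition~\ref{prop: dense-orbit}, applied to the hyperbolic $\G_m$-action together with a $\G_a$-action, produces a $\G_a\rtimes\G_m$-action with an open orbit.

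For the ``only if'' direction, assume $\Aut X$ has an open orbit with $X\notin(\ML_0)$ and $X\notin(\ref{eq: classical})$; I must show $X$ is a hyperbolic $\G_m$-surface of class $(\ML_1)$. The class $(\ML_2)$ is excluded at once: by Proposition~\ref{prop-Iitaka} one has $\Aut^\circ X\cong(\G_m)^r$ with $r\le 1$ (the torus $(\A^1_*)^2$ being the only $(\ML_2)$-surface of rank $2$, and it lies in~(\ref{eq: classical})), whose orbits are at most one-dimensional. Hence $X\in(\ML_1)$. Taking the Levi decomposition $\Aut^\circ X=R_u(\Aut^\circ X)\rtimes L$ of the nested ind-group $\Aut^\circ X$ (Corollary~\ref{cor:quasi-proj}), the factor $L$ is a torus—a $(\ML_1)$-surface admits no semisimple group action, the surfaces~(\ref{eq:semisimple-list}) being all of class $(\ML_0)$—of rank $\le 1$, since rank $2$ would make $X$ toric, i.e.\ $\A^1\times\A^1_*\in(\ref{eq: classical})$. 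If $L$ were trivial, $\Aut^\circ X$ would be unipotent with closed orbits, so the open orbit would be all of $X$, forcing $X\cong\A^2$ by Proposition~\ref{prop: basic facts}; hence $L=\G_m$ and $X$ is a $\G_m$-surface. It remains to see this $\G_m$-action is hyperbolic. If $\mathcal{O}(X)^{\G_m}=\mathcal{O}(X)^{\G_a}$, then by~\ref{sit: LND} all locally nilpotent derivations are proportional, so $R_u(\Aut^\circ X)$ acts along the fibres of $\mu\colon X\to B$, and the whole group $R_u(\Aut^\circ X)\rtimes\G_m$ preserves $\mu$, giving only one-dimensional orbits—contradicting the open orbit. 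Therefore $\mathcal{O}(X)^{\G_m}\neq\mathcal{O}(X)^{\G_a}$, and the last assertion of Proposition~\ref{prop: dense-orbit} shows the $\G_m$-action is hyperbolic, so $X$ is a hyperbolic $\G_m$-surface of class $(\ML_1)$, as required.

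The main obstacle I anticipate is precisely the passage from the (generally infinite-dimensional) $\Aut X$ to a finite-dimensional algebraic group with an open orbit, which is exactly where Proposition~\ref{prop: dense-orbit} fails for Gizatullin surfaces: by Remark~\ref{rem: DPD-cases} there are smooth $(\ML_0,0)$-surfaces on which $\SAut X$ acts with an open orbit although \emph{no} algebraic group does. This is the reason $(\ML_0)$ must enter the list as an independent family and be handled directly through the two transversal $\G_a$-actions rather than through the algebraic-group criterion. A secondary delicate point is the verification that $R_u(\Aut^\circ X)$ acts fibrewise over the base $B$ in the $(\ML_1)$ case, which rests entirely on the uniqueness up to a rational factor of the locally nilpotent derivation recorded in~\ref{sit: LND}.
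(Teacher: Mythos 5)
Your proposal is correct, and it is in fact more careful than the paper's own two-sentence argument, which runs entirely through Proposition~\ref{prop: dense-orbit}: the paper asserts that for $X$ outside~(\ref{eq: classical}) the group $\Aut X$ has an open orbit if and only if some $\G_{\rm a}\rtimes\G_{\rm m}$ does, deduces from the ``moreover'' clause of that proposition that $X$ is a hyperbolic $\G_{\rm m}$-surface of class $(\ML_0)\cup(\ML_1)$, and then rules out the parabolic $(\ML_1)$-surfaces via Remark~\ref{rem: DPD-cases}. What the paper leaves implicit --- and what, taken literally, fails for the smooth $(\ML_0,0)$-surfaces of Remark~\ref{rem: DPD-cases}.2 --- is exactly the pair of issues you isolate: the passage from the infinite-dimensional group $\Aut X$ to an algebraic subgroup with an open orbit, and the fact that Gizatullin surfaces must enter the list through $\SAut X$ (Theorem~\ref{thm: Giz}) rather than through the algebraic-group criterion. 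Your route is therefore the same in spirit (Proposition~\ref{prop: dense-orbit} remains the engine), but your reduction to $\Aut^\circ X$ together with Proposition~\ref{prop-Iitaka} for $(\ML_2)$, the Levi/torus analysis (equivalently, Theorem~\ref{thm: neutral-comp}) for $(\ML_1)$, and the direct treatment of $(\ML_0)$ supplies precisely the content the published proof glosses over.

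Three small points deserve tightening. First, for $(\ML_0)$ it is cleanest to quote Theorem~\ref{thm: Giz} verbatim ($\SAut X$ has an open orbit); your parenthetical claim that pointwise proportionality of the velocity fields on a dense set forces proportionality of the derivations is not literally true (compare $\partial$ and $f\partial$), though pointwise proportionality does force the two associated $\A^1$-fibrations to coincide, contradicting the definition of class $(\ML_0)$. Second, in the ``if'' direction for hyperbolic $(\ML_1)$-surfaces you must actually verify condition (iii) of Proposition~\ref{prop: dense-orbit}, i.e.\ that a hyperbolic action cannot be vertical; this is the converse of the ``moreover'' clause, but it follows in one line: if $\mathcal{O}(X)^{\G_{\rm m}}=\mu^*\mathcal{O}(B)$, then $\G_{\rm m}$ preserves the general fibers $\cong\A^1$, on which regular functions carry weights of only one sign, so in the grading $A=\bigoplus_j A_j$ one of $A_{1}$, $A_{-1}$ vanishes and the action is not hyperbolic. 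Third, your reduction from $\Aut X$ to $\Aut^\circ X$ (an open orbit cannot be a countable union of translates of orbits of dimension at most one) tacitly uses that the component group is at most countable and that $\K$ is uncountable --- a convention the paper adopts elsewhere, e.g.\ before Proposition~\ref{serre} --- and this hypothesis should be acknowledged.
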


\begin{proof} Suppose that $X$ does not appear in (\ref{eq: classical}). By Proposition \ref{prop: dense-orbit}
the group $\Aut X$ acts on $X$ with an open orbit if and only if
a semi-direct product $\G_{\rm a}\rtimes\G_{\rm m}$ does, and so, $X$ is
a $\G_{\rm m}$-surface
of class $(\ML_0)\cup (\ML_1)$. It remains to note that the ($\Aut X$)-action
on a parabolic $\G_{\rm m}$-surface of class $(\ML_1)$ has one-dimensional
orbits, see Remark \ref{rem: DPD-cases}.2.
\end{proof}

\section{Automorphism groups of Gizatullin surfaces} \label{sec: Giz}

\subsection{Definition, characterizations, examples}\label{ss: def-Giz}

We adopt the following definition.

\begin{definition}\label{def: giz} A {\em Gizatullin surface} is a normal affine surface $X$
non-isomorphic to $\A^1\times\A^1_*$ that can be completed by
a chain  of smooth rational curves (a {\em zigzag}) into an SNC-pair $(\bar X, D)$.
\end{definition}

The following characterization goes back to Gizatullin \cite{Gi1}; see also \cite[Thm.\ 1.8]{Be},  \cite{Du}.

\begin{theorem}\label{thm: Giz} Given a  normal affine surface $X$, the following are equivalent:
\begin{itemize}
\item $X$ is a Gizatullin surface;
\item $X$ is of class $(\ML_0)$;
\item $X$ admits two
distinct $\A^1$-fibrations $X\to\A^1$;
\item the group $\SAut X$ acts on $X$ with an open orbit.
\end{itemize}
\end{theorem}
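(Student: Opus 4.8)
The plan is to take membership in class $(\ML_0)$ as the hub and to establish separately the three biconditionals (a)$\Leftrightarrow$(b), (b)$\Leftrightarrow$(c), and (b)$\Leftrightarrow$(d). The equivalence (b)$\Leftrightarrow$(d) is essentially built into the definitions recalled in \ref{sit: ML-inv}: the Makar--Limanov complexity of $X$ is by definition the codimension of a general $\SAut X$-orbit, so $X\in(\ML_0)$ precisely when such an orbit has codimension $0$. Since the $\SAut X$-orbits are locally closed (\cite{AFKKZ}), a codimension-$0$ orbit is dense and locally closed, hence open; conversely an open orbit is the general orbit and has codimension $0$. Thus $X\in(\ML_0)$ iff $\SAut X$ acts with an open orbit.

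For (a)$\Leftrightarrow$(b) I would invoke Lemma~\ref{lem: admissible-chains}. Given a Gizatullin surface $X$ with a zigzag SNC completion $(\bar X,D)$, pass to the minimal NC completion by contracting the $(-1)$-curves of $D$; a $(-1)$-curve of a chain has its (at most two) neighbours non-adjacent, so each such contraction preserves both the NC property and linearity, and the minimal graph $\Gamma_D$ is again a chain of smooth rational curves. The crucial input is that for affine $X$ this chain cannot be admissible: an admissible chain has negative definite intersection form, so $D$ supports no big effective divisor and $X=\bar X\setminus D$ could not be affine. As $X\not\cong\A^1\times\A^1_*$ the graph is also not the chain $[[0,0,0]]$. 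The trichotomy of Lemma~\ref{lem: admissible-chains} then leaves only $X\in(\ML_0)$, the classes $(\ML_2)$ (extremal segments admissible) and $(\ML_1)$ (minimal graph non-linear, apart from the excluded $\A^1\times\A^1_*$) being ruled out. Conversely, for $X\in(\ML_0)$ the same lemma asserts that the minimal NC graph is a chain not transformable to $[[0,0,0]]$, and since $X$ carries an $\A^1$-fibration this completion is SNC by \ref{rem: NC versus SNC}; this exhibits $X$ as a Gizatullin surface, necessarily distinct from $\A^1\times\A^1_*$.

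It remains to treat (b)$\Leftrightarrow$(c). By the characterization of $(\ML_0)$ recorded in \ref{sit: ML-inv}, or equivalently by \ref{sit: LND} via two non-proportional locally nilpotent derivations, class $(\ML_0)$ is the same as the existence of two distinct $\A^1$-fibrations over affine curves; since fibrations over $\A^1$ are in particular over affine curves, this yields (c)$\Rightarrow$(b) at once. For (b)$\Rightarrow$(c) the only thing to add is that each affine base is in fact $\A^1$. Writing the two distinct fibrations as $\mu_1\colon X\to B_1$ and $\mu_2\colon X\to B_2$, a general fibre $F\cong\A^1$ of $\mu_2$ is not contained in a fibre of $\mu_1$---otherwise the generic fibres of the two fibrations would coincide and so would the fibrations---hence $\mu_1|_F\colon\A^1\to B_1$ is non-constant. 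Passing to smooth completions, $\mu_1|_F$ extends to a surjection $\PP^1\to\bar B_1$, so $\bar B_1\cong\PP^1$; and since the finite points of $\A^1$ all map into the affine curve $B_1=\bar B_1\setminus S$ with $S\neq\emptyset$, the set $S$ is contained in the single image of the place at infinity, forcing $|S|=1$ and $B_1\cong\A^1$. The symmetric argument gives $B_2\cong\A^1$, completing (c). Alternatively, one may bring the zigzag to Danilov--Gizatullin standard form and read off two rulings of $\bar X$ giving two $\A^1$-fibrations of $X$ over $\A^1$.

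I expect the genuinely delicate points to be the two geometric inputs flagged above. In (a)$\Rightarrow$(b) it is the affineness$\,\Rightarrow\,$non-admissibility step: this is exactly what prevents an admissible, negative definite chain---the boundary type of $(\ML_2)$-configurations---from masquerading as a Gizatullin boundary, and without it the trichotomy of Lemma~\ref{lem: admissible-chains} would not isolate $(\ML_0)$. In (b)$\Rightarrow$(c) the care lies in showing that a general $\mu_2$-fibre is not swallowed by a $\mu_1$-fibre, i.e.\ that the two fibrations furnished by the $(\ML_0)$ hypothesis remain genuinely distinct with transverse generic fibres; once this is secured the reduction of the base to $\A^1$ is the elementary completion argument above. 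Everything else is an assembly of the definitions in \ref{sit: ML-inv}--\ref{sit: LND} together with Lemma~\ref{lem: admissible-chains}.
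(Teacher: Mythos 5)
Your proof is correct, but there is nothing in the paper to compare it with line by line: the survey states this theorem as a classical characterization ``going back to Gizatullin,'' with the references \cite{Gi1}, \cite[Thm.\ 1.8]{Be}, \cite{Du} standing in for a proof, and no proof environment follows. What you have produced is therefore a genuinely different route, namely a self-contained assembly of the statement from the survey's own preliminaries. Your (b)$\Leftrightarrow$(d) is pure bookkeeping with the definition of Makar--Limanov complexity plus local closedness of $\SAut X$-orbits from \cite{AFKKZ}, as recalled in \ref{sit: ML-inv}; your (b)$\Leftrightarrow$(c) combines the $\A^1$-fibration description of the classes $(\ML_i)$ recalled there with the standard completion argument $\PP^1\to\bar B_1$ forcing each affine base to be $\A^1$, and your factorization argument for why two distinct fibrations admit a general fibre of one that is horizontal for the other is the right one; your (a)$\Leftrightarrow$(b) correctly reduces to Lemma \ref{lem: admissible-chains} together with the one genuinely geometric input, that an admissible chain, having negative definite intersection form, cannot bound an affine complement. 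For that last point a cleaner justification than your appeal to bigness: for a nonconstant $f\in\cO_X(X)$ the divisors $E={\rm div}_\infty(f)\neq 0$ (effective, supported on $D$) and $Z={\rm div}_0(f)$ share no component, so $E^2=E\cdot Z\ge 0$, contradicting negative definiteness. Two caveats. First, the mathematical weight of your proof is concentrated in the quoted preliminaries (Lemma \ref{lem: admissible-chains} and the fibration characterization of the $(\ML_i)$ classes are cited by the authors from \cite{FKZ-deformations} and \cite{FZ-LND}, the same circle of results as the theorem itself), so your text is best read as a reduction of the theorem to those facts rather than an independent proof --- which is presumably why the authors simply cite the original literature, where the equivalences are established from scratch. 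Second, your side remark that $X\not\cong\A^1\times\A^1_*$ forces the minimal chain to differ from $[[0,0,0]]$ is superfluous for your case analysis (you only need $\A^1\times\A^1_*$ excluded by Definition \ref{def: giz} when ruling out $(\ML_1)$) and, as stated, is itself a nontrivial uniqueness assertion you should not lean on.
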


The affine plane $\A^2$ and the toric affine surfaces $\A^2/\mu_{d,e}$ are examples of Gizatullin surfaces.
Another important  examples are the {\em Danilov-Gizatullin surfaces} and the {\em special Gizatullin surfaces}.
Let us consider these classes along with their DPD presentations.

\begin{example}[\emph{Danilov-Gizatullin surfaces}]\label{sit: ex-DG}
Such a surface is the
complement
$X=\mathbb{F}_n\setminus S$ to an ample section $S$ in a Hirzebruch surface
$\mathbb{F}_n=\PP(\mathcal{O}_{\PP^1}\oplus\mathcal{O}_{\PP^1}(n))\to\PP^1$.
A section $S$ is ample if and only if $d:=S^2>n$.
Two Danilov-Gizatullin surfaces  are isomorphic if and only if they share the same invariant $d=S^2$,
see \cite[Thm.\ 5.8.1]{DG2} (see also \cite[Cor.\ 4.8]{CNR},  \cite{FKZ-DG}). We let $X_d$ denote the Danilov-Gizatullin surface with invariant $d=S^2$.
The surface $X_d$ possesses exactly $d-1$
pairwise non-conjugated $\G_{\rm m}$-actions with the DPD presentations
$$(C,D_+,D_-)=\left(\A^1, -\frac{1}{r}
[p_0],\, -\frac{1}{d - r}
[p_1]\right),\quad r=1,\ldots,d-1 \,,$$ where $p_0, p_1\in\A^1$, $p_0\neq p_1$.
see \cite[5.3]{FZ-LND} and \cite[Prop.\ 5.15]{FKZ-completions}. The automorphism group of a Danilov-Gizatullin surface $X_d$ with $2\le d\le 5$ is an
amalgam, see \cite[\S\S 5-8]{DG2}.
\end{example}

\begin{example}[\emph{Special Gizatullin surfaces}]\label{sit: special-Giz}
A smooth Gizatullin surface $X$ equipped
with a hyperbolic $\G_{\rm m}$-action is called {\em special} if
the associate DPD presentation is
$$ \left(C, D_+,D_-\right)=
\left(\A^1, \,-\frac{1}{r}[p_+]\,\,, -\frac{1}{d-r}[p_-]-D_0\right)
\,$$ with $d\ge 3$,  $1\le r\le d-1$, under the convention
that $D_+=0$ if $r=1$ and $D_- =0$ if $r=d-1$, and otherwise $p_+\ne p_-$,  
and with a reduced divisor
$D_0=\sum_{i=1}^s [p_i]$ on $\A^1$, where $s>0$ and $p_i\neq p_\pm$ $\forall i$.
\end{example}

\subsection{One-parameter subgroups and bearability on Gizatullin surfaces}\label{ss: 1-parameter sbgrps} 
For Gizatullin surfaces the following theorem is proven in \cite[Thms.\ 1.0.1, 1.0.5,
and Ex.\ 6.3.21]{FKZ-non-uniqueness}, see also \cite[\S 5.3]{FKZ-completions} and \cite[Cor.~5.15]{FKZ-uniqueness}.

\begin{theorem}\label{thm: AG} Smooth affine surfaces $X$
admitting an effective $\G_{\rm m}$-action can be divided into the following 4 classes:
\begin{itemize}
\item[1)] toric affine surfaces;
\item[2)] Danilov-Gizatullin surfaces $X_d$, $d\ge 4$;
\item[3)] special Gizatullin surfaces;
\item[4)] all the others,
\end{itemize}
so that the set of conjugacy classes of 1-tori in $\Aut X$

\begin{itemize}\item is infinite countable in case {\rm 1)}\footnote{The latter holds for any
toric affine surface $X$, not necessarily smooth.};
\item is finite of cardinality $\lfloor d/2 \rfloor$ for $X_d$  in case {\rm 2)}; 
\item forms a 1- or 2-parameter family  in case {\rm 3)};
 \item is finite of cardinality at most 2 in case {\rm 4)}.
\end{itemize}
Furthermore, the set of equivalence classes of $\A^1$-fibrations $X\to\A^1$
\begin{itemize}\item  is finite of cardinality at most 2 in cases {\rm 1)} and {\rm 4)}; 
\item forms an $m$-parameter family  in case {\rm 2)}, where $m\ge 1$ if $d\ge 7$, and
$m=m(d)\to+\infty$ as $d\to+\infty$;
\item forms an $m$-parameter family  in case {\rm 3)}, where $m\ge 1$.\end{itemize}
\end{theorem}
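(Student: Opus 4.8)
The plan is to translate both counting problems into the Dolgachev--Pinkham--Demazure calculus of Subsection \ref{ss: DPD}. A $1$-torus $T\subset\Aut X$ is the same datum as an effective $\G_{\rm m}$-action on $X$, and by Definition \ref{def: DPD-construction} every such action is recorded, uniquely up to the equivalences listed there, by a DPD pair $(C,D)$ or triple $(C,D_+,D_-)$. Two $1$-tori are conjugate in $\Aut X$ exactly when the corresponding DPD data are carried into one another by an automorphism of $X$, so the first assertion amounts to counting DPD data modulo this equivalence. The interchange $D_+\leftrightarrow D_-$, induced by the reparametrization $\lambda\mapsto\lambda^{-1}$ of $\G_{\rm m}$, is one such move and does not change $T$ as a subgroup; the remaining moves are governed by the uniqueness statement of Proposition \ref{prop-FZ-unique} together with the refined non-uniqueness analysis of \cite{FKZ-non-uniqueness}. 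I would then run this bookkeeping through the four classes separately.

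For the toric surfaces (case 1) the maximal torus $\TTT$ is two-dimensional, its $1$-subtori are indexed by primitive characters, and the elliptic, parabolic, and hyperbolic $\G_{\rm m}$-actions so obtained on $\A^2$ and $V_{d,e}$ fall into countably many conjugacy classes, which is the classical computation recalled in the cited literature. For the Danilov--Gizatullin surfaces $X_d$ (case 2) I would invoke the explicit DPD list of Example \ref{sit: ex-DG}, namely the triples $\left(\A^1,-\tfrac1r[p_0],-\tfrac1{d-r}[p_1]\right)$ with $r=1,\dots,d-1$; the swap $D_+\leftrightarrow D_-$ identifies $r$ with $d-r$, so the $d-1$ actions fall into exactly $\lfloor d/2\rfloor$ conjugacy classes. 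For special Gizatullin surfaces (case 3) the triple of Example \ref{sit: special-Giz} carries the reduced divisor $D_0=\sum_{i=1}^s[p_i]$, and moving the points $p_i$ (respectively adjusting the fractional weights) produces a $1$- or $2$-parameter family of inequivalent triples. In case 4), Proposition \ref{prop: DPD} places $X$ in class $(\ML_0)\cup(\ML_1)$ with both fractional parts concentrated at a single point, so the data admit no continuous modulus and at most the one swap $D_+\leftrightarrow D_-$, whence at most two classes.

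For the second assertion I would pass to a minimal SNC-completion $(\bar X,D)$ by a zigzag, as in \ref{rem: NC versus SNC} and Definition \ref{def: giz}. Every $\A^1$-fibration $X\to\A^1$ extends to a $\PP^1$-fibration of $\bar X$ and is thereby recorded by the position of a weight-$0$ vertex of the chain $\Gamma_D$ after admissible elementary transformations; conversely, the reversions of the zigzag generated by such transformations produce new, inequivalent $\A^1$-fibrations. In cases 1) and 4) the zigzag is rigid enough that only the two extremal fibrations survive, giving the bound $2$. In cases 2) and 3) the reversion calculus of \cite{FKZ-non-uniqueness} yields moduli: for $X_d$ the number of admissible reversions grows with the length of the defining chain, which is controlled by $d$, producing an $m(d)$-parameter family with $m(d)\to+\infty$ and in particular $m\ge1$ once $d\ge7$; for special Gizatullin surfaces the extra boundary points encoded by $D_0$ already furnish a positive-dimensional family, so $m\ge1$.

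The main obstacle is the second assertion, and specifically the lower bounds on the dimension of the family of $\A^1$-fibrations in case 2). The torus count reduces cleanly to the finite DPD bookkeeping above, but controlling the $\A^1$-fibrations requires the full reversion and birational calculus on zigzags: one must show that distinct sequences of elementary transformations of $(\bar X,D)$ yield genuinely non-equivalent fibrations on $X$, and estimate how the number of such sequences grows with $d$. This is exactly where the delicate combinatorics of \cite{FKZ-non-uniqueness} enters, and it is the part that does not follow formally from the DPD dictionary.
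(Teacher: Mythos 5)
Your overall strategy (DPD bookkeeping for the tori, reversion calculus for the fibrations, with the hard non-uniqueness statements delegated to \cite{FKZ-non-uniqueness}) is close in spirit to what the references cited in the paper actually do, and your counts for cases 1)--3) are essentially correct modulo those citations. But your treatment of case 4) has a genuine gap. First, the reduction is factually wrong: case 4) is not contained in $(\ML_0)\cup(\ML_1)$ with fractional parts concentrated at one point --- it also contains all non-toric $\G_m$-surfaces of class $(\ML_2)$ and the $(\ML_1)$-surfaces with arbitrary DPD data allowed by Proposition \ref{prop: DPD}, as well as the non-toric, non-Danilov-Gizatullin, non-special Gizatullin surfaces. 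Second, and more seriously, the inference ``the DPD data carry no continuous modulus, so after the swap $D_+\leftrightarrow D_-$ there are at most two conjugacy classes'' is a non sequitur: the Danilov-Gizatullin surfaces are precisely a counterexample to this pattern of reasoning, since their $d-1$ DPD triples are likewise modulus-free and discrete, yet yield $\lfloor d/2\rfloor$ pairwise non-conjugate $1$-tori. What the bound ``at most $2$'' in case 4) really requires is the uniqueness theorem for the remaining Gizatullin surfaces (\cite[Cor.\ 5.15]{FKZ-uniqueness}), i.e.\ that such a surface admits at most two inequivalent DPD presentations, which you never invoke; nothing in your argument rules out a case-4) surface having many discrete presentations, exactly as $X_d$ does.

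The paper's own proof avoids this by splitting along Makar-Limanov complexity rather than along DPD type: all $(\ML_0)$ cases (toric, Danilov-Gizatullin, special, and the rest) are delegated wholesale to \cite{FKZ-non-uniqueness}, \cite{FKZ-completions}, \cite{FKZ-uniqueness}, while for $(\ML_1)$- and $(\ML_2)$-surfaces both counts are immediate from two facts you never use: such a surface has at most one $\A^1$-fibration over an affine curve by the very definition of the classes (see \ref{sss: ML-classification}), and by Corollary \ref{cor: neutral-comp} the group $\Aut^{\0}X$ is a nested ind-group in which any two maximal tori are conjugate. This also replaces your unsupported ``zigzag rigidity'' claim for the fibration bound in cases 1) and 4): for $(\ML_1)$/$(\ML_2)$ it is definitional, and for the Gizatullin part of case 4) it is again \cite[Cor.\ 5.15]{FKZ-uniqueness}. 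Your acknowledged reliance on \cite{FKZ-non-uniqueness} for the lower bounds in cases 2) and 3) is not in itself a defect --- the paper cites exactly the same results --- but the case 4) argument as written would prove a false statement if applied to $X_d$, so it must be repaired along one of these lines.
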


\begin{proof} For surfaces of class $\ML_0$, that is, for Gizatullin surfaces, the assertions follow due to the references preceding the theorem. 
Thus we need to consider just the $\ML_1$- and $\ML_2$-surfaces. By definition, such a surface $X$ admits at most one $\A^1$-fibration over an affine curve. 
By Corollary \ref{cor: neutral-comp} the group $\Aut^{\0} X$ is nested, that is, 1-bearable, and has at most one conjugacy class of maximal tori. 
Hence the assertions follow also in this case. \end{proof}

\begin{corollary}\label{cor: unbearable}  Suppose that the base field $\K$ is uncountable. 
If $X$ is either a special Gizatullin surface, or a Danilov-Gizatullin  surface $X_d$ with $d\ge 7$, then
 the group $\Aut^{\0} X$  is not countably bearable. Furthermore, $Aut X_d$ ($\Aut^{\0} X_d$, respectively) is not a nested ind-group for $d=4,5$, 
and  cannot be a nontrivial amalgam of two nested ind-groups for $d=6$. 
\end{corollary}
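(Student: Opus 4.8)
The plan is to play the quantitative data of Theorem~\ref{thm: AG} against the structural restrictions that bearability, nestedness, and the Mostow decomposition impose on conjugacy classes of maximal tori and on families of $\A^1$-fibrations. In each case I argue by contradiction.

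For the first assertion (special Gizatullin surfaces and $X_d$ with $d\ge 7$), I would invoke Proposition~\ref{lem: add-classes}. Each surface in question is different from $\A^2$, so it admits no unipotent action with a general orbit of dimension $\ge 2$, and the hypothesis of that proposition is met. If $\Aut^{\0} X$ were countably bearable, that is, $\aleph_0$-bearable, Proposition~\ref{lem: add-classes} would bound the number of non-equivalent $\A^1$-fibrations of $X$ over affine bases by $\aleph_0$. However, Theorem~\ref{thm: AG} shows that for a special Gizatullin surface (case 3) and for $X_d$ with $d\ge 7$ (case 2) the equivalence classes of $\A^1$-fibrations $X\to\A^1$ form an $m$-parameter family with $m\ge 1$; since $\K$ is uncountable, such a family is uncountable. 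This contradiction proves that $\Aut^{\0} X$ is not countably bearable.

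For the second assertion ($d=4,5$), I would first treat the neutral component. The surface $X_d$ is a non-toric Gizatullin surface carrying a $\G_{\rm m}$-action, so $\rk\Aut X_d=\rk\Aut^{\0} X_d=1$, the $1$-tori are exactly the maximal tori, and every $1$-torus, being connected, lies in $\Aut^{\0} X_d$. If $\Aut^{\0} X_d$ were a nested ind-group, it would be connected of bounded rank, and Theorem~\ref{thm: Mostow} would force all its maximal tori to be mutually conjugate in $\Aut^{\0} X_d$, hence a fortiori in $\Aut X_d$. This yields a single conjugacy class of $1$-tori, contradicting the value $\lfloor d/2\rfloor=2$ from Theorem~\ref{thm: AG}. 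Thus $\Aut^{\0} X_d$ is not nested. Finally, if $\Aut X_d$ were nested, its neutral component would be nested by Remark~\ref{rem: nested sbgrp}.2, which we have just excluded; hence $\Aut X_d$ is not nested either.

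For the last assertion ($d=6$), suppose $\Aut X_6$, respectively $\Aut^{\0} X_6$, were a nontrivial amalgam of two nested ind-groups. Then it is $2$-bearable and of rank $1$, so by Remark~\ref{rem: Dani}, whose argument applies to any bearable group, it has at most $2$ conjugacy classes of $1$-tori. For $\Aut X_6$ this directly contradicts the value $\lfloor 6/2\rfloor=3$ of Theorem~\ref{thm: AG}. For $\Aut^{\0} X_6$ one notes that $\Aut^{\0} X_6$-conjugacy of $1$-tori is finer than $\Aut X_6$-conjugacy, so the three $\Aut X_6$-classes split into at least three $\Aut^{\0} X_6$-classes, again contradicting the bound $2$. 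The main point to handle carefully is precisely this comparison between conjugacy in $\Aut X$ and in $\Aut^{\0} X$, together with the verification that the $1$-tori are maximal of a rank that stays bounded, so that Theorem~\ref{thm: Mostow} and Remark~\ref{rem: Dani} genuinely apply; the remaining steps are direct applications of the cited results.
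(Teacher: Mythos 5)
Your proof is correct and follows essentially the same route as the paper: both argue by contradiction, playing the counts in Theorem~\ref{thm: AG} against the cardinality bounds that bearability imposes through Propositions~\ref{lem: mult-classes}, \ref{lem: add-classes} and Remark~\ref{rem: Dani}. The only deviations are minor: for special Gizatullin surfaces you use the uncountable family of $\A^1$-fibration classes together with Proposition~\ref{lem: add-classes}, where the paper instead uses the uncountable family of conjugacy classes of $1$-tori together with Proposition~\ref{lem: mult-classes} (both halves of Theorem~\ref{thm: AG} work), and for $d=4,5$ you invoke Theorem~\ref{thm: Mostow} directly rather than through Remark~\ref{rem: Dani}; your explicit comparison of $\Aut X_d$-conjugacy with the finer $\Aut^\0 X_d$-conjugacy makes precise a point the paper's proof glosses over.
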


\begin{proof} By Theorem  \ref{thm: AG}, for a special Gizatullin surface $X$ (for a Danilov-Gizatullin  surface $X_d$ with $d\ge 7$, respectively)
the set of conjugacy classes of $\G_{\rm m}$-subgroups (of $\A^1$-fibrations over $\A^1$, respectively) in $\Aut^{\0} X$ is uncountable.
In these cases the assertion follows from Propositions \ref{lem: mult-classes} and \ref{lem: add-classes}, respectively.  For $d\le 6$, the groups $\Aut X_d$ and $
\Aut^\0 X_d$ have both $\lfloor d/2 \rfloor$ conjugacy classes of $\G_{\rm m}$-subgroups, that is, of maximal tori. Indeed, $\rk\Aut X_d =1$, since  this surface is not toric. 
Hence  for $d\le 6$ the assertion follows from Remark \ref{rem: Dani}.
\end{proof}

\begin{remarks} 
1. The presentation of the group $\Aut X_d$ as an amalgam in \cite[\S\S 6-8]{DG2} involves two factors if $d=3$ and three factors if $d=4,5$. 
It seems that for $d= 6$, no explicit amalgam structure on $\Aut X_d$ is known. 
The authors of \cite{DG2} mention  that their methods allow in principle to compute the group $\Aut X_d$  for any $d$; cf., however, Corollary \ref{cor: unbearable}.

2. See \cite{BD2} and \cite{Ko2} for spectacular examples of Gizatullin surfaces $X$ such that, if $N(X)\subset\Aut X$
is the normal subgroup
generated by all algebraic subgroups of $\Aut X$, then the quotient  $(\Aut X)/N(X)$
contains a free group
on an uncountable set of generators.
  \end{remarks}

\subsection{Standard completions  and extended divisors of Gizatullin surfaces}\label{ss: standard-completion} These combinatorial invariants are indispensable in
studies on Gizatullin surfaces.

\begin{notation}\label{nota: zigzag} {\rm Let $X$ be a Gizatullin surface, and let $(\bar X, D)$ be an SNC completion of the minimal resolution of singularities of $X$ 
\footnote{Also called a \emph{resolved completion}.} by a zigzag $D$, where
$$D = C_0 + \cdots + C_n \quad \text{with} \quad C_i\cdot C_j \ =1\quad \mbox{if}\quad |i - j| = 1\quad\mbox{and}\quad C_i\cdot C_j \ =0\quad\mbox{otherwise}\,.$$
The boundary components $C_i$, $0 \leq i \leq n$, serve as the vertices $v_i$ of the dual linear graph $\Gamma_D$ of $D$. 
Each vertex $v_i$ is weighted by the corresponding self-intersection number $w_i = C_i^2$. Thus $\Gamma_D$ is of the form
$$\Gamma_D: \quad \cou{v_0}{w_0} \lin \cou{v_1}{w_1} \lin \ \cdots \ \lin \cou{v_n}{w_n} \quad .$$}
\end{notation}

\noindent The string of weights $[[w_0,w_1,\ldots,w_n]]$ can be putted into a standard form by means of elementary transformations of weighted graphs.

\begin{definition}\label{def:elem-tr} Given an at most linear vertex $v$ of a weighted graph $\Gamma$ with weight $0$ one can perform the following transformations. 
If $v$ is linear with neighbors $v_1, v_2$ then we blow up the edge connecting $v$ and $v_1$ in $\Gamma$ and blow down the proper transform of $v$:

\begin{equation}\label{ElementaryTransformation1}
\dots \ \cou{v_1}{w_1 - 1} \lin \cou{v'}{0} \lin \cou{v_2}{w_2 + 1} \ \dots \ \dasharrow \ \dots \ \cou{v_1}{w_1 - 1} \lin \cou{v'}{-1} \lin \cou{v}{-1} \lin \cou{v_2}{w_2} \ \dots \ \to \ \dots \ \cou{v_1}{w_1} \lin \cou{v}{0} \lin \cou{v_2}{w_2} \quad .
\end{equation}

\noindent Similarly, if $v$ is an end vertex of $\Gamma$ connected to the vertex $v_1$ then one proceeds as follows:

\begin{equation}\label{ElementaryTransformation2}
\dots \ \cou{v_1}{w_1 - 1} \lin \cou{v'}{0} \ \dasharrow \ \dots \ \cou{v_1}{w_1 - 1} \lin \cou{v'}{-1} \lin \cou{v}{-1} \ \to \ \dots \ \cou{v_1}{w_1} \lin \cou{v}{0} \quad .
\end{equation}

\noindent These operations (\ref{ElementaryTransformation1}) and (\ref{ElementaryTransformation2}) and their inverses are called \emph{elementary transformations} of $\Gamma$. 
If such an elementary transformation involves only an inner blowup then we call it \emph{inner}. 
Thus (\ref{ElementaryTransformation1}) and (\ref{ElementaryTransformation2}) are inner whereas the inverse of (\ref{ElementaryTransformation2}) is not
 as it involves an outer blowup.
\end{definition}

Consider a Gizatullin surface $X$ along with a resolved SNC completion $(\bar X, D)$, where $\bar X$ is a smooth projective surface and $D\subset \bar X$ is a zigzag. 
By a sequence of blowups and blowdowns one can transform the dual graph $\Gamma_D$ into a \textit{standard form}, 
where $C_0^2 = C_1^2 = 0$ and $C_i^2 \leq -2$ for all $i \geq 2$ if $n \geq 4$ or $C_i^2 = 0$ for all $i$ if $n \leq 3$ (cf.\ \cite{Dai2}, \cite{DG},  \cite{FKZ-graphs}). 
Moreover, this representation is unique up to reversion. 
The latter  means that for two standard forms $[[0, 0, w_2, \dots, w_n]]$ and $[[0, 0, w'_2, \dots, w'_n]]$ of $\Gamma_D$, either $w_i = w'_i$ or $w_i = w'_{n + 2 - i}$ holds (\cite{FKZ1C}).

The reversion process can be described as follows. Start with a boundary divisor of type $[[0, 0, w_2, \dots, w_n]]$. 
Performing the elementary transformation (\ref{ElementaryTransformation1}) at the vertex corresponding to $C_1$ 
one gets a boundary divisor of type $[[-1, 0, w_2 + 1, w_3, \dots, w_n]]$. After $|w_2|$ steps one arrives at a boundary divisor of type
$[[w_2, 0, 0, w_3, \dots, w_n]]$. Thus, one can move pairs of zeros to the right. 
Repeating this, one obtains  finally a boundary divisor of type $$[[w_2, \dots, w_n, 0, 0]]=[[0, 0, w_n, \dots, w_2]]\,.$$ 
Notice that all the birational transformations involved are centered at the boundary (so to say, they yield the identity on the affine parts).

Let us recall the notion of an $m$-standard zigzag (see \cite[(1.2)]{DG}).

\begin{definition}\label{def: zigzag} A zigzag $D$ of type $[[0, -m, w_2, \dots, w_n]]$ with $n \geq 1$ and $w_i \leq -2$
is called \emph{$m$-standard}
(in the case $n = 1$ there is no weight $w_i$).\\
An \emph{$m$-standard pair} is a pair $(\bar X, D)$ consisting of a smooth projective surface $\bar X$ and an $m$-standard zigzag $D$ on $\bar X$. 
If $m = 0$, then $(\bar X, D)$ is called a \emph{standard pair}. 
A \emph{birational map} $\varphi: (\bar X, D) \dasharrow (\bar X', D')$ between $m$-standard pairs is a birational map $\varphi: \bar X \dasharrow \bar X'$ 
which restricts to an isomorphism $\varphi\vert_{\bar X \backslash D}: \bar X \backslash D \stackrel{\sim}{\to} \bar X' \backslash D'$. 
A reversion of an $m$-standard pair starts by reducing it to a 0-standard one by means of $m$ (non-inner) elementary transformations at the component of zero weight. 
After reversion of the resulting 0-standard pair, one returns again at an $m$-standard pair by performing $m$ elementary transformations at an extremal $0$-component. 
This requires outer blowups centered at an arbitrary point of a $0$-component   (cf. Remark~\ref{EquivalenceOfArrows}). 
\end{definition}

\begin{examples} \label{exa: special-standard} 1. The Danilov-Gizatullin surface $X_d$ (see \ref{sit: ex-DG}) has a boundary zigzag of type $[[d]]$ 
with the standard form $[[0, 0, (-2)_{d - 1}]]$ (the index $d - 1$ means that there are $d - 1$ consecutive components with self-intersection index $-2$).
Any smooth affine surface $X$  completable by a standard zigzag $[[0, 0, (-2)_{d - 1}]],$ $d\ge 2\,,$ and non-isomorphic to $\PP^2\setminus C$, 
where $C$ is a smooth conic, is isomorphic to the Danilov-Gizatullin surface $X_d$.

2.
For  a special Gizatullin surface (see \ref{sit: special-Giz}) the standard zigzag is
$$[[0, 0, -2,\ldots,-2, -w_s,-2,\ldots,-2]],\quad\mbox{where}\quad w_s<-2\,.$$ However,  a Gizatullin surface
with such a sequence of weights
does not need to be special.
\end{examples}

\begin{definition}[\emph{extended divisor}]\label{sit: standard-fibration} 
Since the underlying smooth projective surface $\bar X$ of a $0$-standard pair is rational and $C_0^2=C_1^2=0$, it is equipped with rational fibrations 
$\Phi_i = \Phi_{|C_i|}: \bar X \to \p^1$ defined by the complete linear systems $|C_i|$ on $\bar X$, $i=0,1$, respectively.  
This defines a  birational morphism (\cite[Lem.\ 2.19]{FKZ-completions})
$$\Phi = \Phi_0 \times \Phi_1: \bar X \to \p^1 \times \p^1\,.$$
 After a suitable coordinate change one may suppose that 
$C_0 = \Phi_0^{-1}(\infty)$, $\Phi(C_1) = \p^1 \times \{ \infty \}$, and $C_2 \cup \cdots \cup C_n \subseteq \Phi_0^{-1}(0)$. 
\footnote{For a map $\phi\colon A\to B$, the notation $\phi^{-1}(b)$ usually stands for the set theoretical preimage.}
The reduced effective divisor $D_{\text{ext}} := C_0 \cup C_1 \cup \Phi_0^{-1}(0)$ is called the \textit{extended divisor}.
\end{definition}

In order to determine the structure of the extended divisor, let us recall the notion of a \emph{feather}  (\cite[Def.\ 5.5]{FKZ-completions}).

\begin{definition}[\emph{feathers}]\label{def: feather}
\begin{itemize}
\item[(1)] A \emph{feather} is a linear chain

$$F: \quad \co{B}{} \lin \co{F_1} \lin \dots \lin \co{F_s}$$

\noindent of smooth rational curves such that $B^2 \leq -1$ and $F_i^2 \leq -2$ for all $i \geq 1$. The curve $B$ is called the \emph{bridge curve}.
\item[(2)] A \emph{collection of feathers} $\{ F_\rho \}$ consists of pairwise disjoint feathers $F_\rho$, $\rho=1,\ldots, r$. Such a collection will be denoted by a plus box

$$\xboxo{ \{ F_\rho \} } \quad .$$

\item[(3)] Let $D = C_0 + \cdots + C_n$ be a zigzag. A collection $\{ F_\rho \}$ is \emph{attached to a curve} $C_i$ if the bridge curves $B_\rho$ of the feathers $ F_\rho$ 
meet $C_i$ in pairwise distinct points and the feathers $F_\rho$ are disjoint with the curves $C_j$ for $j \neq i$.
\end{itemize}
\end{definition}

\begin{lemma}\label{StructureExtendedDivisor} (\cite[Prop.\ 1.11]{FKZ-uniqueness}) 
Let $(\tilde{X}, D)$ be a minimal SNC completion of the minimal resolution of singularities of a Gizatullin surface $X$. 
Furthermore, let $D = C_0 + \cdots + C_n$ be the boundary divisor in standard form. Then the extended divisor $D_{\text{ext}}$ has the dual graph

\vspace{15pt}
\begin{equation}\label{diag: graph-Gizatullin} 
\Gamma_{\text{ext}}: \quad \cou{0}{C_0} \lin \cou{0}{C_1} \lin \cu{C_2} \nlin \xbshiftup{ \{ F_{2, j} \} }{} \lin \dots \lin \cu{C_i} \nlin \xbshiftup{ \{ F_{i, j} \} }{} \lin \dots \lin \cu{C_n} \nlin \xbshiftup{ \{ F_{n, j} \} }{} \quad ,
\end{equation}

\noindent where $\{ F_{i, j} \}$, $j \in \{ 1, \dots, r_i \}$, are feathers attached to the curve $C_i$. 
Moreover, $\tilde{X}$ is obtained from $\p^1 \times \p^1$ by a sequence of blowups with centers in the images of the components $C_i$, $i \geq 2$.
\end{lemma}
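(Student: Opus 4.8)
The plan is to read off the dual graph of $D_{\text{ext}}$ directly from the birational morphism $\Phi=\Phi_0\times\Phi_1\colon \tilde X\to\PP^1\times\PP^1$ attached to the $0$-standard pair in Definition \ref{sit: standard-fibration}, and to control the fibre $\Phi_0^{-1}(0)$ by combining the elementary geometry of $\PP^1$-fibrations with the structure of the fibres of the induced $\A^1$-fibration on $X$.

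First I would fix the fibration data. Since $C_0^2=C_1^2=0$ and $C_0\cdot C_1=1$, the complete pencils $|C_0|$ and $|C_1|$ are base-point free and define the $\PP^1$-fibrations $\Phi_0,\Phi_1$ of Definition \ref{sit: standard-fibration}, with $C_0=\Phi_0^{-1}(\infty)$ a fibre and $C_1$ a section of $\Phi_0$; the product $\Phi$ has degree $C_0\cdot C_1=1$ and so is a birational morphism, i.e.\ a composition of blow-ups. In the chosen coordinates $\Phi(C_0)=\{\infty\}\times\PP^1$, $\Phi(C_1)=\PP^1\times\{\infty\}$, and, since $C_i\cdot C_0=0$ for $i\ge 2$ together with the normalisation $C_2\cup\dots\cup C_n\subseteq\Phi_0^{-1}(0)$, the chain $C_2,\dots,C_n$ lies in the single fibre $\{s=0\}=\{0\}\times\PP^1$. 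A curve is contracted by $\Phi$ exactly when it lies simultaneously in a fibre of $\Phi_0$ and of $\Phi_1$; as $C_2\cdot C_1=1$ whereas $C_i\cdot C_1=0$ for $i\ge 3$, the component $C_2$ is the strict transform of the ruling $\{s=0\}$ and is not contracted, while $C_3,\dots,C_n$ are. Being connected and contracted, they are blown down to a single point of $\{s=0\}$, together with the feathers. This already produces the trunk $C_0-C_1-C_2-\cdots-C_n$ of $\Gamma_{\text{ext}}$.

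Next I would identify the remaining components of $\Phi_0^{-1}(0)$, namely the feathers. The complement $\tilde X\setminus D$ is the minimal resolution $V$ of $X$, and $\Phi_0|_X\colon X\to\A^1$ is an $\A^1$-fibration whose degenerate fibre carrying $C_2,\dots,C_n$ is the one over $0$. By the structure of fibres of $\A^1$-fibrations on normal affine surfaces (\cite{FZ-LND}), the fibre $\Phi_0^{-1}(0)\cap X$ is a disjoint union of curves isomorphic to $\A^1$; taking closures in $\tilde X$ and adjoining the exceptional curves of the minimal resolution that they meet, each such affine line yields a \emph{linear} chain, and distinct ones remain disjoint. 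These chains are exactly the feathers: their bridge curves meet the trunk, and since $C_1$ meets $\Phi_0^{-1}(0)$ only at the point $C_1\cap C_2$ while $C_0$ lies in a different fibre, no feather can be attached to $C_0$ or $C_1$. Hence every feather is attached to one of $C_2,\dots,C_n$, giving the graph \eqref{diag: graph-Gizatullin}; the negativity conditions $B^2\le-1$, $F_i^2\le-2$ follow from minimality of the resolution and the standard form (which also forces $C_i^2\le-2$ for $i\ge 2$).

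The step I expect to be the crux is the \emph{concentration} asserted in the last sentence: that every blow-up in $\Phi$ is centred over $\{s=0\}$, equivalently that $\Phi_0$ has no degenerate fibre over any $t\neq 0,\infty$. Here I would argue that a hypothetical reducible fibre $\Phi_0^{-1}(t)$, $t\neq 0,\infty$, meets the section $C_1$ in a single point lying on one component $E_1$, while every other component is disjoint from $D$, hence is a complete curve in $V$, hence an exceptional curve of the \emph{minimal} resolution with self-intersection $\le-2$. A reducible fibre of a $\PP^1$-fibration must contain a $(-1)$-curve, which can therefore only be $E_1$; excluding this last possibility is exactly where the global standard-form and reversion machinery of \cite{FKZ-completions, FKZ-uniqueness} is needed, to show that after reduction to standard form the induced $\A^1$-bundle over $\A^1\setminus\{0\}$ is trivial and that all singular points of $X$ lie in $\Phi_0^{-1}(0)$. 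Granting this, $\Phi$ is an isomorphism over $\PP^1\times\PP^1\setminus\{s=0\}$, so all its centres lie on $\Phi(C_2)=\{s=0\}$ and on the successive exceptional curves $\Phi(C_i)$, $i\ge 2$, which is the ``moreover'' assertion and completes the identification of $\tilde X$.
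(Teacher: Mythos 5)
The paper itself contains no proof of this lemma: it is imported verbatim, with its proof, from \cite[Prop.\ 1.11]{FKZ-uniqueness}, so your proposal has to be judged on its own merits rather than against an internal argument. Your strategy is the natural one: read the trunk off the intersection numbers with the two rulings of the $0$-standard pair, identify the feathers from the fibre structure of the induced $\A^1$-fibration together with minimality of the resolution, and finish with a concentration statement for $\Phi_0$. The first two steps are sound at the level of detail given (the feather step legitimately rests on the cited structure theory of $\A^1$-fibrations and of cyclic quotient singularities).

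The genuine gap is the step you yourself call the crux and then skip: excluding a degenerate fibre $\Phi_0^{-1}(t)$, $t\neq 0,\infty$, whose only possible $(-1)$-component is the component $E_1$ meeting the section $C_1$. You assert that this ``is exactly where the global standard-form and reversion machinery is needed'' and proceed by granting it, so the proof as written is incomplete. In fact no such machinery is needed; the pieces you assembled close the argument on their own. Since $C_1$ is a section, $\Phi_0^*(t)\cdot C_1=1$, so the component $E_1$ meeting $C_1$ occurs with multiplicity $1$ in the scheme-theoretic fibre. Now invoke the Claim inside the proof of Lemma \ref{lem: contraction} of the present paper: a degenerate fibre of a $\PP^1$-fibration either contains at least two $(-1)$-components, or its unique $(-1)$-component is multiple. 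As you observed, every component other than $E_1$ is disjoint from $D$, hence is a complete curve in the minimal resolution $V$ of the affine surface $X$, hence an exceptional curve of that resolution with self-intersection $\le -2$; so $E_1$ is the only candidate $(-1)$-curve, and it has multiplicity $1$. Both alternatives of the Claim fail, so the fibre is irreducible. It then also follows that no exceptional curve of $V\to X$ lies over any $t\neq 0,\infty$: such a curve would be the support of an irreducible fibre and would have self-intersection $0$ rather than $\le -2$. This is the entire concentration statement, with no appeal to reversions or standard forms.

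A secondary lacuna: the ``moreover'' does not follow formally from ``$\Phi$ is an isomorphism off $\{s=0\}$''. That only places all blowup centres over $\{0\}\times\PP^1$; a priori a centre could lie on the image of a feather component away from every image of a $C_i$, i.e.\ feathers could sprout from feathers. Ruling this out requires combining the graph you established (the $C_i$ form a chain, feathers are linear with non-bridge components of self-intersection $\le -2$) with an ordering argument for the factorization of $\Phi$ into blowups; it is one more induction, not a formal consequence of the concentration step.
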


\begin{remark}\label{rem: sing} 
Consider the feathers $F_{i, j} := B_{i, j} + F_{i, j, 1} + \cdots + F_{i, j, k_{i, j}}$ mentioned in Lemma \ref{StructureExtendedDivisor}. 
The collection of linear chains $R_{i, j} := F_{i, j, 1} + \cdots + F_{i, j, k_{i, j}}$ corresponds to the minimal resolution of singularities of $X$. 
Thus, if $(\bar X, D)$ is a standard completion of $X$ and $(\tilde{X}, D)$ is the minimal resolution of singularities of $(\bar X, D)$, 
then the chain $R_{i, j}$ contracts via $\mu: (\tilde{X}, D) \to (\bar X, D)$ to a singular point of $X$, which is a cyclic quotient singularity. 
In particular, $X$ has at most cyclic quotient singularities (\cite[\S 3, Lem.\ 1.4.4(1)]{Mi} and \cite[Rem.\ 1.12]{FKZ-uniqueness}).

Hence $X$ is smooth if and only if every $R_{i, j}$ is empty, \ie, if every feather $F_{i, j}$ is irreducible 
and reduces to a single bridge curve $B_{i, j}$ (\cite[1.8, 1.9 and Rem.\ 1.12]{FKZ-uniqueness}).
\end{remark}

Let us introduce the notions of a \emph{$*$-component} and a  \emph{$+$-component}.

\begin{definition}\label{StarComponentGeneral}
\begin{itemize}
\item[(1)] For a general feather $F$ with dual graph
$$\Gamma_F: \quad \cu{B} \lin \cu{D_1} \lin \dots \lin \cu{D_k}$$
\medspace

\noindent and bridge curve $B$ we call $D_k$ the \emph{tip component of} $F$.
\item[(2)] The component $C_i$ is called a \emph{$*$-component} if
\begin{itemize}
\item[(i)] $D^{\geq i + 1}_{\text{ext}}$ is not contractible and
\item[(ii)] $D^{\geq i + 1}_{\text{ext}} - F_{j, k}$ is not contractible for every feather $F_{j, k}$ of $D^{\geq i + 1}_{\text{ext}}$ such that 
the tip component of $F_{j, k}$ has \emph{mother component} $C_\tau$, that is, the component $C_\tau$ with $\tau < i$ carrying the center of  blowup in which 
the tip component of $F_{j, k}$ is born.
\end{itemize}
Otherwise $C_i$ is called a \emph{$+$-component}.
\end{itemize}
\end{definition}

\begin{lemma} Let $D_{\text{ext}}$ be the extended divisor of the minimal resolution of singularities of a $1$-standard completion of a Gizatullin surface $X$. 
Suppose that every $C_i$, $3 \leq i \leq n - 1$, is a $*$-component and that there is no feather attached to the component $C_n$. 
Then every feather $F_{i, j}$ is an $A_k$-feather, that is, every $F_{i, j}$ is contractible and therefore has the dual graph
$$\Gamma_{F_{i, j}}: \quad \cou{-1}{B} \lin \cou{-2}{D_1} \lin \dots \lin \cou{-2}{D_k} \quad ,$$

with $k$ depending on $i$ and $j$.
\end{lemma}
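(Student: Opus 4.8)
The plan is to realize all feathers inside a single degenerate fibre and to read off their shape from the contractibility conditions packaged in the notion of a $*$-component. Recall from Definition~\ref{sit: standard-fibration} that $C_0$ is a fibre and $C_1$ a section of the ruling $\Phi_0\colon \tilde X\to\PP^1$ induced by $|C_0|$, while $D_{\text{ext}}=C_0\cup C_1\cup\Phi_0^{-1}(0)$; thus the reduced fibre
\[
F:=\Phi_0^{-1}(0)=C_2+\dots+C_n+\sum_{i,j}F_{i,j}
\]
carries the spine $C_2-\dots-C_n$ together with all feathers. Since $C_1$ is a section meeting $F$ only along $C_2$ (whence $C_2$ has multiplicity one in $\Phi_0^\ast(0)$), the standard theory of $\PP^1$-fibrations on smooth surfaces shows that $F$ is a tree of smooth rational curves obtained from a smooth fibre by blow-ups, and hence contracts, by iterated blow-downs of $(-1)$-curves, to the $0$-curve that is the descendant of $C_2$; in particular every feather is eventually contracted. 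By Definition~\ref{def: feather} each feather is already a \emph{linear} chain $B+D_1+\dots+D_k$ with $B^2\le -1$ and $D_l^2\le -2$, so what must be proved is that these inequalities are equalities, equivalently that each feather contracts to a \emph{smooth} point, which is exactly the statement that it is an $A_k$-feather.

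The key reformulation I would set up is a dictionary between the abstract contractibility of Definition~\ref{StarComponentGeneral} and the fibre $F$. For $3\le i\le n-1$ the tail $D^{\ge i+1}_{\text{ext}}$ is attached to the rest of $F$ only through the single edge $C_i-C_{i+1}$; hence $D^{\ge i+1}_{\text{ext}}$ is contractible to a smooth point precisely when the blow-down of $F$ can be arranged to remove all of $C_{i+1},\dots,C_n$ and their feathers before any of $C_2,\dots,C_i$ is touched, leaving a smooth point on the descendant of $C_i$. Being a $*$-component forbids exactly this, while condition (ii) additionally forbids it after deleting any one feather whose tip is born from a blow-up at a point of a spine component $C_\tau$ with $\tau<i$. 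As the numerical content of these conditions I would record that a linear feather is contractible to a smooth point if and only if $B^2=-1$ and every $D_l^2=-2$: contracting the bridge turns $D_1$ into a $(-1)$-curve, and the cascade terminates at a smooth point exactly when no $D_l$ has self-intersection $\le -3$.

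With this dictionary the argument runs by downward induction on $i$. The base case is $C_n$: it carries no feather, so $D^{\ge n}_{\text{ext}}=C_n$ with $C_n^2\le -2$ is non-contractible, consistent with $C_{n-1}$ being a $*$-component. For the inductive step I would assume that the feathers on $C_{i+2},\dots,C_n$ are already $A_k$-feathers and contract them; this presents $D^{\ge i+1}_{\text{ext}}$ as the spine $C_{i+1}-\dots-C_n$ with raised self-intersections, carrying only the feathers on $C_{i+1}$. If some feather on $C_{i+1}$ were not an $A_k$-feather, then either its tip descends from an earlier spine component, so deleting it would make $D^{\ge i+1}_{\text{ext}}$ contractible and violate condition (ii) for $C_i$, or its tip has mother $C_{i+1}$ and its presence produces a $(-1)$-configuration making $D^{\ge i+1}_{\text{ext}}$ itself contractible, violating condition (i); either way $C_i$ fails to be a $*$-component, a contradiction. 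The feathers on the leftmost components $C_2,C_3$ are then treated after all the others are contracted: the residual fibre is the chain $C_2-\dots-C_n$ with all interior and tip weights $\le -2$, and since such a chain can contract to a $0$-curve only if the necessary $(-1)$-curves are supplied by the bridges of these remaining feathers, a direct check forces them to be $A_k$ as well.

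The step I expect to be the genuine obstacle is the reformulation of the second paragraph, that is, converting ``$D^{\ge i+1}_{\text{ext}}$ (or $D^{\ge i+1}_{\text{ext}}-F_{j,k}$) is not contractible'' into the sharp numerical conclusion $B^2=-1$ and $D_l^2=-2$. This demands careful bookkeeping of how the fibre multiplicities and self-intersections evolve under the successive blow-downs, and in particular a correct treatment of the ``long-range'' feathers whose tip is born from a blow-up at a far-away spine component $C_\tau$ with $\tau<i$. These are exactly the feathers that condition (ii) is designed to eliminate, and showing that condition (ii) does eliminate them is where the combinatorics is most delicate.
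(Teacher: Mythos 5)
The paper itself states this lemma without proof (it is imported from Kovalenko's work, cf.\ \cite{Ko1}, \cite{Ko2}), so your attempt can only be measured against what a complete argument requires; by that measure it has a genuine gap exactly where you suspect one. Your ambient setup is fine: all feathers live in the fibre $\Phi_0^{-1}(0)$, $C_2$ has multiplicity one, and a linear feather is contractible iff $B^2=-1$ and all $D_l^2=-2$. But the whole content of the lemma sits in the inductive step, which you assert rather than prove, and the case analysis it rests on is defective in all three possible ways. (1) It is not exhaustive: the mother of the tip of a feather attached to $C_{i+1}$ can be any $C_\tau$ with $2\le\tau\le i+1$, and the case $\tau=i$ is covered by neither branch — condition (ii) for $C_i$ explicitly requires $\tau<i$, and your condition-(i) mechanism is keyed to mother $C_{i+1}$. (2) Your case (b) is vacuous: the tip is always the \emph{first-born} component of its feather, and a feather is an $A_k$-feather \emph{precisely when} the tip's mother is the component it is attached to; so a non-$A_k$ feather with tip's mother $C_{i+1}$ does not exist, and the ``$(-1)$-configuration'' you invoke there is not a real phenomenon. (3) Your case (a) is false as stated: take $n=6$ and a feather on $C_5$ whose tip has mother $C_3$ (born on $C_3$, with $C_5$, then $C_4$, inserted by blowups at the successive nodes). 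This feather qualifies for condition (ii) at $C_4$, yet $D^{\ge 5}_{\text{ext}}-F$ is \emph{not} contractible (it contains $C_5$, which has the descendant $C_4$ outside it); the condition that actually fails is (i) for $C_3$, far to the left of the attaching component. So the violated $*$-condition is not localized at $C_i$ for a bad feather on $C_{i+1}$, and a downward induction organized around that locality cannot close.

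The missing idea is the dictionary between $A_k$-ness, birth order, and the mother of the tip. One shows: the tip of a feather on $C_s$ is first-born in the feather and its mother is some $C_\tau$ with $\tau\le s$; the feather is $A_k$ iff $\tau=s$; and if $\tau<s$ the tip is born before \emph{every} other component of $D^{\ge\tau+1}_{\text{ext}}$, from which one deduces (via ``contractible $\Leftrightarrow$ closed under taking infinitely near descendants'') that $D^{\ge\tau+1}_{\text{ext}}$ itself is contractible. For $\tau\ge3$ this contradicts condition (i) for $C_\tau$; the residual case $\tau=2$ is exactly the one condition (i) cannot see (since $D^{\ge3}_{\text{ext}}$ is always contractible — which is also why $C_2$ is excluded from the hypotheses), and it is there that condition (ii) must be brought in, by exhibiting a suitable $i\le s-1$ with $D^{\ge i+1}_{\text{ext}}-F$ contractible. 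Finally, your closing ``direct check'' for the feathers on $C_2,C_3$ does not work: mere contractibility of the residual chain plus feathers does not force the feathers to be $A_k$ — in the Danilov--Gizatullin surfaces a bridge of weight $\le-2$ together with the spine is contractible, the spine blow-downs feeding the feather. Feathers on $C_2$ are $A_k$ for the cheap reason that they are connected components of the contractible divisor $\Phi_0^{-1}(0)\ominus C_2$, while feathers on $C_3$ genuinely require condition (i) for $C_3$ through the birth-order analysis above.
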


Note that for an $A_k$-feather the mother components of all curves $D_1, \dots, D_k$ coincide, 
since any $A_k$-feather is born by successive blowups of a point on the boundary component it is attached to.

\begin{examples}\label{ex: DG-extended}
1. A Gizatullin surface $X$ is isomorphic to a nondegenerate toric surface $V_{d,e}=\A^2/\mu_{d,e}$ if and only if for some (and then also for any) 
resolved standard completion $(\tilde X, D)$ of $X$ the dual graph $\Gamma_{\text{ext}}$ of the associated extended divisor $D_{\text{ext}}$ is a linear chain 
(\cite[Lem.\ 2.20]{FKZ-completions}).

2.
Given a Danilov-Gizatullin surface $X_d$ with $d \neq 4$, there are only $d - 1$ possible associated extended divisors (which do not depend, up to an isomorphism, 
on any further continuous parameter), with the dual graphs
\vspace{15pt}
$$\Gamma_{\text{ext}}: \quad \cu{C_0} \lin \cu{C_1} \lin \cu{C_2} \lin \cu{C_3} \lin \dots \lin \cu{C_{r - 1}} \lin \cu{C_r} \nlin \cshiftup{1 - r}{} \lin \cu{C_{r + 1}} \lin \dots \lin \cu{C_{d - 2}} \lin \cu{C_{d - 1}} \nlin \cshiftup{-1}{} \quad ,$$
\noindent where $2 \leq r \leq d - 1$. In addition, for $d = 4$, there is another extended divisor possible; 
the corresponding affine surface is called an \emph{affine pseudo-plane}. Its dual graph is
\vspace{15pt}
$$\Gamma_{\text{ext}}: \quad \cu{C_0} \lin \cu{C_1} \lin \cu{C_2} \lin \cu{C_3} \nlin \cshiftup{-1}{} \lin \cu{C_4} \quad .$$
\end{examples}

\subsection{Associated graph of groups}\label{ss: graph-of-grps}
Following \cite{DG} and \cite{BD1}, for an $\a^1$-fibered surface $X$  we introduce a (not necessarily finite) graph $\F_X$, which reflects the structure of the group  $\Aut X$.

\begin{definition} To any $\a^1$-fibered smooth
affine surface $\mu\colon X\to\A^1$ one associates the oriented graph $\F_X$ as follows:
\begin{itemize}
\item A vertex of $\F_X$ is an equivalence class of a $1$-standard pair $(\bar X, D)$ such that $\bar X \backslash D \cong X$, where two $1$-standard pairs 
$(\bar X_1, D_1, \bar{\mu}_1)$ and $(\bar X_2, D_2, \bar{\mu}_2)$ define the same vertex if and only if 
$(\bar X_1 \backslash D_1, \mu_1) \cong (\bar X_2 \backslash D_2, \mu_2)$.
\item An arrow of $\F_X$ is an equivalence class of reversions. 
If $\varphi: (\bar X, D) \to (\bar X', D')$ is a reversion, then the class $[\varphi]$ of $\varphi$ is an arrow starting from $[(\bar X, D)]$ and ending at $[(\bar X', D')]$. 
Two reversions $\varphi_1: (\bar X_1, D_1) \dasharrow (\bar X'_1, D'_1)$ and $\varphi_2: (\bar X_2, D_2) \dasharrow (\bar X'_2, D'_2)$ define the same arrow 
if and only if there exist isomorphisms $\theta: (\bar X_1, D_1) \to (\bar X_2, D_2)$ and $\theta': (\bar X'_1, D'_1) \to (\bar X'_2, D'_2)$, 
such that $\varphi_2 \circ \theta = \theta' \circ \varphi_1$. Given an arrow $\alpha$, 
we denote by $s(\alpha)$ and $t(\alpha)$, respectively, the starting and ending vertices of $\alpha$.
\end{itemize}
\end{definition}

\begin{remark}\label{EquivalenceOfArrows} 
It follows from the definition that for a $1$-standard pair $(\bar X, D)$ two reversions 
$\varphi_1: (\bar X, D) \dasharrow (\bar X_1, D_1)$ and $\varphi_2: (\bar X, D) \dasharrow (\bar X_2, D_2)$ 
centered at the points $p_1$ and $p_2$ define the same arrow if and only if there exists an automorphism $\psi \in \Aut (\bar X, D)$ such that $\psi(p_1) = p_2$, 
see Definition~\ref{def: zigzag}.
\end{remark}

The structure of the graph $\F_X$ allows to decide, whether the automorphism group $\Aut X$ is generated by automorphisms of $\a^1$-fibrations. 
One says that $\varphi \in \Aut X$ is an \textit{automorphism of $\a^1$-fibrations} if there exists an $\a^1$-fibration $\mu: X \to \a^1$ such that 
$\varphi$ induces an isomorphism $\varphi: (X, \mu) \stackrel{\cong}{\longrightarrow} (X, \mu)$. Indeed, we have the following important fact.

\begin{theorem}\label{thm: BD} {\rm (\cite[Prop.\ 4.0.7]{BD1})}
Suppose that $D$ has a component $C_i$ with  $C_i^2\le-3$. Then $\Aut X$ is generated by automorphisms of $\a^1$-fibrations if and only if $\F_X$ is a tree.
Furthermore, there is an exact sequence
$$1 \longrightarrow H \longrightarrow \Aut X \longrightarrow \pi_1(\F_X) \longrightarrow 1\,,$$
where $H$ is the normal subgroup of $\Aut X$ generated by the automorphisms of $\A^1$-fibrations and $\pi_1(\F_X)$ is the fundamental group of the graph $\F_X$.
\end{theorem}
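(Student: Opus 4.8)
The plan is to derive the statement from Bass--Serre theory (\cite{Se}) by realizing $\Aut X$ as the fundamental group of a graph of groups whose underlying graph is $\F_X$. First I would introduce an auxiliary tree $\Tt$ on which $\Aut X$ acts. Its vertices are the $1$-standard completions $(\bar X,D)$ of $X$, taken up to isomorphisms restricting to the identity on $X$ (so that the marking $\bar X\setminus D\cong X$ is remembered, not forgotten), and its edges are the classes of reversions keeping the marking fixed. An automorphism $\varphi\in\Aut X$ then acts by altering the marking through post-composition with $\varphi$. With this action, two marked completions lie in a common $\Aut X$-orbit precisely when the associated fibered surfaces $(X,\mu)$ are abstractly isomorphic, which is exactly the equivalence defining the vertices and arrows of $\F_X$; hence $\F_X=\Aut X\backslash\Tt$, once one invokes the uniqueness up to marking of the $1$-standard completion of a fixed $\A^1$-fibration.

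The decisive geometric input, and the step I expect to be the main obstacle, is that $\Tt$ is a tree. Connectedness is the statement that any two $1$-standard completions of $X$ can be joined by a finite chain of reversions and fibered modifications, which is the factorization theory of standard completions recalled in \S\,\ref{ss: standard-completion}. Acyclicity is the harder half: one must exclude nontrivial closed chains of reversions that restrict to the identity on $X$. This is where the hypothesis that $D$ carries a component $C_i$ with $C_i^2\le-3$ is used, for it rigidifies the boundary --- ruling out the highly symmetric Danilov--Gizatullin case $[[0,0,(-2)_{d-1}]]$ --- and forces each reversion to be determined by the combinatorics of its center. I would prove acyclicity by a length/uniqueness argument for standard forms: a would-be cycle produces two genuinely distinct reduced factorizations of the identity into reversions, contradicting the uniqueness up to reversion of the standard form (\cite{DG}, \cite{FKZ1C}).

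Granting that $\Tt$ is a tree, after a barycentric subdivision if necessary the $\Aut X$-action is without inversion, so Bass--Serre theory applies. The stabilizer of a vertex $v=[(\bar X,D)]$ consists of the $\varphi\in\Aut X$ carrying the marked completion to an isomorphic one, that is, extending to an automorphism of $(\bar X,D)$; these are precisely the automorphisms preserving the induced $\A^1$-fibration $\mu_v$, namely the automorphisms of $\A^1$-fibrations. Consequently $\Aut X$ is the fundamental group of the quotient graph of groups $(\F_X,\mathcal G)$, and the canonical morphism that trivializes all vertex and edge groups yields the exact sequence
$$1\longrightarrow H\longrightarrow \Aut X\longrightarrow \pi_1(\F_X)\longrightarrow 1\,,$$
in which $H$ is the normal subgroup generated by the vertex groups, i.e.\ by the automorphisms of $\A^1$-fibrations.

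The equivalence then follows formally. A conjugate of an automorphism preserving $\mu$ preserves the fibration $\mu\circ\varphi^{-1}$, so $H$ coincides with the subgroup generated by all automorphisms of $\A^1$-fibrations, and $\Aut X=H$ holds if and only if $\pi_1(\F_X)$ is trivial. Since $\Tt$, and therefore $\F_X$, is connected, triviality of $\pi_1(\F_X)$ is equivalent to $\F_X$ being a tree, which is the asserted criterion.
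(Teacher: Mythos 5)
The paper itself contains no proof of this theorem: it is quoted from \cite[Prop.\ 4.0.7]{BD1}, and the argument there is indeed the Bass--Serre argument you outline, so your overall strategy is the right one. However, as written your proof has a genuine gap, concentrated in one load-bearing claim: the ``uniqueness up to marking of the $1$-standard completion of a fixed $\A^1$-fibration'', together with its corollary that the stabilizer of a vertex of $\Tt$ --- the automorphisms of $X$ extending to biregular automorphisms of $(\bar X,D)$ --- equals $\Aut(X,\mu_v)$. Both statements are false, and not only for the excluded surfaces. For any $X$ as in the theorem, $\Aut(X,\mu)$ contains the infinite-dimensional group $\UU_\mu$ (Theorem \ref{thm: U-infty}), while $\Aut(\bar X,D)$ is an algebraic group; hence there always exist fibration-preserving automorphisms $\tau$ extending to no fixed completion (for orientation, on $\A^2$ take $\tau\colon(x,y)\mapsto(x,y+x^2)$, which does not extend to $(\mathbb{F}_1,C_0+C_1)$). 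For such $\tau$ the marked completions $(\bar X,D,\iota)$ and $(\bar X,D,\iota\circ\tau)$ are \emph{distinct} vertices of your $\Tt$: they differ by a \emph{fibered modification}, the second type of elementary link in \cite[Thm.\ 3.0.2]{BD1} (quoted in the paper after Proposition \ref{AutOfSurfaceWithThreeVertices}), which your construction omits entirely.

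This omission breaks the argument in two places. First, the vertex stabilizers of your action are the algebraic groups $\Aut(\bar X,D)$, not the de Jonqui\`eres groups $\Aut(X,\mu_v)$; Bass--Serre theory would then present $\Aut X$ with the wrong vertex groups, and the kernel of the map onto $\pi_1$ would be the normal subgroup generated by the groups $\Aut(\bar X,D)$, which is strictly smaller than $H$. Second, with reversions as the only edges, $\Tt$ is not even connected: a chain of marking-preserving reversions joining $(\bar X,D,\iota)$ to $(\bar X,D,\iota\circ\tau)$ would compose to a birational self-map restricting to $\tau$ on $X$, and after cancellation this contradicts exactly the uniqueness of factorization you invoke, since the reduced factorization of $\tau$ is a single fibered modification containing no reversion. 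The repair is to coarsen the vertex set: a vertex must be a marked completion taken up to isomorphisms \emph{and} fibered modifications over $X$ --- equivalently, a vertex is an $\A^1$-fibration $\mu\colon X\to\A^1$ up to automorphisms of the target, with $\Aut X$ acting by $\mu\mapsto\mu\circ\varphi^{-1}$. Then the stabilizers are exactly the groups $\Aut(X,\mu)$, the orbit graph is $\F_X$, and both connectedness and acyclicity of the resulting tree follow from the essentially unique factorization into fibered modifications and reversions of \cite[Thm.\ 3.0.2]{BD1}; this is also precisely where the hypothesis $C_i^2\le-3$ enters (for $\A^2$, whose boundary is $[[0,-1]]$, uniqueness fails, $\F_X$ is a single vertex with a loop, and the asserted equivalence is false). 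With that correction, your argument becomes, in substance, the proof in \cite{BD1}.
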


\begin{remark} Due to Corollary \ref{cor: nested} below, each of the automorphism groups of $\A^1$-fibrations which generate $H$ is  
an extension of a  metabelian connected nested ind-group of rank $\le 2$ by an at most countable group. 
The same concerns the factors of the amalgams considered in the next subsection. 
\end{remark}

One can equip $\F_X$ with a structure of a graph of groups as follows.

\begin{definition}\label{DfGraph} 
Let $X$ be a normal quasi-projective surface, and let $\F_X$ be its associated graph. 
Then $\F_X$ admits a structure $(\mathcal{G}_X,\F_X)$ of a graph of groups by the following choice:
\begin{itemize}
\item For any vertex $v$ of $\F_X$, fix a $1$-standard pair $(\bar X_v, D_v, \bar{\mu}_v)$ in the class $v$. 
The group $G_v$ is equal to $\Aut(\bar X_v \backslash D_v, \mu_v)$.
\item For any arrow $\sigma$ of $\F_X$, fix a reversion 
$r_\sigma: (\bar X_\sigma, D_\sigma, \bar{\mu}_\sigma) \dasharrow (\bar X'_\sigma, D'_\sigma, \bar{\mu'}_\sigma)$
 in the class of $\sigma$ and also an isomorphism 
$\phi_\sigma: (\bar X'_\sigma \backslash D'_\sigma, \mu'_\sigma) \to (\bar X_{t(\sigma)} \backslash D_{t(\sigma)}, \mu_{t(\sigma)})$. 
Then the group $G_\sigma$ is equal to
$$\{ (\varphi, \varphi') \in \Aut(\bar X_\sigma, D_\sigma) \times \Aut(\bar X'_\sigma, D'_\sigma) \mid r_\sigma \circ \varphi = \varphi' \circ r_\sigma \}$$
and the monomorphisms $\kappa_\sigma: G_\sigma \to G_{s(\sigma)}$ and $\lambda_\sigma: G_\sigma \to G_{t(\sigma)}$ are given by 
$\kappa_\sigma((\varphi, \varphi')) = \phi_{\sigma^{-1}} \circ \varphi \circ \phi^{-1}_{\sigma^{-1}}$ and 
$\lambda_\sigma((\varphi, \varphi')) = \phi_\sigma \circ \varphi' \circ \phi^{-1}_\sigma$.
\item A \emph{path} in the graph of groups is a sequence $(g_0, \sigma_1, g_1, \dots, \sigma_r, g_r)$, where $g_i \in G_{v_i}$ and the sequence 
$(v_0, \sigma_1, v_1, \dots, \sigma_r, v_r)$ corresponds to a path in $\F_X$. We say that the path starts at $v_1$ and ends at $v_n$, and is closed if $v_1 = v_n$.

\item The \emph{fundamental group} of a graph of groups at a vertex $v$ consists of the closed paths starting and ending at $v$, modulo the relations 
$$(\sigma, \lambda_\sigma(h), \sigma^{-1},(\kappa_\sigma(h))^{-1}) \cong (1)\quad\mbox{and}\quad (g, \sigma, 1, \sigma^{-1}, g') \cong (gg')\,,$$ 
where $1 \in G_{s(\sigma)}$.
\end{itemize}
\end{definition}

The first version of the following theorem was established by Danilov and Gizatullin (\cite[Thm.\ 5]{DG}). 
It connects the structure of the graph of groups on $\F_X$  as in Definition \ref{DfGraph} with the group $\Aut X$.

\begin{theorem}\label{AutFundamentalGroup} {\rm (\cite[Thm.\ 5]{DG}, see also \cite[Thm.\ 4.0.11]{BD1})} 
Let $(\bar X, D)$ be a $1$-standard pair such that $D$ has a component $C_i$ with  $C_i^2\leq -3$. If $ X= \bar X \backslash D$, 
then $\Aut X\cong\pi_1(\mathcal{G}_X,\F_X)$.
\end{theorem}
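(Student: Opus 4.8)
The plan is to realize $\Aut X$ as a group acting on a tree and then to invoke the fundamental theorem of Bass--Serre theory (\cite{Se}), in the same spirit as the proof of Serre's Theorem \ref{th:Serre} above. Concretely, I would build a graph $\mathcal{T}$ whose vertices are the \emph{marked} $1$-standard completions of $X$, that is, triples $(\bar X, D, \theta)$ with $\theta\colon \bar X\setminus D\isom X$ an isomorphism, taken modulo isomorphisms of pairs that commute with the markings and with the fibrations; an edge joins two such marked completions whenever they are related by a reversion in the sense of Definition \ref{def: zigzag}. The group $\Aut X$ acts on $\mathcal{T}$ by $g\cdot(\bar X, D, \theta)=(\bar X, D, g\circ\theta)$, and, after a barycentric subdivision of $\mathcal{T}$ if necessary, this action is without inversion.

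First I would check that this action realizes exactly the graph of groups $(\mathcal{G}_X,\F_X)$ of Definition \ref{DfGraph}. Two marked completions lie in the same $\Aut X$-orbit precisely when the underlying fibered pairs are isomorphic, so the quotient $\mathcal{T}/\Aut X$ is canonically the vertex set of $\F_X$; the same computation applied to reversions identifies the orbits of edges with the arrows of $\F_X$. For the stabilizers, an element $g$ fixes the class of $(\bar X, D, \theta)$ iff $g\circ\theta=\theta\circ\psi$ for some $\psi\in\Aut(\bar X, D)$ preserving $\bar\mu$, i.e. iff $g=\theta\circ\psi\circ\theta^{-1}\in\Aut(\bar X\setminus D,\mu)=G_v$; thus each vertex stabilizer is conjugate to a vertex group $G_v$, and a parallel argument for reversions recovers the edge groups $G_\sigma$ together with the monomorphisms $\kappa_\sigma,\lambda_\sigma$ from Definition \ref{DfGraph}.

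The crux is to prove that $\mathcal{T}$ is a tree. Connectivity amounts to the statement that any two $1$-standard completions of the fixed surface $X$ can be linked by a finite chain of reversions; this I would deduce from the uniqueness up to reversion of the standard form of the boundary zigzag recalled in \S\ref{ss: standard-completion}, together with the analysis of the extended divisor (Lemma \ref{StructureExtendedDivisor}). The harder point is the absence of cycles, equivalently that the only relations among successive reversions are the ones already built into the vertex groups; here the hypothesis that $D$ carries a component $C_i$ with $C_i^2\le -3$ is essential, since it rigidifies $\Aut(\bar X, D)$ and forbids the degenerate coincidences of reversion centres that occur, e.g., in the all-$(-2)$ Danilov--Gizatullin case. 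This rigidity is the geometric counterpart of Theorem \ref{thm: BD}: in the exact sequence $1\to H\to\Aut X\to\pi_1(\F_X)\to 1$ the subgroup $H$ is the normal closure of the vertex groups $G_v$ (the automorphisms of $\A^1$-fibrations), so that $\pi_1(\F_X)$ is precisely the topological fundamental group of the quotient graph $\mathcal{T}/\Aut X=\F_X$, which is exactly what holds when $\mathcal{T}$ is a tree.

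Finally, once $\mathcal{T}$ is known to be a tree, the structure theorem of Bass--Serre theory applies verbatim: a group acting without inversion on a tree is isomorphic to the fundamental group of the associated quotient graph of groups, with the vertex and edge groups read off from the stabilizers. Since the second step identifies that quotient datum with $(\mathcal{G}_X,\F_X)$, one obtains $\Aut X\cong\pi_1(\mathcal{G}_X,\F_X)$, as claimed. I expect the tree property of $\mathcal{T}$, and within it the acyclicity under the assumption $C_i^2\le -3$, to be the main obstacle; the bookkeeping of stabilizers and the concluding Bass--Serre step are formal once the geometry of reversions between $1$-standard completions has been organized.
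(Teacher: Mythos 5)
The first thing to note is that the paper contains no proof of this theorem at all: it is quoted from \cite[Thm.\ 5]{DG} and \cite[Thm.\ 4.0.11]{BD1}, and the substance of those proofs is the decomposition theorem quoted later in the paper (see the paragraph after Proposition \ref{AutOfSurfaceWithThreeVertices}, citing \cite[Thm.\ 3.0.2]{BD1}): every automorphism of $X$, viewed as a birational self-map of a $1$-standard pair, factors into fibered modifications and reversions, and this reduced factorization is essentially unique. Your Bass--Serre shell is the right framework, but it has a concrete error in the bookkeeping step you call formal. If the vertices of $\mathcal{T}$ are marked completions $(\bar X, D,\theta)$ modulo \emph{biregular} isomorphisms of pairs, then the stabilizer of a vertex is $\theta\,\Aut(\bar X,D)\,\theta^{-1}$ (automorphisms of the pair compatible with $\bar\mu$), a finite-dimensional algebraic group, and \emph{not} the vertex group $G_v=\Aut(\bar X\setminus D,\mu)$: most elements of $\Aut(X,\mu)$ --- for instance the whole infinite-dimensional group $\UU_\mu$ --- do not extend to biregular automorphisms of $(\bar X,D)$, but act on completions by fibered modifications. (Compare Theorem \ref{thm: BD2}, where $\Aut(\bar X,D)=A\cap J$ is a \emph{proper} subgroup of $J=\Aut(X,\mu)$; it plays the role of an edge group, not a vertex group.) Your ``i.e.'' silently identifies the image of $\Aut(\bar X,D)$ with $G_v$, which is false; with your equivalence relation the quotient data is not $(\mathcal{G}_X,\F_X)$. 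The fix is to define vertices modulo fibered modifications as well (equivalently, as the $\A^1$-fibrations $\mu_v\circ g^{-1}$ on $X$), which is exactly how the vertices of $\F_X$ are defined in Definition \ref{DfGraph}.

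The larger gap is that the tree property of $\mathcal{T}$ --- which you correctly single out as the crux --- is precisely the decomposition theorem, and you prove neither half of it. Connectivity does not follow from the uniqueness up to reversion of the standard weight sequence: that is a statement about abstract weighted graphs and says nothing about linking two given completions of the \emph{fixed} surface $X$ by a chain of reversions; what is needed is to factor an arbitrary birational map of $1$-standard pairs restricting to an isomorphism of the affine parts through elementary links and reorganize the factorization into fibered modifications and reversions, which is the existence half of \cite[Thm.\ 3.0.2]{BD1}. Acyclicity is the essential-uniqueness half, and this is where the hypothesis $C_i^2\le-3$ genuinely enters (it excludes the zigzags $[[0,0,-2,\dots,-2]]$, for which a reversion can be absorbed into fibered modifications and the uniqueness of reduced words fails); you correctly locate the difficulty there, but only assert that rigidity handles it. As it stands, the proposal reduces the theorem to exactly the statement whose proof constitutes the content of \cite{BD1}, so the essential mathematical work remains to be done.
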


The following important consequence concerns the structure of the automorphism groups of Gizatullin surfaces.

\begin{corollary}\label{cor: amalgam A1-fibrations} 
Under the assumptions of Theorem \ref{AutFundamentalGroup} suppose in addition that $\F_X$ is a tree with vertices $[(\bar X_i, D_i)]$, $i \in I$. 
Then $\Aut X$ is an amalgam of the  automorphism groups $\Aut(\bar X_i \backslash D_i, \mu_i)$ of $\a^1$-fibrations over $\A^1$. 
\end{corollary}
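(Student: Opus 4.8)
The plan is to deduce the statement directly from Theorem~\ref{AutFundamentalGroup} combined with the general structure theory of fundamental groups of graphs of groups (Bass--Serre theory, see \cite{Se}). By Theorem~\ref{AutFundamentalGroup}, under the standing hypotheses we have an isomorphism $\Aut X\cong\pi_1(\mathcal{G}_X,\F_X)$, where $(\mathcal{G}_X,\F_X)$ is the graph of groups of Definition~\ref{DfGraph}, with vertex groups $G_{v_i}=\Aut(\bar X_i\backslash D_i,\mu_i)$ and edge groups $G_\sigma$. Hence it suffices to identify $\pi_1(\mathcal{G}_X,\F_X)$ with the amalgam $\varinjlim(\F_X,\mathcal{G}_X)$ of the tree of groups in the sense of Subsection~\ref{ss: amalgams}, under the additional hypothesis that the underlying graph $\F_X$ is a tree.

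First I would recall the standard presentation of $\pi_1(\mathcal{G},\F)$ relative to a maximal spanning subtree $T\subseteq\F$: it is generated by the vertex groups together with a stable letter $t_\sigma$ for each edge $\sigma$, subject to the amalgamation relations $t_\sigma\,\lambda_\sigma(h)\,t_\sigma^{-1}=\kappa_\sigma(h)$ for $h\in G_\sigma$ and to $t_\sigma=1$ for $\sigma\in T$; the free generators $t_\sigma$ with $\sigma\notin T$ contribute a free factor detecting the first Betti number of $\F$. The key observation is that when $\F_X$ is itself a tree one may take $T=\F_X$, so that no free stable letters survive and every relation $t_\sigma\,\lambda_\sigma(h)\,t_\sigma^{-1}=\kappa_\sigma(h)$ collapses to $\lambda_\sigma(h)=\kappa_\sigma(h)$. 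This is precisely the defining presentation of the iterated amalgamated free product of the vertex groups $G_{v_i}$ over the edge groups $G_\sigma$, that is, of the amalgam $\varinjlim(\F_X,\mathcal{G}_X)$ introduced in Subsection~\ref{ss: amalgams}, whose factors are the automorphism groups $\Aut(\bar X_i\backslash D_i,\mu_i)$ of $\A^1$-fibrations over $\A^1$. This matches the relations $(\sigma,\lambda_\sigma(h),\sigma^{-1},(\kappa_\sigma(h))^{-1})\cong(1)$ and $(g,\sigma,1,\sigma^{-1},g')\cong(gg')$ listed in Definition~\ref{DfGraph}.

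To finish, I would check that the resulting data genuinely form a tree of groups in the sense fixed in Subsection~\ref{ss: amalgams}. Injectivity of $\kappa_\sigma$ and $\lambda_\sigma$ is part of Definition~\ref{DfGraph}, so each edge group embeds into its two adjacent vertex groups. The remaining point is the paper's convention that each edge group be a \emph{proper} subgroup of the adjacent vertex groups; here I would argue from the geometry, using that a reversion $r_\sigma$ changes the $\A^1$-fibration and is not itself an automorphism of the fibered pair (Definition~\ref{def: zigzag}, Remark~\ref{EquivalenceOfArrows}), so that the compatibility condition $r_\sigma\circ\varphi=\varphi'\circ r_\sigma$ defining $G_\sigma$ is strictly stronger than membership in a single vertex group, whence $G_\sigma\subsetneq G_{s(\sigma)}$.

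The group-theoretic reduction is purely formal once Bass--Serre theory is invoked, so the only real work lies in this last verification of properness of the edge groups; that is the step I expect to be the main obstacle. It is a geometric rather than a combinatorial point, and it is precisely what guarantees that $\Aut X$ is presented as a \emph{nontrivial} amalgam rather than degenerating to a single vertex group.
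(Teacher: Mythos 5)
Your proposal is correct and takes essentially the same route as the paper, which states the corollary without a separate proof as an immediate consequence of Theorem \ref{AutFundamentalGroup}: once $\Aut X\cong\pi_1(\mathcal{G}_X,\F_X)$ is known, the fact that the fundamental group of a graph of groups over a tree is the amalgam of its tree of groups is standard Bass--Serre theory, which is exactly the presentation argument you spell out. Your additional check that the edge groups are proper subgroups of the vertex groups (needed for the convention of Subsection \ref{ss: amalgams}) is a sensible supplement; it can also be seen directly from the fact that each edge group $G_\sigma$ embeds into the algebraic group $\Aut(\bar X_\sigma, D_\sigma)$, whereas each vertex group $\Aut(\bar X_i\backslash D_i,\mu_i)$ contains the infinite-dimensional unipotent group $\UU_{\mu_i}$.
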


\subsection{Amalgam structures for Gizatullin surfaces}\label{sec: amalgams}

In this section we list the Gizatullin surfaces known to the authors, where the automorphism group is an amalgam (however, see \cite{DL} for further potential examples). 
The easiest way to present such surfaces is to describe various $1$-standard completions of them in terms of the dual graphs of their extended divisors.

Although the following theorem is a special case of Theorem \ref{StructureOfAutomorphismGroup}, it is worth to be mentioned independently.

\begin{theorem}\label{thm: BD2} {\rm (\cite[Thm.\ 5.4.5]{BD1})} Consider a Danielewski surface
$$X = \{ xy - P(z) = 0 \} \subseteq \a^3, \quad\mbox{where}\quad P(z) \in \ka[z]$$ has degree $n\ge 1$. 
Then $X$ has a standard completion $(\bar X, D)$ of type $[[0, 0, -n]]$. Letting $\tau\in\Aut X$ be the involution $(x, y, z) \mapsto (y, x, z)$ and $\mu\colon X\to\A^1$ 
be the $\A^1$-fibration $(x,y,z)\mapsto x$, we let $A = \langle \Aut(\bar X, D), \tau \rangle \subseteq \Aut X$ and $J = \Aut(X, \mu)$. Then $A \cap J  = \Aut(\bar X, D)$ and
$$\Aut X = A \star_{A \cap J} J\,.$$
\end{theorem}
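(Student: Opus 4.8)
The plan is to realize $\Aut X$ as the fundamental group of the graph of groups $(\mathcal{G}_X,\F_X)$ of \S\ref{ss: graph-of-grps} and to read off the amalgam from the induced action on the associated Bass--Serre tree. First I would fix the standard completion $(\bar X,D)$ of type $[[0,0,-n]]$, whose boundary chain is $C_0\!-\!C_1\!-\!C_2$ with weights $(0,0,-n)$, and record the two $\A^1$-fibrations carried by the two $0$-weight components: $\mu=\mu_x\colon(x,y,z)\mapsto x$, attached to $C_1$, and $\mu_y\colon(x,y,z)\mapsto y$, attached to $C_0$. Since $\mu_x\circ\tau=\mu_y$, the involution $\tau$ is an isomorphism of fibred surfaces $(X,\mu_y)\to(X,\mu_x)$. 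The group $\Aut(\bar X,D)$ consists of the automorphisms of $X$ extending to $\bar X$ and preserving $D$; these fix every boundary component (the weighted path $(0,0,-n)$ has no nontrivial graph automorphism), hence preserve both rulings and in particular $\mu$, so $\Aut(\bar X,D)\subseteq J$. By contrast $\tau$ would have to interchange the end component $C_0$ and the inner component $C_1$, which no automorphism of $(\bar X,D)$ can do; thus $\tau\notin\Aut(\bar X,D)$, and $\tau$ is realized only birationally, as a reversion. As $\tau$ normalizes $\Aut(\bar X,D)$ in $\Aut X$ and $\tau^2=1$, one gets $A=\Aut(\bar X,D)\rtimes\langle\tau\rangle$ and, since $\tau$ does not preserve $\mu$, the set-level identity $A\cap J=\Aut(\bar X,D)$.

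The engine is the machinery of \S\ref{ss: graph-of-grps}. One outer elementary transformation at $C_0$ (the inverse of (\ref{ElementaryTransformation2})) turns $[[0,0,-n]]$ into the $1$-standard zigzag $[[0,-1,-n]]$, whose component $C_2$ has self-intersection $-n\le-3$ as soon as $n\ge3$; thus for $n\ge3$ the hypotheses of Theorems \ref{thm: BD} and \ref{AutFundamentalGroup} are met and $\Aut X\cong\pi_1(\mathcal{G}_X,\F_X)$. The heart of the argument is then the computation of $\F_X$. I would show that, up to equivalence, $X$ carries the \emph{single} $\A^1$-fibration $\mu$ — with $\mu_y$ conjugate to $\mu_x$ through $\tau$ — so that $\F_X$ has exactly one vertex $v$ with vertex group $G_v=\Aut(X,\mu)=J$; and that all reversions of $(\bar X,D)$ lie in a single equivalence class, realized on $X$ by the involution $\tau$. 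The first point rests on the uniqueness up to reversion of the standard form (\S\ref{ss: standard-completion}) together with the shape of the extended divisor of $[[0,0,-n]]$, and the second uses Remark \ref{EquivalenceOfArrows}, i.e.\ that $\Aut(\bar X,D)$ acts transitively on the admissible centres of reversion lying on the $0$-component. Consequently $\F_X$ is a single vertex with one loop-arrow, whose edge group (Definition \ref{DfGraph}) is $\Aut(\bar X,D)$ and whose stable letter is $\tau$.

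It remains to read the amalgam off this one-vertex graph of groups. By Definition \ref{DfGraph} its fundamental group admits the presentation $\langle\, J,\ \tau \mid \tau^2=1,\ \tau\,\Aut(\bar X,D)\,\tau^{-1}=\Aut(\bar X,D)\,\rangle$, the two edge embeddings of $C=\Aut(\bar X,D)$ into $J$ being conjugate through $\tau$; because $\tau^{2}=1$ this is exactly the amalgam $A\star_{A\cap J}J$ with $A=\Aut(\bar X,D)\rtimes\langle\tau\rangle$. Geometrically, $\tau$ inverts the corresponding edge $e$ of the Bass--Serre tree, fixing its midpoint $m$ with $\Stab(m)=\langle\Stab(e),\tau\rangle=A$, so that barycentric subdivision yields the segment with vertex groups $J$ and $A$ and edge group $A\cap J$, and Serre's structure theorem (\cite{Se}) gives $\Aut X\cong A\star_{A\cap J}J$. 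For $n\le2$ the surface is $\A^2$ ($n=1$) or the smooth affine quadric $(\PP^1\times\PP^1)\setminus\Delta$ ($n=2$), and the stated decomposition follows by inspection from \ref{sit: A2} and \ref{sit: Aut P1-P1}. I expect the main obstacle to be the determination of $\F_X$: proving that $X$ has a single $\A^1$-fibration up to equivalence and that every reversion is $\Aut(\bar X,D)$-equivalent to the involutive one induced by $\tau$. This is precisely where the symmetric geometry of $xy=P(z)$ enters and keeps $\F_X$ finite, in contrast with the much larger graphs produced by general Gizatullin surfaces (cf.\ Corollary \ref{cor: unbearable}).
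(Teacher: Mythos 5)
Your route for $n\ge 3$ — realize $\Aut X$ as $\pi_1(\mathcal{G}_X,\F_X)$ via Theorem \ref{AutFundamentalGroup}, show $\F_X$ is one vertex with one involutive loop, and read off the amalgam from the Bass--Serre tree with inversion — is exactly the route the paper points to: Theorem \ref{thm: BD2} is stated there without proof, as a quotation of \cite{BD1} and a special case of Theorem \ref{StructureOfAutomorphismGroup}. But two of your steps are genuinely wrong, not merely under-detailed. First, the cases $n\le 2$ do \emph{not} ``follow by inspection'' from \ref{sit: A2} and \ref{sit: Aut P1-P1}; the asserted decomposition is incompatible with them. For $n=1$ one has $X\cong\A^2$ and $A\cap J=\Aut(\bar X,D)$ is the torus $T=\{(x,y,z)\mapsto(\lambda x,\mu y,\lambda\mu z)\}$; the element $j\colon(x,y,z)\mapsto(x+1,y,z+y)$ lies in $J\setminus T$, and $\tau j\tau\colon(x,y,z)\mapsto(x,y+1,z+x)$ lies again in $J$, so the reduced word $\tau\cdot j\cdot\tau$ of length three represents an element of the factor $J$ — impossible in $A\star_{T}J$ by the normal form theorem for amalgams \cite{Se}. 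For $n=2$ (where $X\cong(\PP^1\times\PP^1)\setminus\Delta$, resp.\ $V_2$ for a double root), $\Aut X$ contains $\PSL(2,\K)$ (resp.\ $\GL(2,\K)/\mu_2$) by \ref{sit: Aut P1-P1} (resp.\ \ref{sss: sing-toric}), while both of your factors are (solvable)-by-(at most countable) groups, cf.\ Corollary \ref{cor: nested}; since by Serre's Theorem \ref{th:Serre}/Proposition \ref{serre} such an algebraic subgroup would be conjugate into a factor, the claimed amalgam cannot hold. Thus $n\le2$ is precisely where the hypothesis $C_i^2\le-3$ of Theorems \ref{thm: BD} and \ref{AutFundamentalGroup} fails, and these cases cannot be absorbed the way you propose; the statement must be read with $n\ge 3$, as in \cite{BD1}.

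Second, your computation of $\F_X$ conflates $\Aut(\bar X,D)$, where $D$ is the $[[0,0,-n]]$-zigzag, with the automorphism group of the $1$-standard pair $[[0,-1,-n]]$ to which Definition \ref{DfGraph} and Remark \ref{EquivalenceOfArrows} refer, and these are \emph{different} subgroups of $\Aut X$. The $1$-standard group contains in addition the maps $g_c\colon(x,y,z)\mapsto\bigl(x,\,y+(P(z+cx)-P(z))/x,\,z+cx\bigr)$, $c\in\K$, which contract the fibre at infinity $C_0$ of the $[[0,0,-n]]$-completion (so $g_c\notin\Aut(\bar X,D)$) and which act on the affine line of admissible reversion centres by translations; $\Aut(\bar X,D)$ acts there only by scalings, with two orbits. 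So the transitivity you invoke for ``one loop'' holds for the $1$-standard group and \emph{fails} for the group you use; and if it did hold for $\Aut(\bar X,D)$, the edge group of the loop could not equal $\Aut(\bar X,D)$, because by Definition \ref{DfGraph} the edge group is the stabilizer of the chosen reversion centre (a conjugate $r\varphi r^{-1}$ is biregular only when $\varphi$ fixes the centre of $r$). In other words, your two claims — transitivity of $\Aut(\bar X,D)$ on centres, and edge group $=\Aut(\bar X,D)$ — contradict each other; the consistent bookkeeping (transitivity for the $1$-standard group, edge group $=$ centre-stabilizer $=\Aut(\bar X,D)$) is exactly what produces the theorem's equality $A\cap J=\Aut(\bar X,D)$. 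With this repair, and with a proof (asserted but not given in your text) that $\tau$ normalizes $\Aut(\bar X,D)$ — immediate from the explicit form $(x,y,z)\mapsto(\lambda x,\,a^n\lambda^{-1}y,\,az+b)$ of its elements — your argument for $n\ge3$ goes through and coincides with the proof in \cite{BD1}, \cite{Ko1}.
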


This result can be generalized as follows.

\begin{theorem} {\rm (\cite[Cor.\ 3.19, cf.\ Thm.\ 4.4]{Ko1})}  \label{StructureOfAutomorphismGroup}
Let $X$ be a smooth Gizatullin surface satisfying the following condition (see diagram {\rm (\ref{diag: graph-Gizatullin})}):
\begin{eqnarray*}
(*) && X \ \text{admits a} \ 1-\text{standard completion} \ (\bar X, D) \ \text{such that} \ C_3, \dots, C_{n - 1} \ \text{are} \\ && *\text{-components
and there is no feather attached to} \ C_2 \ \text{and to} \ C_n.
\end{eqnarray*}

\noindent
Fix an $\a^1$-fibration $\mu\colon X \to \a^1$, and let $\mu^\vee: X \to \a^1$ be the $\a^1$-fibration  induced by the reversion 
$\psi: (\bar X, D) \dasharrow (\bar X^\vee, D^\vee)$ with center $p \in C_0 \backslash C_1$. Then $\F_X$ has one of the following structures:
$$\F_X: \begin{xy}
  \xymatrix{
  [(\bar X, D)] \ \bullet \ar@{<->}[r] & \bullet \ [(\bar X^\vee, D^\vee)] \\
  }
\end{xy} \quad \text{or} \quad \F_X: [(\bar X, D)] \ \bullet \rcirclearrowleft .$$
\noindent If $\F_X$ is of the form $\bullet \rcirclearrowleft$, then $D^{\geq 2}$ is a palindrome.
\begin{itemize}
\item[(a)] Let $\F_X$ be of the form $\bullet \rcirclearrowleft$, that is, $(\bar X, D) \cong (\bar X^\vee, D^\vee)$. Then
$$\Aut X = A \star_{A \cap J} J\,,$$
where $A = \langle \Aut (\bar X, D), \psi \rangle$, $\,\,J = \Aut(X, \mu)$, and $A \cap J = \Aut(\bar X, D)$.
\item[(b)] Let $\F_X$ be of the form 
$\begin{xy} \xymatrix{ [(\bar X, D)] \ \bullet \ar@{<->}[r] & \bullet \ [(\bar X^\vee, D^\vee)] } \end{xy}$.
Denote by $A$ the subgroup corresponding to the edge and by $J$ and $J^\vee$ the subgroups $J = \Aut(X, \mu)$ and $J^\vee = \Aut(X, \mu^\vee)$. 
Identifying $J \hookleftarrow A \hookrightarrow J^\vee$ we have
$$\Aut  X = J \star_A J^\vee.$$
\end{itemize}
\end{theorem}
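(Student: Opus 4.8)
The plan is to deduce everything from the identification $\Aut X \cong \pi_1(\mathcal{G}_X, \F_X)$ furnished by Theorem \ref{AutFundamentalGroup}, so that the problem splits into two tasks: determining the combinatorial shape of the graph $\F_X$ under hypothesis $(*)$, and then reading off the fundamental group of the resulting graph of groups. To invoke Theorem \ref{AutFundamentalGroup} one needs a boundary component of self-intersection $\le -3$; I would first check that this is automatic under $(*)$, since a $1$-standard zigzag whose $D^{\ge 2}$ consists solely of $(-2)$-curves produces a Danilov--Gizatullin surface, and those are excluded by $(*)$ (their extended divisors carry feathers on the extremal components, cf.\ Examples \ref{ex: DG-extended}; compare Corollary \ref{cor: unbearable}, which shows $\Aut X_d$ is never a two-factor amalgam).

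The combinatorial heart is to show that $\F_X$ is either a single edge or a single vertex carrying a loop. First I would use the Lemma preceding Examples \ref{ex: DG-extended} together with smoothness of $X$ (Remark \ref{rem: sing}) to conclude that, under $(*)$, every feather of $D_{\text{ext}}$ is an $A_k$-feather reducing to a single $(-1)$-bridge curve, so that the extended divisor is completely rigid. Next I would analyze reversions: by Remark \ref{EquivalenceOfArrows} the arrows issuing from $[(\bar X, D)]$ are the $\Aut(\bar X, D)$-orbits of admissible centers $p \in C_0 \setminus C_1$, and since $C_0$ meets no feather and meets the boundary only in $C_1$, these centers form a single orbit, yielding exactly one reversion class with target the reversed pair $(\bar X^\vee, D^\vee)$. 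The rigidity then forces the reversion to be involutive, $((\bar X, D)^\vee)^\vee \cong (\bar X, D)$, so the connected graph $\F_X$ consists of the two vertices $[(\bar X, D)]$ and $[(\bar X^\vee, D^\vee)]$ joined by the reversion arrow. These vertices coincide precisely when $(\bar X, D) \cong (\bar X^\vee, D^\vee)$; since a reversion reverses the standard weight string of $D^{\ge 2}$ and the feather data is determined by that string, coincidence is equivalent to $D^{\ge 2}$ being a palindrome. This settles the dichotomy and the palindrome claim.

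It remains to convert $\pi_1(\mathcal{G}_X, \F_X)$ into an amalgam. In the two-vertex case $\F_X$ is a single segment, so Corollary \ref{cor: amalgam A1-fibrations} applies directly: the vertex groups are $J = \Aut(X, \mu)$ and $J^\vee = \Aut(X, \mu^\vee)$, the edge group is the reversion-compatibility group $A = G_\sigma$ of Definition \ref{DfGraph}, and identifying $J \hookleftarrow A \hookrightarrow J^\vee$ gives $\Aut X = J \star_A J^\vee$, which is (b). In the loop case the fundamental group is a priori an HNN extension of $J$ with stable letter the reversion $\psi$; here I would exploit that $(\bar X, D) \cong (\bar X^\vee, D^\vee)$ promotes $\psi$ to an honest automorphism $\hat\psi \in \Aut X$ with $\hat\psi^2 \in \Aut(\bar X, D) = A \cap J$ (an involution modulo the boundary group, exactly like the swap $\tau$ in Theorem \ref{thm: BD2}). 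Setting $A = \langle \Aut(\bar X, D), \hat\psi \rangle$, the stable letter acts on the Bass--Serre tree as an edge inversion, and passing to the barycentric subdivision turns the loop into a segment with vertex groups $J$ and $A$ and edge group $A \cap J = \Aut(\bar X, D)$, yielding $\Aut X = A \star_{A \cap J} J$, which is (a).

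The main obstacle is the rigidity step: showing that under $(*)$ the reversion is unique up to equivalence and involutive, so that $\F_X$ cannot exceed diameter one. This is where the full $*$-component formalism and the explicit bookkeeping of feathers under reversions are needed, and where the argument of \cite{Ko1} is most delicate. A secondary, more routine difficulty is the Bass--Serre rewriting in the loop case, where one must verify carefully that $\hat\psi^2 \in \Aut(\bar X, D)$ so that the HNN extension genuinely collapses to a two-factor amalgam rather than to a proper ascending HNN extension.
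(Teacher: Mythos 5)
Note first that the paper itself contains no proof of this statement: it is imported wholesale from \cite[Cor.\ 3.19]{Ko1}, and the surrounding Subsections 5.3--5.4 (Theorem \ref{thm: BD}, Theorem \ref{AutFundamentalGroup}, Remark \ref{EquivalenceOfArrows}, Corollary \ref{cor: amalgam A1-fibrations}) are exactly the machinery your outline invokes, so your high-level route is the intended one, and your Bass--Serre bookkeeping in cases (a) and (b) (segment case via Corollary \ref{cor: amalgam A1-fibrations}; loop case via an inverting element and barycentric subdivision) is the standard, correct conversion. The problems lie in the two steps you defer, and they are not routine. The first is your preliminary reduction: to apply Theorem \ref{AutFundamentalGroup} you need a boundary component with $C_i^2\le-3$, and you claim this is automatic under $(*)$ because an all-$(-2)$ string $D^{\ge 2}$ would force $X$ to be Danilov--Gizatullin. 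That dichotomy is false: $\PP^2\setminus C$ (Example \ref{ex:222-1}) has a $1$-standard completion with weights $[[0,-1,-2,-2,-2]]$, its unique feather attached to $C_3$, and $C_3$ a $*$-component (both conditions of Definition \ref{StarComponentGeneral} hold, since $D^{\ge 4}_{\text{ext}}=C_4$ is a $(-2)$-curve carrying no feather). So it satisfies $(*)$, is explicitly not Danilov--Gizatullin (Example \ref{exa: special-standard}.1), yet has no weight $\le -3$; the same happens for several toric surfaces, which the paper calls ``an important particular case'' of this theorem. Hence the theorem's scope genuinely exceeds that of Theorem \ref{AutFundamentalGroup} as stated, and these cases need a separate argument that your proposal does not supply.

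The second, and central, gap is the rigidity step. You derive the transitivity of the $\Aut(\bar X,D)$-action on $C_0\setminus C_1$ (hence the uniqueness of the arrow) from the observation that ``$C_0$ meets no feather and meets the boundary only in $C_1$''. This argument proves too much: by Lemma \ref{StructureExtendedDivisor} feathers are \emph{always} attached only to $C_i$ with $i\ge 2$, so your reasoning applies verbatim to the surface treated immediately after the theorem in the paper, where nevertheless the $\Aut(\bar X,D)$-action on $C_0\setminus C_1$ has \emph{two} orbits (an open orbit and a special point determined by the feather attached to $C_n$), producing the three-vertex graph of Proposition \ref{AutOfSurfaceWithThreeVertices} and a three-factor amalgam. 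Transitivity must therefore be extracted from the hypotheses of $(*)$ themselves --- the $*$-component condition and the absence of feathers on $C_2$ and $C_n$ --- and the same control is needed at the reversed vertex $[(\bar X^\vee,D^\vee)]$, where one must track how the feathers redistribute under reversion. Your outline never uses these hypotheses in any computation; it only records that this is ``where \cite{Ko1} is most delicate''. Since that is precisely the content of the theorem, what you have is a correct reduction scheme with its core missing, not a proof. (A minor further slip: your biconditional ``loop $\Leftrightarrow$ palindrome'', justified by ``the feather data is determined by the weight string'', is false --- Danilov--Gizatullin surfaces and $\PP^2\setminus C$ share a zigzag but have different extended divisors; only the implication loop $\Rightarrow$ palindrome is asserted in the theorem, and that one does follow from the reversal of the weight string.)
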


An important particular case of Theorem \ref{StructureOfAutomorphismGroup} is that of the toric  affine surfaces with the amalgam structures 
on their automorphism groups as exposed in \ref{sit: A2}-\ref{sit: A1star-2}.

Another interesting example of a family of smooth Gizatullin surfaces, for which the automorphism groups are amalgams, is the following one. 
Consider any smooth $1$-standard pair $(\bar X, D)$ such that the dual graph of $D_{\text{ext}}$ has the following form:

\vspace{15pt}
$$\Gamma_{\text{ext}}: \quad \cu{C_0} \lin \cu{C_1} \lin \cu{C_2} \lin \cu{C_3} \lin \dots \lin \cu{C_{i - 1}} \lin \cu{C_i} \nlin \xbshiftup{ \{ F_j \} }{} \lin \cu{C_{i + 1}} \lin \dots \lin \cu{C_{n - 1}} \lin \cu{C_n} \nlin \cshiftup{-1}{} \quad ,$$

\noindent where $C_3, \dots, C_{n - 1}$ are $*$-components. Hence any feather $F_j$ has self-intersection index $F_j^2=-1$. 
Reversion of $(\bar X, D)$ may lead to two different completions, namely those with the dual graphs of the extended divisors

\vspace{15pt}
$$\Gamma'_{\text{ext}}: \quad \cu{C^\vee_0} \lin \cu{C^\vee_1} \lin \cu{C^\vee_2} \nlin \cshiftup{-1}{} \lin \cu{C^\vee_3} \lin \dots \lin \cu{C^\vee_{{n + 1 - i}}} \llin \cu{C^\vee_{n + 2 - i}} \nlin \xbshiftup{ \{ F^\vee_j \} }{} \llin \cu{C^\vee_{n + 3 - i}} \lin \dots \lin \cu{C^\vee_{n - 1}} \lin \cu{C^\vee_n}$$

\noindent and

\vspace{15pt}
$$\Gamma''_{\text{ext}}: \quad \cu{C^\vee_0} \lin \cu{C^\vee_1} \lin \cu{C^\vee_2} \lin \cu{C^\vee_3} \lin \dots \lin \cu{C^\vee_{{n + 1 - i}}} \llin \cu{C^\vee_{n + 2 - i}} \nlin \xbshiftup{ \{ F^\vee_j \} }{} \llin \cu{C^\vee_{n + 3 - i}} \lin \dots \lin \cu{C^\vee_{n - 1}} \lin \cu{C^\vee_n} \nlin \cshiftup{-2}{} \quad ,$$

\noindent respectively, depending on the choice of the center of reversion $\lambda \in C_0 \backslash C_1$. 
It is not difficult to see that the action of $\Aut(\bar X, D)$ on $C_0 \backslash C_1$ admits two orbits, namely an open orbit 
$(C_0 \backslash C_1) \backslash \{ p \}$ and a point $\{ p \}$ (depending on the position of the feather $G$ attached to $C_n$). 
Moreover, the actions of $\Aut(\bar X', D')$ and $\Aut(\bar X'', D'')$, respectively, on $C'_0 \backslash C'_1$ and $C''_0 \backslash C''_1$, respectively, are transitive. 
These observations lead to the following proposition.

\begin{proposition}\label{AutOfSurfaceWithThreeVertices}
The graph $\F_X$ associated to $X = \bar X \backslash D$ is
$$\F_X: \begin{xy}
 \xymatrix{
 [(\bar X', D')] \ \ar@{<->}[r]^{\sigma'} & \ [(\bar X, D)] \ \ar@{<->}[r]^{\sigma''} & \ [(\bar X'', D'')]\\
  }
\end{xy}.$$

\noindent Fixing arbitrary reversions $\alpha: (\bar X, D) \dasharrow (\bar X', D')$ and $\beta: (\bar X, D) \dasharrow (\bar X'', D'')$, 
it follows that $\Aut X$ is an amalgam of the groups $\Aut(X, \mu)$, $\Aut(X, \mu' \circ \alpha)$, and $\Aut(X, \mu'' \circ \beta)$, 
amalgamated over their pairwise intersections.
\end{proposition}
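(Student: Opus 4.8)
The plan is to apply the machinery built up in this section, namely Theorem~\ref{AutFundamentalGroup} together with Corollary~\ref{cor: amalgam A1-fibrations}, to the specific graph-of-groups structure that the preceding geometric analysis forces on $\F_X$. First I would verify that the hypotheses of Theorem~\ref{AutFundamentalGroup} are met: the surface $X$ comes equipped with a $1$-standard completion $(\bar X,D)$ whose extended divisor has the displayed dual graph $\Gamma_{\text{ext}}$, and by inspection this graph contains a component $C_i$ with $C_i^2\le-3$ (the boundary zigzag of a $1$-standard pair is of type $[[0,-1,w_2,\dots,w_n]]$ with $w_j\le-2$, and the presence of the attached feathers together with the $*$-component condition guarantees a component of self-intersection $\le-3$). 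This is exactly the running assumption needed for $\Aut X\cong\pi_1(\mathcal{G}_X,\F_X)$.

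The heart of the argument is the determination of $\F_X$ itself, which I would extract from the reversion analysis sketched just before the statement. The key geometric input is the orbit structure of the action of $\Aut(\bar X,D)$ on $C_0\setminus C_1$: it has exactly two orbits, an open orbit $(C_0\setminus C_1)\setminus\{p\}$ and the single point $\{p\}$, the distinguished point being dictated by the position of the feather attached to $C_n$. By Remark~\ref{EquivalenceOfArrows}, two reversions centered at points $p_1,p_2\in C_0\setminus C_1$ define the same arrow of $\F_X$ precisely when some $\psi\in\Aut(\bar X,D)$ carries $p_1$ to $p_2$; hence the two orbits give rise to exactly two arrows emanating from the vertex $[(\bar X,D)]$. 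Performing the reversion at a general point $\lambda$ in the open orbit lands on the completion with extended divisor $\Gamma'_{\text{ext}}$, i.e.\ the vertex $[(\bar X',D')]$, while the reversion at the special point $p$ lands on $\Gamma''_{\text{ext}}$, i.e.\ $[(\bar X'',D'')]$. I would then check that from the two flanking vertices $[(\bar X',D')]$ and $[(\bar X'',D'')]$ the reversion action is transitive on the relevant boundary component (as noted, $\Aut(\bar X',D')$ and $\Aut(\bar X'',D'')$ act transitively on $C'_0\setminus C'_1$ and $C''_0\setminus C''_1$), so that each of these outer vertices has a single arrow back, closing up the graph into the linear shape $[(\bar X',D')]\leftrightarrow[(\bar X,D)]\leftrightarrow[(\bar X'',D'')]$ with no further vertices. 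This establishes the asserted form of $\F_X$.

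Once $\F_X$ is pinned down as this three-vertex tree, the final conclusion is almost immediate: the graph is a tree, so Corollary~\ref{cor: amalgam A1-fibrations} applies and yields that $\Aut X$ is the amalgam of the three vertex groups $\Aut(\bar X_i\setminus D_i,\mu_i)$ amalgamated along the edge groups, which are their pairwise intersections. Fixing the reversions $\alpha\colon(\bar X,D)\dashrightarrow(\bar X',D')$ and $\beta\colon(\bar X,D)\dashrightarrow(\bar X'',D'')$ provides the identifications under which the three vertex groups are realized as $\Aut(X,\mu)$, $\Aut(X,\mu'\circ\alpha)$, and $\Aut(X,\mu''\circ\beta)$, completing the statement.

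The step I expect to be the main obstacle is the precise orbit count for the $\Aut(\bar X,D)$-action on $C_0\setminus C_1$ and, correlatively, the claim that \emph{no additional vertices} appear in $\F_X$ beyond the three listed. Showing that there are exactly two orbits (not more, and in particular that the ``special'' point is genuinely a separate orbit forced by the feather at $C_n$) requires a careful bookkeeping of which automorphisms of the standard pair extend across the reversion and how they permute the centers; and confirming that the reversions from the two outer completions return only to $[(\bar X,D)]$—rather than producing new $1$-standard models—depends on the transitivity assertions for $\Aut(\bar X',D')$ and $\Aut(\bar X'',D'')$, which in turn rest on the $*$-component hypothesis $C_3,\dots,C_{n-1}$ and the absence of feathers at the extremal components. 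I would treat these transitivity and orbit statements as the technical core, drawing on the explicit structure of $\Gamma_{\text{ext}},\Gamma'_{\text{ext}},\Gamma''_{\text{ext}}$ and on the reversion formalism of Definition~\ref{def: zigzag}.
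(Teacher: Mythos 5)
Your proposal is correct and follows essentially the same route as the paper: the proposition is derived there precisely from the two observations you identify (the two-orbit structure of the $\Aut(\bar X, D)$-action on $C_0 \setminus C_1$, with the special point singled out by the feather at $C_n$, and the transitivity of the actions of $\Aut(\bar X', D')$ and $\Aut(\bar X'', D'')$ on the corresponding components), combined with Remark~\ref{EquivalenceOfArrows} to translate orbits into arrows and the graph-of-groups machinery of Theorem~\ref{AutFundamentalGroup} and Corollary~\ref{cor: amalgam A1-fibrations} (equivalently, the Blanc--Dubouloz decomposition of automorphisms into fibered modifications and reversions, which is how the paper phrases the final amalgam identification). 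The only presentational difference is that the paper states the orbit and transitivity facts as given observations immediately before the proposition, whereas you correctly flag them as the technical core requiring verification.
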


\begin{sit} 
The last statement requires an explanation. 
By \cite[Thm.\ 3.0.2]{BD1}, every automorphism of a Gizatullin surface admits (an essentially unique) decomposition in fibered modifications and reversions. 
Let us fix two reversions $\alpha: (\bar X, D) \dasharrow (\bar X', D')$ and $\beta: (\bar X, D) \dasharrow (\bar X'', D'')$ 
(which are, as we have seen, unique up to equivalence). 
Then every automorphism of $X$ has an (essentially unique) decomposition into maps of the form 
$\phi \in \Aut(X, \mu)$, $\alpha^{-1}\varphi'\alpha \in \Aut(X, \mu' \circ \alpha)$ with $\varphi' \in \Aut(X, \mu')$, 
and $\beta^{-1}\varphi''\beta \in \Aut(X, \mu'' \circ \beta)$ with $\varphi'' \in \Aut(X, \mu'')$.
\end{sit}

\begin{examples}\label{BD-amalgam} 
1. Given a Danilov-Gizatullin surface $X_d$ (see Example \ref{sit: ex-DG}),
for $d = 2, 3, 4, 5$ the group $\Aut(X_d)$  is an amalgam of a finite set of nested subgroups, hence is finitely bearable, see \cite{DG2}, \S\S 6 - 10 for details. 
Whereas for $d\ge 7$ this group is not countably bearable by Corollary \ref{cor: unbearable}.

2. An interesting example of a smooth Gizatullin surface $X$ with an amalgam structure of $\Aut X$
is given by the following construction, see \cite[5.5]{BD1}.
For $a, b \in \ka^*$, $c \in \ka$, and $a \neq b$, consider the smooth Gizatullin surface $X_{a, b, c}$  in $\a^4$ given by the  equations
\begin{eqnarray*}
xz &=& y(y - a)(y - b), \\
yw &=& z(z - c), \\
xw &=& (y - a)(y - b)(z - c)\, .
\end{eqnarray*}
\noindent The (abstract) isomorphism type of $X_{a, b, c} =: X$ does not depend on the parameters $a, b, c$, see \cite[5.5.6]{BD1}. 
Furthermore, $X$ possesses a $1$-standard completion of type $[[0, -1, -2, -3]]$. 
It is an easy exercise to show that $X$ admits 4 different families of $1$-standard completions $(\bar X_1, D_1)$, $(\bar X_{2, t}, D_{2, t})$, $(\bar X_{3, t}, D_{3, t})$, 
and $(\bar X_4, D_4)$, two of them depending on a parameter $t \in \ka \backslash \{0, 1\}$ 
(and these are isomorphic if and only if  the parameters $t, t'$ are equivalent under the relation $\sim$ generated by $t \sim t^{-1}$) 
and the other two are independent on any parameter. It is shown in \cite[5.5.4]{BD1} that the associated graph $\F_X$ has the following structure:
$$\begin{xy}
  \xymatrix{
    & & [(X_{2, s}, D_{2, s})] \ar@{<->}[r] & [(X_{3, s}, D_{3, s})] \\
  [(X_4, D_4)] \ar@{<->}[r] & [(X_1, D_1)] \ar@{<->}[ru] \ar@{<->}[rd] & \vdots & \vdots \\
    & & [(X_{2, t}, D_{2, t})] \ar@{<->}[r] & [(X_{3, t}, D_{3, t})] \\
  }
\end{xy}$$
\noindent where $(s, t)\in (\ka \backslash \{0, 1\})^2$ factorized 
by the equivalence relation $s \sim s^{-1}$. The group $\Aut X$ is 
an amalgam of the (uncountable set of) groups of automorphisms of 
$\a^1$-fibrations, see \cite[5.5.5]{BD1}. In particular, this group is uncountably bearable.
\end{examples}

Let us conclude this section with the following problem.

\medskip

\noindent {\bf Problem.} \emph{Determine, for which Gizatullin surfaces $X$ 
the neutral component $\Aut^\circ X$ is a (finitely or countably) bearable group. }

\section{Automorphism groups of $\A^1$-fibrations}
\label{sec: dJ}
As we have seen in Section \ref{sec: Giz}, the automorphism groups of $\A^1$-fibrations over affine bases play an essential role in studying the full automorphism group.
For an $\A^1$-fibered variety over an affine base of arbitrary dimension, we describe in Subsection \ref{ss: GDJ-grps}  the unipotent radical of such a group as a nested ind-group. 
In Subsection \ref{ss: surf-case} we give some immediate applications to the neutral component of the automorphism group of a given $\A^1$-fibration $\mu\colon X\to B$ 
on a normal affine surface $X$ over an affine curve $B$. Note that any such fibration is generated by some $\G_a$-action on $X$. However, the latter does not hold any longer for fibrations over projective bases.
In Sections \ref{sec:fiber-arc} and \ref{sec: dJ-surf} we dwell on a description of the full group of automorphisms $\Aut(X,\mu)$ in the surface case. 
For an $\ML_1$-surface $X$, this group coincides with the full automorphism group $\Aut X$.

\subsection{Generalized de Jonqui\`eres groups}\label{ss: GDJ-grps}
\begin{definition}\label{def: dJ}  
Let $X$ be a normal affine variety,
and let $\mu: X\to Z$ be an $\A^1$-fibration over
a normal affine variety $Z$, that is, a morphism with general scheme theoretical fibers 
isomorphic to the affine line. We assume that $\codim_Z(Z\setminus\mu(X))\ge 2$.
Consider the subgroups \begin{itemize}\item[--] $\Aut (X,\mu)\subset\Aut X$
of all automorphisms of $X$ preserving the fibration $\mu$;
\item[--] $\Aut_Z (X,\mu)\subset\Aut  (X,\mu)$ of those automorphisms 
which  preserve each $\mu$-fiber;  \item[--]
$\mathbb{U}_\mu\subset\Aut_Z (X,\mu)$
of those automorphisms which
restrict to translations on general $\mu$-fibers.\end{itemize}
Clearly, $\mathbb{U}_\mu$ is an Abelian group. 
This group is infinite-dimensional; see, e.g., Theorem \ref{thm: U-infty} below.
If $X=\A^2$ and $\mu\colon (x,y)\mapsto x$,
then $\mathbb{U}_\mu$ is the
maximal unipotent subgroup of the de Jonqui\`eres group,
see \ref{sit: A2}. In the general case,
we call $\Aut(X,\mu)$  a {\em generalized
de Jonqui\`eres group}, and $\mathbb{U}_\mu$ a {\em generalized unipotent
de Jonqui\`eres group}.
\end{definition}

\begin{remarks}\label{rem: exponentiating} 1. Recall that two
$\A^1$-fibrations $\mu_i: X\to Z_i$ on a normal affine variety $X$ over
normal affine varieties $Z_i$, $i=1,2$, are said to be equivalent if one can be sent
into another by an automorphism of $X$ which induces an isomorphism of $Z_1$ and $Z_2$.
Clearly, $\mu_1$ and $\mu_2$ are equivalent if and only if the corresponding 
subgroups $\mathbb{U}_{\mu_1}$
and $\mathbb{U}_{\mu_2}$ are conjugated
 in $\Aut X$.

2. If $\mu$ is locally trivial then $\mathbb{U}_\mu$ is the union of 
its unipotent one-parameter subgroups. In fact,
for any $\alpha\in \mathbb{U}_\mu$ there is a
a  locally nilpotent regular vertical vector field $\partial$ on $X$ such that $\alpha=\exp\partial$, 
and so,
$\alpha$ belongs to the unipotent one-parameter subgroup  
$H=\{\exp(t\partial)\}_{t\in\K}\subset  \mathbb{U}_\mu$. 
The latter holds as well if $X$ can be covered by affine charts $(U_i)_{i\in I}$ such that the restriction $\mu|_{U_i}$ 
is locally trivial for each $i\in I$ 
(such charts are automatically $\alpha$-stable). Moreover, the same conclusion remains 
true under a weaker assumption that in each chart $U_i$ of the covering the 
$\A^1$-fibration $\mu$ becomes locally trivial after a cyclic base change. 
This is the case, for instance, for any normal $\A^1$-fibered affine surface.

3.  Let $X$ be a normal affine surface. Then any $\G_{\rm a}$-action on $X$ 
acts along the fibers of an
$\A^1$-fibration $\mu: X\to Z$ over a smooth affine curve $Z$, see, e.g., 
\cite[Lem.\ 1.1]{Fi}. Thus, the group $\SAut X$ is generated by the unipotent de Jonqui\`eres
subgroups $\mathbb{U}_\mu$, where $\mu$ runs over the set of all the 
$\A^1$-fibrations on $X$ with affine bases. If $X$ is an $\ML_1$-surface, 
then $\mu$ is unique, and so, $\SAut X=\mathbb{U}_\mu$.
\end{remarks}

The group $\mathbb{U}_\mu$ admits the following presentation.
We let $\Quot (A)$ denote the quotient field of
an integral domain $A$, and $\K(Y)$ the function
field of an algebraic variety $Y$ over $\K$.

\begin{theorem}\label{thm: U-infty}
Let $\mu: X\to Z$ be an $\A^1$-fibration as in Definition \ref{def: dJ}. Suppose that
$\mathbb{U}_\mu$ is  the union of its unipotent one-parameter subgroups 
(see Remark \ref{rem: exponentiating}.2).
Then $$\mathbb{U}_\mu\cong H^0(Z, \cO_Z (D))$$ for a divisor $D$
on $Z$, where the class $[D]\in\Pic Z$
is uniquely defined by $\mu$.
If $\Pic Z=0$, then
there exists a locally nilpotent derivation
$\partial_0\in {\rm Der}\, \cO_X(X)$
such that $\mathbb{U}_\mu=\exp\left ((\ker\partial_0)\cdot \partial_0\right)$.
\end{theorem}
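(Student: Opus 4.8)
The plan is to pass to the generic fiber of $\mu$ and thereby realise $\mathbb{U}_\mu$ as a module of rational functions on $Z$. Write $R=\cO_Z(Z)$, $A=\cO_X(X)$ and $K=\Quot(R)=\K(Z)$. The generic fiber $X\times_Z\Spec K$ is a $K$-form of the affine line, hence, in characteristic zero, isomorphic to $\A^1_K=\Spec K[t]$; I fix such a coordinate $t$, so that the inclusion $R\subset A$ becomes $A\hookrightarrow A\otimes_R K=K[t]$ (here $A$ is $R$-torsion-free, being a domain containing $R$). Every $K$-linear locally nilpotent derivation of $K[t]$ is of the form $g\,\delta$ with $\delta=d/dt$ and $g\in K=\ker\delta$. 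By hypothesis each $\alpha\in\mathbb{U}_\mu$ is $\exp\partial$ for a regular vertical locally nilpotent derivation $\partial$ of $A$ (Remark \ref{rem: exponentiating}.2); extending $\partial$ to $K[t]$ gives $\partial=g\,\delta$ with $g=\partial(t)\in K$, and $\alpha$ acts on the generic fiber by $t\mapsto t+g$. The assignment $\Phi\colon\alpha\mapsto g$ is a homomorphism $\mathbb{U}_\mu\to(K,+)$, because $g_1\delta$ and $g_2\delta$ commute (their coefficients lie in $\ker\delta$), and it is injective because $A\hookrightarrow K[t]$ and an automorphism trivial on the generic fiber is trivial. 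Hence $\mathbb{U}_\mu\cong M:=\{g\in K\colon g\,\delta(A)\subseteq A\}\subseteq(K,+)$.

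Next I would promote $M$ to a divisorial sheaf. Setting $\bar R:=A\cap K=\ker(\delta|_A)$, one checks that for $c\in\bar R$ and $g\in M$ the product $c\,g$ again lies in $M$, so $M$ is a module over the normal ring $\bar R\supseteq R$, with $\Quot(\bar R)=K$. Sheafifying, $U\mapsto\{g\in K\colon g\,\delta(\cO_X(\mu^{-1}U))\subseteq \cO_X(\mu^{-1}U)\}$ defines a rank-one torsion-free $\cO_Z$-subsheaf $\mathcal M$ of the constant sheaf $\underline K$. At each prime divisor $\Delta\subset Z$ the local ring $\cO_{Z,\Delta}$ is a discrete valuation ring, and the behaviour of $\mu$ over the generic point of $\Delta$ bounds the admissible pole order of $g$ along $\Delta$ by an integer $d_\Delta$, with $d_\Delta=0$ away from the finitely many $\Delta$ carrying degenerate or multiple fibers (the hypothesis $\codim_Z(Z\setminus\mu(X))\ge2$ guarantees that the locus removed from $Z$ is irrelevant to this codimension-one bookkeeping). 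Thus $\mathcal M$ is the reflexive, divisorial sheaf $\cO_Z(D)$ of $D=\sum_\Delta d_\Delta\,\Delta$, so that $M=H^0(Z,\cO_Z(D))$; on a smooth $Z$ (in particular in the surface case, where $Z$ is a smooth curve) $\cO_Z(D)$ is a line bundle, whence $[D]\in\Pic Z$. Replacing $t$ by $a t+b$ with $a\in K^\times$, $b\in K$ multiplies every $g$ by $a$ and hence replaces $D$ by $D+\operatorname{div}(a)$, so the class $[D]$ is an invariant of $\mu$, as asserted.

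For the second statement, assume $\Pic Z=0$, so the line bundle $\cO_Z(D)$ is trivial and $M=H^0(Z,\cO_Z(D))$ is a free rank-one $R$-module, say $M=R\,h$ with $h\in K$. Let $\partial_0$ be the regular locally nilpotent derivation of $A$ with $\Phi(\exp\partial_0)=h$, that is $\partial_0=h\,\delta$ on $K[t]$; then $\ker\partial_0=A\cap\ker\delta=\bar R$. Since $M$ is a $\bar R$-module and $M=R\,h$, we get $\bar R\,h\subseteq R\,h$, whence $\bar R\subseteq R$ and therefore $\ker\partial_0=\bar R=R$. Now for $c\in\ker\partial_0$ the derivation $c\,\partial_0$ is locally nilpotent (a kernel multiple of an LND) and $\Phi(\exp(c\,\partial_0))=c\,h$; conversely every element of $M=R\,h$ arises this way. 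Hence $\mathbb{U}_\mu=\exp\big((\ker\partial_0)\cdot\partial_0\big)$, which completes the proof.

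The step I expect to be the crux is the divisorial analysis of the second paragraph: making precise, over the generic point of each prime divisor $\Delta\subset Z$, how the degenerate and multiple fibers of $\mu$ determine the exact pole bound $d_\Delta$, and verifying that the sheaf $\mathcal M$ is genuinely reflexive (equal to, and not merely contained in, $\cO_Z(D)$) and — on smooth $Z$ — invertible, so that the class really lands in $\Pic Z$. A subordinate difficulty is that the coordinate $t$ is available only on the generic fiber, so every assertion about $M$ must be phrased through the embedding $A\hookrightarrow K[t]$ and then sheafified; in particular one must control $\bar R=A\cap K$ carefully, since it is the vanishing of the discrepancy $\bar R\smallsetminus R$ that, via the freeness $M=R\,h$, makes $h$ a genuine generator and yields the clean exponential description of $\mathbb{U}_\mu$.
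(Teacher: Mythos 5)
Your proposal is, in substance, the paper's own proof transported to the generic fiber: the paper likewise fixes one vertical locally nilpotent derivation $\partial$ (playing the role of your $\delta$), identifies $\mathbb{U}_\mu$ with the coefficient module $\mathcal{H}=\{u \mid \mu^*(u)\,\partial(A)\subseteq A\}$ — exactly your $M$ up to the principal factor relating $\partial$ and $\delta$ — and then proves $\mathcal{H}=H^0(Z,\cO_Z(D))$, with the same observation that changing the fiber coordinate alters $D$ by a principal divisor. However, the two statements you explicitly postpone as "the crux" are not side verifications; they are the entire mathematical content of the theorem, and in your write-up they are asserted rather than proved: (a) that the pole orders of elements of $M$ along each prime divisor $\Delta\subset Z$ are bounded, and vanish for all but finitely many $\Delta$, so that $D=\sup\{-\mathrm{div}(g)\mid g\in M\setminus\{0\}\}$ exists as a divisor at all; and (b) that $M$ equals, rather than merely sits inside, $H^0(Z,\cO_Z(D))$. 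As written, this is a genuine gap.

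Both gaps close with short arguments, which are the ones the paper uses. For (a): since $A$ injects into $A\otimes_R K=K[t]$, there is $r\in R\setminus\{0\}$ with $a:=rt\in A$ (this is the paper's element $a\in\ker\partial^2\setminus\ker\partial$). For every $g\in M$ one gets $g\,\delta(a)=gr\in A\cap K$, and $A\cap K=\cO_Z(Z)$: if $c\in K$ had a pole along a prime divisor $\Delta$, then — because $\codim_Z(Z\setminus\mu(X))\ge 2$ places the generic point $\eta_\Delta$ of $\Delta$ inside $\mu(X)$ — the pullback $\mu^*(c)$ would fail to be regular at the points of $\mu^{-1}(\eta_\Delta)$, so $A\cap K\subseteq\bigcap_\Delta\cO_{Z,\Delta}=\cO_Z(Z)$ by normality of $Z$. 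Hence $-\mathrm{div}(g)\le\mathrm{div}(r)$ uniformly in $g$, which is precisely the needed bound (and $\mathrm{div}(r)$ is effective with finite support, giving the a.e. vanishing). For (b): if $-\mathrm{div}(g)\le D$, choose finitely many $g_1,\dots,g_n\in M$ with $-\mathrm{div}(g)\le\max_i\{-\mathrm{div}(g_i)\}$ (possible since $D$ is computed divisorwise as a maximum of integers); then for every $a\in A$, $-\mathrm{div}_X(\mu^*(g)\,\delta(a))\le\max_i\{-\mathrm{div}_X(\mu^*(g_i)\,\delta(a))\}\le 0$, and normality of $X$ converts "no poles in codimension one" into $g\,\delta(a)\in A$, i.e.\ $g\in M$. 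Two smaller repairs: your claim that $d_\Delta=0$ exactly away from degenerate or multiple fibers is not correct as stated — $d_\Delta$ depends on the choice of $t$ (replace $t$ by $\pi t$), and all you need is $d_\Delta=0$ for almost all $\Delta$, which follows from local triviality of $\mu$ over a dense open subset of $Z$; and the surjectivity of $\Phi$ onto $M$ deserves one line: for $g\in M$ one has $(g\delta)^n=g^n\delta^n$ on $K[t]$, so $g\delta|_A$ is automatically locally nilpotent and $\exp(g\delta)$ preserves $A$, giving the required element of $\mathbb{U}_\mu$.
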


\begin{proof} Let $A=\mathcal{O}(X)$. Shrinking $Z$ appropriately one can
obtain an affine ruling, and even a
locally trivial $\A^1$-bundle
$X_\omega\to \omega$ on the normal affine variety $X_\omega=\mu^{-1}(\omega)$ over a normal affine base $\omega$,
where $\omega$ is a principal open subset of $Z$ (\cite{KW}; see also \cite{KM}). 
Shrinking the base further, one may assume that $X_\omega\cong \omega\times\A^1\to\omega$
is a principal cylinder in $X$.
There exists a $\G_{\rm a}$-action
$U=\exp (\K\cdot \partial)$
along the fibers of $\mu$, where $\partial\in {\rm Der} (A)$ is  
locally nilpotent and $\ker\partial=\mu^*(\mathcal{O}_Z(Z))$, see, e.g., \cite[Prop.\ 3.1.5]{KPZ}.

If $U'=\exp (\K\cdot \partial')$ is a second $\G_{\rm a}$-action on $X$
along the fibers of $\mu$, then
$\partial'=f\partial$ for some $f\in {\Quot} (\ker\partial)=
\mu^*(\mathcal{O}_Z(Z))$
such that $f\cdot\partial(A)\subset A$.
Conversely, for any rational function $f\in\mu^*(\mathcal{O}_Z(Z))$
such that $f\cdot\partial(A)\subset A$,  the derivation $\partial'=f\partial$ is
locally nilpotent on $A$, and so $U'=\exp (\K\cdot \partial')\subset \mathbb{U}_\mu$,
see, e.g., \cite[Prop.\ 1.1(b)]{FZ-LND}.

It follows that $\mathbb{U}_\mu=\exp\left (\mu^*\mathcal{H}\cdot \partial\right)$,
where
$$\mathcal{H}=\{u\in \mathcal{O}_Z(Z)\,\vert\,\partial(A)
\subset \mu^*(u^{-1})A\}\,.$$
Note that $\mathcal{H}$ is an $\mathcal{O}_Z(Z)$-module.
There is an isomorphism
$\mathcal{H}\stackrel{\cong}{\longrightarrow}\mathbb{U}_\mu$,
$u\mapsto \exp\left (\mu^*(u)\cdot \partial\right)$.
Assume that the set
$\mathfrak{D}=\{-{\rm div}(u)\,\vert\,u\in \mathcal{H}\}$
is bounded above, and consider the divisor  $D=\sup\,\mathfrak{D}$ on $Z$.
We claim that $\mathcal{H}=H^0(Z, \mathcal{O}_Z(D))$, that is,
$u\in\mathcal{H}$ if and only if $-{\rm div} (u)\le D$.
Since $\mathcal{H}\cong\mathbb{U}_\mu$,
this yields the required isomorphism $\mathbb{U}_\mu\cong H^0(Z, \mathcal{O}_Z(D))$.

To show the claim, it suffices to establish the inclusion
$H^0(Z, \mathcal{O}_Z(D))\subset \mathcal{H}$, the converse inclusion being
clear from the definition of $D$.
Let $u\in \mathcal{O}_Z(Z)$ be such that $-{\rm div} (u)\le D$. Then there exists
a cort\`ege $(u_1,\ldots,u_n)\in\mathcal{H}^n$ such that
$-{\rm div} (u)\le \max_{1\le i\le n}\,\{-{\rm div} (u_i)\}$. We claim that
$u\in \mathcal{H}$, i.e., $\mu^*(u)\partial A\subset A$, or, which is equivalent,
that
$-{\rm div}(\mu^*(u)\partial a)\le 0$ for any $a\in A$. Indeed, one has
$$-{\rm div}(\mu^*(u)\partial a)\le \max_{1\le i\le n}\,\{-{\rm div}(\mu^*(u_i)\partial a)\}\le 0\,,$$
since $-{\rm div}(\mu^*(u_i)\partial a)\le 0$ for $i=1,\ldots,n$. This proves our claim.

 To finish the proof of the first statement of the lemma, it remains to
show that $\mathfrak{D}$ is bounded above. Choose an element
$a\in \ker\partial^2\setminus \ker\partial\subset A$. Then
$\partial a=\mu^*(h)\in\ker\partial=\mu^*(\mathcal{O}_Z(Z))$,
where $h\in\mathcal{O}_Z(Z)$.
For $u\in \mathcal{H}$ we have
$\mu^*(h)\in \partial(A)\subset \mu^*(u^{-1})A$. Hence there exists
$b\in \mathcal{O}(Z)$ such that $h=u^{-1}b$.
Thus, $-{\rm div}\, u\le {\rm div}\, h$ and so, ${\rm div}\, h$
is an upper bound for $\mathfrak{D}$. Actually our argument shows
that $D\le D_0$, where
the effective divisor $D_0=\inf\{{\rm div} (h)\,|\,\mu^*h\in \partial ( \ker\partial^2)\}$
on $Z$ is given by the
zero locus of the ideal   $I=\mu_*(\partial ( \ker\partial^2))\subset \mathcal{O}_Z(Z)$.

Replacing in our construction $\partial$ by $\partial'$ results in replacing the divisor $D$ by a linearly
equivalent one $D'$. Starting with a suitable derivation $\partial'$ of the form $f\partial$, 
where $f\in\mu^*(\mathcal{O}_Z(Z))$,
one can get as $D'$ an arbitrary representative of the class $[D]\in\Pic Z$. Thus our construction
associates canonically the class $[D]$ to the $\A^1$-fibration $\mu\colon X\to Z$.

Now the second assertion follows easily. Indeed, if $\Pic Z=0$, then 
$D=-{\rm div}(u)={\rm div}_\infty(u)$ for a rational function
$u\in \K(Z)$. Then the locally nilpotent derivation
$\partial_0=\mu^*(u)\partial\in {\rm Der}\, A$
satisfies  $\mathbb{U}_\mu=
\exp\left ((\ker\partial_0)\cdot \partial_0\right)$. We leave the details to the reader.
\end{proof}

\begin{corollary}\label{cor: jonq=nested} Under the assumptions of Theorem \ref{thm: U-infty},
$\mathbb{U}_\mu$
is a unipotent Abelian nested ind-group.
\end{corollary}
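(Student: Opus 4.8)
The plan is to read off everything from Theorem~\ref{thm: U-infty}. That theorem provides a locally nilpotent derivation $\partial\in{\rm Der}\,\cO_X(X)$ with $\ker\partial=\mu^*(\cO_Z(Z))$ together with an $\cO_Z(Z)$-module $\mathcal{H}\cong H^0(Z,\cO_Z(D))$ such that $u\mapsto\exp(\mu^*(u)\partial)$ is an isomorphism of abstract groups $(\mathcal{H},+)\isom\mathbb{U}_\mu$; here the group law on the left is addition because the $\mu^*(u)$ lie in $\ker\partial$, so the derivations $\mu^*(u)\partial$ pairwise commute and $\exp(\mu^*(u)\partial)\circ\exp(\mu^*(v)\partial)=\exp(\mu^*(u+v)\partial)$. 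In particular $\mathbb{U}_\mu$ is Abelian, as already noted in Definition~\ref{def: dJ}. Since $Z$ is affine and $\cO_Z(D)$ is coherent, $\mathcal{H}$ is a finitely generated $\cO_Z(Z)$-module, hence of countable $\K$-dimension. I would therefore fix an exhausting filtration $W_1\subset W_2\subset\cdots$ of $\mathcal{H}$ by finite-dimensional $\K$-subspaces with $\bigcup_i W_i=\mathcal{H}$, and set $G_i=\exp(\mu^* W_i\cdot\partial)\subset\mathbb{U}_\mu$. Since $W_i$ is a subspace, each $G_i$ is a subgroup, and $\bigcup_i G_i=\mathbb{U}_\mu$. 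It then remains to equip each $G_i$ with the structure of a closed unipotent Abelian algebraic subgroup of $\Aut X$, so that $\mathbb{U}_\mu=\varinjlim G_i$ becomes a unipotent Abelian nested ind-group in the sense of Definition~\ref{def: nest}.

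The technical heart is to realize $G_W:=\exp(\mu^* W\cdot\partial)$, for a finite-dimensional $W\subset\mathcal{H}$, as a copy of the vector group $(W,+)\cong\G_{\rm a}^{\dim W}$ sitting closed in $\Aut X$. First, the map $\alpha_W\colon W\to\Aut X$, $u\mapsto\exp(\mu^*(u)\partial)$, is an algebraic family in the sense of Remark~\ref{rem: Ramanujam-connected}: because $\partial$ is locally nilpotent, $\exp(\mu^*(u)\partial)$ is given on a fixed finite generating set by finite sums whose coefficients are polynomials in $u$, so $W\times X\to X$ is a morphism. By Lemma~\ref{lem:family}, $\alpha_W(W)\subset\Sigma_d$ for some $d$ and $\alpha_W\colon W\to\Sigma_d$ is a morphism. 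To identify the image as a unipotent algebraic group, I would produce a finite-dimensional $G_W$-invariant subspace $V\subset\cO_X(X)$ containing a generating set: fixing generators $a_1,\dots,a_m$ with $\partial^N a_j=0$ for all $j$, a basis $w_1,\dots,w_r$ of $W$, and writing $g_l=\mu^*(w_l)\in\ker\partial$, one may take $V=\Span_\K\{\,g^\beta\,\partial^{|\beta|}(a_j)\mid 1\le j\le m,\ |\beta|\le N-1\,\}$. Invariance holds because $\exp(\mu^*(u)\partial)$ fixes each $g_l$ and carries $g^\beta\partial^{|\beta|}(a_j)$ to $g^\beta$ times a combination of the $g^\gamma\partial^{|\beta|+|\gamma|}(a_j)$, all terms with $|\beta|+|\gamma|\ge N$ dying since $\partial^{\ge N}a_j=0$. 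On $V$ the operator $\mu^*(u)\partial$ is nilpotent, as $(\mu^*(u)\partial)^N=\mu^*(u)^N\partial^N$ kills $V$, so $\exp$ maps $(W,+)$ into the unipotent elements of $\GL(V)$; since $V$ contains generators, this representation $\rho\colon W\to\GL(V)$ is faithful. Being a faithful morphism of algebraic groups out of a vector group, $\rho$ has closed image $\rho(W)\cong\G_{\rm a}^{\dim W}$.

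Finally I would transport closedness back to $\Aut X$ exactly as in the proof of Lemma~\ref{lem: ss-elements}: via the embedding $X\hookrightarrow V^*$ determined by $V$, any $t\in\overline{G_W}\subset\Aut X$ leaves $V$ invariant and restricts to an element of $\overline{\rho(W)}=\rho(W)$, whence $t\in G_W$; thus $G_W$ is closed in $\Aut X$ and inherits the algebraic-group structure of $\rho(W)$. Applying this to each $W_i$ makes $G_i$ a closed unipotent Abelian algebraic subgroup, and the inclusions $G_i\subset G_{i+1}$ are closed embeddings since both are closed in $\Aut X$. As $\bigcup_i W_i=\mathcal{H}$, we obtain $\mathbb{U}_\mu=\bigcup_i G_i=\varinjlim G_i$, a unipotent Abelian nested ind-group. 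I expect the main obstacle to be the linearization step of the middle paragraph — checking that the explicit $V$ is genuinely $G_W$-invariant and that $\alpha_W(W)$ is \emph{closed} (not merely constructible) in $\Aut X$ — together with the bookkeeping ensuring a \emph{countable} exhausting filtration of $\mathcal{H}$, which is precisely what makes the limit a nested ind-group rather than a general ind-group.
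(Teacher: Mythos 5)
Your proof is correct, but it takes a genuinely different and much heavier route than the paper's. Both arguments start from the isomorphism $\mathbb{U}_\mu\cong H^0(Z,\mathcal{O}_Z(D))$ supplied by Theorem \ref{thm: U-infty} (Abelianness, which you re-derive from the commutation of $\mu^*(u)\partial$ and $\mu^*(v)\partial$, is already recorded in Definition \ref{def: dJ}). The paper then finishes in two lines: it fixes a completion $\bar Z$ of $Z$ with boundary divisor $D'=\bar Z\setminus Z$ and filters $H^0(Z,\mathcal{O}_Z(D))$ by the finite-dimensional spaces of sections over $\bar Z$ with pole order along $D'$ bounded by $n$, exhibiting $\mathbb{U}_\mu$ as an increasing union of finite-dimensional vector groups; since Definition \ref{def: nest} is purely abstract (an increasing union of algebraic groups with closed embeddings, with no compatibility required with the ind-group structure of $\Aut X$), this already proves the corollary. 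You instead take an arbitrary countable exhausting filtration $W_1\subset W_2\subset\cdots$ of $\mathcal{H}$ (countability coming from finite generation of $H^0(Z,\mathcal{O}_Z(D))$ as an $\mathcal{O}_Z(Z)$-module) and then spend the bulk of the proof establishing something the paper never claims here: that each $G_{W_i}=\exp(\mu^*W_i\cdot\partial)$ is a \emph{closed algebraic subgroup of} $\Aut X$, via your invariant subspace $V$, the faithful unipotent representation $\rho\colon W_i\to\GL(V)$, and the closedness-transport argument modelled on Lemma \ref{lem: ss-elements}. That extra work is sound — your $V$ is indeed $G_W$-stable, by the identity $(f\partial)^k(hb)=hf^k\partial^k(b)$ for $f,h\in\ker\partial$, and the image of an injective homomorphism of algebraic groups in characteristic zero is a closed vector subgroup — and it buys a strictly stronger conclusion: that $\mathbb{U}_\mu$ is realized as a closed nested ind-subgroup of $\Aut X$ in the sense compatible with Proposition \ref{prop: ind-str}, which is the form implicitly relied on later (e.g., Example \ref{rem: deJonq}, Corollary \ref{cor: nested}). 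For the corollary as literally stated, however, the paper's pole-order filtration makes your entire linearization step unnecessary.
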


\begin{proof}
Let $(\bar Z,D')$ be a completion of $Z$ by a divisor $D'=\bar Z\setminus Z$.
Then
\[
H^0(Z, \mathcal{O}_Z(D))=\varinjlim_n H^0(\bar Z, \mathcal{O}_{\bar Z}(D-nD'))\,,
\]
where $H^0(\bar Z, \mathcal{O}_{\bar Z}(D-nD'))$ is 
a finite-dimensional vector group for each $n$. Therefore, 
the vector group $\mathbb{U}_\mu\cong H^0(Z, \mathcal{O}_Z(D))$
is a nested ind-group.
\end{proof}

\begin{remarks}\label{rems: added}
1. Suppose that the affine variety $X$ as in Definition \ref{def: dJ} admits a free $\G_a$-action along the $\mu$-fibers. 
Then the corresponding locally nilpotent $\mu$-vertical vector field $\partial_0$ (that is, $\partial_0$ is tangent to the $\mu$-fibers) has no zero, 
and so, divides any other locally nilpotent $\mu$-vertical vector field on $X$. Then the equality $\mathbb{U}_\mu=\exp\left ((\ker\partial_0)\cdot \partial_0\right)$ holds. 
This is the case, for instance, for any smooth Danielewski surface $xy-p(z)=0$ in $\A^3$. 

2. Consider  a line bundle $L=(\mu\colon X\to Z)$, that is, a locally trivial $\A^1$-bundle on $Z$ with a fixed (zero) section. 
Then any $\G_a$-action on $X$ along the $\mu$-fibers is uniquely defined by the image of the zero section. 
Vice versa, given a section $S$ of $\mu$, there is a unique $\G_a$-action on $X$ along the $\mu$-fibers which sends the zero section to $S$. 
Hence $\mathbb{U}_\mu\cong H^0(Z, \O_Z(L))\cong H^0(Z, \O_Z(D))$ for any  divisor $D$, which represents the class of $L$ in $\Pic Z$.
\end{remarks}

\begin{proposition}\label{prop: jonq=nested1} 
Under the assumptions of Theorem \ref{thm: U-infty}, there is an exact sequence
\begin{equation}\label{eq: Aff-mu} 1\to \UU_{\mu} \to \Aut_Z (X,\mu)\to \Delta_\mu
\to 1\,, \end{equation} where $ \Delta_\mu\cong\Upsilon_\mu\times \ZZ^l$ for some $l\ge 0$ and some subgroup $\Upsilon_\mu\subset\G_m$.
\end{proposition}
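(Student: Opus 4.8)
The plan is to realize $\Aut_Z(X,\mu)$ inside the affine group of the generic fibre and to read off $\Delta_\mu$ as the group of fibrewise multipliers. Write $R=\cO_Z(Z)$ and $A=\cO_X(X)$, and let $\partial\in{\rm Der}\,A$ be the locally nilpotent vertical derivation from the proof of Theorem~\ref{thm: U-infty}, so that $\ker\partial=\mu^*(R)$. Passing to the generic fibre $X_\eta=X\times_Z\Spec\K(Z)$, the derivation $\partial$ extends to a nonzero locally nilpotent derivation of $A\otimes_R\K(Z)$ with kernel $\K(Z)$ and admits a slice; hence this algebra is a polynomial ring $\K(Z)[t]$ and $\Aut_{\K(Z)}(X_\eta)=\K(Z)^*\ltimes\K(Z)$ acts by $t\mapsto at+b$. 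Since $A\hookrightarrow A\otimes_R\K(Z)$, an automorphism inducing the identity on $X_\eta$ is the identity on $X$, so restriction gives an injection $\Aut_Z(X,\mu)\hookrightarrow\K(Z)^*\ltimes\K(Z)$. Composing with the projection onto $\K(Z)^*$ yields a homomorphism $\lambda\colon\Aut_Z(X,\mu)\to\K(Z)^*$, $g\mapsto a_g$, sending $g$ to its multiplier on the general fibre. An element lies in $\ker\lambda$ precisely when it is a translation on the general fibre, i.e.\ $\ker\lambda=\UU_\mu$. Setting $\Delta_\mu:=\lambda(\Aut_Z(X,\mu))\subseteq\K(Z)^*$ then yields the exact sequence (\ref{eq: Aff-mu}) tautologically.

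The heart of the argument, and the step I expect to cost the most, is to show that the a priori only rational multiplier $a_g$ is in fact a global invertible regular function, i.e.\ $\Delta_\mu\subseteq R^*=\cO_Z(Z)^*$. Here I would use the normality of $\UU_\mu$ in $\Aut_Z(X,\mu)$ together with the identification $\UU_\mu\cong M:=H^0(Z,\cO_Z(D))$ of Theorem~\ref{thm: U-infty}, where $M$ is viewed as a fractional ideal of $R$ inside $\K(Z)$. A direct computation on the generic fibre shows that conjugation by $g$ sends the translation $\exp(\mu^*(u)\partial)$ to $\exp(\mu^*(a_g u)\partial)$; since $\UU_\mu$ is normal, this conjugation preserves $M$, whence $a_gM=M$. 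Now $R$ is a normal Noetherian, hence Krull, domain, and $M\neq 0$ is a divisorial (reflexive) fractional ideal, namely $M=\{f\in\K(Z):v_P(f)\ge-\ord_P(D)\ \forall P\}$, the intersection over all prime divisors $P$ of $Z$. Comparing valuations, $a_gM=M$ forces $v_P(a_g)=0$ for every $P$, and a nonzero element of a Krull domain with trivial valuation along all prime divisors is a unit. Hence $a_g\in R^*$ and $\Delta_\mu\subseteq\cO_Z(Z)^*$.

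It then remains to determine the group structure of $\Delta_\mu$. I would set $\Upsilon_\mu:=\Delta_\mu\cap\K^*$, a subgroup of $\G_m$ consisting of the constant multipliers. By the second isomorphism theorem $\Delta_\mu/\Upsilon_\mu\cong\Delta_\mu\K^*/\K^*$ embeds into $\cO_Z(Z)^*/\K^*$, which by Rosenlicht's theorem on units is a free abelian group of finite rank $l$. A subgroup of a finitely generated free abelian group is again free, so $\Delta_\mu/\Upsilon_\mu\cong\ZZ^{l'}$ for some $l'\le l$; since this quotient is free, the extension $1\to\Upsilon_\mu\to\Delta_\mu\to\ZZ^{l'}\to 1$ splits, giving $\Delta_\mu\cong\Upsilon_\mu\times\ZZ^{l'}$ with $\Upsilon_\mu\subseteq\G_m$, exactly as asserted.
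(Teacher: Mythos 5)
Your proof is correct, but it follows a genuinely different route from the paper's. The paper stays regular rather than birational: using the principal cylinder $X_\omega\cong\omega\times\A^1$ over a principal open subset $\omega\subset Z$ (already produced in the proof of Theorem \ref{thm: U-infty}), it embeds $\Aut_Z(X,\mu)$ into $\Aut_\omega(X_\omega,\mu)\cong\cO^+_\omega(\omega)\rtimes\cO^\times_\omega(\omega)$, defines $\Delta_\mu$ as the image in $\cO^\times_\omega(\omega)$, and then applies Samuel's Units Lemma to $\omega$ plus the observation that a subgroup of $\G_m\times\ZZ^N$ is isomorphic to a product of a subgroup of $\G_m$ and a free abelian group. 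This sidesteps entirely the question you spend most of your effort on: since the multiplier of $g$ on the cylinder is automatically a unit of $\cO_\omega(\omega)$, no regularity argument is needed. You instead pass to the generic fibre $X_\eta$, read the multiplier in $\K(Z)^*$, and then prove the sharper fact that it lies in $\cO_Z(Z)^*$, via the normality of $\UU_\mu$, the conjugation formula $g\exp(\mu^*(u)\partial)g^{-1}=\exp(\mu^*(a_g u)\partial)$, and the uniqueness of the divisor attached to a divisorial fractional ideal of a Krull domain. That extra work buys a strictly stronger conclusion — $\Delta_\mu$ embeds into the units of the base $Z$ itself, not merely of a trivializing open subset — which is the statement actually exploited later in the surface case (compare Proposition \ref{pr:Aut-B-X0}, where $\Lambda_B\cong\cO_B^\times(B)$), and your final splitting argument (intersect with $\K^*$, quotient embeds in a finitely generated free abelian group, hence free, hence the abelian extension splits) is a more careful justification of the one-line claim the paper makes about subgroups of $\G_m\times\ZZ^N$. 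The trade-off is length: the paper's proof is a short diagram chase, while yours needs the generic-fibre slice argument and the valuation-theoretic step; both are sound.
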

\begin{proof} 
We use the notation from the proof of Theorem \ref{thm: U-infty}. 
Since $X_\omega\cong_Z \omega\times\A^1$ we have $\Aut_\omega (X_\omega,\mu)\cong\cO^+_\omega (\omega)\rtimes\cO^\times_\omega (\omega)$, 
where $A^+$ stands for the additive group of an algebra $A$ and $A^\times$ for its multiplicative group.  
The natural embedding $\Aut_Z (X,\mu)\embed \Aut_\omega (X_\omega,\mu)$ induces the  commutative diagram
 \begin{equation}\label{diagr: exact0}
\tikzset{math mode/.style = {execute at begin node=$\displaystyle, execute at end node=$}}
\begin{tikzcd}[math mode]
\displaystyle
1\arrow{r}& \cO^+_\omega(\omega) \arrow{r}& \Aut_\omega (X_\omega,\mu) \arrow{r}& \cO^\times_\omega (\omega) \arrow{r}& 1 \\
1\arrow{r}& \UU_{\mu} \arrow{r}\arrow[hook]{u}& \Aut_Z (X,\mu) \arrow{r}\arrow[hook]{u}&
 \Delta_\mu \arrow{r}\arrow[hook]{u}& 1 
\end{tikzcd}
\end{equation} where $ \Delta_\mu $ 
 is the image of $\Aut_Z (X,\mu)$ in $ \cO^\times_\omega (\omega)$. By Samuel's Units Lemma (\cite[Lem.\ 1]{Sa}, 
see also \cite[Lem.\ 4.3]{LZ}) we have $\cO^\times_\omega (\omega)\cong \G_m\times \ZZ^N$ for some $N\ge 0$.
 It is easily seen that any subgroup of the product $\G_m\times \NN^N$ is a product of subgroups of the factors. 
Hence $ \Delta_\mu\cong\Upsilon_\mu\times \ZZ^l$ for some $l\ge 0$ and some subgroup $\Upsilon_\mu\subset\G_m$.
\end{proof}

\subsection{The surface case}\label{ss: surf-case} 
Here we give some immediate applications of Theorem \ref{thm: U-infty} to the neutral component and the unipotent radical of the group $\Aut(X,\mu)$ 
in the case of a normal affine surface $X$. In the next two sections we enterprise a more thorough study of this group. 

\begin{remarks}\label{rem: surf-case} 
1. In the surface case, sequence (\ref{eq: Aff-mu}) splits and the subgroup $\Upsilon_\mu\subset\G_m$ is closed, 
see Proposition \ref{prop: splitting} and Theorem \ref{th:jonq-mu-general}. Plausibly, the latter holds in the general case as well.

2.
Rentchler's Theorem (\cite{Re}) says that any locally nilpotent derivation of the polynomial ring $\K[x,y]$ is conjugated to the derivation 
$\partial_0=f(x)\partial/\partial y$ for some $f\in\K[x]$. This can be generalized to the surfaces of class $(\ML_0)$ as follows. 
\end{remarks}

\begin{proposition}\label{cor: LND} 
Let $X$ be a smooth Gizatullin $\G_m$-surface, which is neither Danilov-Gizatullin, nor special (see Examples \ref{sit: ex-DG} and \ref{sit: special-Giz}). 
Then there exist two locally nilpotent derivations $\partial_0, \partial_1\in {\rm Der}\, \mathcal{O}_X(X)$ such that any other 
locally nilpotent derivation of $\mathcal{O}_X(X)$ is conjugated to a one of the form $f_i\partial_i$ for some $i\in\{0,1\}$ and $f_i\in\ker\partial_i$. 
\end{proposition}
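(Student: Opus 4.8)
The plan is to reduce every locally nilpotent derivation to one of at most two ``model'' derivations attached to the finitely many $\A^1$-fibrations of $X$, combining the classification in Theorem~\ref{thm: AG} with the presentation of the generalized unipotent de Jonqui\`eres group from Theorem~\ref{thm: U-infty}.

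First I would record the input from the classification. Since $X$ is Gizatullin it is of class $(\ML_0)$ by Theorem~\ref{thm: Giz}; being a smooth $\G_{\rm m}$-surface that is neither a Danilov-Gizatullin surface nor a special one, it lies in class~1) or class~4) of Theorem~\ref{thm: AG}. In both cases the set of equivalence classes of $\A^1$-fibrations $X\to\A^1$ is finite of cardinality at most two. Moreover a Gizatullin surface is rational and satisfies $\mathcal{O}_X(X)^\times=\K^\times$; hence the base of any $\A^1$-fibration of $X$ over an affine curve is a rational smooth affine curve with only constant units, that is, a copy of $\A^1$. Thus every such base $Z$ has $\Pic Z=0$.

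Next I would fix the model derivations. Choose representatives $\mu_0,\mu_1\colon X\to\A^1$ of the (at most two) equivalence classes of $\A^1$-fibrations, putting $\mu_1=\mu_0$ if there is a single class. Because $\Pic\A^1=0$, Theorem~\ref{thm: U-infty} provides, for each $i\in\{0,1\}$, a locally nilpotent derivation $\partial_i\in{\rm Der}\,\mathcal{O}_X(X)$ with $\ker\partial_i=\mu_i^*\mathcal{O}_{\A^1}(\A^1)$ and $\mathbb{U}_{\mu_i}=\exp\!\big((\ker\partial_i)\cdot\partial_i\big)$. I would then observe that the vertical locally nilpotent derivations for $\mu_i$ are \emph{exactly} the derivations $f\partial_i$ with $f\in\ker\partial_i$: any such $\partial'$ generates a one-parameter unipotent subgroup contained in $\mathbb{U}_{\mu_i}$, and since $[f\partial_i,g\partial_i]=0$ for $f,g\in\ker\partial_i$ the map $g\mapsto\exp(g\partial_i)$ is a group isomorphism $(\ker\partial_i,+)\isom\mathbb{U}_{\mu_i}$, whose one-parameter subgroups have infinitesimal generators precisely the $f\partial_i$, $f\in\ker\partial_i$.

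Finally I would treat an arbitrary locally nilpotent derivation $\partial$ of $\mathcal{O}_X(X)$. By Remark~\ref{rem: exponentiating}.3 the associated $\G_{\rm a}$-action is vertical for some $\A^1$-fibration $\mu\colon X\to\A^1$, with base $\A^1$ by the second paragraph. This $\mu$ is equivalent to some $\mu_i$, so by Remark~\ref{rem: exponentiating}.1 there is $\alpha\in\Aut X$ with $\alpha\,\mathbb{U}_\mu\,\alpha^{-1}=\mathbb{U}_{\mu_i}$. Conjugating, $\exp(\K\cdot\alpha\partial\alpha^{-1})\subset\mathbb{U}_{\mu_i}$, so $\alpha\partial\alpha^{-1}$ is a vertical locally nilpotent derivation for $\mu_i$ and hence equals $f_i\partial_i$ for some $f_i\in\ker\partial_i$; therefore $\partial$ is conjugate to $f_i\partial_i$, as required. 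The step I expect to be the main obstacle is exactly the \emph{regularity} of $f_i$: a priori two locally nilpotent derivations with the same kernel differ only by a factor in the fraction field $\Quot(\ker\partial_i)$, and it is the distinguished, maximally divisible choice of $\partial_i$ furnished by the case $\Pic Z=0$ of Theorem~\ref{thm: U-infty} that forces $f_i\in\ker\partial_i$. A secondary point to handle carefully is the identity $\mathcal{O}_X(X)^\times=\K^\times$ for Gizatullin surfaces, which ensures that \emph{every} $\A^1$-fibration of $X$ over an affine curve — not merely the two chosen representatives — has base $\A^1$, so that the cardinality bound of Theorem~\ref{thm: AG} indeed applies to it.
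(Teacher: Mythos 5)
Your proposal is correct and follows essentially the same route as the paper, whose proof simply declares the proposition an immediate consequence of Theorems \ref{thm: AG} and \ref{thm: U-infty}; you have filled in exactly the details that make it "immediate" (finiteness of the set of fibration classes, the $\Pic\A^1=0$ case of Theorem \ref{thm: U-infty}, and the conjugation step via Remark \ref{rem: exponentiating}).
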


\begin{proof}
This is an immediate consequence of Theorems \ref{thm: AG}  
and \ref{thm: U-infty}. 
\end{proof}

For the next theorem we address the reader to \cite{Be}, \cite{PZ}; cf.\ Proposition \ref{prop-Iitaka}. 

\begin{theorem}\label{thm: neutral-comp} Let $X$ be a normal affine surface of class $(\ML_1)$ or $(\ML_2)$.
Then the neutral component $\Aut^{\circ}X\subset \Aut X$ is isomorphic to
\begin{itemize}
\item $\G_m^r$ if $X$ is of type $(\ML_2,r)$, $r\in\{0,1,2\}$, and 
\item $\UU_\mu\rtimes\G_m^r$ if $X$ is of type $(\ML_1,r)$, $r\in\{0,1,2\}$, where $\mu\colon X\to B$ is a unique $\A^1$-fibration on $X$ over an affine curve $B$.
\end{itemize}
\end{theorem}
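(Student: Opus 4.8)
The plan is to treat the two cases $(\ML_2)$ and $(\ML_1)$ separately, in each case first identifying the rank and then pinning down the full neutral component using the structural results already proven in the paper.

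For the $(\ML_2)$ case, the claim is precisely the content of Proposition~\ref{prop-Iitaka}: for a surface of class $(\ML_2)$ one has $\Aut^\circ X\cong(\G_m)^r$ with $r=\rk\Aut X\in\{0,1,2\}$. So I would simply invoke that proposition, recalling that an $(\ML_2)$-surface admits no nontrivial $\G_a$-action (hence $\Aut^\circ X$ contains no unipotent one-parameter subgroup), which forces $\Aut^\circ X$ to be an algebraic torus. The subdivision by $r$ is immediate from the definition of rank. Thus this case is essentially a citation.

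For the $(\ML_1)$ case, the strategy is as follows. By definition of class $(\ML_1)$ and Remark~\ref{rem: exponentiating}.3, there is a \emph{unique} $\A^1$-fibration $\mu\colon X\to B$ over an affine curve $B$, every $\G_a$-action on $X$ acts along its fibers, and $\SAut X=\UU_\mu$. First I would argue that every element of $\Aut^\circ X$ preserves $\mu$: since $\mu$ is the unique $\A^1$-fibration over an affine base, any automorphism sends $\mu$ to an equivalent fibration, so $\Aut X=\Aut(X,\mu)$, and in particular $\Aut^\circ X\subseteq\Aut(X,\mu)$. Next I would use the exact sequence (\ref{eq: Aff-mu}) of Proposition~\ref{prop: jonq=nested1} together with the splitting and closedness asserted in Remark~\ref{rem: surf-case}.1 (Proposition~\ref{prop: splitting} and Theorem~\ref{th:jonq-mu-general}) to write $\Aut_Z(X,\mu)\cong\UU_\mu\rtimes\Delta_\mu$ with $\Delta_\mu\cong\Upsilon_\mu\times\ZZ^l$. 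Passing to neutral components kills the discrete factor $\ZZ^l$ and replaces $\Upsilon_\mu$ by its neutral component, which is either trivial or $\G_m$; I would then account for the action on the base $B$, which contributes at most one further $\G_m$ (when $B\cong\A^1$ carries a $\G_m$-action compatible with $\mu$), so that the reductive part of $\Aut^\circ X$ is a torus $\G_m^r$ with $r\in\{0,1,2\}$. Combining this with the fact that the unipotent radical of $\Aut^\circ(X,\mu)$ is exactly $\UU_\mu$ (Corollary~\ref{cor: jonq=nested} shows $\UU_\mu$ is a unipotent Abelian nested ind-group, and it is the full set of vertical translations) gives the semidirect product decomposition $\Aut^\circ X\cong\UU_\mu\rtimes\G_m^r$.

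The main obstacle I anticipate is matching the rank bookkeeping: verifying that the torus factor $\G_m^r$ appearing in the Levi-type decomposition of $\Aut^\circ(X,\mu)$ genuinely has rank equal to $\rk\Aut X$, correctly splitting the contributions from the fiberwise $\G_m$ (encoded in $\Upsilon_\mu\subset\G_m$) and from a $\G_m$-action on the base curve $B$. This requires knowing that $\Upsilon_\mu^\circ$ is closed in $\G_m$ and that the two torus actions are independent, which is where the surface-specific refinements of Remark~\ref{rem: surf-case}.1 and the Mostow-type decomposition (Theorem~\ref{thm: Mostow}, applicable since $\Aut^\circ(X,\mu)$ is a connected nested ind-group of bounded rank) do the essential work. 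Once the Levi factor is identified as a torus and the unipotent radical as $\UU_\mu$, the stated isomorphism follows.
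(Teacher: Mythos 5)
Your proposal is correct in substance, but it takes a different route from the paper, for the simple reason that the paper offers no proof of Theorem \ref{thm: neutral-comp} at all: the statement is accompanied only by references to \cite{Be}, \cite{PZ} and a cross-reference to Proposition \ref{prop-Iitaka}. Your $(\ML_2)$ half thus agrees with the paper's pointer (it is a citation of Proposition \ref{prop-Iitaka}); just be aware that the genuinely hard content there --- that $\Aut^\circ X$ is an algebraic group at all --- is itself deferred to \cite{PZ} except in positive rank, so your parenthetical gloss (no unipotent one-parameter subgroups, hence a torus) should not be presented as an argument: an ind-group without unipotent elements need not be algebraic. Your $(\ML_1)$ half is a genuinely internal derivation: using $\Aut X=\Aut(X,\mu)$ for an $\ML_1$-surface, you pull the conclusion out of the structure theory of $\Aut(X,\mu)$ developed in Sections \ref{sec: dJ} and \ref{sec: dJ-surf}. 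This is legitimate and non-circular (nothing in Sections \ref{sec:fiber-arc}--\ref{sec: dJ-surf} uses Theorem \ref{thm: neutral-comp}), though it inverts the paper's expository order; what it buys is a self-contained proof inside the paper in place of external references. To close your argument you should (i) run the case trichotomy of Subsection \ref{ss: final} explicitly: Theorems \ref{th:jonq-mu} and \ref{th:jonq-mu-general} have disjoint hypotheses ($B=\A^1$ with $\omega\cong\A^1_*$, respectively $\omega\ncong\A^1,\A^1_*$), and the leftover case $\omega=B$ is exactly $X\cong\A^1\times\A^1_*$, handled by the explicit description in \ref{sit: A1-A1-star}; (ii) justify that the unipotent radical of $\Aut^\circ X$ is precisely $\UU_\mu$, which follows because every one-parameter unipotent subgroup acts along some $\A^1$-fibration over an affine curve (Remark \ref{rem: exponentiating}.3) and $\mu$ is unique; (iii) observe that $r=\rk\Aut X$ because any algebraic torus is connected, hence lies in $\Aut^\circ X$, and is conjugate into the Levi factor. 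Point (iii) is the ``rank bookkeeping'' you worry about, and it is easier than you fear: in Theorem \ref{th:jonq-mu} the factor $\Lambda_\mu$ already packages the base torus and the fiberwise torus together (it is conjugate to a closed subgroup of the full $2$-torus $\TTT$), so there is no need to split the two contributions by hand through the exact sequence of Lemma \ref{lem:ex-sec}.
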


\begin{remark} 
Recall (see \ref{sit: toric}) that, up to isomorphism, the class $(\ML_2,2)$ consists of a single surface $(\A^1_*)^2$, 
and  the class $(\ML_1,2)$ of a single surface $\A^1\times\A^1_*$; 
see \ref{sit: A1-A1-star} and \ref{sit: A1star-2} for a description of the corresponding automorphism groups.
\end{remark}

From Corollary \ref{cor: jonq=nested} and Theorem \ref{thm: neutral-comp} we deduce the following result; 
see \cite[Cor.\ 2.3]{Be} for an alternative proof in the case of a rational surface.

\begin{corollary}\label{cor: neutral-comp} 
If $X$ is a surface of class $(\ML_1)$ or $(\ML_2)$, then $\Aut^{\0} X$ is
a solvable nested ind-group, and  any two maximal tori in $\Aut^{\0} X$ are conjugated.
\end{corollary}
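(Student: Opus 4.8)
The plan is to read off the structure of $\Aut^{\circ}X$ from Theorem \ref{thm: neutral-comp} and then verify the three assertions — solvability, the nested ind-group property, and the conjugacy of maximal tori — separately in the two classes. In the $(\ML_2)$ case Theorem \ref{thm: neutral-comp} gives $\Aut^{\circ}X\cong\G_m^r$, which is an algebraic torus; it is an Abelian (hence solvable) algebraic group, in particular a nested ind-group by Remark \ref{rem: nested sbgrp}, and it is its own unique maximal torus, so all three claims are immediate. Thus the content is concentrated in the $(\ML_1)$ case, where Theorem \ref{thm: neutral-comp} yields $\Aut^{\circ}X\cong\UU_\mu\rtimes\G_m^r$ for the unique $\A^1$-fibration $\mu\colon X\to B$ over an affine curve.

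For solvability in the $(\ML_1)$ case, I would note that the quotient $\Aut^{\circ}X/\UU_\mu\cong\G_m^r$ is Abelian, so the commutator subgroup lies in $\UU_\mu$, which is itself Abelian by Corollary \ref{cor: jonq=nested}; hence $\Aut^{\circ}X$ is metabelian, and in particular solvable. For the nested ind-group property, I would start from the exhaustion $\UU_\mu=\varinjlim_n U_n$ by finite-dimensional vector groups furnished by Corollary \ref{cor: jonq=nested}. The point is to arrange this filtration to be stable under the conjugation action of the torus $\G_m^r$: since $\G_m^r\subset\Aut X$ is an algebraic subgroup acting morphically on $X$, it acts locally finitely on $\cO_X(X)$, and hence on $\UU_\mu$ (cf.\ the fact cited in the proof of Lemma \ref{lem: ss-elements}); therefore each $U_n$ may be enlarged to a $\G_m^r$-invariant finite-dimensional subgroup, and so we may assume the $U_n$ themselves are $\G_m^r$-stable. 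Then $G_n:=U_n\rtimes\G_m^r$ is a connected solvable algebraic group, the embeddings $G_n\subset G_{n+1}$ are closed, and $\Aut^{\circ}X=\varinjlim_n G_n$ is a nested ind-group.

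Finally, for the conjugacy of maximal tori, I would invoke the Mostow-type decomposition. Since $\Aut^{\circ}X$ is a connected nested ind-group acting faithfully on the affine (hence quasi-projective) surface $X$, Corollary \ref{cor:quasi-proj} bounds the ranks $\rk G_n$ by $\dim X=2$, so Theorem \ref{thm: Mostow} applies and asserts that any two maximal tori of $\Aut^{\circ}X$ are conjugate. I expect the only genuinely delicate point to be the verification that the filtration of $\UU_\mu$ can be chosen $\G_m^r$-stable, i.e.\ that the semidirect product is exhausted by honest \emph{algebraic} subgroups; once the nested structure is in place, the remaining assertions follow formally from the general theory of Section \ref{sec: generalities}.
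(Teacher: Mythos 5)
Your proposal is correct and takes essentially the same route as the paper: the paper states this corollary as an immediate deduction from Corollary \ref{cor: jonq=nested} and Theorem \ref{thm: neutral-comp}, and your argument fills in exactly that deduction — the $(\ML_2)$ case being trivial for a torus, the $(\ML_1)$ case yielding the metabelian nested ind-group $\UU_\mu\rtimes\G_m^r$, with conjugacy of maximal tori supplied by Theorem \ref{thm: Mostow} via Corollary \ref{cor:quasi-proj}. Your attention to choosing a $\G_m^r$-stable exhaustion of $\UU_\mu$ (via local finiteness of the conjugation action) is a legitimate detail that the paper leaves implicit, and it is handled correctly.
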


\section{Formal neighborhood of a fiber in an $\A^1$-fibration}\label{sec:fiber-arc}
In this section we consider a normal affine surface  $X$ equipped with an 
$\AA^{1}$-fibration $\mu\colon X\to B$ over a smooth affine curve $B$. We study formal neighborhoods of fibers, the corresponding arc spaces, and their stabilizers.
These technical tools are used in the next section in the proofs of our main Theorems~\ref{th:jonq-mu} and \ref{th:jonq-mu-general} 
on the structure of the automorphism group of an  $\A^1$-fibration. 
Note that our technique is rather different from that of "tails" introduced in \cite[\S\S 1, 3]{DG2} in studies of 
automorphism groups of the Danilov-Gizatullin surfaces and based on Zariski's theory of complete ideals.

\subsection{Chain of contractions}\label{ss: contraction} 

\begin{notation}\label{nota: decomposition} Fix a minimal resolved SNC completion $(\bar X,D)$ of $X$ such that $\mu$ extends to a 
$\PP^{1}$-fibration 
$\bar\mu\colon \bar X\to\bar B$. Recall  that $\bar X$ is smooth and contains the minimal resolution of singularities of $X$, see Notation~\ref{nota: zigzag}. 

We call the degenerate $\bar\mu$-fibers  {\em special} and denote their union by $\Tt\subset\bar X$. Thus, $\Tt$ is a reduced effective divisor in $\bar X$. 
The components of a special fiber are smooth rational curves, and its dual graph is a tree.

Let $S$ be the unique section of $\bar\mu$ contained in  the boundary divisor $D= \bar X\setminus X$. 
The dual graph $\Gamma_D$ of $D$ is a rooted tree with a  prescribed root vertex $S$. 
The irreducible components of $F:=\bar\mu^{-1}(\bar B\setminus B)$ correspond to the $0$-vertices of degree 1 that are neighbors of $S$. 
The other branches of $\Gamma_D$ at $S$ are nonempty connected subgraphs of the dual graphs of the special fibers over $B$. 
Their intersection bilinear forms are negative definite. The extended divisor $D_{\rm ext}=S+F+\Tt$ of $( \bar X,\bar\mu)$ contains $D$.
\end{notation}

\begin{remark}
By Miyanishi's Theorem \cite[Ch.\ 3, Lem.\ 1.4.4(1)]{Mi0}, $X$ has only cyclic quotient singularities. 
The  weighted dual graph of the minimal resolution of such a singular point is a Hirzebruch-Jung string, see \cite{BHPV}.
\end{remark}

The following lemma is well known; see, e.g., \cite[Lem.\ 7]{Gi-2}. For the reader's convenience, we recall the proof.

\begin{lemma}\label{lem: contraction} Let $V$ be a smooth projective surface, $Z$ be a smooth projective curve, and let $\pi\colon V\to Z$ 
be a $\PP^1$-fibration, which admits a section $s\colon Z\to V$ with image $S$. 
Then there is a sequence of contractions 
$$ V=V_n\to V_{n-1}\to\ldots\to V_0$$ of $(-1)$-components of degenerate fibers disjoint from $S$ 
and from the subsequent images of $S$ that terminates by a ruled surface $V_0$ with an induced ruling $\pi_0\colon V_0\to Z$. 
\end{lemma}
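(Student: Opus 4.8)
The plan is to run the relative minimal model program for $\pi$, contracting $(-1)$-curves sitting inside degenerate fibers, and to check at each stage that such a curve can be chosen away from the section. I would argue by induction on the total number of irreducible components of the fibers of $\pi$, or equivalently on $\rk\Pic V$. The termination and the base case form the easy part: each contraction of a $(-1)$-curve contained in a fiber produces a smooth surface $V'$ by Castelnuovo's criterion, and since that curve maps to a point under $\pi$, the fibration descends to a $\PP^1$-fibration $\pi'\colon V'\to Z$; as $\rk\Pic$ strictly drops, the process must stop. It stops exactly when every fiber of the current surface is irreducible, and then, because the section meets each fiber transversally in one point, every fiber is reduced, hence a smooth rational curve of self-intersection $0$. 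Thus the terminal surface $V_0$ is geometrically ruled, with the induced ruling $\pi_0$.

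The crux is the disjointness claim: if $F$ is a degenerate fiber and $C_0\subset F$ is the unique component met by $S$, then $F$ contains a $(-1)$-curve $E\neq C_0$; since $S\cdot C_i=0$ for $i\neq 0$, such an $E$ is automatically disjoint from $S$, and remains disjoint from the subsequent images of $S$ because every contraction is centered off the section. To prove this I would invoke Zariski's lemma: the intersection form on the components of $F$ is negative semidefinite with kernel spanned by the fiber class, so every component $C_i$ that is not a rational multiple of the whole fiber satisfies $C_i^2<0$. Each $C_i\cong\PP^1$, so adjunction gives $C_i\cdot K_V=-2-C_i^2\ge -1$, with equality precisely for the $(-1)$-curves.

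Writing $F=\sum_i a_iC_i$ with all $a_i\ge 1$, and using $F\cdot K_V=-2$ (the general fiber is a $\PP^1$, so $2p_a(F)-2=-2$), I obtain $\sum_i a_i(C_i\cdot K_V)=-2$. As the non-$(-1)$-components contribute non-negatively, the $(-1)$-curves must carry total multiplicity at least $2$. On the other hand, the section forces $a_0=1$: the intersection point $S\cap F$ cannot be a node of $F$ (else $S\cdot F\ge 2$), so it is a smooth point of a single component $C_0$ with $S\cdot C_0=1$, whence $1=S\cdot F=a_0$. Consequently, even if $C_0$ happens to be a $(-1)$-curve, the total $(-1)$-multiplicity $\ge 2$ leaves room for a further $(-1)$-curve $E\neq C_0$, which is what I want.

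It remains to record the bookkeeping of a single contraction $c\colon V\to V'$ of such an $E$: the induced $\pi'$ is again a $\PP^1$-fibration; since $E\cap S=\emptyset$ the restriction $c|_S$ is an isomorphism onto a section $S'$ with $(S')^2=S^2$; and $c_*F$ is again a fiber of arithmetic genus $0$ with one fewer component. Iterating yields the desired chain $V=V_n\to\cdots\to V_0$. The main obstacle is exactly the disjointness claim — guaranteeing that every degenerate fiber possesses a contractible component off the section — and this is settled by the multiplicity count above; all the remaining steps are standard surface geometry.
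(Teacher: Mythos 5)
Your proof is correct and follows essentially the same route as the paper: the key step in both is the adjunction computation $\sum_i m_i(F_i^2+2)=-K_V\cdot F=2$ on a degenerate fiber, which shows the $(-1)$-components carry total multiplicity at least $2$, while $F\cdot S=1$ forces the component meeting $S$ to have multiplicity one, so some $(-1)$-component avoids $S$; the lemma then follows by induction on the number of fiber components. Your added justifications (Zariski's lemma for $C_i^2<0$, the bookkeeping of a single contraction) merely fill in details the paper leaves implicit.
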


\begin{proof} The lemma is an immediate consequence of  the following claim.

\medskip

\noindent \emph{Claim. If a fiber $F$ of $\pi$ is degenerate, then either $F$ contains at least two $(-1)$-components, or such a component is unique and multiple. 
In any case, at least one of the $(-1)$-components of $F$ is disjoint from $S$. }

Since $F\cdot S=1$, the second assertion 
follows from the first. 
Then also the lemma follows  
by induction on the total number of components of degenerate fibers.
Indeed, while contracting a $(-1)$-component of the fiber as an induction step, we reproduce the setting of the lemma with a smaller total number of fiber components. 

To fix the first statement of the claim, we
let $F=\pi^*(z)=\sum_{i=1}^{n}m_iF_i$, $z\in Z$. Since $F$ is degenerate and non-multiple we have $n\ge 2$.
Since $F^2=0$, by the adjunction formula we obtain $$-K_V\cdot F=2-2\pi_a(F)=2\quad\mbox{and}\quad 
-K_V\cdot F_i=F_i^2+2-2\pi_a(F_i)=F_i^2+2, \quad i=1,\ldots,n\,.$$ 
Hence $$\sum_{i=1}^n m_i(F_i^2+2)=2\,.$$ Since $F_i^2\le -1$ $\forall i$,
the positive summands correspond exactly to the $(-1)$-components of $F$. If $F_i$ 
is a unique such component, then necessarily $m_i>1$. Now the statement follows. 
\end{proof}

\begin{notation}\label{nota: decomposition-1}
Let $\Phi\colon \bar X\to\bar X_0$ be
a birational morphism, which contracts all degenerate $\bar\mu$-fibers to non-degenerate ones yielding a $\PP^1$-fiber bundle $\bar \mu_0\colon\bar X_0\to \bar B$.
Then $\Phi$ can be decomposed into a sequence of blowups of smooth points
\begin{equation}\label{eq: decomposition} \Phi\colon \bar X=
\bar X_m\xlongrightarrow{\sigma_{m}} \bar X_{m-1}\xlongrightarrow{\sigma_{{m-1}}} 
\ldots\xlongrightarrow{\sigma_{1}} \bar X_0,
\end{equation}
where $\sigma_{i}$ contracts the component $T_i\subset \bar X_i$ of the image $\sigma_{i+1}\circ\ldots\circ\sigma_m(\Tt)\subset\bar X_i$  to a point $p_{i}\in \bar X_{i-1}$. 
The proper transforms of the curves $T_i$ ($i< j$), 
$S$, and $F=\bar\mu^{-1}(\bar B\setminus B)$  on the surfaces $\bar X_j$  will be denoted by the same letters. The 
$\PP^1$-fibration $\bar\mu\colon\bar X\to\bar B$ induces 
$\PP^1$-fibrations $\bar\mu_i\colon\bar X_i\to\bar B$ 
so that $\sigma_i\colon\bar X_i\to\bar X_{i-1}$ 
becomes a morphism of $\PP^1$-fibrations identical on $\bar B$, $i=1,\ldots,m$. 
By Lemma~\ref{lem: contraction} we may assume that $T_1,\ldots, T_m$ do not meet $S$, and so, the 
centers of blowups $p_1,\ldots,p_m$ do not belong to $S$ or its images. Thus, $\Phi(T_1\cup\ldots\cup T_m)$ is a finite subset of the smooth affine surface   
$X_0:=\bar X_0\setminus (S\cup F)$.
 The induced $\AA^1$-fibration $\mu_0=\bar\mu_0|_{X_0}\colon X_0\to B$ is a locally trivial bundle with fiber $\A^1$.
\end{notation}

\begin{lemma}\label{lem:general-bundle}
$\mu_0\colon X_0\to B$  admits 
a structure of a line bundle. \end{lemma}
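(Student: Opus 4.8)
The plan is to produce a \emph{zero section} of $\mu_0$, since a locally trivial $\A^1$-bundle over $B$ that admits a section automatically carries a line bundle structure. By Notation \ref{nota: decomposition-1} the fibration $\mu_0\colon X_0\to B$ is a locally trivial $\A^1$-bundle, hence a torsor under a line bundle $L$ on $B$ — the \emph{linear part} of its $\Aff(\A^1)=\G_a\rtimes\G_m$-valued transition cocycle $g_{ij}(x)=a_{ij}x+b_{ij}$, where the units $a_{ij}$ alone determine $L$. With $L$ fixed, such torsors are classified by $H^1(B,L)$. Since $B$ is a smooth affine curve, $H^1(B,\mathcal F)=0$ for every quasi-coherent sheaf $\mathcal F$, so the torsor is trivial: one may take $b_{ij}=0$, and then $X_0$ is $B$-isomorphic to the total space of $L$. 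This is precisely a line bundle structure with projection $\mu_0$.

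I would also spell out the same conclusion geometrically, which has the advantage of making the zero section explicit. By construction $\bar\mu_0\colon\bar X_0\to\bar B$ is a $\PP^1$-bundle carrying the section $S$: the contractions in \eqref{eq: decomposition} are centered off $S$ by Lemma \ref{lem: contraction}, so $S$ maps isomorphically onto a section of $\bar\mu_0$. Hence $\bar X_0\cong\PP(\mathcal E)$ for a rank-two vector bundle $\mathcal E$ on $\bar B$, and $S$ corresponds to a line subbundle $N\subset\mathcal E$ with quotient $Q=\mathcal E/N$. Restricting to the affine curve $B$, the extension $0\to N\to\mathcal E|_B\to Q\to 0$ splits because $\operatorname{Ext}^1_B(Q,N)\cong H^1(B,Q^\vee\otimes N)=0$, so $\mathcal E|_B\cong N\oplus Q$ (again dropping restriction signs on $N,Q$). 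A point of $\PP(\mathcal E|_B)$ over $b\in B$ is a line in $N_b\oplus Q_b$, and it avoids $S=\PP(N)$ exactly when it is the graph of a homomorphism $Q_b\to N_b$. Therefore $X_0=\PP(\mathcal E|_B)\setminus S$ is $B$-isomorphic to the total space of the line bundle $\Hom(Q,N)=Q^\vee\otimes N$, that is $\underline{\Spec}_B\operatorname{Sym}(Q\otimes N^\vee)$, with zero section the complementary subbundle $Q$. Here $F=\bar\mu^{-1}(\bar B\setminus B)$ lies over $\bar B\setminus B$, so it does not meet $X_0$ and the removal of $S\cup F$ leaves precisely the complement of a single section over $B$.

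The hard part, in both arguments, will be the vanishing of first cohomology of coherent sheaves on the affine curve $B$: this is what trivializes the torsor in the first approach and what splits the extension $0\to N\to\mathcal E|_B\to Q\to 0$ and yields the identification $X_0\cong\underline{\Spec}_B\operatorname{Sym}(Q\otimes N^\vee)$ in the second. Both uses are standard. Everything else is bookkeeping: verifying that $S$ survives as a genuine section of $\bar\mu_0$ through the chain of contractions (already guaranteed by Notation \ref{nota: decomposition-1} and Lemma \ref{lem: contraction}) and that $X_0$ is exactly the complement of that section over $B$. I expect no serious obstacle beyond these cohomological vanishings.
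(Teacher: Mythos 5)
Your proposal is correct, and your second, geometric argument is essentially the paper's own proof: the paper realizes $\bar\mu_0\colon\bar X_0\to\bar B$ as $\PP(V)$ for a rank-two bundle $V$ on $\bar B$, lets $S$ correspond to a line subbundle $L\subset V$, splits the restricted extension $0\to L|_B\to V|_B\to (V/L)|_B\to 0$ using the vanishing of $H^1$ of coherent sheaves on the affine curve $B$ (Serre's analog of Cartan's Theorems A and B), and takes the resulting section disjoint from $S$ as the zero section. You are in fact slightly more careful than the paper on two points: you place the splitting obstruction in $\operatorname{Ext}^1_B(Q,N)\cong H^1(B,Q^\vee\otimes N)$, which is the correct order of arguments (the paper writes $\operatorname{Ext}^1(L,V/L)$; harmless here since both groups vanish), and you make explicit, via the graph description of lines in $N_b\oplus Q_b$ avoiding $N_b$, why the complement of $S$ is the total space of $\Hom(Q,N)$ rather than merely an $\A^1$-bundle admitting a disjoint section. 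Your first, torsor-theoretic argument is a genuinely different route not taken by the paper: it works directly with the local triviality of $\mu_0$ recorded in Notation \ref{nota: decomposition-1}, notes that transition functions of an $\A^1$-bundle over a reduced base are affine, so $X_0$ is an affine bundle modeled on the line bundle determined by the linear parts of the cocycle, and kills the obstruction class in $H^1(B,\cdot)$ by the same Serre vanishing. This bypasses the compactification, Lemma \ref{lem: contraction}, and the section $S$ entirely, at the modest cost of invoking that automorphisms of $\A^1$ over a reduced ring are affine; both routes ultimately rest on the identical cohomological vanishing, so neither is stronger, but the torsor argument is more self-contained while the paper's argument reuses data already set up for the rest of the section.
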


\begin{proof} 
The ruling $\bar\mu_0\colon\bar X_0\to\bar B$ is the projectivization of a rank 2 vector bundle $V\to\bar B$. 
The section $S$ of $\bar\pi$ corresponds to a line subbundle $L\subset V$. 
The exact sequence of vector bundles over the affine curve $B$, $$0\to L|_B\to V|_B \to (V/L)|_B\to 0$$ splits. Indeed, the obstacle to splitting sits in the group 
$${\rm Ext}^1(L, V/L)\cong H^1(B, \mathcal{H}om_{\cO_B}(\cO_B(L), \cO_B(V/L))\cong H^1(B, \cO_B(L)^{\vee}\otimes_{\cO_B} \cO_B(V/L))$$
 that vanishes due to Serre's analog of Cartan's A and B Theorems. This provides a section of $V|_B\to B$ disjoint with $S$, 
which can be taken for the zero section of a line bundle $\mu_0\colon X_0\to B$. 
\end{proof}

\begin{notation}\label{not: special-fiber}
Fix a special fiber, say, $\Tt'=\bar\mu^{-1}(\beta')$, with its reduced structure, where $\beta'\in B$. 
Since the blowups with centers in different fibers commute, with a suitable enumeration
we may assume that 
$$\Tt'=T_{0}\cup T_{1}\cup\ldots\cup T_{m'},$$ 
where $m'\le m$ and $T_0$ is the proper transform of $\bar\mu_0^{-1}(\beta')\subset\bar X_0$
and the only component of $\Tt'$ meeting $S$. We denote by $\Tt^{(i)}$ the image of $\Tt'$ in $\bar X_i.$
\end{notation}

\begin{definition} \label{def: star-comp} 
The blowup $\sigma_i$ is called {\em inner} if $p_i$ is a singular point of $\Tt^{(i-1)}$, and {\em outer} otherwise. 
The corresponding component $T_i$ is also called   {\em  inner} or  {\em outer}, respectively.
\end{definition}

\begin{definition}\label{def:parent} 
Let $T_i$, $T_k$ ($k<i$) be two components of $\Tt'$ such that $p_i=\sigma_i(T_i)\in T_k$, see \ref{nota: decomposition-1}. 
We say that $T_k$ is a \emph{parent} of $T_i$ if either $T_i$ is outer, 
or $T_i$ is inner and $p_i\in T_k\cap T_j$ on $\bar X_{i-1}$ for some $j<k$. 
Any component $T_i$, where $i>0$, has exactly one parent. 
\footnote{The notion of a {\em parent} is not related to the notion of a \emph{mother component} in \ref{StarComponentGeneral}.} 
\end{definition}

\subsection{Formal neighborhoods and coordinate charts}
\label{ss: loc-coordinates} 

\begin{definition}\label{def:formal-neigh}
Given an algebraic variety $Y$ and a closed subset $Z\subset Y$, we denote by $\hat\O_{Y,Z}$ the completion
of the local sheaf $\O_{Y,Z}$ with respect to the filtration by powers of the ideal sheaf $\mathcal{I}$ of $Z$. The corresponding formal scheme $\Spf\hat\O_{Y,Z}$ 
 is called a \emph{formal neighborhood} of $Z$ in $Y$,  see, e.g., 
\cite[Ch.\ 9]{Bad}
or
\cite[\S 10]{Grot}.
\end{definition}

\begin{notation}\label{sit: coord-abstr} 
Given a surface $V$ and 
 a local coordinate chart $(x_p,y_p)$ on $V$ centered at a smooth point $p\in V$ 
\footnote{Such a coordinate chart $(x_p,y_p)$ on $V$ can be defined as follows. 
Consider an affine neighborhood $X$ of $p$ in $V$, a closed  embedding $X\hookrightarrow\A^N$, and a linear projection $\pi\colon\A^N\to T_pX$. 
Let $U,V$ be linear functions  on $\A^N$ that restrict to coordinates in the tangent plane $T_pX$.  Then we let $x_p=U|_X$ and $y_p=V|_X$. Thus, 
$p$ is the origin of this local chart.}, we identify the completion
$\hat\O_{V,p}$ of the  local ring $\O_{V,p}$ with the ring $\K[\![x_p,y_p]\!]$.
 Let $\sigma\colon \tilde V \to V$ be the blowup of $p$ with exceptional $(-1)$-curve 
$E$. The  rational function  $\yE=y_p/x_p$ defines an isomorphism $\yE\colon E\stackrel{\cong}{\longrightarrow}\PP^1$.
For each $q\in\PP^1\cong\AA^1\cup\{\infty\}$ we let $E(q)=y_{\scriptscriptstyle E}^{-1}(q)\in E$.
The  sheaf $\hat\O_{\tilde V,E}$ inherits coordinates of $\hat\O_{V,p}$ as follows:
\begin{align*}
\hat\O_{\tilde V,E}(\left\{\yE\neq 0\right\})&=
\hat\O_{V,p}\left[\frac{1}{\yE}\right]\,=\,\K[1/\yE][\![y_p]\!],\\
\hat\O_{\tilde V,E}(\left\{\yE\neq \infty\right\})&=\hat\O_{V,p}[\yE]\,\,\,=\,\K[\yE][\![x_p]\!].
\end{align*}
 For a point $E(q)\in E$ the local coordinate chart centered at $E(q)$  is given by
\begin{equation}\label{eq:pt-coord}
\left(x_{E(q)},y_{E(q)}\right)=\begin{cases}
(x_p,\yE-q),&\quad q\neq\infty,\\
\left(\frac{1}{\yE},y_p\right),&\quad q=\infty\,.
\end{cases}
\end{equation}
 Reversing formulas (\ref{eq:pt-coord}) yields
\begin{equation}\label{eq:pt-coord-2}
\left(x_p,y_p\right)=\begin{cases}
(x_{E(q)},\,x_{E(q)}(y_{E(q)}+q)),&\quad q\neq\infty,\\
\left(x_{E(q)}y_{E(q)},\,y_{E(q)}\right),&\quad q=\infty\,.
\end{cases}
\end{equation}
Letting $x_{\scriptscriptstyle E}=x_p$ we call $(x_{\scriptscriptstyle E},\yE)$ the {\em local coordinates near} $E$. 
\end{notation} 

\begin{notation}\label{not:succ}
We write $p'\succeq p$
if $p'\in\Tt^{(i+k)}$ and $\sigma_{i+1}\circ\ldots\circ\sigma_{i+k}(p')=p\in \Tt^{(i)}$ for some  $k\ge 0$ (that is, $p'$ is an infinitely near point of $p$). 
If $p$ is the center of a blowup $\sigma_{i+j}$,  $1\le j\le k$, then we write $p'\succ p$. Otherwise,  by abuse of notation, we write $p=p'$. 
\end{notation}
\begin{notation}\label{sit:coord} 
Tensoring with the ring $\hat\O_{B,\beta'}=\K[\![t]\!]$, where $t$ is a local coordinate on $B$ centered at $\beta'$, 
we restrict $\mu_0$ to the formal neighborhood of the fiber $T_0^*:=T_0\setminus S=\mu_0^{-1}(\beta')\cong\A^1$ in $X_0$, namely, to
\[\hat\O_{X_0, T_0^{*}}(T_0^{*})= \K[y_0][\![x_0]\!],\] 
where the coordinates $(x_0,y_0)$ near the fiber $T_0^{*}\subset X_0$ are chosen so that $x_0=\mu_0^*(t)$, and $y_0=0$  defines the zero section of $\mu_0\colon X_0\to B$.
Regarding $y_0$ as a $\PP^1$-coordinate on $T_0\subset\bar X_0$, 
we define local coordinates at each point $T_0(q)$
of $T_0^{*}=T_0\cap X_0$ via (\ref{eq:pt-coord}).

For every $i=1,\ldots,m'$ we define  recursively local coordinates at the points of the fiber $\Tt^{(i)}\subset\bar X_i$  as follows. 
Assume that the local coordinates $(x_{p_i},y_{p_i})$ centered at a point $p_i\in\bar X_{i-1}$ are already defined. 
As in \ref{sit: coord-abstr} we infer first a $\PP^1$-coordinate $y_i=y_{\scriptscriptstyle T_i}$ on $T_i$ and then local coordinates near $T_i(q)$ for each $q\in\PP^1$. 
For any point $p\in \Tt^{(i)}\setminus (T_i\cup S)$ 
we  keep the same local chart $(x_p,y_p)=(x_{\sigma_i(p)},y_{\sigma_i(p)})$ as on the surface $X_{i-1}$. In more detail, each point $p\in \Tt^{(i)}$ admits a unique 
representation of the form $p=T_j(q)$ as in Notation~\ref{not:succ}, where $j\le i$ and $q\in\PP^1$. 
In particular, if $p=T_j\cap T_k$ with $j>k$, then $p=T_j(q)$, where $q\in\PP^1$, and $p$ cannot be represented as $T_k(q')$ for $q'\in\PP^1$. 
Then we let $(x_p,y_p)=(x_{\scriptscriptstyle T_j(q)},y_{\scriptscriptstyle T_j(q)})$. 
\end{notation}

\begin{remark}\label{rem: coord}
If $T_j\cap T_k=\{p\}\subset \bar X_i$, then $T_j$ and $T_k$ are the coordinate lines in the local coordinates $(x_p,y_p)$ near $p$. Up to permuting $j$ and $k$ 
one has $p\succeq
T_j(\infty)$, $p\succeq T_k(q)$, where $q\neq\infty$, and \[(x_p,y_p)=\left(\frac{1}{y_j},y_k-q\right).\] 
 Note that $T_j(\infty)$ is the point of $T_j$ closest to $S$, and the component $T_k$ separates $T_j$ and $S$ in $\Tt^{(i)}$.
\end{remark}

\subsection{Arcs and multiplicities}
\label{ss: arcs} In this subsection we introduce the arc spaces  of an $\A^1$-fibration (see \ref{sit: coord-abstr} and \ref{def:
arc-space}) and the multiplicities of arcs (see \ref{sit:pui-mult} and \ref{prop:mult}).

\begin{definition}[\emph{arc space}; see, e.g., \cite{EM,
Is}]\label{def: arc-space}  Given a variety $V$, an \emph{arc} in $V$ is a parameterized formal curve germ $\Spec\K[\![t]\!]\to V$. 
The \emph{arc space} $\Arc(V)$ is the $\K[\![t]\!]$-scheme consisting of all the $\K[\![t]\!]$-rational 
points of $V$. Given a closed subvariety $Z\subset V$, we say that an arc $\xi\colon\Spec\K[\![t]\!]\to V$ is \emph{centered in} $Z$ if 
 the  image in $\K[\![t]\!]$
of the vanishing ideal  of $Z$ in $\mathcal{O}_V(V)$  is contained in  the maximal ideal of  $\K[\![t]\!]$.  
In particular, if $Z$ is a reduced point $p\in V$, then we say that $\xi$ is \emph{centered at} $p$.
We let $\Arc(V)_Z$ denote the subscheme of all arcs centered in $Z$. 
This subscheme can be identified with a scheme of arcs in the formal neighborhood of $Z$ in $V$.
\end{definition}

\begin{example}\label{ex: arc-p}
Let $V$ be a surface and $p\in V$ be a smooth point 
with a local coordinate chart $(x_p,y_p)$ centered at $p$. 
The corresponding arc space is
\[\begin{aligned}
\Arc(V)_{p}=&\big\{h\colon\hat\O_{V,p}\to\K[\![t]\!]\mid h(\mathfrak{m}_{V,p})
\subseteq t\K[\![t]\!]\big\}\\
=&\big\{(x_p,y_p)\mapsto(x(t),y(t))\mid x,y\in t\K[\![t]\!]\big\}.
\end{aligned}\]
If $\sigma\colon \tilde V \to V$ is the blowup of $p$ with exceptional $(-1)$-curve 
$E$,
then $\sigma$  induces an isomorphism $\Arc(\tilde V)_E\setminus 
\Arc(E)\cong \Arc(V)_p\setminus\{0\}$. 
\end{example}

\begin{notation}\label{nota: descent} For a point  $p\in\Tt^{(i)}\setminus S\subset \bar X_i\setminus S$ ($i\in\{0,\ldots,m'\}$)  we
consider  the subset
$\Arc(\bar X_i)_p^*\subset \Arc(\bar X_i)_p$ of all arcs in $\bar X_i$ centered at $p$
whose generic point does not belong to $\Tt'$. 
 The map $\Arc(\bar X_i)_p^*\to \Arc(X_0)_{T_0^*}^*\subset\Arc(\bar X_0)_{T_0}^*$ induced by 
$\sigma_1\circ\ldots\circ\sigma_i\colon\bar X_i\to\bar X_0$ is injective.  This allows to
identify $\Arc(\bar X_i)_p^*$ with its image in $\Arc(X_0)_{T_0^*}^*$, where 
\begin{align*}\label{eq:arc-sp} \Arc(X_0)_{T_0^*}^*=& \Arc(X_0)_{T_0^*}\setminus \Arc(T_0^*)\\=&\big\{(x_{0},y_{0})
\mapsto(x(t),y(t))\;|\; x\in t\K[\![t]\!]\setminus\{0\},y\in\K[\![t]\!]\big\}\,.
\end{align*} 
\end{notation}

\begin{notation} 
Let $\mult T_i=m_i$ be the multiplicity of $T_i$ in the divisor $\bar\mu_{m'}^*(\beta')=\sum_{i=0}^{m'} m_iT_i\,$, 
which corresponds to a special fiber $\bar\mu_{m'}^{-1}(\beta')$. 
\footnote{We distinguish between the divisor $\mu^*( \beta )$ and its reduced version, that is, the geometric fiber $\mu^{-1}(\beta)$.}
Thus, $\mult T_i=\mult T_j$ if $T_i$ is outer and $\sigma_i(T_i)=p_i\in T_j$, and $\mult T_i=\mult T_j+\mult T_k$ if $T_i$ is inner and $p_i\in T_j\cap T_k$ 
(see \ref{not: special-fiber}). In particular, $\mult T_i$ is the same on every surface $\bar X_j$, $j\ge i$. 
\end{notation}

\begin{definition}[\emph{multiplicity of an arc}]\label{sit:pui-mult}
For an arc $h\in\Arc(\bar X_0)_{T_0}^*,$ $(x_0,y_0)\mapsto(x(t),y(t))$, we define its multiplicity  by $\mult h=\ord_t x \,(=\ord_t (\bar\mu_0)_*h)$. 
The multiplicity of an arc $h \in\Arc(\bar X_i)_{\Tt^{(i)}}^*$ is defined as the multiplicity  of the image of $h$ in $\Arc(\bar X_0)_{T_0}^*$.
\end{definition}

\begin{proposition}\label{prop:mult}
Let $p\in\Tt^{(i)}$ and $h\in\Arc(\bar X_i)_p^*,\; h\colon(x_p,y_p)\mapsto(x(t),y(t))$.
\footnote{Since $h\in\Arc(\bar X_i)_p^*$, $x(t)\neq 0$ in the former case and $x(t),y(t)\neq0$ in the latter one, 
thus the formulas (\ref{eq:arc-mult-1})--(\ref{eq:arc-mult-2}) are well defined.}
If $p\notin\Sing \Tt^{(i)}$ is a point of $T_j$,
then
\begin{equation}\label{eq:arc-mult-1}
\mult h=\mult(T_j)\ord_t x.
\end{equation}
If $p\in \Sing \Tt^{(i)}$ is an intersection of components $T_j$ and $T_k$ with $\sigma_{k+1}\circ\ldots\circ\sigma_i(p)=T_k(\infty)$
(cf.\ \ref{rem: coord}), then
\begin{equation}\label{eq:arc-mult-2}
\mult h=\mult(T_j)\ord_t x+ \mult(T_k)\ord_ty.
\end{equation}
\end{proposition}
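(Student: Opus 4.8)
The plan is to identify $\mult h$ with the local intersection number of the arc $h$ with the scheme-theoretic fibre $\bar\mu_i^*(\beta')=\sum_{l} m_l T_l$, after which the two formulas become a direct reading of the local equations of the components passing through the centre $p$.

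First I would record that the multiplicity of an arc is nothing but the order of vanishing of the pulled-back fibre coordinate. Indeed, by the chain of contractions \ref{nota: decomposition-1} the morphism $\Phi_i=\sigma_1\circ\dots\circ\sigma_i\colon\bar X_i\to\bar X_0$ is a morphism of $\PP^1$-fibrations over $\bar B$, so $\bar\mu_i=\bar\mu_0\circ\Phi_i$ and hence $\mu_i^*(t)=\Phi_i^*\mu_0^*(t)$. By Definition \ref{sit:pui-mult} the multiplicity of $h$ equals the multiplicity of its image $h_0=\Phi_i\circ h$ in $\Arc(\bar X_0)_{T_0}^*$, which is $\ord_t\bigl(\mu_0^*(t)\circ h_0\bigr)=\ord_t\bigl(\mu_i^*(t)\circ h\bigr)$, since $\mu_0^*(t)=x_0$ is the fibre coordinate on $\bar X_0$. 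Thus
\begin{equation*}
\mult h=\ord_t\bigl(\mu_i^*(t)\circ h\bigr).
\end{equation*}

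Next I would compute the right-hand side locally at $p$. On $\bar X_i$ the divisor of zeros of $\mu_i^*(t)$ in a neighbourhood of $\Tt^{(i)}$ is exactly the fibre $\bar\mu_i^*(\beta')=\sum_l m_l T_l$, the coefficients $m_l=\mult T_l$ being the very multiplicities governed by the recursion $\mult T_i=\mult T_j$ in the outer case and $\mult T_i=\mult T_j+\mult T_k$ in the inner case. Writing $\mu_i^*(t)=u\prod_l f_l^{\,m_l}$ near $p$, where $f_l$ is a local equation of $T_l$ and $u$ is a unit, and using that $h\in\Arc(\bar X_i)_p^*$ has generic point off $\Tt'$ (so that $f_l\circ h\neq 0$), I obtain
\begin{equation*}
\mult h=\sum_{l}m_l\,\ord_t\bigl(f_l\circ h\bigr),
\end{equation*}
in which only those $T_l$ passing through $p$ contribute, because $f_l\circ h$ is a unit for the remaining components. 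If $p\notin\Sing\Tt^{(i)}$ lies on the single component $T_j$, only one term survives; if $p=T_j\cap T_k$, precisely two terms survive.

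It remains to match the surviving terms $\ord_t(f_l\circ h)$ with $\ord_t x$ and $\ord_t y$. Here I would invoke the coordinate construction \ref{sit:coord} together with Remark \ref{rem: coord}: at a smooth point $p\in T_j$ the chart coordinate $x_p$ is a local equation of $T_j$ (as in the base situation, where $x_0=\mu_0^*(t)$ cuts out $T_0$), whence $\ord_t(f_j\circ h)=\ord_t x$ and (\ref{eq:arc-mult-1}) follows; at a node $p=T_j\cap T_k$ normalised so that $\sigma_{k+1}\circ\dots\circ\sigma_i(p)=T_k(\infty)$, the two chart coordinates $x_p,y_p$ are local equations of $T_j$ and $T_k$ respectively, as dictated by the reversal formulas (\ref{eq:pt-coord-2}), which gives $\ord_t(f_j\circ h)=\ord_t x$, $\ord_t(f_k\circ h)=\ord_t y$ and hence (\ref{eq:arc-mult-2}). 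I expect the one genuinely delicate point to be this last identification of the chart coordinates with the local equations of the named components, consistently with the $\infty$-versus-finite labelling built into \ref{sit:coord} and Remark \ref{rem: coord}; the $q=\infty$ branch of (\ref{eq:pt-coord-2}), which mixes the two coordinates, is exactly where the additivity $\mult T_i=\mult T_j+\mult T_k$ of the inner case must be reconciled with the two-term shape of (\ref{eq:arc-mult-2}). An equivalent, more computational route avoids this by a direct induction on $i$: one pushes $h$ forward by $\sigma_i$, substitutes (\ref{eq:pt-coord-2}), and reads off the transformed orders, the inner blow-up being the case where additivity of multiplicities is used; but the intersection-theoretic formulation seems cleaner and isolates all the bookkeeping into this single verification.
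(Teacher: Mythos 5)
Your proposal is correct, but it is organized quite differently from the paper's argument. The paper proves Proposition \ref{prop:mult} by induction on $i$: it pushes the arc forward under $\sigma_i$, rewrites it in the coordinates at $p_i$ via the reversal formulas (\ref{eq:pt-coord-2}), splits into five cases according to whether $T_i$ is inner or outer and the position of $q$, and invokes the inductive hypothesis; the additivity $\mult T_i=\mult T_j+\mult T_k$ enters explicitly in the inner case. You instead interpret $\mult h$ as $\ord_t$ of the pullback of the fibre function $\bar\mu_i^*(t)$ (legitimate, since the $\sigma_l$ are morphisms of $\PP^1$-fibrations over $\bar B$ and $x_0=\mu_0^*(t)$ by \ref{sit:coord}), factor that function locally at $p$ as a unit times $\prod_l f_l^{m_l}$ with $m_l=\mult T_l$, and reduce both formulas (\ref{eq:arc-mult-1})--(\ref{eq:arc-mult-2}) to matching the chart coordinates $(x_p,y_p)$ with the local equations $f_l$ of the components through $p$. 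This is conceptually cleaner: it treats all five of the paper's cases uniformly and explains \emph{why} the formulas hold (the arc is being intersected with the scheme-theoretic fibre). What it buys the paper's way instead is that the delicate bookkeeping is carried out rather than isolated: your key identification --- at a node over $T_k(\infty)$ one has $x_p=1/y_k$ cutting out $T_j$ and $y_p$ cutting out $T_k$, with the two coordinates crossing over relative to their defining components, cf.\ Remark \ref{rem: coord} --- is true (I checked it through the blowup charts), but verifying it in general is itself a short induction on the chain (\ref{eq: decomposition}) using exactly the formulas (\ref{eq:pt-coord-2}); so the total work is comparable, just relocated from the arc to the coordinate system. If you write this up, state the crossing at $q=\infty$ explicitly, since it is the one place where a sign-of-convention slip would silently exchange $\mult(T_j)$ and $\mult(T_k)$ in (\ref{eq:arc-mult-2}).
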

\begin{proof}
We proceed by induction on $i$. The case $i=0$ is trivial. Assume that the assertion holds on $\bar X_{i-1}$.
Then it also holds on $\Tt^{(i)}\setminus T_i$. Let then $p=T_i(q)$, $q\in\PP^1$, so $\sigma_i(p)=p_i$.
We distinguish the  following five cases:\begin{itemize}  \item $T_i$ is outer and $q=\infty$;
\item  $T_i$ is outer and  $q\neq\infty$;
\item  $T_i$ is  inner and $q=\infty$;
\item $T_i$ is  inner and $q=0$;
\item $T_i$ is  inner and  $q\neq 0,\infty$. 
\end{itemize}
These are depicted below in Figure~1
 for $T_i$ outer and in Figure~2 for $T_i$ inner.
\tikzset{pt/.style={circle,draw,fill=white, inner sep=0pt, minimum size=4pt}}
\begin{figure}[ht!]
\centering
\begin{minipage}[c]{0.4\linewidth}
{\begin{tikzpicture}[baseline={([yshift=-.5ex]current bounding box.center)}]
	\draw (1,2)--(2,5) node[left, very near end] {$T_j$};
	\draw (1,4)--(2,1) node[below left, very near end] {$T_i$};
	\node at (1.33,3)[pt, label=right:$T_i(\infty)$] {};
	\node at (1.66,2)[pt, label=right:$T_i(q)$] {};
	\draw [->, thick](3.2,3)--(4.2,3) node[above,midway]{$\sigma_i$};
	\draw (5,2)--(5,5) node[left, near end] {$T_j$};
	\node at (5,3)[pt, label=right:$p_i$] {};
\end{tikzpicture}}
\label{fig:out}
\caption{$T_i$ is outer.}
\end{minipage}
\begin{minipage}[c]{0.4\linewidth}
{\begin{tikzpicture}[baseline={([yshift=-.5ex]current bounding box.center)}]
	\draw (0,4)--(2,5) node[above left, very near start] {$T_j$};
	\draw (1,5)--(1,1) node[above left, midway] {$T_i$};
	\draw (0,2)--(2,1) node[below left, very near start] {$T_k$};
	\node at (1,4.5)[pt, label={[yshift=-1.2ex]0:$T_i(\infty)$}] {};
	\node at (1,3)[pt, label=right:$T_i(q)$] {};
	\node at (1,1.5)[pt, label={[yshift=1ex]0:$T_i(0)$}] {};
	\draw [->, thick](2.6,3)--(3.6,3) node[above,midway]{$\sigma_i$};
	\draw (4,2)--(5,5) node[left, very near end] {$T_j$};
	\draw (4,4)--(5,1) node[below left, very near end] {$T_k$};
	\node at (4.33,3)[pt, label=right:$p_i$] {};	
\end{tikzpicture}}
\label{fig:inn}
\caption{$T_i$ is inner.}
\end{minipage}
\end{figure}
All these cases are treated similarly, so we consider just  two of them and leave the others to the reader.

Let first $T_i$ be outer with a parent $T_j$, where $p=T_i(q)$, $q\neq\infty$.
Then $(x_{p_i},y_{p_i})=(x_p,x_p(y_p+q))$ by (\ref{eq:pt-coord-2}), so $h\colon(x_{p_i},y_{p_i})\mapsto(x(t),x(t)(q+y(t)))$ on $X_{i-1}$, and by the induction conjecture
\[\mult h=\mult(T_j)\ord_t x=\mult(T_i)\ord_t x.\]
Let further $T_i$ be inner and $p=T_i(\infty)$. Then $h\colon(x_{p_i},y_{p_i})\mapsto (x(t)y(t),y(t))$ on $X_{i-1}$ by (\ref{eq:pt-coord-2}), and so, by the inductive conjecture,
\begin{multline*}
\mult h=\mult(T_j)(\ord_tx+\ord_ty)+\mult(T_s)\ord_ty=
\mult(T_j)\ord_tx+\\
(\mult(T_j)+\mult(T_s))\ord_ty=\mult(T_j)\ord_tx+\mult(T_i)\ord_ty\,.
\end{multline*}
\end{proof}  

\subsection{Puiseux arc spaces} In this subsection we introduce and study Puiseux arc spaces. 

\begin{sit}[\emph{Puiseux arcs}]\label{sit:arc-puiseux} 
Fix a point\footnote{Warning: in what follows we never consider the points `at infinity' $p\in\Tt^{(i)}\cap S$.} $p\in\Tt^{(i)}\setminus S$. 
\begin{enumerate}[1.] 
\item An \emph{invertible substitution} is a change of variable $t\mapsto A(t)$, where $\ord_tA=1$. 
All such substitutions form a group acting on the arc space $\Arc(\bar X_i)_p^*$ via
\[A(t). h\colon(x_p,y_p)\mapsto(x_h(A(t)),y_h(A(t))),\]
where $h\colon(x_p,y_p)\mapsto(x_h(t),y_h(t)).$ We call two arcs \emph{equivalent} if they belong to the same orbit of the action.
\item 
The coordinate line $x_p=0$ in a local chart $(x_p,y_p)$ is a part of a component of $\Tt^{(i)}$. 
Hence  $x(t)\neq 0$ for any arc $h\in\Arc(\bar X_i)_p^*$, $h\colon(x_p,y_p)\mapsto(x(t),y(t))$. 
So, a suitable invertible substitution $A(t)$ sends $h$ to an arc $\tilde h=A(t).h\colon(x_p,y_p)\mapsto(t^n,\tilde y(t))$, 
where $n=\ord_tx$ and $\ord_t\tilde y=\ord_ty.$  

Such a change of variable $t\mapsto A(t)$ and an arc $\tilde h$ are defined 
uniquely up to a composition with $t\mapsto\alpha\cdot t$, where $\alpha\in\K$, $\alpha^n=1$. 
All the arcs equivalent to $\tilde h$ share the same Puiseux expansion $y=\tilde y(x^{1/n})$. They form an orbit of the cyclic Galois group for the reduction problem. 
\item An arc $\tilde h\colon (x_p,y_p)\mapsto(t^n,\tilde y(t))$ with $\tilde y=\sum_{i=\ord_ty}^\infty a_it^i$ is called a \emph{Puiseux arc} if  
\[\gcd(\{i\mid  a_i\neq0\}\cup\{n\})=1.\]
\end{enumerate}
\end{sit}

\begin{definition}[\emph{Puiseux arc space}]\label{def: Puiseux} Consider a point $p\in\Tt^{(i)}\setminus S$, a
pair of positive integers $n,d\in\ZZ_{>0}$, and a polynomial $\psi=\sum_{i=1}^{d-1}\psi_it^i\in t\K[t]$ such that 
\begin{equation}\label{eq:gcd-puiseux}
\gcd(\{i\;|\;\psi_i\neq0\}\cup\{n\})=1,
\end{equation}
that is, such that  $(x_p,y_p)\mapsto(t^n,\psi(t))$ is a Puiseux arc. 
Assume also that $\psi\neq0$ if $p$ is a node.
The \emph{Puiseux arc space} $W=\Pui_{\bar X_i, p}(\psi, n, d)$ on $\bar X_i$
relative to the coordinate system $(x_p,y_p)$ centered at $p$ 
consists of all arcs equivalent to Puiseux arcs of the form $h\colon (x_p,y_p)\mapsto(t^n,y(t))$ with $y\in\psi+t^d\K[\![t]\!]$. In other words,
\[
W=\{(x_p,y_p)\mapsto(A(t)^n,y(A(t)))\mid  \ord_tA=1,\,y\in\psi+t^d\K[\![t]\!]\}.
\] 
Thus, the elements of $W$ share the same starting piece $\psi$ of the Puiseux expansion. 
The condition that $\psi\neq0$ if $p$ is a node ensures that $W \subset \Arc(\bar X_i)_p^*$. 
Furthermore, the elements of $W$ share the same multiplicity (see \ref{sit:pui-mult}), which we denote by $\mult W$.

Any Puiseux arc space can be expressed in the coordinates $(x_0,y_0)$ on $X_0$, see \ref{nota: descent}. To this end, given
a pair of positive integers $n,d\in\ZZ_{>0}$ and a polynomial $\psi=\sum_{i=0}^{d-1}\psi_it^i\in \K[t]$ with a possibly nonzero constant term, we let
\[\Pui(\psi,n,d)=\Pui_{\bar X_0,T_0(\psi_0)}(\psi-\psi_0,n,d).\]
Thus, the constant term $\psi_0$ is responsible for the choice of the center $T_0(\psi_0)\in T_0\setminus S$. 
\end{definition}

\begin{lemma}\label{lem: Puiseux}
For any point $p=T_i(q)\in \Tt^{(i)}\setminus S$, $i\in\{0,\ldots, m'\}$, and any 
Puiseux arc space $W=\Pui_{\bar X_i,p}(\psi, n, d)\subset \Arc(\bar X_i)_p^*$,  
the image $W'$ of $W$  under the embedding $\Arc(\bar X_i)_p^*\hookrightarrow \Arc(\bar X_{i-1})_{p_i}^*$ induced by 
$\sigma_i\colon \bar X_i\to\bar X_{i-1}$  is a Puiseux arc space in $\Arc(\bar X_{i-1})_{p_i}^*$. 
More precisely, 
\begin{align}\label{eq:Puiseux-descent-finite}
W'=&\Pui_{\bar X_{i-1}, p_i}(t^n(q+\psi), n, d+n) &\mbox{if}\quad q\neq\infty,\\
W'=&\Pui_{\bar X_{i-1}, p_i}(\widetilde\psi, n+\ord\psi, d), &\mbox{if}\quad q=\infty,\label{eq:Puiseux-descent-inf}
\end{align}
where $\ord\widetilde\psi=\ord\psi$.
\end{lemma}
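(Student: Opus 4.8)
The plan is to reduce the statement to the explicit coordinate change (\ref{eq:pt-coord-2}) for the blowup $\sigma_i\colon\bar X_i\to\bar X_{i-1}$ and to track its effect on the Puiseux data $(\psi,n,d)$. Since a Puiseux arc space is by construction a union of equivalence classes of arcs under the invertible substitutions $t\mapsto A(t)$, $\ord_t A=1$ (see \ref{sit:arc-puiseux} and \ref{def: Puiseux}), and these substitutions commute with the map induced by $\sigma_i$, it suffices to compute the image of a straightened representative $h\colon(x_p,y_p)\mapsto(t^n,y(t))$ with $y\in\psi+t^d\K[\![t]\!]$, and then to re-straighten if the first coordinate is no longer of the form $t^{n'}$. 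Throughout I shall use that $\sigma_i$ induces an injective map $\Arc(\bar X_i)_p^*\hookrightarrow\Arc(\bar X_{i-1})_{p_i}^*$ (see \ref{nota: descent}), so that identifying the image amounts to exhibiting an explicit inverse.

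For $q\neq\infty$, formula (\ref{eq:pt-coord-2}) gives $\sigma_i\circ h\colon(x_{p_i},y_{p_i})\mapsto\left(t^n,\,t^n(q+y(t))\right)$. The first coordinate is already straightened, so the multiplicity index is unchanged; the second coordinate lies in $t^n(q+\psi)+t^{n+d}\K[\![t]\!]$, whence the new starting polynomial is $t^n(q+\psi)$ and the new precision is $d+n$. Reading the computation backwards — dividing the second coordinate by $t^n$ and subtracting $q$ — produces the inverse map and hence identity (\ref{eq:Puiseux-descent-finite}). The gcd condition (\ref{eq:gcd-puiseux}) for the new data is immediate from $\gcd(n,n+i)=\gcd(n,i)$, which gives $\gcd(\{n\}\cup\{\,n+i\mid\psi_i\neq0\,\})=\gcd(\{n\}\cup\{\,i\mid\psi_i\neq0\,\})=1$.

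For $q=\infty$ the situation is more delicate. Here $p=T_i(\infty)$ is the point of $T_i$ nearest to $S$, hence a node of $\Tt^{(i)}$ (cf.\ \ref{rem: coord}), so the standing convention in \ref{def: Puiseux} forces $\psi\neq0$; set $e=\ord_t\psi=\ord_t y$, which is finite and $<d$. Now (\ref{eq:pt-coord-2}) gives $\sigma_i\circ h\colon(x_{p_i},y_{p_i})\mapsto\left(t^n y(t),\,y(t)\right)$, whose first coordinate $t^{n+e}\cdot\bigl(y(t)/t^e\bigr)$ has order $n+e$. Solving $A(s)^{n+e}\,\bigl(y(A(s))/A(s)^{e}\bigr)=s^{n+e}$ by extraction of an $(n+e)$-th root of the unit $y(t)/t^e$ produces an invertible substitution $C$, $\ord C=1$, bringing the arc into the straightened form $\left(s^{n+e},\,y(C(s))\right)$. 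As $\ord C=1$, the second coordinate lies in $\psi(C(s))+s^d\K[\![s]\!]$; therefore the precision is preserved ($d'=d$), the new multiplicity index is $n+e=n+\ord\psi$, and the new starting polynomial $\widetilde\psi$ (the truncation of $\psi(C(s))$ modulo $s^d$) satisfies $\ord\widetilde\psi=\ord\psi$, which is (\ref{eq:Puiseux-descent-inf}). Surjectivity onto this set again follows by reversing the construction.

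It remains to verify that the data produced in the case $q=\infty$ define a genuine Puiseux arc space, i.e.\ that $\widetilde\psi$ and $n+e$ satisfy (\ref{eq:gcd-puiseux}); this is the one point I expect to resist a direct computation, since the substitution $C$ may scramble the exponents of the Puiseux expansion. I would argue it instead through primitivity. The gcd condition is equivalent to the parametrization being primitive, that is, to the arc not factoring through a ramified substitution $s\mapsto s^k$ with $k>1$; and because $\sigma_i$ restricts to an isomorphism away from $T_i$ while every arc in $\Arc(\bar X_i)_p^*$ has its generic point off $\Tt'$, in particular off $T_i$, the arcs $h$ and $\sigma_i\circ h$ coincide over $\Spec\K(\!(t)\!)$ up to this isomorphism, so one is primitive if and only if the other is (cf.\ Example \ref{ex: arc-p}). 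Every member of $W$ is primitive by the gcd hypothesis on $\psi$, hence so is every member of its image; applying this to the member $\left(s^{n+e},\,\widetilde\psi(s)\right)$ of the image yields $\gcd\bigl(\{\,j\mid\widetilde\psi_j\neq0\,\}\cup\{n+e\}\bigr)=1$. Thus the image is exactly $\Pui_{\bar X_{i-1},p_i}(\widetilde\psi,n+\ord\psi,d)$, completing the identification.
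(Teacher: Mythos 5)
Your $q\neq\infty$ case is correct and coincides with the paper's computation. In the $q=\infty$ case your main strategy --- re-straightening the first coordinate $t^{n}y(t)$ by extracting an $(n+\ord\psi)$-th root of the unit $y(t)/t^{\ord\psi}$ --- is also the paper's, but your verification of (\ref{eq:gcd-puiseux}) for the new data is genuinely different: the paper assumes a factorization $\widetilde\psi(s)=\widetilde\varphi(s^{k})$, $k>1$, and pulls it back through the substitution to contradict the gcd hypothesis on $\psi$, whereas you transport primitivity of parametrizations through the injection of arc spaces induced by $\sigma_{i}$, lifting arcs whose generic point avoids the center (cf.\ Example \ref{ex: arc-p}). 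Your route is sound and arguably cleaner, since it isolates exactly where the birational geometry enters.

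There is, however, a genuine gap just before that point: your substitution $C$ is defined by $C(s)^{n}y(C(s))=s^{n+e}$ (with $e=\ord\psi$) and therefore depends on the individual arc, $C=C_{y}$, while you define $\widetilde\psi$ as the truncation of $\psi(C_{y}(s))$ modulo $s^{d}$. Nothing in your argument shows that this truncation is the same for every $y\in\psi+t^{d}\K[\![t]\!]$. Without this, you have only placed each image arc in some space $\Pui_{\bar X_{i-1},p_{i}}(\widetilde\psi_{y},n+e,d)$; and since two Puiseux arc spaces with the same multiplicity and precision are either equal or disjoint, a genuine dependence on $y$ would make $W'$ a disjoint union of several such spaces rather than a single one --- that is, the statement being proved would fail. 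The missing step is an estimate: since $y-\psi\in t^{d}\K[\![t]\!]$, one has $t^{n}y(t)=t^{n}\psi(t)\,u(t)$ with $u\in 1+t^{d-e}\K[\![t]\!]$, whence $C_{y}(s)-C_{\psi}(s)\in s^{d-e+1}\K[\![s]\!]$, where $C_{\psi}$ denotes the substitution adapted to $\psi$ alone; and since $\psi'\in t^{e-1}\K[t]$, Taylor expansion gives $\psi(C_{y}(s))-\psi(C_{\psi}(s))\in s^{d}\K[\![s]\!]$, so all truncations agree. This is precisely why the paper works from the start with the single substitution $t=t(s)$ defined by $t^{n}\psi(t)=s^{n+\ord\psi}$, which fixes $\widetilde\psi$ once and for all; the same estimate also underlies your ``surjectivity by reversing the construction,'' which otherwise targets a set that is not yet well defined.
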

\begin{proof}
According to Definition \ref{def: Puiseux} one has
\[
W=\big\{(x_p,y_p)\mapsto (A(t)^n,\psi(A(t))+\eta(A(t)))\mid \ord_tA=1, \eta\in t^d\K[\![t]\!]\big\}\,.
\] 
Suppose first that $p=T_i(q)$, where $q\neq\infty$. By (\ref{eq:pt-coord}) we have $(x_{p_i},y_{p_i})=\left(x_p,x_p(y_p+q)\right)$. Hence
\[
W'=\big\{(x_{p_i},y_{p_i})\mapsto(A(t)^n,y(A(t)))\mid \ord_tA=1, 
y\in t^n(q+\psi)+ t^{d+n}\K[\![t]\!]\big\}\]
is again a Puiseux arc space relative to the coordinate system $(x_{p_i},y_{p_i})$ in $\bar X_{i-1}$. More precisely, 
$W'=\Pui_{\bar X_{i-1}, p_i}(\widetilde\psi, n, d+n)$, where
$\widetilde\psi(t)=t^n(q+\psi(t))\in\K[t]$ is a polynomial of degree $<d+n$ satisfying (\ref{eq:gcd-puiseux}). 

Let further $p=T_i(\infty)$.  Then 
one can write $t=\sum_{i=1}^{\infty}\alpha_is^i\in s\K[\![s]\!]$, where 
$\alpha_1\neq0$ and $t^{n}\psi(t)=s^{n+\ord\psi}$.
Plugging in $t=t(s)$ sends the set $\psi(t)+t^d\K[\![t]\!]$   into 
$\widetilde\psi(s)+s^d\K[\![s]\!]$ for some polynomial $\widetilde\psi\in s\K[s]$ of degree $<d$ and of order $\ord\widetilde\psi=\ord\psi$.  
By (\ref{eq:pt-coord}) we have $(x_{p_i},y_{p_i})=(x_py_p,y_p)$. Hence
\[\begin{aligned}
W'=&\big\{(x_{p_i},y_{p_i})\mapsto(A(t)^ny(A(t)),y(A(t)))\mid \ord_tA=1, y\in \psi+ t^{d}\K[\![t]\!]\big\}\\
=&\big\{(x_{p_i},y_{p_i})\mapsto(\widetilde A(s)^{n+\ord\psi},y(\widetilde A(s)))\mid \ord_s\widetilde A=1, y\in \widetilde\psi+ s^{d}\K[\![s]\!]\big\}
\end{aligned}\]
coincides with the Puiseux arc space $\Pui_{\bar X_{i-1}, p_i}(\widetilde\psi, n+\ord\psi, d)$ relative to the coordinate system $(x_{p_i},y_{p_i})$ in $\bar X_{i-1}$, 
where the polynomial $\widetilde\psi\in s\K[s]$ still satisfies (\ref{eq:gcd-puiseux}). 
Indeed, otherwise one can write  $\widetilde\psi(s)=\widetilde\varphi(s^k)$, where $\widetilde\varphi\in\K[s]$, $k>1$, and $k|n$. 
However, plugging in the expression  $s=s(t)\in t\K[\![t]\!]$ yields $\psi(t)=\varphi(t^k)$ for some polynomial $\varphi\in\K[t]$. 
The latter contradicts condition (\ref{eq:gcd-puiseux}) for $\psi$.
\end{proof}

The  following corollary is straightforward.
 
\begin{corollary}\label{cor: const-mult} 
Given a Puiseux arc space $W=\Pui_{\bar X_i,p}(\psi,n,d)$, its image 
   under the embedding $\Arc(\bar X_i)_p^*\embed  \Arc(\bar X_0)_{T_0}^*$
   is a Puiseux arc space, say
 $\Pui(\tilde\psi,\tilde n,\tilde d)$. In particular, $\mult W=\tilde n.$ 
\end{corollary}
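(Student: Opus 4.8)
The plan is to prove the statement by descending induction on $i$, iterating Lemma \ref{lem: Puiseux}. The descent embedding $\Arc(\bar X_i)_p^*\embed \Arc(\bar X_0)_{T_0}^*$ of \ref{nota: descent} is induced by $\sigma_1\circ\ldots\circ\sigma_i$, so it factors as the composition of the one-step embeddings induced by the individual blowdowns $\sigma_j\colon\bar X_j\to\bar X_{j-1}$. Consequently it suffices to check that a single blowdown carries a Puiseux arc space to a Puiseux arc space centered at the image point, and then to compose these steps.

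For the inductive step I would take $W=\Pui_{\bar X_i,p}(\psi,n,d)$ with $p\in\Tt^{(i)}\setminus S$ and set $p'=\sigma_i(p)\in\Tt^{(i-1)}\setminus S$ (recall from \ref{nota: decomposition-1} that the blowup centers avoid $S$). If $p\notin T_i$, then $\sigma_i$ is a local isomorphism at $p$ and, by the choice of charts in \ref{sit:coord}, the coordinate system at $p$ coincides with the one at $p'$; hence the image of $W$ on $\bar X_{i-1}$ is literally $\Pui_{\bar X_{i-1},p'}(\psi,n,d)$. If instead $p=T_i(q)$, then Lemma \ref{lem: Puiseux} identifies the image as the Puiseux arc space $\Pui_{\bar X_{i-1},p'}(t^n(q+\psi),n,d+n)$ for $q\neq\infty$, respectively $\Pui_{\bar X_{i-1},p'}(\widetilde\psi,n+\ord\psi,d)$ for $q=\infty$. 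In either case the image is a Puiseux arc space on $\bar X_{i-1}$ centered at $p'$, so the induction hypothesis applies and produces a Puiseux arc space $\Pui(\tilde\psi,\tilde n,\tilde d)$ in $\Arc(\bar X_0)_{T_0}^*$; the base case $i=0$ holds by the definition \ref{def: Puiseux} of $\Pui$.

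Finally I would read off the multiplicity. By definition \ref{sit:pui-mult}, $\mult W$ is the common multiplicity of the arcs of $W$ computed after descent to $\Arc(\bar X_0)_{T_0}^*$, that is, $\ord_t x$ for an arc $(x_0,y_0)\mapsto(x(t),y(t))$. Every arc of $\Pui(\tilde\psi,\tilde n,\tilde d)$ has $x(t)=A(t)^{\tilde n}$ with $\ord_tA=1$, so $\ord_t x=\tilde n$; hence $\mult W=\tilde n$.

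There is no serious obstacle here: the corollary is a formal iteration of Lemma \ref{lem: Puiseux}, and the only care needed is the bookkeeping of centers along the descent—verifying that each intermediate point stays off the section $S$ (so that we remain inside the starred arc spaces and the lemma keeps applying) and distinguishing the harmless case where the tracked point does not lie on the component being contracted. As a consistency check one may compare the resulting exponent $\tilde n$ with the explicit multiplicity formula of Proposition \ref{prop:mult}: the increments $+\ord\psi$ occurring at the points lying over $T_k(\infty)$ match the contributions $\mult(T_k)\ord_t y$ appearing there.
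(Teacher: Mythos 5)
Your proof is correct and is precisely the argument the paper has in mind: the paper labels this corollary ``straightforward'' because it is just the iteration of Lemma~\ref{lem: Puiseux} along the chain of blowdowns, which is exactly your descending induction. Your extra bookkeeping (the local-isomorphism case when the tracked point misses the contracted component $T_j$, the point staying off $S$, and reading off $\mult W=\tilde n$ from Definition~\ref{sit:pui-mult}) fills in the details the paper leaves implicit, and does so correctly.
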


\begin{notation}\label{def:pui-p}
Given a point $p\in\Tt^{(i)}\setminus \Sing (\Tt^{(i)}\cup S)$, we let $\Pui(p)$ be the image of $\Pui_{\bar X_i,p}(0,1,1)$ in $\Arc(\bar X_0)_{T_0}^*$. 
By the preceding corollary, $\Pui(p)$ is a Puiseux arc space.
\end{notation}

\begin{corollary}\label{cor: n=1}
Let $\Pui(p)=\Pui(\psi,n,d)$, where $p\in T_j\setminus S$ is not a node of $\Tt^{(i)}$. Then $\mult\Pui(p)=n=\mult T_j.$
Furthermore, $n=1$ if and only if $T_j$ is obtained via a sequence of outer blowups. 
\end{corollary}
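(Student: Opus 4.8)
The corollary packages two claims, and only their geometric content needs argument; the bookkeeping is supplied by the results of this subsection. Writing $\Pui(p)=\Pui(\psi,n,d)$ for the image of $\Pui_{\bar X_i,p}(0,1,1)$ in $\Arc(\bar X_0)_{T_0}^*$, Corollary~\ref{cor: const-mult} already identifies $\mult\Pui(p)$ with the third integer parameter, so the equality $\mult\Pui(p)=n$ is automatic. The substantial tasks are to evaluate this common multiplicity as $\mult T_j$, and then to characterize when it equals $1$.

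For the first task I would compute the multiplicity of a generic arc of $\Pui_{\bar X_i,p}(0,1,1)$ directly. By Definition~\ref{def: Puiseux} such an arc is equivalent to $h\colon(x_p,y_p)\mapsto(t,y(t))$ with $y\in t\K[\![t]\!]$, so $\ord_t x=1$. Since by hypothesis $p\in T_j\setminus S$ is not a node, we have $p\notin\Sing\Tt^{(i)}$, and formula~(\ref{eq:arc-mult-1}) of Proposition~\ref{prop:mult} applies to give $\mult h=\mult(T_j)\cdot\ord_t x=\mult T_j$. As all arcs of a Puiseux arc space share one multiplicity, this yields $\mult\Pui(p)=\mult T_j$, hence $n=\mult T_j$.

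Because $n=\mult T_j$, the second assertion reduces to showing that $\mult T_j=1$ exactly when $T_j$ arises from $T_0$ through a chain of outer blowups. I would argue by induction on the index $j$, using the recursion for the multiplicities $\mult T_i$ recalled in this subsection together with the parent relation of Definition~\ref{def:parent}. The base case is $\mult T_0=1$, since $T_0$ is the proper transform of a whole non-degenerate fibre of the $\PP^1$-bundle $\bar\mu_0\colon\bar X_0\to\bar B$. In the inductive step, if $\sigma_j$ is an outer blowup with parent $T_k$ then $\mult T_j=\mult T_k$, and $T_j$ is reached by outer blowups if and only if $T_k$ is, so the induction hypothesis for $T_k$ transfers verbatim. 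If instead $\sigma_j$ is inner, with $p_j\in T_k\cap T_l$, then $\mult T_j=\mult T_k+\mult T_l\ge 2$, because every $\mult T_i$ is a positive integer; simultaneously $T_j$ is not obtained by a sequence of outer blowups, so both sides of the equivalence fail.

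The step demanding the most care is the correct reading of ``obtained via a sequence of outer blowups'': it must refer to the entire ancestry of $T_j$ along the parent chain, not merely to the last blowup $\sigma_j$. The subtlety is that an outer blowup sitting above an inner one still inherits a multiplicity $\ge 2$; the induction handles this automatically because the outer case copies both the multiplicity and the ``all-outer'' status of its parent, so once an inner blowup has intervened the value stays $\ge 2$ thereafter. Checking that the parent relation of Definition~\ref{def:parent} is exactly the one along which the multiplicity recursion propagates is the only place where I expect to compare the definitions against each other rather than simply cite them.
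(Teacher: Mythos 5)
Your proposal is correct and takes essentially the same approach as the paper: the equality $\mult\Pui(p)=n=\mult T_j$ via Corollary \ref{cor: const-mult} together with formula (\ref{eq:arc-mult-1}), and the characterization of $n=1$ via the recursion $\mult T_i=\mult T_k$ (outer case) and $\mult T_i=\mult T_k+\mult T_l$ (inner case). The only difference is one of detail: the paper simply cites the fact that $\mult T_j=1$ if and only if $T_j$ arises through outer blowups only, whereas you prove it by induction along the creation order of the components — which is exactly the argument the paper leaves implicit.
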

\begin{proof} The first statement follows from Corollary \ref{cor: const-mult} and (\ref{eq:arc-mult-1}). 
The second follows from the first due to the fact that $\mult(T_j)=1$ if and only if the component $T_j$ is obtained via a sequence of outer blowups. 
\end{proof}

\begin{remark}
 For each Puiseux space $\Pui(\psi,n,d)\subset\Arc\, (X_0)$ centered at a point $p\in T_0^*$ there exists a surface $X'$ and 
a sequence of blowups $X'\to \bar X_0$ with centers at infinitely near points  of $p$  such that $\Pui(\psi,n,d)=\Pui(p')$ for some point $p'\in X'$, $p'\succeq  p$. 
So, there is a one-to-one correspondence between infinitely near points of $T_0^*\subset X_0$, and  the Puiseux arc spaces. 
\end{remark}
\subsection{Stabilizer of a special fiber}
 In this subsection we
study the action  of
 the automorphism group of an $\A^1$-fibration on the Puiseux arc spaces of a special fiber. We use the following notation.

\begin{notation}\label{nota:aff-module} 
Given a $\K$-module $M$  (a commutative $\K$-algebra $A$, respectively), we let $\G_a(M)$ ($\G_m(A)$, respectively) 
denote the additive group of $M$ (the group of units of $A$, respectively). We let also
$\Aff(A)=\G_a(A)\rtimes\G_m(A)$ denote the group of affine transformations of the affine line over $A$.
\end{notation}

\begin{notation}\label{sit: Jonq0-local}
Consider the following groups of  automorphisms of  the arc space\footnote{Recall that $(x,y)$ stands for an arc  in $\bar X_0$ and $(x_0,y_0)$ for the local coordinates in $\bar X_0$.} $\Arc(\bar X_0)_{T_0}^*$:
\[\Q=\left\{(x,y)\mapsto(ax,Q(x)y)\mid a\in\K^\times,\, Q=\sum_{i=0}^{\infty} b_ix^i\in \K[\![x]\!]^\times\right\}\cong \G_m( \K[\![t]\!])\rtimes\G_m\,,\]
 \[\TTT=\left\{(x,y)\mapsto(ax,by)\;|\; (a,b)\in(\K^\times)^2\right\}\cong (\mathbb{G}_m)^2\,,\]
\[G(i)=\left\{(x,y)\mapsto(x,y+c_i x^i)\;|\; c_i\in\K\right\}\cong\G_{a}\,,\]
where as before $x\in t\K[\![t]\!]\setminus\{0\},y\in\K[\![t]\!]$,  see \ref{nota: descent}.
\end{notation}

The following lemma is immediate.

\begin{lemma}\label{lem:new}
We have
$\,\qquad\Aut \Arc(\bar X_0)_{T_0}^*=$
\[\left\{(x, y)\mapsto(ax,Q(x)y+P(x))\;|\; a\in\K^\times, P=\sum_{i=0}^{\infty} c_it^i\in\K[\![t]\!],\, Q=\sum_{i=0}^{\infty} b_it^i\in\K[\![t]\!]^\times \right\}\,.\]
Consequently,
$$\Aut \Arc(\bar X_0)_{T_0}^*=\left(\prod_{i=0}^{\infty} G(i)\right)\rtimes  \Q\cong \Aff ( \K[\![t]\!])\rtimes\G_m\,,$$
where the  factor $\G_m$ acts on $ \Aff ( \K[\![t]\!])$ via $t\mapsto at$ for $a\in\K^\times$.
\end{lemma}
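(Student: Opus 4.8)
The plan is to identify $\Aut \Arc(\bar X_0)_{T_0}^*$ explicitly by examining how a $\K[\![t]\!]$-algebra automorphism of the arc space must act on the two coordinate functions, and then to recognize the resulting group as the asserted semidirect product. Recall from Notation~\ref{nota: descent} that an arc in $\Arc(\bar X_0)_{T_0}^*$ is given by $(x_0,y_0)\mapsto(x(t),y(t))$ with $x\in t\K[\![t]\!]\setminus\{0\}$ and $y\in\K[\![t]\!]$; equivalently, the ring of functions involved is $\K[y_0][\![x_0]\!]$ with $x_0=\mu_0^*(t)$. An automorphism of this arc space is a substitution in the coordinates $(x,y)$ compatible with the fibration structure, i.e. it must preserve the projection to the base (sending $x$ to a unit multiple, since $x$ is the pullback of the base parameter and the only invertible changes are multiplicative) and act affinely in the fiber coordinate $y$ (since $y_0=0$ cuts out the zero section of the line bundle $\mu_0\colon X_0\to B$ from Lemma~\ref{lem:general-bundle}, and line-bundle automorphisms over the base are fiberwise affine).

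First I would write a general candidate automorphism in the form $(x,y)\mapsto(ax, Q(x)y+P(x))$ with $a\in\K^\times$, $P,Q\in\K[\![t]\!]$, and $Q$ a unit, and verify directly that any such map is indeed an automorphism of $\Arc(\bar X_0)_{T_0}^*$: it is invertible (the inverse is again of this shape, with $x\mapsto a^{-1}x$ and $y\mapsto Q(a^{-1}x)^{-1}(y-P(a^{-1}x))$), and it preserves the conditions $x\in t\K[\![t]\!]\setminus\{0\}$ and $y\in\K[\![t]\!]$. For the converse, I would argue that any automorphism must have this form: writing the image of $x$ as $\tilde x$ and of $y$ as $\tilde y$, the requirement that $\tilde x$ again be a valid $x$-coordinate (a generator of the maximal ideal giving the base) forces $\tilde x=ax$ for a unit $a$ after absorbing higher-order terms by the rigidity of the coordinate on the $1$-dimensional base, while the affine-over-the-base structure in the fiber direction forces $\tilde y=Q(x)y+P(x)$ with $Q\in\K[\![x]\!]^\times$. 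This establishes the first displayed formula for $\Aut \Arc(\bar X_0)_{T_0}^*$.

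Next I would package the group structure. Writing $P=\sum_{i\ge 0}c_it^i$, the translations $y\mapsto y+c_ix^i$ are precisely the one-parameter subgroups $G(i)\cong\G_a$ of Notation~\ref{sit: Jonq0-local}, and an arbitrary $P$ is an (infinite, but coefficientwise-finite) combination of these, so the subgroup of pure translations is the vector group $\prod_{i=0}^\infty G(i)\cong\G_a(\K[\![t]\!])$. The subgroup fixing the zero section, i.e. those with $P=0$, is exactly $\Q=\{(x,y)\mapsto(ax,Q(x)y)\}\cong\G_m(\K[\![t]\!])\rtimes\G_m$ as defined there. A direct computation shows that $\Q$ normalizes the translation subgroup: conjugating $y\mapsto y+P(x)$ by $(x,y)\mapsto(ax,Q(x)y)$ yields $y\mapsto y+Q(x)P(ax)/Q(ax)\cdot(\dots)$, which again lies in $\prod_i G(i)$, and the product decomposition is a semidirect one because $\Q\cap\prod_i G(i)=\{\id\}$ and the two together generate the whole group. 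This gives $\Aut \Arc(\bar X_0)_{T_0}^*=\left(\prod_{i=0}^\infty G(i)\right)\rtimes\Q$.

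Finally I would match this with the abstract description $\Aff(\K[\![t]\!])\rtimes\G_m$: the group of affine transformations $z\mapsto uz+v$ of the line over $\K[\![t]\!]$, namely $\Aff(\K[\![t]\!])=\G_a(\K[\![t]\!])\rtimes\G_m(\K[\![t]\!])$ in the sense of Notation~\ref{nota:aff-module}, corresponds to the fiberwise action $y\mapsto Q(x)y+P(x)$ (with $u=Q$, $v=P$), while the remaining $\G_m$-factor records the scaling $x\mapsto ax$ of the base parameter, acting on $\Aff(\K[\![t]\!])$ through the substitution $t\mapsto at$. The main point requiring care—and the step I expect to be the only genuine obstacle—is the converse inclusion in the first paragraph, i.e. ruling out automorphisms that mix the base and fiber coordinates or that send $x$ to a unit multiple times a nontrivial higher-order series; this is where one must use the rigidity afforded by the line-bundle structure of $\mu_0$ (Lemma~\ref{lem:general-bundle}) and the fact that $x$ is the pullback of a base coordinate. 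The remaining verifications are routine formal power series manipulations, which I would leave to the reader, as the lemma itself is flagged as immediate.
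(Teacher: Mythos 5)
Your packaging of the group structure (the inverse formula, the semidirect decomposition $\left(\prod_{i}G(i)\right)\rtimes\Q$, the identification with $\Aff(\K[\![t]\!])\rtimes\G_m$) is correct, and for what it is worth the paper offers no argument at all: the lemma is prefaced by ``The following lemma is immediate'' and really functions as the definition of the ambient group in which the later stabilizer computations take place. The genuine gap in your proposal is exactly where you predicted it, but your proposed fix does not work. There is no ``rigidity of the coordinate on the $1$-dimensional base'' forcing $\tilde x=ax$: the substitution $t\mapsto t+t^{2}$ is a perfectly good automorphism of $\K[\![t]\!]$, it sends generators of the maximal ideal to generators, preserves $t\K[\![t]\!]\setminus\{0\}$, and post-composition with it induces an automorphism of $\Arc(\bar X_0)_{T_0}^*$ compatible with the fibration, the zero section, the section at infinity, and even the multiplicity of arcs. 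Higher-order terms cannot be ``absorbed''; linearity of the base action is simply not provable from the structures you invoke. It has to be imposed, and what it encodes is that the base action is required to come from an \emph{algebraic} automorphism of $\bar B\cong\PP^1$ fixing both $\bar\mu_0(T_0)=0$ and $\infty$, i.e.\ from $\Aut(\A^1,0)\cong\G_m$, rather than from an arbitrary formal reparameterization of $\Spf\K[\![t]\!]$.

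Your argument for affineness in the fiber variable is also insufficient: preservation of the zero section of the line bundle $\mu_0$ does not force fiberwise affine action. For instance $(x,y)\mapsto\bigl(x,\,y+xy^{2}\bigr)$ is an automorphism of the formal neighborhood $\Spf\K[y][\![x]\!]$ (its inverse $y\mapsto y-xy^{2}+2x^{2}y^{3}-\cdots$ again has coefficients polynomial in $y$), it commutes with the projection to the base, fixes the zero section, and induces an automorphism of the arc space, yet it is not of the form $y\mapsto Q(x)y+P(x)$. What rules such maps out is not the zero section but the section at infinity: one must work with the formal completion of $\bar X_0$ along the \emph{complete} fiber $T_0\cong\PP^1$ and demand that $S$ be preserved; relative automorphisms of $\PP^1$ over $\K[\![t]\!]$ form $\PGL_2(\K[\![t]\!])$, and those preserving the $\infty$-section are precisely the affine maps $y\mapsto Q(x)y+P(x)$ with $Q\in\K[\![t]\!]^{\times}$, $P\in\K[\![t]\!]$. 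So a correct write-up must (i) make explicit the definition of $\Aut\Arc(\bar X_0)_{T_0}^*$ being used, namely bijections induced by an algebraic automorphism of $(B,0)$ on the base together with an automorphism of the formal completion along the complete fiber $T_0$ over it preserving $S$, and (ii) appeal to the $\PP^1$-bundle, not the line-bundle, structure. With that definition the statement is indeed immediate, which is presumably why the paper omits the proof; without it, the converse inclusion you flag cannot be established, because it is false under every purely formal reading of ``automorphism of the arc space.''
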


\begin{notation}\label{nota:stabilizer}
 We let $$\Stab_{\mathrm{fps}}W\subset\Aut \Arc(\bar X_0)_{T_0}^*$$ denote the stabilizer of a subset $W\subset \Arc(\bar X_0)_{T_0}^*$, 
where `fps' stands for `formal power series'. 
Attributing the lower index $B$ to a group of automorphisms means passing to the subgroup of automorphisms that act trivially on the first coordinate, 
that is, verify $a=1$ in the notation as above (cf., e.g., \ref{def: dJ}). In particular, we consider the subgroups 
$\Aut_B \Arc(\bar X_0)_{T_0}^*\subset\Aut \Arc(\bar X_0)_{T_0}^*$, $\Q_B\subset\Q$, and the one-dimensional torus $\TTT_B\subset\TTT$.
 \end{notation}

\begin{notation}Given a Puiseux arc space $W=\Pui(\psi,n,d)$, 
we can decompose 
\begin{equation}\label{eq: split} \psi(t)=\psi^\reg(t^n)+\psi^\sing(t)\,,\end{equation} where $\psi^\sing\in\K[t]$ is the sum of all monomials in $\psi$ with exponents not divided by $n$.
\end{notation}

\begin{lemma}\label{lem:Q-stab-jet}
Consider a Puiseux arc space  $W=\Pui(\psi,n,d)\subset \Arc(\bar X_0)_{T_0}^*$.
 An automorphism  $g\in \Aut \Arc(\bar X_0)_{T_0}^*$, 
$g\colon (x,y)\mapsto (ax,Q(x)y+P(x))$,
stabilizes $W$ if and only if 
\begin{equation}\label{eq: stab-P} 
P(x)=\psi^\reg(ax)-\psi^\reg(x)Q(x)\quad\mod x^{\lceil\frac{d}{n}\rceil}\K[\![x]\!],\\
\end{equation}
where $\lceil \frac{d}{n}\rceil$  stands for the smallest integer  $\ge\frac{d}{n}$, 
and in the case $n>1$ also
\begin{equation}\label{eq: stab-Q}
\begin{cases} 
Q(s^n)\psi^\sing(s)=\psi^\sing(\alpha s)\quad\mod s^d\K[\![s]\!]\\
\alpha^n=a\,
\end{cases}
\end{equation}
for some  $\alpha\in\K$.
\end{lemma}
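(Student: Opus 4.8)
The plan is to reduce the stabilization statement to a single computation on one standard representative, and then to read off the two congruences by separating exponents according to their divisibility by $n$. Recall from Definition \ref{def: Puiseux} that $W=\Pui(\psi,n,d)$ consists of the arcs equivalent to those of the form $(x_0,y_0)\mapsto(t^n,y(t))$ with $y\in\psi+t^d\K[\![t]\!]$, and that an automorphism $g\in\Aut\Arc(\bar X_0)_{T_0}^*$ with $g\colon(x,y)\mapsto(ax,Q(x)y+P(x))$ (Lemma \ref{lem:new}) acts on an arc $(x(t),y(t))$ by post-composition, sending it to $(ax(t),\,Q(x(t))y(t)+P(x(t)))$. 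Since $g$ multiplies the first coordinate by a unit it preserves the multiplicity, and being triangular with unit factor $Q$ it preserves the depth $d$; a direct check (subsumed in the computation below) then shows that $g(W)$ is again a Puiseux arc space $\Pui(\psi',n,d)$. Hence $g\in\Stab_{\fps}W$ amounts to $\psi'=\psi$ up to the Galois ambiguity of the defining datum, and it suffices to apply $g$ to the representative $h_0\colon(x_0,y_0)\mapsto(t^n,\psi(t))$ and determine when the normalized image again starts with $\psi$ modulo degree $d$.

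First I would apply $g$ to $h_0$, obtaining the arc $(at^n,\,Q(t^n)\psi(t)+P(t^n))$, and then renormalize the leading coordinate to a pure $n$-th power by the invertible substitution $t=\alpha^{-1}s$ with $\alpha^n=a$; such an $\alpha$ exists since $\K$ is algebraically closed, and it turns $at^n$ into $s^n$. Because $t^n=a^{-1}s^n$, the second coordinate becomes $Q(a^{-1}s^n)\,\psi(\alpha^{-1}s)+P(a^{-1}s^n)$, and the condition $g\in\Stab_{\fps}W$ reads: this series lies in $\psi(s)+s^d\K[\![s]\!]$ for some choice of the $n$-th root $\alpha$ (the finitely many roots differing by the Galois action $s\mapsto\zeta s$, $\zeta^n=1$).

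The crucial step is then to split variables using the decomposition $\psi(t)=\psi^\reg(t^n)+\psi^\sing(t)$ of \eqref{eq: split}. Writing $\psi(\alpha^{-1}s)=\psi^\reg(a^{-1}s^n)+\psi^\sing(\alpha^{-1}s)$, the summand $Q(a^{-1}s^n)\psi^\reg(a^{-1}s^n)+P(a^{-1}s^n)$ is supported on exponents divisible by $n$, whereas $Q(a^{-1}s^n)\psi^\sing(\alpha^{-1}s)$ is supported on exponents \emph{not} divisible by $n$, since multiplying a series in $s^n$ by one supported on non-multiples of $n$ keeps every exponent a non-multiple of $n$. Matching these two supports against the halves $\psi^\reg(s^n)$ and $\psi^\sing(s)$ of $\psi(s)$ splits the single membership into two independent congruences. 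Comparing the $n$-divisible parts and setting $x=s^n$ yields $Q(a^{-1}x)\psi^\reg(a^{-1}x)+P(a^{-1}x)\equiv\psi^\reg(x)\pmod{x^{\lceil d/n\rceil}}$, where the truncation exponent appears because a term $s^{nk}$ survives modulo $s^d$ exactly when $nk<d$; the substitution $x\mapsto ax$ then converts this into \eqref{eq: stab-P}. Comparing the non-divisible parts and substituting $s\mapsto\alpha s$ yields $Q(s^n)\psi^\sing(s)\equiv\psi^\sing(\alpha s)\pmod{s^d}$, which is \eqref{eq: stab-Q}. When $n=1$ every exponent is divisible by $n$, so $\psi^\sing=0$ and the second congruence becomes vacuous, explaining why \eqref{eq: stab-Q} is imposed only for $n>1$.

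For the converse I would run the same computation on a general $y\in\psi+t^d\K[\![t]\!]$: the extra tail $\eta\in t^d\K[\![t]\!]$ is carried into $s^d\K[\![s]\!]$ by the unit $Q$ and does not affect the class modulo $s^d$, so \eqref{eq: stab-P} and \eqref{eq: stab-Q} are also sufficient for $g(W)\subseteq W$; applying the same to $g^{-1}$ upgrades this to $g(W)=W$. I expect the main obstacle to be the careful bookkeeping of the normalization—correctly propagating the factor $\alpha$ through both coordinates and verifying that the single free parameter $\alpha$ (an $n$-th root of $a$) is precisely what couples the $\psi^\sing$-congruence while leaving \eqref{eq: stab-P} independent of its choice.
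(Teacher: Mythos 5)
Your proof is correct and follows essentially the same route as the paper's: both apply $g$ to arcs of $W$, match the first coordinates via an $n$-th root $\alpha$ of $a$, reduce everything to the single congruence $Q(s^n)\psi(s)+P(s^n)\equiv\psi(\alpha s) \pmod{s^d\K[\![s]\!]}$, and split it along the decomposition $\psi=\psi^\reg(t^n)+\psi^\sing(t)$ into \eqref{eq: stab-P} and \eqref{eq: stab-Q}. The only organizational difference is that the paper substitutes $s=A(t)$ in a general arc $(A(t)^n,y(A(t)))$ whereas you normalize the distinguished representative $(t^n,\psi(t))$ and invoke equivariance under reparametrization; your explicit treatment of sufficiency, of the choice of $\alpha$, and of the exponent bookkeeping merely fills in details the paper compresses into its final sentence.
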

\begin{proof} Consider an arc $h\in W$,
$$h\colon (x_0,y_0)\mapsto(A(t)^n,y(A(t)))\quad\mbox{with}\quad A\in t\K[\![t]\!]\setminus  \{0\}\quad\mbox{and}\quad y\in \psi+ t^d\K[\![t]\!]\,.$$  
The automorphism $g$ sends $h$
to the arc 
$$g. h\colon (x_0,y_0)\mapsto(a\cdot A(t)^n,\,Q(A(t)^n)\cdot y(A(t))+P(A(t)^n))\,.$$
Hence $g. h\in W$ if and only if 
 \[\begin{cases} a\cdot A(t)^n=\widetilde A(t)^n\\
Q(A(t)^n)\cdot y(A(t))+P(A(t)^n)=\widetilde y(\widetilde A(t))\end{cases} 
 \]
for some $\widetilde A(t)\in t\K[\![t]\!]$ and $\widetilde y\in \psi+ t^d\K[\![t]\!]$.
The first equation means that $\widetilde A(t)=\alpha\cdot A(t)$ for  an $n$th root $\alpha$ of $a$. Letting $s= A(t)$, 
 the second equation holds for some $ \widetilde y\in \psi+ t^d\K[\![t]\!]$ if and only if, with this root $\alpha$,
\begin{equation}\label{eq:stab-eq}
Q(s^n)\cdot\psi\left(s\right)+P\left({s^n}\right)- \psi(\alpha s)\in s^d\K[\![s]\!]\,.
\end{equation}
Splitting $\psi$ as in (\ref{eq: split}) leads to equations (\ref{eq: stab-P}) and (\ref{eq: stab-Q}).
\end{proof}
 
\begin{proposition}\label{prop:stab-intersect}
Consider a finite collection of Puiseux arc spaces $W_k=\Pui(\psi_k,n_k,d_k)$, $k=1,\ldots,r$, ordered so that $\frac{d_1}{n_1}\ge\ldots\ge\frac{d_k}{n_k}$. 
Let $N=\lceil\frac{d_1}{n_1}\rceil$. Then in $\Aut \Arc(\bar X_0)_{T_0}^*$ one has
\begin{equation}
\bigcap_{k=1}^r \Stab_{\mathrm{fps}}\Pui(\psi_k,n_k,d_k)=\left( \prod_{i=N}^{\infty} G(i)\right)\rtimes\bar\Q^h \cong \G_a(t^N\K[\![t]\!])\rtimes\bar\Q^h\,
\end{equation}
for a subgroup $\bar\Q\subset\Q$,  where $\bar\Q^h=h\circ\bar\Q\circ h^{-1}$ with $h=\psi^\reg_1\in\bigoplus_{i=0}^{N-1}G(i)$.
\end{proposition}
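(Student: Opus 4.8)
The plan is to compute the intersection by intersecting the stabilizer conditions from Lemma~\ref{lem:Q-stab-jet} one Puiseux space at a time, exploiting the chosen ordering $\frac{d_1}{n_1}\ge\ldots\ge\frac{d_r}{n_r}$. First I would record what a single stabilizer $\Stab_{\mathrm{fps}}\Pui(\psi_k,n_k,d_k)$ looks like: by Lemma~\ref{lem:Q-stab-jet}, an element $g\colon(x,y)\mapsto(ax,Q(x)y+P(x))$ lies in it precisely when the translation part $P$ is pinned down modulo $x^{\lceil d_k/n_k\rceil}$ by equation (\ref{eq: stab-P}), and (when $n_k>1$) the multiplicative part $Q$ together with $a$ must satisfy the singular constraint (\ref{eq: stab-Q}). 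The key structural observation is that (\ref{eq: stab-P}) only constrains $P$ \emph{up to} order $\lceil d_k/n_k\rceil$; the higher-order coefficients $c_i$ with $i\ge \lceil d_k/n_k\rceil$ are completely free. Hence in the intersection, the binding constraint on $P$ comes from the largest threshold $N=\lceil d_1/n_1\rceil$, and the factors $G(i)$ with $i\ge N$ survive untouched in every stabilizer simultaneously. This explains the factor $\prod_{i=N}^\infty G(i)\cong\G_a(t^N\K[\![t]\!])$ in the claimed answer.

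Next I would isolate the subgroup $\bar\Q\subset\Q$. An element $(x,y)\mapsto(ax,Q(x)y)$ of $\Q$ stabilizes \emph{all} the $W_k$ if and only if for each $k$ it satisfies the singular equation (\ref{eq: stab-Q}) (automatic when $n_k=1$) and the \emph{regular} part of (\ref{eq: stab-P}) holds with $P=0$, namely $\psi_k^\reg(ax)=\psi_k^\reg(x)Q(x)\bmod x^{\lceil d_k/n_k\rceil}$. These are finitely many closed conditions on the pair $(a,Q)\in\G_m\times\K[\![x]\!]^\times$, so I would \emph{define} $\bar\Q$ to be the resulting subgroup of $\Q$; verifying that it is indeed a subgroup is routine since the conditions are multiplicative in $(a,Q)$. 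The content of the proposition is then that every element of the intersection factors uniquely as an element of $\bigl(\prod_{i\ge N}G(i)\bigr)$ followed by a conjugate of an element of $\bar\Q$, the conjugation being by $h=\psi_1^\reg$.

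The role of the conjugation by $h=\psi_1^\reg\in\bigoplus_{i=0}^{N-1}G(i)$ is the slightly delicate point, and I expect this to be the main obstacle. The issue is that equation (\ref{eq: stab-P}) forces $P$ to have a prescribed nonzero regular part $\psi_k^\reg(ax)-\psi_k^\reg(x)Q(x)$ modulo $x^{\lceil d_k/n_k\rceil}$, so a general stabilizing element does \emph{not} have $P$ supported only in degrees $\ge N$; its low-order translation part is rigidly determined by the $\psi_k^\reg$. To see that all these prescribed low-order parts are simultaneously accounted for by a single conjugation, I would change coordinates by the translation $h\colon y\mapsto y+\psi_1^\reg(x)$, which moves $W_1$ so that its regular part becomes $0$. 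Under this conjugation the conditions (\ref{eq: stab-P}) for all $k$ become homogeneous (i.e.\ $P\equiv 0$ in low degrees), because the ordering guarantees $\psi_k^\reg\equiv\psi_1^\reg$ modulo $x^{\lceil d_k/n_k\rceil}$ for the relevant truncations—this compatibility of the regular parts across the family, forced by the nesting of the thresholds, is the heart of the argument and the step I would check most carefully. Granting it, in the new coordinates the intersection is exactly $\bigl(\prod_{i\ge N}G(i)\bigr)\rtimes\bar\Q$, and conjugating back by $h^{-1}$ replaces $\bar\Q$ by $\bar\Q^h$ while leaving the $G(i)$ with $i\ge N$ invariant (since $h$ normalizes $\prod_{i\ge N}G(i)$). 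Finally I would confirm the semidirect product structure by checking that $\bar\Q^h$ normalizes $\G_a(t^N\K[\![t]\!])$, which follows from the $\G_m$-action $t\mapsto at$ described in Lemma~\ref{lem:new} together with the fact that multiplication by $Q$ preserves the ideal $t^N\K[\![t]\!]$.
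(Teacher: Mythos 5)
There is a genuine gap, and it sits exactly where you predicted you would need to be careful: both your definition of $\bar\Q$ and the compatibility claim supporting it are false. You define $\bar\Q$ as $\Q\cap\bigcap_k\Stab_{\mathrm{fps}}W_k$, the elements of $\Q$ that stabilize the \emph{original} spaces $W_k$. But the stabilizer of a single $W_k$ contains many elements whose multiplicative part $(a,Q)$ does \emph{not} stabilize $W_k$: the displacement caused by $(a,Q)$ is cancelled by the forced low-order translation part $\psi_k^\reg(ax)-\psi_k^\reg(x)Q(x)$ from (\ref{eq: stab-P}). Your $\bar\Q$ retains only those $(a,Q)$ for which this forced part vanishes, so it is too small, already for $r=1$. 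Concretely, take $W_1=\Pui(qt,1,2)$ with $q\neq0$, so $N=2$ and $h\colon(x,y)\mapsto(x,y+qx)$. By Lemma \ref{lem:Q-stab-jet}, $\Stab_{\mathrm{fps}}W_1$ consists of all $(x,y)\mapsto(ax,Q(x)y+P(x))$ with $P\equiv q(a-Q(0))x \bmod x^2$; in particular it contains $g\colon(x,y)\mapsto(x,2y-qx)$. The $\Q$-part $(a,Q)=(1,2)$ of $g$ violates your condition $\psi_1^\reg(ax)=\psi_1^\reg(x)Q(x)\bmod x^2$ (which reads $Q(0)=a$), and since neither conjugation by the translation $h$ nor multiplication by elements of $\prod_{i\ge2}G(i)$ changes the $\Q$-part, $g$ does not lie in $\bigl(\prod_{i\ge2}G(i)\bigr)\rtimes\bar\Q^h$ for your $\bar\Q$. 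So with your definition the asserted equality fails: the right-hand side is a proper subgroup of the stabilizer. Likewise, the claim that the ordering forces $\psi_k^\reg\equiv\psi_1^\reg\bmod x^{\lceil d_k/n_k\rceil}$ is unfounded — the proposition assumes no relation among the $\psi_k$. For $W_1=\Pui(q_1,1,1)$, $W_2=\Pui(q_2,1,1)$ with $q_1\neq q_2$ (Puiseux spaces of two distinct points of $T_0$, precisely the configuration exploited in the proof of Corollary \ref{cor:Tt-linear}), your congruence would force $q_1=q_2$.

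The paper's proof avoids both problems by elimination rather than homogenization. Condition (\ref{eq: stab-P}) for $k=1$ determines $P$ modulo $x^{N}$ as $\psi_1^\reg(ax)-\psi_1^\reg(x)Q(x)$, with the coefficients $c_i$, $i\ge N$, free; this set of elements, with $(a,Q)$ ranging over $\Q$, is exactly $\bigl(\prod_{i\ge N}G(i)\bigr)\rtimes\Q^h$. Substituting this expression for $P$ into (\ref{eq: stab-P}) for $k\ge2$ (legitimate since $\lceil d_k/n_k\rceil\le N$ by the ordering) leaves residual equations involving $(a,Q)$ only, namely $(\psi_k^\reg-\psi_1^\reg)(ax)=(\psi_k^\reg-\psi_1^\reg)(x)\,Q(x)\bmod x^{\lceil d_k/n_k\rceil}$, which together with the singular equations (\ref{eq: stab-Q}) cut out the correct $\bar\Q$; see (\ref{eq:c-inters}). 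In your conjugated picture this reads: after conjugating by $h^{-1}$, the condition coming from $h^{-1}.W_1$ is $P\equiv0\bmod x^N$, and \emph{modulo that condition} the conditions from $h^{-1}.W_k$, $k\ge2$, become conditions on $(a,Q)$ alone — they say that $(a,Q)$ stabilizes the \emph{translated} space $h^{-1}.W_k$, not $W_k$. Thus your argument is repaired by defining $\bar\Q=\Q\cap\bigcap_k\Stab_{\mathrm{fps}}(h^{-1}.W_k)$; the ordering is needed only to know that $N$ is the largest truncation threshold, not to force any congruence among the $\psi_k^\reg$.
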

\begin{proof}
In the system of equations (\ref{eq: stab-P})--(\ref{eq: stab-Q}) for all $W_k$, $k=1,\ldots,r$,  we can eliminate $P(x)$ for $k=2,\ldots,r$. This yields the system
\begin{equation}\label{eq:c-inters}
\begin{cases}
P(x)=\psi^\reg_1(ax)-\psi^\reg_1(x)Q(x)&\mod x^{N}\K[\![x]\!], \\
\psi^\reg_k(ax)-\psi^\reg_1(ax)=(\psi^\reg_k(x)-\psi^\reg_1(x))Q(x)&\mod x^{\lceil\frac{d_k}{n_k}\rceil}\K[\![x]\!],\;\; k=2,\ldots,r,\\
Q(s^n)\psi^\sing_k(s)=\psi^\sing_k(\alpha_k s)&\mod s^{d_k}\K[\![s]\!],\quad\,\,\, k=1,\ldots,r,\\
\alpha_k^{n_k}=a& \qquad\qquad\qquad\qquad\,\,\,\,\,\,  k=1,\ldots,r.
\end{cases}
\end{equation}
The first equation expresses $P(x)$ in terms of $a$ and $Q(x)$. This defines the subgroup
\begin{multline}
\left\{(x,y)\mapsto(ax,Q(x)y+\psi^\reg_1(ax)-\psi^\reg_1(x)Q(x)+\sum_{i=N}^{\infty}c_ix_i)\right\}\\
= \left(\prod_{i=N}^{\infty} G(i)\right)\rtimes\left(h\circ \Q\circ h^{-1}\right)\,,
\end{multline} where $h=\psi^\reg_1$. 
The remaining equations define a subgroup $\bar\Q\subset\Q$.
\end{proof}

\begin{notation}\label{def:fiber-aut}
Given a special fiber $\Tt'=\bar\mu^{-1}(\beta')\subset\bar X$, we let
\begin{equation}\label{eq: stab-fib}\Stab_\fps\Tt'=\bigcap_{k\,\colon\, \sigma_k\; \mathrm{is\,\,outer}} 
\Stab_{\mathrm{fps}}(\Pui(p_k))\subset\Aut \Arc(\bar X_0)_{T_0}^*\,,\end{equation}
where $p_k=\sigma_k(T_k)\in\bar X_{k-1}\setminus S$.
Taking in Proposition~\ref{prop:stab-intersect} a suitably reordered collection 
$$(W_k=\Pui(p_k)\,|\,\sigma_k\; \mathrm{is\,\,outer})\,,$$ we let $N=N(\Tt')$, $\bar\Q=\Q(\Tt')$, and $h=h(\Tt')$ denote the corresponding objects provided by this proposition, and also let  $\TTT(\Tt')=\TTT\cap\Q(\Tt')$.\end{notation}
According to Proposition~\ref{prop:stab-intersect}, with this notation we have
\begin{equation}\label{eq: decomposing} 
\Stab_\fps\Tt'= \left(\prod_{i=N}^{\infty} G(i)\right)\rtimes  \bar\Q^h \cong\G_a(t^N\K[\![t]\!])\rtimes\bar\Q^h\,.
\end{equation}

\begin{corollary}\label{cor:Tt-linear}
Given a special fiber $\Tt'=\mu^{-1}(\beta')\subset\bar X$, the following conditions are equivalent:
\begin{itemize}
\item $\Q(\Tt')=\Q$,
\item $\Q(\Tt')\supset\TTT_B$,
\item  the dual graph $\Gamma_{\Tt'}$ is a linear chain.
\end{itemize}
\end{corollary}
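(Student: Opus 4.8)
The plan is to prove the equivalence of the three conditions in Corollary~\ref{cor:Tt-linear} by exploiting the explicit description of the stabilizer given in Proposition~\ref{prop:stab-intersect} and the recursive behaviour of Puiseux arc spaces under blowups established in Lemma~\ref{lem: Puiseux} and its corollaries. The implication $\Q(\Tt')=\Q\Rightarrow\Q(\Tt')\supset\TTT_B$ is immediate, since $\TTT_B\subset\Q$ always holds (a torus element $(x,y)\mapsto(x,by)$ is the special case $a=1$, $Q=b$ of an element of $\Q$). So the real content lies in the other two implications, and I would organize the proof around them.

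First I would analyze the condition $\Q(\Tt')\supset\TTT_B$. Recall from Notation~\ref{def:fiber-aut} that $\Q(\Tt')=\bar\Q$ is cut out inside $\Q$ by the system of equations~(\ref{eq:c-inters}) coming from all the outer-blowup Puiseux spaces $\Pui(p_k)$, and specifically by the singular-part equations $Q(s^{n_k})\psi^\sing_k(s)=\psi^\sing_k(\alpha_k s)\ (\mathrm{mod}\ s^{d_k})$ together with the regular-part compatibility equations. A torus element $(x,y)\mapsto(x,by)$ has $a=1$, $Q\equiv b$ constant, and $P\equiv 0$. The regular equations are then automatically satisfied, so the presence of $\TTT_B$ in $\bar\Q$ is controlled entirely by the singular-part equations: for $a=1$ one may take $\alpha_k=1$, and the condition becomes $b\,\psi^\sing_k(s)=\psi^\sing_k(s)\ (\mathrm{mod}\ s^{d_k})$ for all $k$ and all $b\in\K^\times$. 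This forces $\psi^\sing_k\equiv 0\ (\mathrm{mod}\ s^{d_k})$, i.e.\ every Puiseux space $\Pui(p_k)$ attached to an outer blowup has trivial singular part, which by the definition in~\ref{sit:arc-puiseux} and~(\ref{eq:gcd-puiseux}) means $n_k=1$ for every outer component.

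The key geometric step, then, is to identify the condition ``$n_k=\mult(T_k)=1$ for all outer $T_k$'' with linearity of the dual graph $\Gamma_{\Tt'}$. Here I would invoke Corollary~\ref{cor: n=1}: a component $T_j$ has $\mult(T_j)=1$ if and only if $T_j$ is obtained by a sequence of outer blowups. The plan is to argue that $\Gamma_{\Tt'}$ is a linear chain precisely when no inner blowup ever occurs, which in turn is equivalent to every component having multiplicity $1$. An inner blowup $\sigma_i$, by Definition~\ref{def: star-comp}, is centred at a node $p_i=T_j\cap T_k$ of $\Tt^{(i-1)}$, and the resulting component $T_i$ meets both $T_j$ and $T_k$; this creates a vertex of degree $\ge 3$ in the dual graph (the new $T_i$ together with its two neighbours), destroying linearity. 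Conversely, if only outer blowups are performed, each new component is attached to a single existing component at a smooth point, and one checks by induction on the number of blowups that the dual graph remains a chain. By the multiplicity formula in the Notation preceding Proposition~\ref{prop:mult}, inner blowups are exactly the ones producing multiplicities $>1$ (since $\mult T_i=\mult T_j+\mult T_k>1$ there), while outer blowups preserve multiplicity $1$ if the parent has it. This closes the loop: $\Gamma_{\Tt'}$ linear $\iff$ all multiplicities equal $1$ $\iff$ all singular parts $\psi^\sing_k$ vanish $\iff \TTT_B\subset\bar\Q$.

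Finally, to complete the cycle I would show $\Q(\Tt')\supset\TTT_B\Rightarrow\Q(\Tt')=\Q$. Once we know all outer Puiseux spaces have $n_k=1$ and hence $\psi^\sing_k\equiv 0$, the singular-part equations in~(\ref{eq:c-inters}) disappear entirely, leaving only the regular-part equations. These express $P$ in terms of $a$ and $Q$ via the first line of~(\ref{eq:c-inters}) and impose the compatibility conditions $\psi^\reg_k(ax)-\psi^\reg_1(ax)=(\psi^\reg_k(x)-\psi^\reg_1(x))Q(x)$; the conjugation by $h=\psi^\reg_1$ absorbs these into the statement that $\bar\Q^h$ is a full copy of $\Q$, giving $\bar\Q=\Q$. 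I expect the \textbf{main obstacle} to be the careful bookkeeping in this last step: verifying that when all singular parts vanish the regular compatibility equations impose no further constraint on the pair $(a,Q)$ beyond what conjugation by $h$ accounts for, so that $\bar\Q$ is genuinely all of $\Q$ rather than a proper subgroup. This requires tracking how the different centres $T_0(\psi_{k,0})$ interact, and confirming that the $n_k=1$ condition makes the regular parts $\psi^\reg_k$ mutually compatible under the $\Q$-action; the linearity of the chain, which guarantees all outer components descend to arcs of multiplicity one centred along $T_0^*$, is what makes this work.
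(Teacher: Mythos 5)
Your reduction of the problem to the defining equations of $\Q(\Tt')$ inside $\Q$ is the right starting point, but two of your claims are false and together they break the proof. First, the assertion that for a torus element ($a=1$, $Q\equiv b$, $P\equiv 0$) ``the regular equations are automatically satisfied'' is wrong: the regular-part compatibility equations in (\ref{eq:c-inters}) become $(1-b)\bigl(\psi^\reg_k(x)-\psi^\reg_1(x)\bigr)\equiv 0 \mod x^{\lceil d_k/n_k\rceil}$, and demanding this for \emph{all} $b\in\K^\times$ forces $\psi^\reg_k\equiv\psi^\reg_1$, a genuinely nontrivial constraint whenever two of the Puiseux spaces have different regular parts. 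Second, your geometry is backwards: an inner blowup subdivides an edge of the dual graph (the new vertex has degree $2$ and its neighbours keep their degrees), so inner blowups \emph{preserve} linearity, whereas outer blowups can destroy it, namely when the new component is attached to a component that already has two neighbours. Hence your chain of equivalences ``linear $\iff$ all multiplicities $1$ $\iff$ all $\psi^\sing_k=0$ $\iff$ $\TTT_B\subset\Q(\Tt')$'' fails at both ends.

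A concrete counterexample to your argument: create $T_1$ by an outer blowup at $p_1\in T_0^*$, then perform outer blowups at two distinct points $T_1(q_2)\neq T_1(q_3)$. All blowups are outer, all multiplicities equal $1$, and all singular parts vanish, so your argument concludes $\TTT_B\subset\Q(\Tt')$; but the graph is non-linear ($T_1$ has degree $3$), and the two Puiseux spaces descend by (\ref{eq:Puiseux-descent-finite}) to $\Pui(q_2t,1,2)$ and $\Pui(q_3t,1,2)$, whose compatibility equation reads $(q_3-q_2)ax=(q_3-q_2)Q(x)x \mod x^2$, i.e.\ $Q(0)=a$; for $a=1$ this forces $b=1$, so $\TTT_B\cap\Q(\Tt')$ is trivial. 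This is precisely the situation you flagged as the ``main obstacle'' and then assumed away. The paper's proof treats it as a separate case: after contracting inner components that are not parents, it distinguishes (A) an inner component exists --- then the graph is non-linear and the singular-part equations (\ref{eq: stab-Q}) with $n>1$ kill the torus, which is the only place your mechanism actually applies; (B) all components outer and the graph linear (the chain having $T_0$ at an end) --- then the Puiseux spaces are nested, $\Stab_\fps\Tt'$ is the stabilizer of a single $\Pui(\psi,1,d)$, and $\Q(\Tt')=\Q$; and (C) all components outer but a branch point present --- then two spaces $\Pui(\psi+q_1t^{d-1},1,d)$ and $\Pui(\psi+q_2t^{d-1},1,d)$ with $q_1\neq q_2$ force $b=a^{d-1}$ on torus elements, so $\TTT_B\not\subset\Q(\Tt')$. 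Your proposal is missing case (C) entirely, and without it neither of the two non-trivial implications is proved.
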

\begin{proof}
We start with the following observation. Clearly, an inner component $T_i$ of $\Tt'$ which is not a parent is a $(-1)$-curve. 
After contraction of $T_i$ we obtain a new special fiber, say $\Tt''$, where $\Tt''$ and $\Tt'$ are both linear or non-linear simultaneously, 
and $\Stab_\fps\Tt''=\Stab_\fps\Tt'$, hence also $\Q(\Tt'')=\Q(\Tt')$.
Thus, we may assume in the sequel that each inner component of $\Tt'$ is a parent.

Assume first that there exists  an inner component $T_i$ of $\Tt'$. By the previous observation
such a component  $T_i$ with a maximal value of $i$ is a parent of an outer component. Hence in this case $\Gamma_{\Tt'}$ is non-linear. Furthermore, 
being inner, $T_i$ belongs to the preimage of $T_k(\infty)$ for some $T_k$. Then
by (\ref{eq:Puiseux-descent-inf}) the corresponding Puiseux arc space is of form $\Pui(\psi,n,d)$ with $n>1$.  
So, the corresponding equations (\ref{eq: stab-Q}) are nontrivial. It follows that $\dim \Q(\Tt')\cap\TTT\le 1$ and $\Q(\Tt')\cap\TTT_B$ is finite. 
Thus, all three conditions of the lemma fail. 

Assume further that all components of $\Tt'$ are outer. If $\Tt'$ is linear, then $T_{m'}$ is the only non-parent, 
all the other components of $\Tt'$ being its successive parents.
By (\ref{eq:Puiseux-descent-finite}) the corresponding Puiseux arc space is of form $\Pui(\psi,1,d)$, and $\Stab_\fps\Tt'=\Stab_{\mathrm{fps}}\Pui(\psi,1,d)$.
 In this case  (\ref{eq:c-inters}) contains just one equation (of form (\ref{eq: stab-P})), and so, $\Q(\Tt')=\Q\supset\T_B$.
 Thus,  under this setup all three conditions of the lemma are fulfilled.

Finally, assume that $\Tt'$ is non-linear. 
Then there is a component $T_k\subset\Tt'$ which is a parent for at least two other components with centers, say, $T_k(q_1)$ and $T_k(q_2)$.
The corresponding  Puiseux arc spaces are of form $\Pui(\psi+q_1t^{d-1},1,d)$ and $\Pui(\psi+q_2t^{d-1},1,d)$ for some $q_1\neq q_2$, some $d\in\ZZ_{>0}$, 
and some polynomial $\psi\in \K[t]$ of degree $\le d-2$.
Inspecting system (\ref{eq:c-inters}), for $i=d-1$ we obtain the equalities $c_{d-1}=q_1(a^{d-1}-b)=q_2(a^{d-1}-b)$. This implies that $b=a^{d-1}$. 
Hence we can conclude that $\dim \Q(\Tt')\cap\TTT\le 1$ and $\Q(\Tt')\cap\TTT_B$ is finite. So, once again, all three conditions of the lemma fail.
\end{proof}

\begin{example}\label{ex:222-1}
Consider a sequence  of blowups
$$\bar X_3\stackrel{\sigma_3}{\longrightarrow} \bar X_2\stackrel{\sigma_2}{\longrightarrow} \bar X_1\stackrel{\sigma_1}{\longrightarrow} \bar X_0=\PP^1\times\PP^1\,,$$ 
where
\begin{align*}
\sigma_1\,\colon\,& T_1\mapsto T_0(0);\\
\sigma_2\,\colon\,& T_2\mapsto T_1(\infty)=T_0\cap T_1;\\
\sigma_3\,\colon\,& T_3\mapsto T_2(1)
\end{align*}
The stabilizer of the Puiseux arc space of the point $\sigma_3(T_3)=T_2(1)$ is contained in the stabilizer of  $\sigma_1(T_1)=T_0(0)$,
hence it coincides with $\Stab_\fps\Tt'$. Using Lemma~\ref{lem: Puiseux} we obtain
\[
\Pui(\sigma_3(T_3))=\Pui_{\bar X_2,T_2(1)}(0,1,1)=\Pui_{\bar X_1,T_1(\infty)}(t,1,2)=\Pui(t,2,2)\,,\] where by definition
\[\Pui(t,2,2) = \left\{ (x_0 ,y_0) \mapsto (A(t ) ,y(A(t ) )) \,|\, \ord_t A = 1, y \in t +t^2 K[\![t]\!]\right\}\,.
\] 
 By Lemma~\ref{lem:Q-stab-jet}, $\Stab_\mathrm{fps}(\Pui_{\bar X_0,T_0(0)}(t,2,2))$ is defined by equations $c_0=0$, $b_0^2=a$. It follows that 
$$\Aut_\fps\Tt'=\left(\bigoplus_{i=1}^{\infty} G (i)\right)\rtimes\TTT(\Tt')\cong \G_a(t \K[\![t]\!])\rtimes\TTT(\Tt')\,,$$
where $\TTT(\Tt')\subset \TTT$ is the one-parameter subgroup defined by $b^2=a$, see \ref{nota: descent}.
 The subgroup $\TTT(\Tt')\cap\TTT_B$ has order two and is generated by the involution $(x_0, y_0)\mapsto(x_0, -y_0)$.

Our blowup procedure leads to an SNC completion $(\bar X_3, D)$ of a smooth affine surface $X=\bar X_3\setminus D$. 
The $(-1)$-standard extended graph of this completion looks as follows

\medskip

\[\;D_{\rm ext}:\qquad \cou{}{0}\lin
\cou{}{-1}\lin\cou{}{-2}\lin 
\cou{}{-2} \nlin\cshiftup{-1
}{} \lin \cou{}{-2}\]
After contraction of the subchain $[[-1,-2,-2,-2]]$ we arrive at a new completion $(\PP^2, C)$ of $X$, 
where $C\subset \PP^2$ is a smooth conic. 
Thus, $X\cong\PP^2\setminus C$. The $(-1)$ component $T_3$ of multiplicity 2 in the central fiber becomes in $\PP^2$ a tangent line $L$ to $C$. 
The original $\A^1$-fibration $X\to\A^1$ extends to the pencil of conics in $\PP^2$ generated by $C$ and $2L$. 
The group $\Aut(\PP^2\setminus C)$ is well known; see, e.g., \cite[\S 2]{DG2} and also \ref{sit: Aut P2-conic} and \ref{rem: Z2-cover} and the references therein.
\end{example}

\section{Automorphism groups of $\A^1$-fibrations on surfaces}
\label{sec: dJ-surf} For an $\AA^1$-fibration $\mu\colon X\to B$ on a normal affine surface $X$ over a smooth affine curve $B$,  we describe in Subsection \ref{ss: final}
 the automorphism group $\Aut (X,\mu)$ up to passing to a finite index normal subgroup, see Theorems~\ref{th:jonq-mu} and \ref{th:jonq-mu-general}. 
In particular, this applies to the full automorphism group of an $\ML_1$-surface. 

\subsection{Preliminaries} \begin{sit}\label{sit: nots} We keep the notation of Section \ref{sec:fiber-arc}. 
In particular, we consider the induced $\PP^1$-fibration $\bar\mu\colon \bar X\to \bar B$ on a minimal resolved completion $\bar X$ of $X$, and a sequence of blowdowns 
\begin{equation}\label{eq: decomposition-2} \Phi\colon \bar X=
\bar X_m\xlongrightarrow{\sigma_{m}} \bar X_{m-1}\xlongrightarrow{\sigma_{{m-1}}} 
\ldots\xlongrightarrow{\sigma_{1}} \bar X_0\end{equation}
of $(-1)$-components of the special fibers $\Tt_i=\bar\mu^{-1}(\beta_i)$ of $\bar\mu$ with $\beta_1,\ldots,\beta_{n_s}\in B$, 
which terminates by a smooth ruling $\bar\mu_0\colon\bar X_0\to\bar B$. We assume as before that 
$\sigma_{i}$ contracts the component $T_i\subset \bar X_i\setminus S$ to a point $p_{i}\in \bar X_{i-1}$, where $S$ is the unique   horizontal component of 
$D=\bar X\setminus X$ and a section of $\bar\mu$. Let also $F=\bar\mu^{-1}(\bar B\setminus B)$. 
Since $B$ is affine, $F\neq\emptyset$, and we may suppose that $F$ is a union of  irreducible fibers of  $\bar \mu$. 
We let 
$$\Tt=\bigcup_{i=1}^{n_s}\Tt_i=\bigcup_{j=1}^m T_j\cup\bigcup_{}^{n_s}T_{0,\beta_i}\,,$$ where $T_{0,\beta_i}=\bar\mu_0^{-1}(\beta_i)\subset\bar X_0$, $i=1,\ldots,n_s$. 
Thus, $D\subset D_{\rm ext}:=S\cup F\cup \Tt$, and the dual graphs of both $D$ and $D_{\rm ext}$ are trees. We let $\Tt_i^{(j)}$ be  the image of $\Tt_i$ in $X_j$. 
\end{sit}

\begin{notation}\label{sit:omega}
 Let $\Aut_B (X,\mu)\subset \Aut(X,\mu)$ be the subgroup of all automorphisms of $X$
that send each fiber of $\mu$ into itself, and $\Aut_\mu B\subset\Aut B$ 
be the subgroup of all automorphisms of $B$ induced by the elements of 
$\Aut(X,\mu)$.\end{notation}

 The following fact is immediate.

\begin{lemma}\label{lem:ex-sec} There is an exact sequence
\begin{equation}\label{eq: exact-seq} 1\to\Aut_B (X,\mu)\to\Aut(X,\mu)\to
\Aut_\mu B\to 1.\end{equation}\end{lemma}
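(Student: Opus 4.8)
The statement to prove is Lemma \ref{lem:ex-sec}, asserting the exactness of the sequence
$$1\to\Aut_B (X,\mu)\to\Aut(X,\mu)\to\Aut_\mu B\to 1.$$
The plan is to unwind the definitions in Notation \ref{sit:omega} and verify exactness at each of the three nontrivial spots. Recall that $\Aut_B(X,\mu)$ consists of those $g\in\Aut(X,\mu)$ sending each fiber of $\mu$ into itself, that $\Aut_\mu B\subset\Aut B$ is the image of $\Aut(X,\mu)$ in $\Aut B$, and that the middle arrow is the natural map $\pi\colon\Aut(X,\mu)\to\Aut B$.

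First I would construct the map $\pi$ itself. Since any $g\in\Aut(X,\mu)$ preserves the fibration $\mu\colon X\to B$, it permutes the fibers $\mu^{-1}(b)$, $b\in B$. As the (scheme-theoretic) general fibers are reduced and irreducible (isomorphic to $\A^1$), the induced assignment $b\mapsto \bar g(b)$ on points of $B$ is well defined and, being obtained from a morphism, extends to an automorphism $\bar g\in\Aut B$ satisfying $\mu\circ g=\bar g\circ\mu$. The assignment $g\mapsto\bar g$ is a group homomorphism because $\mu\circ(g_1 g_2)=\bar g_1\circ\bar g_2\circ\mu$, so uniqueness of the induced map on $B$ forces $\overline{g_1g_2}=\bar g_1\bar g_2$. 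Surjectivity of $\pi$ onto $\Aut_\mu B$ is then immediate, since $\Aut_\mu B$ is \emph{defined} as the image $\pi(\Aut(X,\mu))$; this handles exactness at $\Aut_\mu B$.

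Next I would identify the kernel. By definition $g\in\ker\pi$ means $\bar g=\id_B$, i.e. $\mu\circ g=\mu$, which says precisely that $g$ maps each fiber $\mu^{-1}(b)$ onto itself. Comparing with the definition of $\Aut_B(X,\mu)$, this gives $\ker\pi=\Aut_B(X,\mu)$, so the inclusion $\Aut_B(X,\mu)\hookrightarrow\Aut(X,\mu)$ is injective and its image equals $\ker\pi$; this establishes exactness at both $\Aut_B(X,\mu)$ and $\Aut(X,\mu)$, and en passant shows $\Aut_B(X,\mu)$ is a normal subgroup. Since the lemma is flagged as ``immediate,'' there is essentially no obstacle beyond bookkeeping; the only point requiring a word of care is the well-definedness of the induced automorphism $\bar g$ on $B$. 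That is where I would be slightly careful: a priori $g$ is a morphism on $X$, and one must argue that the set-theoretic action it induces on fibers is algebraic. This follows because $\bar g$ is determined on the dense open set of points $b\in B$ with reduced irreducible fiber, where $\bar g(b)$ is characterized by $g(\mu^{-1}(b))=\mu^{-1}(\bar g(b))$, and a birational self-map of the smooth (hence normal) curve $B$ that is defined on a dense open set and is bijective extends to a regular automorphism of $B$. With this remark in place, exactness at all three places is verified and the proof concludes.
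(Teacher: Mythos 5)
The paper states this lemma without proof, flagging it as immediate, and your definitional verification is exactly the intended argument: the kernel of $g\mapsto\bar g$ is by definition the fiber-preserving subgroup $\Aut_B(X,\mu)$, and surjectivity onto $\Aut_\mu B$ holds because $\Aut_\mu B$ is defined as the image. Your treatment of the only substantive point — that the induced bijection $\bar g$ of $B$ is a regular automorphism (regular on a dense open set, e.g.\ by composing $\mu\circ g$ with a local section of $\mu$, and then extending over the smooth curve $B$) — is adequate, so the proposal is correct and in line with the paper.
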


\subsection{Stabilizers of arc spaces}
\label{ss: arcs-aut}

We need the following fact. 

\begin{lemma}\label{lem: extension} Any automorphism $\alpha\in\Aut (X,\mu)$ lifts to the minimal resolution of singularities of $X$ 
and extends to an automorphism of $\bar X\setminus F$. 
\end{lemma}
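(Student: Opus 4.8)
The plan is to separate the two assertions of the lemma --- lifting to the minimal resolution, and extension to $\bar X\setminus F$ --- and treat each by standard uniqueness-of-resolution and valuative arguments.

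First I would handle the lift to the minimal resolution $\rho\colon \tilde X\to X$. Since $\alpha\in\Aut(X,\mu)$ is an automorphism of the normal affine surface $X$, and $X$ has at most cyclic quotient singularities (Miyanishi's Theorem, cited in Notation~\ref{nota: decomposition}), the minimal resolution of singularities is unique and functorial: $\alpha$ permutes the singular points of $X$ and, by the universal property of the minimal resolution, lifts uniquely to an automorphism $\tilde\alpha$ of $\tilde X$ with $\rho\circ\tilde\alpha=\alpha\circ\rho$. This step is essentially formal once one invokes uniqueness of the minimal resolution, so I would state it briefly.

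The substantive part is the extension of $\tilde\alpha$ (now an automorphism of the open surface $\tilde X\cong X$ minus nothing, i.e.\ of the minimal resolution) across the boundary to an automorphism of $\bar X\setminus F$. Here I would argue as follows. The surface $\bar X$ is a smooth projective completion of $\tilde X$ whose boundary $D=\bar X\setminus \tilde X$ decomposes, after removing the fibres $F=\bar\mu^{-1}(\bar B\setminus B)$ over $\bar B\setminus B$, into the section $S$ together with the parts of the special fibres lying in $D$. The key point is that $\alpha$ preserves $\mu$, hence the birational self-map $\bar\alpha$ of $\bar X$ induced by $\tilde\alpha$ commutes with $\bar\mu$ over $\bar B$ (it covers an automorphism of $\bar B$ preserving $B$, by the exact sequence of Lemma~\ref{lem:ex-sec}). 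I would then show $\bar\alpha$ has no points of indeterminacy on $\bar X\setminus F$: any such point would have to lie on the boundary $S\cup\Tt$, and one rules this out using that $\bar X$ is the \emph{minimal} resolved completion together with the negativity of the boundary intersection form. Concretely, the components of the special fibres other than $F$ have negative definite intersection form relative to $S$ (as recorded in Notation~\ref{nota: decomposition}), and $S$ is the unique section in the boundary, so $\bar\alpha$ must map $S$ to $S$ and must send boundary components to boundary components preserving the weighted dual graph; a valuative/discrepancy argument (each boundary divisor is a divisorial valuation on the function field, and $\bar\alpha$ permutes these valuations) then forces $\bar\alpha$ to be a morphism near the boundary, hence an isomorphism of $\bar X\setminus F$ onto itself.

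The hard part will be controlling the behaviour at the fibres $F$ over $\bar B\setminus B$: an element of $\Aut(X,\mu)$ need not extend across $F$, which is precisely why the statement only claims extension to $\bar X\setminus F$. I would therefore be careful to invoke the exact sequence \eqref{eq: exact-seq} so that $\alpha$ induces an automorphism of $B$, and then extend over $B$ only, arguing that over each point $\beta\in B$ the special fibre is mapped to a special fibre (preserving multiplicities and the tree structure of its dual graph), while making no claim over $\bar B\setminus B$. The main obstacle is thus not the formal lifting but verifying indeterminacy-freeness on the boundary over $B$; I expect this to follow cleanly from minimality of the completion and the negative definiteness of the boundary form, but it requires a genuine argument rather than a citation.
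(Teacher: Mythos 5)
Your first step (the lift to the minimal resolution via its uniqueness) is exactly the paper's argument, and the skeleton of your second step --- fiber preservation, $S\mapsto S$, boundary components permuted, indeterminacy confined to $F$ --- is the right shape. But the core of the second step is missing. The decisive point is to rule out that the induced birational self-map of $\bar X$ performs any blowups or blowdowns on the boundary over points of $B$, and you dispose of it with the sentence that ``a valuative/discrepancy argument \ldots forces $\bar\alpha$ to be a morphism near the boundary.'' That sentence assumes precisely what has to be proved, namely that the valuations of the boundary components over $B$ are permuted (equivalently, that none of them is contracted, by either $\bar\alpha$ or $\bar\alpha^{-1}$). Note that the analogous statement over $\bar B\setminus B$ is \emph{false}: components of $F$ do get contracted under elementary transformations, which is exactly why the lemma only claims extension to $\bar X\setminus F$. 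So fiber preservation plus ``boundary divisors are divisorial valuations'' cannot suffice; some mechanism must distinguish the boundary trees over $B$ from the fibers $F$, and that mechanism is the whole content of the proof.

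Moreover, the facts you propose to use (minimality of $(\bar X,D)$ and negative definiteness of the branches) do not yield the conclusion ``cleanly.'' If you run the natural Zariski-factorization argument --- assume indeterminacy over $\beta\in B$, take a minimal resolution $Z$ of the map, pick a $(-1)$-curve $E\subset Z$ exceptional for the first projection but not for the second --- then the image of $E$ is a \emph{boundary} component over $\bar\alpha_B(\beta)$ of self-intersection $\ge -1$, hence exactly $-1$ by negative definiteness. But minimality of the completion only tells you that such a $(-1)$-curve has at least three neighbors in $\Gamma_D$ (see \ref{rem: NC versus SNC}); this is not yet a contradiction, and one must then dig into the combinatorics of the fiber trees. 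This is exactly the step the paper outsources to the theory of birational transformations of weighted graphs: $g$ induces a birational transformation $g_*$ of the minimal weighted graph $\Gamma_D$ fixing the vertex $S$; since all maximal linear chains of $\Gamma_{D\ominus(S+F)}$ are admissible (weights $\le -2$), the only vertices of weight $0$ --- hence the only places where elementary transformations in the sense of Definition \ref{def:elem-tr} can occur --- are $S$ and the components of $F$; by Theorem 3.1 of \cite{FKZ-graphs}, $g_*$ is then a composition of elementary transformations at $F$-components followed by a graph automorphism, whence $g$ is biregular off $F$. To complete your proof you would need either to cite this theorem, as the paper does, or to supply the missing combinatorial argument; as written, the central step is asserted rather than proved --- a gap you in fact concede in your own final sentence.
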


\begin{proof}
The first statement is  well known  (cf.\ e.g., \cite[Lem.\ 2.2]{FKZ-completions}) and follows, for instance, 
from the uniqueness of the minimal resolution of singularities of surfaces.
Thus, any automorphism  $g\in\Aut (X,\mu)$ induces a birational automorphism of $\bar X$ regular in $\bar X\setminus D$. 
Since $g$ preserves the $\A^1$-fibration $\mu$, it extends regularly to the section $S$. 
Furthermore, $g$ induces a birational transformation $g_*$ of the dual graphs $\Gamma_D$ and $\Gamma_{D_{\rm ext}}$ fixing the vertex $S$, 
which transforms the dual graphs  $\Gamma_F$ and $\Gamma_{\Tt}$ into themselves. 
By our convention in \ref{nota: decomposition} and \ref{sit: nots} $\Gamma_D$ is minimal. So, 
the section $S$ and the components of $F$ are the only possible zero vertices of $\Gamma_D$. 
All the maximal linear chains in $\Gamma_{D\ominus (S+F)}$ are admissible, that is, with all weights $\le-2$. According to Theorem 3.1
in \cite{FKZ-graphs}, $g_*$ can be decomposed into a sequence of elementary transformations in zero vertices in $F$ followed by an automorphism, 
see Definition \ref{def:elem-tr}. Indeed, since $S$ is fixed by $g_*$, also these elementary transformations and the automorphism fix $S$. 
Hence the elementary transformations in the decomposition of $g_*$ are performed only near components of $F$. Now the second statement follows. 
\end{proof}

\begin{notation}\label{nota: fat-point}
In the notation of \ref{nota: decomposition}, we
let $$\Aut^\bullet (\bar X,F)\subset\Aut(X,\mu)$$  stand for the subgroup of 
all automorphisms of $X$ preserving $\mu$ and admitting an extension to automorphisms 
of $\bar X\setminus F$, which send each component of $\Tt$ into itself.
 Similarly, given $i\in\{0,\ldots,m\}$, we let $\Aut^\bullet(\bar X_i,F)$ be the group of 
all birational automorphisms of $\bar X_i$ which preserve $\bar\mu_i$, send the section $S$ and each component 
$T_{0,\beta_1},\ldots, T_{0,\beta_{n_s}},T_1,\ldots,T_i$ of $\Tt^{(i)}$ into itself, 
and induce automorphisms of $\bar X_i\setminus F$ (see \ref{nota: decomposition-1} and \ref{sit: nots}). 
Thus, $\Aut^\bullet(\bar X,F)=\Aut^\bullet(\bar X_m,F)$.
\end{notation}

\begin{lemma}\label{prop:aut-bullet-finite}
$\Aut^\bullet (\bar X, F)\subset \Aut(X,\mu)$  is a normal subgroup  of finite index.
\end{lemma}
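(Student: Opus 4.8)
The plan is to establish the two claims separately: first that $\Aut^\bullet(\bar X,F)$ is normal in $\Aut(X,\mu)$, and then that it has finite index. Throughout I will use Lemma~\ref{lem: extension}, which says every $\alpha\in\Aut(X,\mu)$ lifts to the minimal resolution and extends to an automorphism of $\bar X\setminus F$, inducing a graph automorphism $\alpha_*$ of $\Gamma_{D_{\rm ext}}$ that fixes the section $S$ and preserves the subgraphs $\Gamma_F$ and $\Gamma_\Tt$.

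For normality, I would argue on the level of the induced action on the set of irreducible components of $\Tt$. Each $\alpha\in\Aut(X,\mu)$ permutes the special fibers $\Tt_1,\dots,\Tt_{n_s}$ (via its action on the base through $\Aut_\mu B$) and, within this, permutes the components of $\Tt$ by a graph automorphism fixing $S$. The subgroup $\Aut^\bullet(\bar X,F)$ is precisely the kernel of the resulting homomorphism $\Phi\colon \Aut(X,\mu)\to \mathrm{Sym}(\text{components of }\Tt)$ (more precisely, the subgroup acting trivially on this finite set of components). As a kernel of a homomorphism it is automatically normal; this handles the first claim cleanly. I would make sure to phrase $\Phi$ so that an element lies in its kernel exactly when it sends each component of $\Tt$ into itself, matching Notation~\ref{nota: fat-point}.

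For finite index, the key observation is that the image of $\Phi$ is finite because $\Tt$ has only finitely many irreducible components and each $\alpha_*$ is a graph automorphism of the finite tree $\Gamma_\Tt$ (together with its permutation of the finitely many special fibers $\beta_1,\dots,\beta_{n_s}$ induced on $B$). Since the automorphism group of a finite graph is finite, the image $\Phi(\Aut(X,\mu))$ is a finite group, and hence $\Aut^\bullet(\bar X,F)=\ker\Phi$ is of finite index in $\Aut(X,\mu)$. The one point requiring care here is that a component of $\Tt$ could in principle be sent to a \emph{different} component of the \emph{same} self-intersection type, so I cannot simply appeal to rigidity of individual curves; but because each $\alpha_*$ must preserve the tree structure, the incidences, and the weights of $\Gamma_{D_{\rm ext}}$ while fixing $S$, only finitely many such permutations are realizable, which is all that is needed.

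The main obstacle I anticipate is ensuring that the permutation action is genuinely well-defined on components rather than merely on the abstract weighted graph: I must verify that distinct boundary curves of $\bar X$ give distinct vertices and that $\alpha_*$ genuinely permutes the geometric components (not just their dual-graph vertices), so that $\ker\Phi$ coincides with the geometrically defined subgroup of Notation~\ref{nota: fat-point}. This is where I would lean on the minimality of $\Gamma_D$ established in \ref{nota: decomposition} and on Lemma~\ref{lem: extension}, which guarantees that the elementary transformations in the decomposition of $g_*$ are confined to $F$, so that the components of $\Tt$ are permuted biregularly and the correspondence between graph vertices and curves is faithful.
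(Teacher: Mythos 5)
Your proposal is correct and follows essentially the same route as the paper: by Lemma~\ref{lem: extension} every element of $\Aut(X,\mu)$ extends to $\bar X\setminus F$ and permutes the finitely many components of $\Tt$ (and the special fibers), giving a homomorphism $\rho\colon\Aut(X,\mu)\to\mathbb{S}(m+n_s)$ whose kernel is exactly $\Aut^\bullet(\bar X,F)$, which is therefore normal of finite index. Your extra worry about realizability of permutations as graph automorphisms is unnecessary — finiteness of the target symmetric group already suffices — but it does no harm.
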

\begin{proof} Let $\mathbb S(n)$ stand for the symmetric group on $n$ symbols.
By Lemma \ref{lem: extension} there is a natural embedding $\Aut (X,\mu)\hookrightarrow \Aut(\bar X\setminus F,\bar\mu)$.  
Clearly, any $\alpha\in \Aut(\bar X\setminus F,\bar\mu)$ 
permutes the special fibers $\Tt_j$ of $\bar\mu$ and the components of $\Tt$. 
Hence $\alpha$ defines a permutation $\rho(\alpha)\in \mathbb{S}(m+n_s)$, where $\rho\colon\Aut(X,\mu)\to\mathbb{S}(m+n_s)$ is
a homomorphism with $\ker\rho=\Aut^\bullet(\bar X, F)$. Now the lemma follows. 
\end{proof}

\begin{notation}[\emph{stabilizers of arc spaces}]\label{nota: stab} The group $\Aut^\bullet(\bar X_i,F)$ acts naturally on the arc space $\Arc(\bar X_i\setminus F)$.
Given a subset $W\subset \Arc(\bar X_i\setminus F)$, we let $\Stab_i(W)$ be the  stabilizer of $W$ in $\Aut^\bullet(\bar X_i,F)$.
\end{notation}

In the next proposition we
identify the groups $\Aut^\bullet(\bar X_i,F)$ and $\Stab_i(W)$ with their images in $\Aut^\bullet(\bar X_0,F)$.

\begin{proposition}\label{prop:aut-bullet-stab} 
There is a  natural embedding $\Aut^\bullet(\bar X,F)\embed \Aut^\bullet(\bar X_0,F)$ such that 
\begin{equation}\label{eq: inters-stabilizers}
 \Aut^\bullet(\bar X,F)=\bigcap_{i\,\colon\; \sigma_i{\rm\; is\,\,outer}} \Stab_0(\Pui(p_i))
\subset\Aut^\bullet(\bar X_0,F)\,.
\end{equation}
\end{proposition}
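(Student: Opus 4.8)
The plan is to construct the embedding by \emph{descending} automorphisms along the contraction tower (\ref{eq: decomposition-2}), and then to identify its image by translating the condition of fixing a blow-up centre into the condition of stabilizing the corresponding Puiseux arc space. First I would take $\alpha\in\Aut^\bullet(\bar X,F)$: by Lemma \ref{lem: extension} and the definition in \ref{nota: fat-point} this is (the restriction of) an automorphism of $\bar X\setminus F$ preserving $\bar\mu$ and fixing $S$, every component of $F$, and every component of $\Tt$, in particular each exceptional curve $T_1,\dots,T_m$. Since $\alpha$ fixes the top curve $T_m$ of $\sigma_m$, it descends through $\sigma_m$ to an automorphism of $\bar X_{m-1}\setminus F$ fixing the centre $p_m$; iterating, $\alpha$ descends to $\bar\alpha:=\Phi\circ\alpha\circ\Phi^{-1}$, an automorphism of $\bar X_0\setminus F$. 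As $\bar\alpha$ still fixes $S$, the components of $F$ and each $T_{0,\beta_k}$, it lies in $\Aut^\bullet(\bar X_0,F)$, and $\alpha\mapsto\bar\alpha$ is an injective homomorphism because $\Phi$ is birational. This produces the asserted embedding $\Aut^\bullet(\bar X,F)\embed\Aut^\bullet(\bar X_0,F)$.

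For the inclusion ``$\subseteq$'' I would observe that, the above descent being valid at every intermediate stage, the map $g_{i-1}$ induced by $\alpha$ on $\bar X_{i-1}$ fixes $p_i=\sigma_i(T_i)$ for every $i$, simply because $\alpha$ fixes $T_i$. Under the injection $\Arc(\bar X_{i-1})_{p_i}^*\hookrightarrow\Arc(\bar X_0)_{T_0}^*$ of \ref{nota: descent} (a manifestation of the arc-space functoriality of Example \ref{ex: arc-p}), the action of $\bar\alpha$ on arcs agrees with that of $g_{i-1}$, so $\bar\alpha$ carries the set of arcs through $p_i$, namely $\Pui(p_i)$ (Definition \ref{def:pui-p}), to $\Pui(g_{i-1}(p_i))=\Pui(p_i)$. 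Hence $\bar\alpha\in\Stab_0(\Pui(p_i))$ for all $i$, in particular for all outer ones, so the image lies in the intersection on the right of (\ref{eq: inters-stabilizers}).

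The reverse inclusion ``$\supseteq$'' is the substantial point, and I would establish it by lifting a given $g\in\bigcap_{\sigma_i\ \mathrm{outer}}\Stab_0(\Pui(p_i))$ step by step up the tower. Assume inductively that $g$ has been lifted to $g_{i-1}\in\Aut^\bullet(\bar X_{i-1},F)$; the lift across $\sigma_i$ exists and fixes $T_i$ precisely when $g_{i-1}(p_i)=p_i$, and it then automatically preserves $\bar\mu_i$, $S$, $F$ and the remaining components, so lands in $\Aut^\bullet(\bar X_i,F)$. If $\sigma_i$ is inner, then by Definition \ref{def: star-comp} the centre $p_i$ is a node $T_j\cap T_k$ with $j,k<i$, and $g_{i-1}$ fixes both $T_j$ and $T_k$ by hypothesis, hence fixes their intersection $p_i$; no condition is needed. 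If $\sigma_i$ is outer, then $p_i$ is a smooth point of $\Tt^{(i-1)}$, and using once more the compatibility of the arc-space actions together with the one-to-one correspondence between infinitely near points of $T_0^*$ and Puiseux arc spaces recorded in Section \ref{sec:fiber-arc}, the assumption $g\in\Stab_0(\Pui(p_i))$ is exactly equivalent to $g_{i-1}(p_i)=p_i$. Since the first centre $p_1$ lies on a single fibre component $T_{0,\beta}$ away from $S$, the blow-up $\sigma_1$ is outer and the induction starts; after $m$ steps one reaches $g_m\in\Aut^\bullet(\bar X_m,F)=\Aut^\bullet(\bar X,F)$ mapping to $g$. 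Combining the two inclusions yields the identification (\ref{eq: inters-stabilizers}).

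The main obstacle I anticipate is making rigorous the slogan ``$g$ stabilizes $\Pui(p_i)$ if and only if the induced map fixes the infinitely near point $p_i$''. This rests on three ingredients already available: the functoriality of arc spaces under a single blow-up (Example \ref{ex: arc-p}), the explicit descent formulas of Lemma \ref{lem: Puiseux} describing how a Puiseux arc space transforms under each $\sigma_i$, and the bijection between infinitely near points of $T_0^*$ and Puiseux arc spaces. Once this dictionary is in place, the genuinely geometric content reduces to the inner/outer dichotomy: the inner centres are forced by the already-fixed components and impose no independent condition, which is exactly why the intersection in (\ref{eq: inters-stabilizers}) need only range over the outer indices.
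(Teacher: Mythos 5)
Your proposal takes essentially the same route as the paper's proof: induction along the contraction tower, with the inner/outer dichotomy (inner centres are nodes of already-fixed components, hence impose no extra condition) and, for outer blowups, the equivalence between fixing $p_i$ and stabilizing $\Pui(p_i)$. The step you flag as the ``main obstacle'' is exactly what the paper settles, and it does so by noting that $\Pui(p_i)$ --- which, beware, is not the set of \emph{all} arcs through $p_i$ as your second paragraph suggests, but only the subset of arcs of minimal multiplicity $\mult(T_i)$ --- is stable under any element of $\Aut^\bullet(\bar X_{i-1},F)$ fixing $p_i$ (since such automorphisms preserve multiplicities of arcs), while conversely every arc of $\Pui(p_i)$ is centered at $p_i$, so stabilizing this set forces fixing the point.
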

\begin{proof}
We proceed by induction on $i$. Assume that our assertion holds for $\bar X_{i-1}$.
Since  $\Aut^\bullet (\bar X_i,F)$ stabilizes $T_i$, there is a natural homomorphism $\Aut^\bullet (\bar X_i,F)\to \Aut^\bullet (\bar X_{i-1},F)$, 
which embeds $\Aut^\bullet (\bar X_i,F)$  onto the stabilizer of the point $p_i=\sigma_i(T_i)$ in $\Aut^\bullet (\bar X_{i-1},F)$. 
The latter stabilizer coincides with $\Stab_{i-1} (\Arc(\bar X_{i-1})_{p_i})$.

If $T_i$ is inner, then $p_i$ is already stabilized by $\Aut^\bullet (\bar X_{i-1},F)$, thus we have $\Aut^\bullet (\bar X_i,F)\cong \Aut^\bullet (\bar X_{i-1},F)$. 
Assume now that $T_i$ is outer. Then \[\Stab_{i-1} (\Arc(\bar X_{i-1})_{p_i})=\Stab_{i-1}(\Pui(p_i))\,,\]
where by abuse of notation we write $\Pui(p_i)$ for $\Pui_{\bar X_{i-1},p_i}(0,1,1)$ (cf.\ \ref{def:pui-p}).  
Indeed, $\Pui(p_i)$ is the subset of arcs of minimal multiplicity (equal to $\mult(T_i)$) in $\Arc(\bar X_{i-1})_{p_i}$. 
This subset is stable under the action on $\Arc(\bar X_{i-1})_{p_i}$ of the stabilizer of $p_i$ in $\Aut^\bullet (\bar X_{i-1},F)$.
 This gives the inclusion $$\Stab_{i-1} (\Arc(\bar X_{i-1})_{p_i})\subset\Stab_{i-1}(\Pui(p_i))\,.$$ 
The inverse inclusion is also clear, since the elements of $\Stab_{i-1}(\Pui(p_i))$ fix the point $p_i$.
Passing to the images of our subgroups under their natural embeddings in $\Aut^\bullet (\bar X_0,F)$, 
which we denote by the same symbols, we obtain the equalities
\[\Aut^\bullet (\bar X_i,F)=\Stab_{i-1}(\Pui(p_i))=\Aut^\bullet (\bar X_{i-1},F)\cap\Stab_0(\Pui(p_i)). \]
This yields (\ref{eq: inters-stabilizers}) for $\Aut^\bullet (\bar X_{i},F)$, since  by the inductive conjecture, it holds for the group $\Aut^\bullet (\bar X_{i-1},F)$.
\end{proof}

\subsection{Automorphism groups of $\AA^1$-fibrations}\label{ss: final}
\begin{notation}\label{nota: omega} We fix an $(\Aut_\mu B)$-stable Zariski
open subset  $\omega\subset B\setminus\{\beta_1,\ldots,\beta_{n_s}\}$, where as before $\beta_1,\ldots,\beta_{n_s}\in B$ are the points 
that correspond to the special fibers of $\mu\colon X\to B$, see \ref{sit: nots},  such that 
$\mu$ admits a trivialization over $\omega$. We  assume that $\omega$ is maximal with these properties. 
We let $X_\omega=\mu^{-1}(\omega)\cong \omega\times \AA^{1}$; this is an 
$(\Aut(X,\mu))$-stable dense open subset in $X$. 
\end{notation}
\begin{remark} If the curve $B$ is rational, then $\omega=B\setminus\{\beta_1,\ldots,\beta_{n_s}\}$
and $X_\omega$ is the complement in $X$ to the union of special fibers. 
Indeed, $\mu|_{X_\omega}=\mu_0|_{(X_0)_\omega}$ is the projection of a line bundle (see Lemma \ref{lem:general-bundle}), which is trivial in this case. 
As follows from Lemma \ref{lem: extension}, the open set $B\setminus\{\beta_1,\ldots,\beta_{n_s}\}$ is $(\Aut_\mu B)$-stable.\end{remark}

In what follows we treat separately
the cases $\omega\cong \AA^1_*,\AA^1$ and $\omega\ncong\AA^1_*,\AA^1$, see
 Theorems~\ref{th:jonq-mu} and \ref{th:jonq-mu-general}, respectively. 
In the second case, the base curve $B$ is not supposed to be rational. 
\subsubsection{Case $B=\A^1$, $\omega\cong\AA^1_*$} 
If  $\omega=B\cong\AA^1$ or $\omega=B\cong\AA^1_*$, then $X=X_\omega\cong\A^2$ and $X=X_\omega\cong\AA^1_*\times\AA^1$, 
respectively, and the group $\Aut (X,\mu)$ is the usual de Jonqui\`eres group and its analog as in \ref{sit: A2} and \ref{sit: A1-A1-star}, respectively. 
Hence we assume in the sequel that $B=\A^1$ and $\omega=\A^1_*$. This is the case, for instance, for any 
Gizatullin surface different from the plane $\A^2$.  For the $\ML_1$-surfaces, this case was studied in \cite{Be}; 
our Theorem \ref{th:jonq-mu} precises Corollary 2.3 in \cite{Be}. 

\begin{sit}\label{sit: standard-projection} We let $F=\bar\mu^{-1}(\infty)$ and $\Tt=\Tt_1=\bar\mu^{-1}(0)$.
Performing suitable elementary transformations with centers on $F$ (see Definition~ \ref{def:elem-tr}) 
one can achieve the equality $S^2=0$.
Then  the linear pencil $|S|$ on $\bar X$ defines a second $\PP^1$-fibration $\bar\nu\colon\bar X\to\PP^1$ such that $S=\bar\nu^{-1}(\infty)$. The birational morphism
\[\Phi\colon \bar X\to\bar X_0=\PP^1\times\PP^1,\; w\mapsto(\bar\mu(w),\bar\nu(w))\,,\]
is  biregular on $\bar X\setminus\bar\mu^{-1}(0)$. (In the notation of \ref{sit: standard-fibration}, $\bar\mu=\Phi_0$ and $\bar\nu=\Phi_1$, while $F=C_0$ and $S=C_1$.)
The affine coordinate on $\bar B\cong\PP^1$ is chosen so that $\bar B\setminus B=\{\infty\}$, $B\setminus\omega=\{0\}$, and so, 
$F=\{\infty\}\times\PP^1\subset \PP^{1}\times \PP^{1}$.
\end{sit}

\begin{remark} \label{sit: Jonq0}
On $\bar X_0=\PP^{1}\times \PP^{1}$ we have (cf. \ref{sit: Jonq0-local})
\begin{align*}
 \Aut^\bullet(\bar X_0,F)=&\;\left\{(x_0,y_0)\mapsto(ax_0,by_0+P(x_0))\;|\; (a,b)\in (\K^\times)^2, P=\sum_{i=0}^{N} c_ix_0^i\in\K[x_0]\right\}\\
 \cong&\;\left(\bigoplus_{i=0}^{\infty} G(i)\right) \rtimes\TTT\cong\G_a(\K[\![t]\!]) \rtimes(\mathbb{G}_m)^2 \,,
\end{align*}
\noindent where
\[\TTT=\left\{(x_0,y_0)\mapsto(ax_0,by_0)\;|\; (a,b)\in(\K^\times)^2\right\}\cong (\mathbb{G}_m)^2\,\]
and
\[G(i)=\left\{(x_0,y_0)\mapsto(x_0,y_0+c_i x_0^i)\;|\; c_i\in\K\right\}\cong\G_{a}\,.\]
The natural embedding $\Aut^\bullet(\bar X_0,F)\embed\Aut \Arc(\bar X_0)_{T_0}^*$ 
is tautological in coordinates $(a,b,c_0,\ldots)$ under substitution $(x_0,y_0)=(x,y)$
and corresponds to the embedding of the polynomial ring into the ring of formal power series.
So, all results of the previous section hold  automatically for $\Aut^\bullet(\bar X,F)$.
In particular, $\Stab_0 W=\Stab_{\mathrm{fps}}W\cap\Aut^\bullet(\bar X_0,F)$ for any subset $W\subset \Arc(\bar X_0)_{T_0}^*$, 
and $\Aut^\bullet(\bar X,F)=\Stab_\fps\Tt\cap\Aut^\bullet(\bar X_0,F).$
 \end{remark}

\begin{theorem}\label{th:jonq-mu} Let $X$ be a  normal affine surface, and let $\mu\colon X\to\AA^1$ be an $\AA^1$-fibration with a unique special fiber $\mu^{-1}(0)$. 
Then  the automorphism group $\Aut(X,\mu)$ is a finite extension of
\begin{equation}\label{eq:jonq-mu}
\Aut^\bullet(\bar X,F)= \left(\bigoplus_{k=d}^{\infty} G(k)\right)\rtimes\Lambda_\mu\cong \G_a(t^d\K[\![t]\!])\rtimes\Lambda_\mu\,,
\end{equation}
where $d\in\ZZ_{>0}$ and $\Lambda_\mu$ is conjugate to a closed subgroup of the standard torus 
$\TTT$ 
by an element of $\bigoplus_{k=0}^{d-1} G(k)$.
Furthermore, 
if $\dim \Lambda_\mu=1$, then the action of $\Lambda_\mu$ on $X$ is transversal, i.e., the intersection of a $\Lambda_\mu$-orbit and a $\mu$-fiber is always finite.
\end{theorem}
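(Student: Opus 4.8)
The plan is to reduce the transversality statement to the single assertion that $\Lambda_\mu$ acts nontrivially on the base $B$, and then to extract this nontriviality from the combinatorics of the special fiber through Corollary \ref{cor:Tt-linear}. First I would use the explicit shape of $\Lambda_\mu$. By the decomposition \eqref{eq: decomposing} (combined with Remark \ref{sit: Jonq0} and Proposition \ref{prop:aut-bullet-stab}), the maximal torus $\Lambda_\mu$ of $\Aut^\bullet(\bar X,F)$ equals $h\,\TTT(\Tt')\,h^{-1}$, where $\TTT(\Tt')=\TTT\cap\Q(\Tt')$ and $h=h(\Tt')\in\bigoplus_{k=0}^{d-1}G(k)$. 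Since every $G(k)$ fixes the first coordinate $x_0=\bar\mu_0^{*}(t)$, the automorphism $h$ preserves each fiber $\{x_0=c\}$ and induces the identity on the base; hence $\mu(\Lambda_\mu\cdot w)=\mu(\TTT(\Tt')\cdot h^{-1}w)$ for every $w$, so $\Lambda_\mu$ and $\TTT(\Tt')$ have the same base-action character. Writing $\TTT=\{(x_0,y_0)\mapsto(ax_0,by_0)\}$, this character is $(a,b)\mapsto a$, and $\mu(\lambda\cdot w)=a(\lambda)\,\mu(w)$; transversality of the general orbit is therefore equivalent to $a|_{\Lambda_\mu}$ being nontrivial, i.e. to $\TTT(\Tt')\not\subset\TTT_B$.

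Next I would prove this nontriviality. Suppose to the contrary that $\TTT(\Tt')\subset\TTT_B$. As $\dim\Lambda_\mu=1$ forces $\dim\TTT(\Tt')^\circ=1=\dim\TTT_B$, and $\TTT_B$ is the unique one-dimensional subtorus of $\TTT$ on which the base character is trivial, we would get $\TTT(\Tt')^\circ=\TTT_B$, so in particular $\Q(\Tt')\supset\TTT_B$. By Corollary \ref{cor:Tt-linear} this occurs only when the dual graph $\Gamma_{\Tt'}$ of the (unique) special fiber is a linear chain, in which case the same corollary yields $\Q(\Tt')=\Q$ and hence $\TTT(\Tt')=\TTT\cap\Q=\TTT$ is two-dimensional — contradicting $\dim\Lambda_\mu=1$. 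Consequently $\Gamma_{\Tt'}$ is non-linear, $\Q(\Tt')\cap\TTT_B$ is finite, $\TTT(\Tt')\not\subset\TTT_B$, and the base character $a|_{\Lambda_\mu}$ is nontrivial.

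Finally I would translate this back into a statement about orbits. For $w\in X$ with $\mu(w)\neq0$ the relation $\mu(\lambda\cdot w)=a(\lambda)\mu(w)$ shows that $\mu$ is nonconstant on the orbit $\Lambda_\mu\cdot w$, whose image is the one-dimensional set $a(\Lambda_\mu)\cdot\mu(w)\subset B$; thus this orbit is a curve not contained in any fiber, and $(\Lambda_\mu\cdot w)\cap\mu^{-1}(\beta)$ is finite for every $\beta$. Indeed an orbit can lie inside a fiber only if $a(\lambda)\mu(w)=\mu(w)$ for all $\lambda$, forcing $\mu(w)=0$, so the only possible exceptions are orbits inside the special fiber $\mu^{-1}(0)$; this is exactly the transversality of the general orbit. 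I expect the main obstacle to be the bookkeeping of the first step — matching $\Lambda_\mu$ with $\TTT(\Tt')$ across the conjugation by $h$ and across the passage between the formal (\emph{fps}) model of \ref{nota:stabilizer} and the polynomial model $\Aut^\bullet(\bar X_0,F)$ of \ref{sit: Jonq0} — rather than the concluding geometric argument; the genuinely decisive conceptual input is the linear/non-linear dichotomy of Corollary \ref{cor:Tt-linear}, which is precisely what converts the hypothesis $\dim\Lambda_\mu=1$ into nontriviality of the base action.
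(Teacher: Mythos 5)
Your proposal is correct in what it proves, and the comparison with the paper is instructive because the two arguments cover the theorem's three assertions in complementary ways. For the structural decomposition (\ref{eq:jonq-mu}) you follow exactly the paper's route: the paper combines Remark \ref{sit: Jonq0} (so that $\Stab_0 W=\Stab_\fps W\cap\Aut^\bullet(\bar X_0,F)$), Proposition \ref{prop:aut-bullet-stab}, and Proposition \ref{prop:stab-intersect} (i.e.\ the decomposition (\ref{eq: decomposing})), which is precisely what you cite; your bookkeeping step --- conjugation by $h\in\bigoplus_{k=0}^{d-1}G(k)$ does not change the coefficient of $y$, so intersecting $\Q(\Tt')^h$ with the polynomial model forces that coefficient to be constant and yields $\Lambda_\mu=h\,\TTT(\Tt')\,h^{-1}$ with $\TTT(\Tt')=\TTT\cap\Q(\Tt')$ --- is the correct way to make that identification precise. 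The genuine divergence is elsewhere. The paper's proof invokes Lemma \ref{prop:aut-bullet-finite} for the finite-extension claim and then stops: it contains no argument whatsoever for the transversality assertion. You do the opposite: you omit the finite-index step (a one-line citation of Lemma \ref{prop:aut-bullet-finite}, which you should add for completeness), but you supply a full proof of the ``Furthermore'' clause. Your argument --- if the base character $a$ were trivial on $\Lambda_\mu$, then $\dim\Lambda_\mu=1$ would force $\TTT(\Tt')^\circ=\TTT_B$, hence $\Q(\Tt')\supset\TTT_B$, hence $\Q(\Tt')=\Q$ by Corollary \ref{cor:Tt-linear}, making $\TTT(\Tt')=\TTT$ two-dimensional, a contradiction --- is sound, and it is exactly the dichotomy the paper itself exploits only later, in Proposition \ref{pr:T-lin}. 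So on this point your proof fills a real gap in the paper's own argument rather than duplicating it.

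One further point in your favour: your cautious reformulation as ``transversality of the general orbit'' is not mere prudence but necessary. The literal claim that the intersection of a $\Lambda_\mu$-orbit with a $\mu$-fiber is \emph{always} finite fails for orbits lying inside the special fiber: take the Danielewski surface $xy=z^2-1$ with $\mu=x$, where $\dim\Lambda_\mu=1$ and the action is $(x,y,z)\mapsto(ax,a^{-1}y,z)$; the orbits of the points $(0,y_0,\pm 1)$ with $y_0\neq 0$ are one-dimensional curves contained in $\mu^{-1}(0)$. Your final paragraph proves the strongest true statement: for every point $w$ with $\mu(w)\neq 0$, the intersection of $\Lambda_\mu\cdot w$ with any fiber is a coset of the finite group $\ker\bigl(a|_{\Lambda_\mu}\bigr)$ and hence finite, while the only exceptions are orbits inside $\mu^{-1}(0)$.
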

\begin{proof}
As mentioned in Remark~\ref{sit: Jonq0}, 
for any $W\subset \Arc(\bar X_0)_{T_0}$ we have $$\Stab_0 W=\Stab_\fps W\cap\Aut^\bullet(\bar X_0,F)\,.$$ Hence by 
Proposition~\ref{prop:aut-bullet-stab}, $$\Aut^\bullet(\bar X,F)=\Stab_\fps\Tt\cap\Aut^\bullet(\bar X_0,F)\,.$$
Now Proposition~\ref{prop:stab-intersect} implies (\ref{eq:jonq-mu}).
Finally, $\Aut(X,\mu)$ is a finite extension of $\Aut^\bullet(\bar X,F)$ by Lemma~\ref{prop:aut-bullet-finite}.
\end{proof}

\subsubsection{Case $\omega\ncong \AA_*^1,\AA^1$} \begin{sit}  As before, the notation $\Aut_{B}$ with the base curve $B$ as a subscript means 
passing to the subgroup of automorphisms of $(X,\mu)$ which induces the identity on $B$. In particular,
 $$\Aut^\bullet_{B} (\bar X,F)=\Aut^\bullet(\bar X,F)\cap\Aut_B  (\bar X\setminus F)\,.$$ \end{sit}
\begin{lemma} \label{lem: norm-sbgrp}
If $\omega\ncong \AA_*^1,\AA^1$ then $\Aut^\bullet_B(\bar X,F)$ is a  normal subgroup of $\Aut(X,\mu)$ of finite index. 
\end{lemma}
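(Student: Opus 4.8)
The plan is to realize $\Aut^\bullet_B(\bar X,F)$ as an intersection of two subgroups of $\Aut(X,\mu)$ that are each normal of finite index, and then to invoke the elementary fact that the intersection of two normal subgroups of finite index is again normal of finite index. Two ingredients are already at hand. First, by Lemma~\ref{prop:aut-bullet-finite} the subgroup $\Aut^\bullet(\bar X,F)$ is normal of finite index in $\Aut(X,\mu)$. Second, the subgroup $\Aut_B(X,\mu)$ is the kernel of the restriction homomorphism $\Aut(X,\mu)\to\Aut_\mu B$ of the exact sequence~\eqref{eq: exact-seq}, hence is normal in $\Aut(X,\mu)$. Moreover, for an element of $\Aut^\bullet(\bar X,F)$---which already preserves $\mu$ and extends over $\bar X\setminus F$---inducing the identity on $B$ is the same as lying in $\Aut_B(\bar X\setminus F)$, so the definition recalled above gives $\Aut^\bullet_B(\bar X,F)=\Aut^\bullet(\bar X,F)\cap\Aut_B(X,\mu)$. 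Thus normality of $\Aut^\bullet_B(\bar X,F)$ will be immediate, and the whole problem reduces to showing that $\Aut_B(X,\mu)$ has finite index, equivalently that $\Aut_\mu B$ is finite.

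The finiteness of $\Aut_\mu B$ is the only step that uses the hypothesis $\omega\ncong\AA^1_*,\AA^1$, and it is the heart of the argument. Since $\omega$ was chosen to be $(\Aut_\mu B)$-stable, restriction to $\omega$ defines a homomorphism $\Aut_\mu B\to\Aut\omega$; this map is injective because $\omega$ is dense in the (separated) curve $B$, so an automorphism of $B$ restricting to the identity on $\omega$ is the identity. It then remains to invoke the classical fact that a smooth affine curve has infinite automorphism group only when it is isomorphic to $\AA^1$ or $\AA^1_*$. Under our hypothesis $\omega$ is neither of these, so $\Aut\omega$ is finite and therefore $\Aut_\mu B$ is finite as well.

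Putting the pieces together, $\Aut_B(X,\mu)$ is a normal subgroup of finite index, and its intersection with the normal finite-index subgroup $\Aut^\bullet(\bar X,F)$ is the desired group $\Aut^\bullet_B(\bar X,F)$, which is therefore normal of finite index in $\Aut(X,\mu)$. The only nonformal ingredient is the finiteness of the automorphism group of the hyperbolic affine curve $\omega$; this is where the hypothesis is genuinely needed, while the remaining reasoning is a purely group-theoretic manipulation with finite-index normal subgroups. I would expect the main care to go into cleanly justifying the identity $\Aut^\bullet_B(\bar X,F)=\Aut^\bullet(\bar X,F)\cap\Aut_B(X,\mu)$ and the injectivity of $\Aut_\mu B\hookrightarrow\Aut\omega$, both of which are routine but should be stated explicitly.
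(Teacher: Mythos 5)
Your proposal is correct and follows essentially the same route as the paper: both realize $\Aut^\bullet_B(\bar X,F)$ as the intersection $\Aut^\bullet(\bar X,F)\cap\Aut_B(X,\mu)$ of normal subgroups (using Lemma~\ref{prop:aut-bullet-finite} and the exact sequence~(\ref{eq: exact-seq})), and both obtain finite index from the inclusion $\Aut_\mu B\subset\Aut\omega$ together with the finiteness of the automorphism group of a non-exceptional affine curve. The only cosmetic difference is that the paper deduces finite index via the exact sequence $1\to\Aut^\bullet_B(\bar X,F)\to\Aut^\bullet(\bar X,F)\to\Aut_\mu B$, whereas you bound the index of $\Aut_B(X,\mu)$ in $\Aut(X,\mu)$ directly and intersect.
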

\begin{proof} By Lemma \ref{prop:aut-bullet-finite}, $\Aut^\bullet (\bar X, F)\lhd \Aut(X,\mu)$  is a normal subgroup  of finite index. 
Due to (\ref{eq: exact-seq}), $\Aut_B(X,\mu)\lhd \Aut(X,\mu)$. Hence $$\Aut^\bullet_B (\bar X,F)=\Aut^\bullet (\bar X,F)\cap\Aut_B(X,\mu)\lhd \Aut(X,\mu)\,.$$  
Likewise in (\ref{eq: exact-seq}) we have an exact sequence
\begin{equation}\label{eq: exact-seq1} 1\to\Aut^\bullet_B (\bar X,F)\to\Aut^\bullet (\bar X,F)\to
\Aut_\mu B\subset \Aut B\,.\end{equation}
Since $\omega$ is  $\Aut_\mu B$-stable, there is an inclusion
$\Aut_\mu B\subset\Aut\omega$, where $\omega\ncong \AA_*^1,\AA^1$ is an affine curve  of non-exceptional type. Hence $\Aut\omega$ is a finite group. Now the assertion follows. 
\end{proof}
\begin{sit}\label{sit: affine-grp}
Using the equivariant local trivialization $X_\omega\cong\omega\times\AA^1$ (see \ref{nota: omega}), for $(x_0,y_0)\in\omega\times\AA^1$ we can write
\begin{equation}\label{eq: coord-presen}
\Aut_\omega X_\omega=\big\{(x_0,y_0)\mapsto (x_0, Qy_0+P)\mid Q\in\O^{\times}_\omega(\omega), P\in\O_\omega^+(\omega)\big\}\,,\end{equation}
where $A^{\times}$ stands as before for the multiplicative group  of an algebra $A$ and 
$A^+$ for its additive group. Note that $\O_\omega^+(\omega)$ is an infinite-dimensional Abelian unipotent group. In fact, $\O_\omega^+(\omega)=\UU_\mu(X_\omega)\subset
\Aut_\omega X_\omega$ is the subgroup of automorphisms  that act on the $\mu$-fibers by translations, and  
$\O^{\times}_\omega(\omega)\subset\Aut_\omega X_\omega$ is the subgroup of
automorphisms that fix the zero section of the (trivial) line bundle $\mu|_{X_\omega}\colon X_\omega\to\omega$. 

The functions $Q\in\O^{\times}_\omega(\omega)$ have their zeros and poles in $\bar B\setminus \omega$.
So, there is a homomorphism $\O_\omega^{\times}(\omega)\to \ZZ^{N}$ with kernel $\K^\times\subset \O^{\times}_\omega(\omega)$ consisting of the nonzero constants, 
where $N$ is the number of punctures of $\omega$.
 Thus,  $\O^{\times}_\omega(\omega)\cong \G_m\times\ZZ^{r}$ for some $r< N$, and
\[\Aut_\omega X_\omega\cong \O_\omega^+(\omega)\rtimes \O_\omega^{\times}(\omega)\cong \O_\omega^+(\omega)\rtimes(\G_m\times\ZZ^{r}).\]
\end{sit}

\begin{notation}\label{nota: new-grp}
 We let $X_0=\bar X_0\setminus (S\cup F)$. By Lemma \ref{lem:general-bundle}, $\mu_0=\bar\mu_0|_{X_0}\colon X_0\to B$ has a structure of a line bundle, 
say, $L_0$ with a zero section given in $X_\omega\cong\omega\times\A^1$, $X_\omega\subset X_0$, by equation $y_0=0$.  
Let $\UU_{\mu_0}\subset\Aut_B (X_0,\mu_0)$ be the unipotent de Jonqu\`eres group of the automorphisms which
restrict to translations on general $\mu$-fibers (see Definition~\ref{def: dJ}). 
\end{notation}

\begin{proposition} \label{pr:Aut-B-X0}  For some $s\ge 0$
there are decompositions
\begin{equation}\label{eq: decomp-gamma} 
\Aut_B^\bullet(\bar X_0,F)=\Aut_B(\bar X_0,F)\cong \UU_{\mu_0}\rtimes\Lambda_B\cong\UU_{\mu_0}\rtimes (\G_m\times\ZZ^s)\,,
\end{equation} where $\Lambda_B=\Aut_B L_0\cong\cO^\times_B(B)$.
\end{proposition}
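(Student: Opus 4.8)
The plan is to identify $\Aut_B(\bar X_0,F)$ with the group of fibre-preserving $B$-automorphisms of the line bundle $\mu_0\colon X_0\to B$ furnished by Lemma~\ref{lem:general-bundle}, and then to split off the dilation part. First I would establish the equality $\Aut^\bullet_B(\bar X_0,F)=\Aut_B(\bar X_0,F)$. Any $g\in\Aut_B(\bar X_0,F)$ induces the identity on $B$, so it fixes each of $\beta_1,\dots,\beta_{n_s}$ and, preserving $\bar\mu_0$, carries each special fibre $T_{0,\beta_i}$ to itself; moreover on a general fibre $g$ acts through $\Aut\A^1=\Aff(\A^1)$, hence fixes the point at infinity and therefore preserves the section $S$ and extends regularly to $\bar X_0\setminus F$. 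Thus the conditions defining the superscript $\bullet$ are automatic once one induces the identity on $B$, which gives the first equality.

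Next I would produce the semidirect decomposition. Writing $g$ over the trivialising locus in the form (\ref{eq: coord-presen}), the fibrewise affinity shows that globally $g$ has the shape $(x_0,y_0)\mapsto(x_0,Qy_0+P)$, where $P=g(\{y_0=0\})$ is the image of the zero section, hence a global section $P\in H^0(B,L_0)=\UU_{\mu_0}$, and $Q$ is the fibrewise linear coefficient, which is nonzero on every fibre and therefore a global unit $Q\in\cO^\times_B(B)$. Composing with the translation $\tau_{-P}\in\UU_{\mu_0}$ returns an automorphism fixing the zero section, i.e.\ a pure dilation $\delta_Q\in\Lambda_B=\Aut_B L_0$, so that $g=\tau_P\circ\delta_Q$. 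The assignment $g\mapsto Q$ is a homomorphism onto $\cO^\times_B(B)$ with kernel exactly the translations $\UU_{\mu_0}$ and with the dilation subgroup $\Lambda_B$ as a splitting; the conjugation relation $\delta_Q\,\tau_P\,\delta_Q^{-1}=\tau_{QP}$ records the $\cO^\times_B(B)$-module action of $\Lambda_B$ on $\UU_{\mu_0}$. This yields $\Aut_B(\bar X_0,F)\cong\UU_{\mu_0}\rtimes\Lambda_B$.

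Finally, since $\Lambda_B=\Aut_B L_0$ is just fibrewise multiplication by a nowhere-vanishing regular function, one has $\Lambda_B\cong\cO^\times_B(B)$, and Samuel's Units Lemma (\cite[Lem.\ 1]{Sa}; cf.\ the proof of Proposition~\ref{prop: jonq=nested1}) gives $\cO^\times_B(B)\cong\G_m\times\ZZ^s$ for some $s\ge 0$, completing the last isomorphism.

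I expect the only genuinely delicate point to be the global regularity in the middle step: one must be sure that $Q$ and $P$, which are manifest only over the trivialising open set $\omega$, extend across the special fibres $T_{0,\beta_i}$. This is guaranteed because $g$ is by hypothesis a biregular automorphism of all of $X_0=\bar X_0\setminus(S\cup F)$, so the affine coefficients are regular over $B$; the identification of $P$ with an honest global section of $L_0$ and of $Q$ with a global unit is then forced. The remaining verifications—that the linear-part map is a homomorphism and that the two subgroups meet trivially—are routine.
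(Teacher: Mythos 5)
Your proof is correct and follows essentially the same route as the paper: both split $\Aut_B(\bar X_0,F)$ into the translation subgroup $\UU_{\mu_0}\cong H^0(B,\cO_B(L_0))$ (the kernel, acting freely and transitively on sections) and the stabilizer $\Lambda_B=\Aut_B L_0$ of the zero section, identified with $\cO_B^\times(B)$ acting by fibrewise dilations. The only cosmetic difference is in the last step: you apply Samuel's Units Lemma directly to $\cO_B^\times(B)$, whereas the paper embeds $\Lambda_B$ into $\cO_\omega^\times(\omega)\cong\G_m\times\ZZ^r$ and invokes the structure of subgroups containing $\G_m$ — both yield $\Lambda_B\cong\G_m\times\ZZ^s$.
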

\begin{proof} 
The subgroup $\UU_{\mu_0}\subset \Aut_\mu^\bullet(\bar X_0,\bar\mu_0)$ acts freely and transitively on the space $H^0(B,\mathcal{O}_B(L_0))$ of global sections of $L_0$. 
Therefore, $\Aut_B^\bullet (\bar X_0,F)$ is
generated by $\UU_{\mu_{0}}\cong H^0(B,\mathcal{O}_B(L_0))$ and the subgroup $\Lambda_B\subset\Aut_B^\bullet(\bar X_0,F)$ of  automorphisms that fix the zero section. 
This leads to the first decomposition in (\ref{eq: decomp-gamma}).
By our assumption, the trivialization $X_\omega\cong \omega\times\AA^1$ and the  line bundle $L_0$ share the same zero section. 
Hence there is an embedding $\Lambda_B\hookrightarrow \O_\omega^\times(\omega)\cong\G_m\times\ZZ^r$ with $\Lambda_B\supset\G_m$, see \ref{sit: affine-grp}. 
Now the second isomorphism in (\ref{eq: decomp-gamma}) follows.

To show the last assertion, it suffices to observe that
the group $\Aut_B^\bullet(\bar X_0,F)=\UU_{\mu_0}\rtimes\Lambda_B$ acts on $H^0(B, \cO_B(L_0))$ via
\[g=(P,Q)\colon \Psi\mapsto Q\Psi+P\quad\forall\Psi\in H^0(B, \cO_B(L_0))\,,\]
where $P\in \UU_{\mu_0}=H^0(B, \cO_B(L_0))$ and $Q\in\Lambda_B=\Aut_B L_0=\cO_B^\times(B)$.
\end{proof}

\begin{remark}\label{rem: 1-torus}  
The neutral component $\G_m$ of the group $\cO_B^\times(B)$ acts on $L_0$ by homotheties; 
this defines a $1$-torus $\TTT_B\subset \Aut_B L_0$ (cf.\ \ref{sit: Jonq0-local}).
\end{remark}

\begin{sit} \label{sit: nota-j}
For every $j=1,\ldots,n_s$ there is a natural embedding
\[\iota_j\colon\Aut^\bullet_B(\bar X_0,F)\hookrightarrow \Aut_B \Arc(\bar X_0)_{T_{0,\beta_j}}^*\,\] (see Remark \ref{sit: Jonq0}),
where $\iota_j$ embeds the factors of the decomposition 
$\Aut_B(\bar X_0,F)\cong\UU_{\mu_0}\rtimes\Lambda_B$ into the respective factors of the  decomposition 
\[\Aut_B \Arc(\bar X_0)_{T_{0,\beta_j}}^*=\left(\prod_{i=0}^\infty G(i)\right)\rtimes\Q_B\cong\G_a(\K[\![t]\!])\rtimes\G_m(\K[\![t]\!])\cong \Aff(\K[\![t]\!]) \,,\] see \ref{nota:aff-module} and \ref{sit: Jonq0-local}.
Indeed, in both cases the first factor is the unipotent radical (acting  by translations on fibers), and the second consists of the automorphisms preserving the zero section.
Thus,  $$\UU_{\mu_0}\embed\prod_{i=0}^\infty G(i)\cong\G_a(\K[\![t]\!])\quad\mbox{and}\quad  \Lambda_B\embed\Q_B\cong\G_m(\K[\![t]\!])\,.$$

For a special fiber $\Tt'=\Tt_j\subset \Tt$ we have by  (\ref{eq: decomposing}),
\[\Stab_\fps\Tt_j=\left(\prod_{i=N_j}^{\infty}G(i)\right)\rtimes\Q_j^{h_j}\,,\]
where  in the notation of~\ref{def:fiber-aut}, $h_j=h(\Tt_j)\in\bigoplus_{i=0}^{N_j-1}G(i)$ with  $N_j=N(\Tt_j)$, and $\Q_j=\Q(\Tt_j)\subset\Q$.\end{sit}

In Proposition \ref{prop: splitting} below we gather these local data for different special fibers. In the proof we use the following lemma. 

\begin{lemma}\label{lem:chinese}
Given natural numbers $N_1,\ldots,N_{n_s}\in\ZZ_{>0}$ and  a collection $(\psi_j)_{j=1,\ldots,n_s}$, 
where $\psi_j\in\O_{B,\beta_j}$ $\forall j$, there exists a section $\Psi \in H^0(B, \cO_B(L_0))$
such that $$\iota_j(\Psi)\equiv\psi_j\mod \m_{\beta_j}^{N_j}\quad\forall j=1,\ldots,n_s\,.$$ 
\end{lemma}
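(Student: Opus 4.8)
The statement is a section-extension result: given prescribed jets $\psi_j \in \O_{B,\beta_j}$ at finitely many points $\beta_1,\dots,\beta_{n_s}$, together with orders $N_j$, I want a global section $\Psi$ of the line bundle $L_0$ whose local expression via $\iota_j$ matches $\psi_j$ modulo $\m_{\beta_j}^{N_j}$. Since the $\iota_j$ identify global sections of $L_0$ with elements of $\O_{B,\beta_j}$ near $\beta_j$ (after trivializing $L_0$ locally and using the zero section $y_0=0$ shared with the trivialization $X_\omega\cong\omega\times\A^1$), the problem reduces to a classical interpolation problem for $H^0(B,\O_B(L_0))$ on the smooth affine curve $B$.

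\emph{First I would} set up the exact sequence of sheaves on $B$ encoding simultaneous vanishing to the prescribed orders. Let $I = \prod_{j=1}^{n_s} \m_{\beta_j}^{N_j}$ be the ideal sheaf of the fat points, so that $\O_B/I \cong \bigoplus_{j=1}^{n_s} \O_{B,\beta_j}/\m_{\beta_j}^{N_j}$ is supported on the finite set $\{\beta_1,\dots,\beta_{n_s}\}$. Twisting by $\O_B(L_0)$ and taking the cohomology long exact sequence gives
\[
H^0(B,\O_B(L_0)) \xrightarrow{\ \rho\ } \bigoplus_{j=1}^{n_s} \O_B(L_0)\otimes \big(\O_{B,\beta_j}/\m_{\beta_j}^{N_j}\big) \xrightarrow{\ \partial\ } H^1\big(B, \O_B(L_0)\otimes I\big)\,.
\]
The desired surjectivity of $\rho$ — which is exactly the assertion of the lemma, since $\iota_j(\Psi) \bmod \m_{\beta_j}^{N_j}$ is precisely the $j$-th component of $\rho(\Psi)$ after trivialization — follows once I show that the $H^1$ term on the right vanishes.

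\emph{The key step} is that vanishing of $H^1\big(B,\O_B(L_0)\otimes I\big)$ is automatic because $B$ is an affine curve: by Serre's theorem (Cartan's Theorems A and B in the coherent-algebraic setting, exactly as invoked in the proof of Lemma~\ref{lem:general-bundle}), all higher cohomology of a coherent sheaf on an affine variety vanishes, so $H^1$ of any coherent sheaf on $B$ is zero. Concretely, $\O_B(L_0)\otimes I$ is coherent on the affine curve $B$, hence $H^1(B,\O_B(L_0)\otimes I)=0$, and $\rho$ is surjective. Then I take any $\Psi\in H^0(B,\O_B(L_0))$ in the preimage of the tuple $(\psi_1,\dots,\psi_{n_s})$ under $\rho$; by construction $\iota_j(\Psi)\equiv\psi_j \bmod \m_{\beta_j}^{N_j}$ for every $j$, which is the claim.

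\emph{The main thing to be careful about} — rather than a genuine obstacle — is the bookkeeping that $\iota_j$ really does send a global section to its jet in the correct coordinate. This is the compatibility already recorded in \ref{sit: nota-j}: $\iota_j$ embeds $\UU_{\mu_0}\cong H^0(B,\O_B(L_0))$ into $\prod_i G(i)\cong\G_a(\K[\![t]\!])$ by reading off the Taylor coefficients of a section in the local coordinate $t$ centered at $\beta_j$, and the trivialization $X_\omega\cong\omega\times\A^1$ and $L_0$ share the zero section $y_0=0$. Hence the component of $\rho(\Psi)$ at $\beta_j$ and the truncation $\iota_j(\Psi)\bmod\m_{\beta_j}^{N_j}$ coincide. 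Once this identification is in place the argument is purely the affine-curve interpolation above; no positivity or degree hypothesis on $L_0$ is needed, precisely because affineness of $B$ kills the obstruction unconditionally. (This is the same mechanism as the classical Chinese Remainder Theorem, which explains the name of the lemma: sections of a line bundle on an affine curve can be prescribed to arbitrary finite order at finitely many points independently.)
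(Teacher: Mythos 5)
Your proof is correct and is essentially identical to the paper's: both consider the coherent ideal sheaf $\cI=\bigl(\prod_{j=1}^{n_s}\m_{\beta_j}^{N_j}\bigr)\cdot\cO_B(L_0)$, view the prescribed jets $(\psi_j)_j$ as a section of the skyscraper quotient $\cO_B(L_0)/\cI$, and invoke Serre's vanishing ($H^1(B,\cI)=0$ on the affine curve $B$) to lift it to a global section $\Psi\in H^0(B,\cO_B(L_0))$. Your additional remarks on the compatibility of $\iota_j$ with the local trivialization are a harmless elaboration of what the paper leaves implicit.
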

\begin{proof}
Indeed, consider  the coherent ideal sheaf \[\cI =\left(\prod_{j=1}^{n_s}\m_{\beta_j}^{N_j}\right)\cdot\cO_B(L_0)\subset \cO_B(L_0)\,.\] 
The local data $(\psi_j)_j$ defines a section of the skyscraper sheaf $\cO_B(L_0)/\cI$ on the affine curve $B$. 
By the Serre analog of Cartan's A and B Theorems, $H^1(B, \cI)=0$. Hence the latter
section can be interpolated by a global section $\Psi\in H^0(B, \cO_B(L_0))$.
\end{proof}

The following corollary is immediate.

\begin{corollary}\label{cor: h} There exists $h \in\UU_{\mu_0}=H^0(B, \cO_B(L_0))$ such that \begin{equation}\label{eq: cond-on-h}
\iota_j(h)\equiv h_j\mod t^{N_j}\K[\![t]\!]\quad\forall j=1,\ldots,n_s\,,\end{equation} where $N_j$ and $h_j$ are as in \ref{sit: nota-j}.\end{corollary}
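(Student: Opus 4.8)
The corollary I must prove is stated as ``immediate'' — it asks for a single section $h$ of $L_0$ whose local expansions at each special point $\beta_j$ agree with the prescribed local data $h_j$ up to the required order $N_j$. Let me plan how this follows from the preceding Lemma~\ref{lem:chinese}.

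\medskip

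The plan is to simply invoke Lemma~\ref{lem:chinese} with a suitable choice of input data and match up all the identifications. First I observe that the object $h$ we seek lives in $\UU_{\mu_0}$, which in \ref{nota: new-grp} and \ref{sit: nota-j} is identified with the space of global sections $H^0(B,\cO_B(L_0))$; so producing $h$ is literally the same as producing a global section $\Psi$ of the line bundle $L_0$. Next I recall that for each $j$ the embedding $\iota_j$ from \ref{sit: nota-j} sends a global section to its image in the local completion $\Arc(\bar X_0)_{T_{0,\beta_j}}^*$, and under the coordinate identification of Remark~\ref{sit: Jonq0} this image is exactly the Taylor/power-series expansion of the section in the local coordinate $t$ at $\beta_j$, i.e.\ an element of $\K[\![t]\!]$ sitting in the unipotent factor $\prod_{i=0}^\infty G(i)\cong\G_a(\K[\![t]\!])$. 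Thus the congruence $\iota_j(h)\equiv h_j\ \mathrm{mod}\ t^{N_j}\K[\![t]\!]$ is precisely the statement that the local expansion of the section agrees with $h_j$ modulo $\m_{\beta_j}^{N_j}$, since $t$ is a local uniformizer at $\beta_j$ and $\m_{\beta_j}^{N_j}$ corresponds to $t^{N_j}\K[\![t]\!]$ under the completion.

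\medskip

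With these identifications in place, I would apply Lemma~\ref{lem:chinese} to the collection of natural numbers $N_1,\dots,N_{n_s}$ (which are the numbers $N_j=N(\Tt_j)$ from \ref{sit: nota-j}, each in $\ZZ_{>0}$ by construction) and to the local data $\psi_j:=h_j$, where $h_j=h(\Tt_j)\in\bigoplus_{i=0}^{N_j-1}G(i)$ is regarded as an element of the local ring $\O_{B,\beta_j}$ via $\iota_j$. The only point requiring a word is that $h_j$, being a truncated element of $\bigoplus_{i=0}^{N_j-1}G(i)$, indeed defines a germ in $\O_{B,\beta_j}$ modulo $\m_{\beta_j}^{N_j}$; this is immediate because $\bigoplus_{i=0}^{N_j-1}G(i)$ corresponds to polynomials of degree $<N_j$ in $t$, which are representatives of classes in $\O_{B,\beta_j}/\m_{\beta_j}^{N_j}$. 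Lemma~\ref{lem:chinese} then furnishes a section $\Psi\in H^0(B,\cO_B(L_0))$ with $\iota_j(\Psi)\equiv\psi_j=h_j\ \mathrm{mod}\ \m_{\beta_j}^{N_j}$ for all $j$. Setting $h:=\Psi\in\UU_{\mu_0}$ gives exactly condition \eqref{eq: cond-on-h}.

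\medskip

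There is essentially no hard part here: the content is entirely carried by Lemma~\ref{lem:chinese}, whose proof in turn rests on the vanishing $H^1(B,\cI)=0$ (Serre's analogue of Cartan's Theorems A and B for the affine curve $B$). The only thing one must be careful about — and the closest thing to an obstacle — is bookkeeping: verifying that the two chains of identifications (first $\UU_{\mu_0}\cong H^0(B,\cO_B(L_0))$, and second the compatibility of $\iota_j$ with the power-series expansion sending $\m_{\beta_j}^{N_j}$ to $t^{N_j}\K[\![t]\!]$) are the same identifications used to state both Lemma~\ref{lem:chinese} and the target congruence \eqref{eq: cond-on-h}. Once these are matched, the corollary is an immediate specialization, which is why the text labels it as such.
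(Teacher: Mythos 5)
Your proposal is correct and matches the paper exactly: the paper gives no argument at all (it labels the corollary ``immediate'' after Lemma~\ref{lem:chinese}), and your write-up simply supplies the bookkeeping — taking $\psi_j=h_j$ as local data in $\O_{B,\beta_j}$, applying Lemma~\ref{lem:chinese}, and matching the identifications $\UU_{\mu_0}\cong H^0(B,\cO_B(L_0))$ and $\m_{\beta_j}^{N_j}\leftrightarrow t^{N_j}\K[\![t]\!]$ — which is precisely what the paper intends.
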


\begin{notation}\label{nota: from-prop}
We let $D_0=\sum_{j=1}^{n_s}N_j[\beta_j]$ and
$$\Lambda_{\mu,B}=\bigcap_{j=1}^{n_s}\iota_j^{-1}(\iota_j(\Lambda_B)\cap\Q_j)\subset\Lambda_B\,,$$ 
$$\UU_\mu=\{P\in H^0(B, \cO_B(L_0))=\UU_{\mu_0}\,\mid\,\mathrm{div} P\ge D_0\}=H^0(B,\mathcal{O}_B(-D_0))\,.$$
\end{notation}

\begin{proposition}\label{prop: splitting} We have
\begin{equation}\label{eq:aut-decomp-general}
\Aut^\bullet_B (\bar X,F)=\UU_\mu\rtimes\Lambda_{\mu,B}^h\,,
\end{equation} where $h\in\UU_{\mu_0}$ verifies (\ref{eq: cond-on-h}).
\end{proposition}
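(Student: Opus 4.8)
The plan is to read off membership in $\Aut^\bullet_B(\bar X,F)$ from the local fiber stabilizers and to transport the resulting conditions, through the embeddings $\iota_j$ of \ref{sit: nota-j}, back to the global decomposition $\Aut^\bullet_B(\bar X_0,F)=\UU_{\mu_0}\rtimes\Lambda_B$ of Proposition~\ref{pr:Aut-B-X0}. First I would set up a membership criterion. Since $\Aut^\bullet_B(\bar X,F)=\Aut^\bullet(\bar X,F)\cap\Aut^\bullet_B(\bar X_0,F)$, Proposition~\ref{prop:aut-bullet-stab} says that an element $g\in\Aut^\bullet_B(\bar X_0,F)$ belongs to $\Aut^\bullet_B(\bar X,F)$ if and only if it stabilizes $\Pui(p_i)$ for every outer blowup $\sigma_i$. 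Each center $p_i$ lies in a unique special fiber $\Tt_j$, and on the corresponding arc space $\Arc(\bar X_0)^*_{T_{0,\beta_j}}$ the element $g$ acts through $\iota_j(g)$; grouping the outer blowups fiber by fiber and invoking Notation~\ref{def:fiber-aut} turns the criterion into
$$g\in\Aut^\bullet_B(\bar X,F)\iff \iota_j(g)\in\Stab_\fps\Tt_j\quad\text{for all }j=1,\ldots,n_s.$$

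Next I would conjugate away the local twists. By Proposition~\ref{prop:stab-intersect} one has $\Stab_\fps\Tt_j=\bigl(\prod_{i\ge N_j}G(i)\bigr)\rtimes\Q_j^{h_j}$ with $h_j\in\bigoplus_{i=0}^{N_j-1}G(i)$. Let $h\in\UU_{\mu_0}$ be the global section of Corollary~\ref{cor: h}, so that $\iota_j(h)\equiv h_j\pmod{t^{N_j}}$, and put $\tilde g=h^{-1}gh$. Since $\iota_j$ is a homomorphism, $\iota_j(\tilde g)=\iota_j(h)^{-1}\iota_j(g)\iota_j(h)$, and the discrepancy $u_j:=\iota_j(h)h_j^{-1}$ lies in the unipotent part $\prod_{i\ge N_j}G(i)$, which is a normal subgroup of the untwisted group $\bigl(\prod_{i\ge N_j}G(i)\bigr)\rtimes\Q_j$. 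Conjugation by $u_j$ therefore preserves membership in that group, so the condition $\iota_j(g)\in\Stab_\fps\Tt_j$ is equivalent to the untwisted condition $\iota_j(\tilde g)\in\bigl(\prod_{i\ge N_j}G(i)\bigr)\rtimes\Q_j$.

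Then I would split this untwisted condition. Writing $\tilde g=(\tilde P,\tilde Q)$ with $\tilde P\in\UU_{\mu_0}$ and $\tilde Q\in\Lambda_B$, the embedding $\iota_j$ respects the two semidirect factorizations (\ref{sit: nota-j}), so $\iota_j(\tilde g)\in\bigl(\prod_{i\ge N_j}G(i)\bigr)\rtimes\Q_j$ holds if and only if $\iota_j(\tilde P)\in t^{N_j}\K[\![t]\!]$ and $\iota_j(\tilde Q)\in\Q_j$. Running over all $j$, the first family of conditions says $\mathrm{div}\,\tilde P\ge D_0$, that is $\tilde P\in\UU_\mu=H^0(B,\cO_B(-D_0))$, while the second says $\tilde Q\in\bigcap_j\iota_j^{-1}(\iota_j(\Lambda_B)\cap\Q_j)=\Lambda_{\mu,B}$, both in the sense of Notation~\ref{nota: from-prop}. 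Hence $\tilde g\in\UU_\mu\rtimes\Lambda_{\mu,B}$, i.e.\ $\Aut^\bullet_B(\bar X,F)=h\,(\UU_\mu\rtimes\Lambda_{\mu,B})\,h^{-1}$.

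Finally I would rewrite this conjugate. Because $h\in\UU_{\mu_0}$ and $\UU_{\mu_0}$ is abelian, $h$ centralizes $\UU_\mu$; writing an element of $\UU_\mu\rtimes\Lambda_{\mu,B}$ as $P\cdot Q$ gives $hPQh^{-1}=P\cdot(hQh^{-1})$, so $h(\UU_\mu\rtimes\Lambda_{\mu,B})h^{-1}=\UU_\mu\cdot\Lambda_{\mu,B}^h$. This product is again semidirect: a unit of $\cO_B(B)$ has trivial divisor, so multiplication by it preserves the vanishing orders cutting out $\UU_\mu$, whence $\Lambda_B$ and therefore $\Lambda_{\mu,B}^h$ normalize $\UU_\mu$, while $\Lambda_{\mu,B}^h\cap\UU_\mu=\{1\}$ since $h\Lambda_Bh^{-1}$ is a complement to $\UU_{\mu_0}$. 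This yields $\Aut^\bullet_B(\bar X,F)=\UU_\mu\rtimes\Lambda_{\mu,B}^h$. The delicate point throughout is the second step: one must check that the mod-$t^{N_j}$ ambiguity of $h$ is exactly absorbed by the unipotent radical, so that the individual local twists $\Q_j^{h_j}$ assemble into the single global twist $\Lambda_{\mu,B}^h$.
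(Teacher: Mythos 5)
Your proposal is correct and follows essentially the same route as the paper's proof: the membership criterion via Propositions \ref{prop:aut-bullet-stab} and \ref{prop:stab-intersect}, the untwisting conjugation by the global $h$ of Corollary \ref{cor: h}, and the componentwise splitting into $\UU_\mu$ and $\Lambda_{\mu,B}$ are exactly the paper's chain of equivalences (i)--(vi). You merely make explicit two points the paper leaves implicit — that the discrepancy $\iota_j(h)h_j^{-1}$ is absorbed by the unipotent radical $\prod_{i\ge N_j}G(i)$, and that conjugating $\UU_\mu\rtimes\Lambda_{\mu,B}$ by $h$ fixes $\UU_\mu$ and yields the stated semidirect product.
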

\begin{proof}
Let $h\in\UU_{\mu_0}$  verifies (\ref{eq: cond-on-h}), see Corollary \ref{cor: h}. 
Letting $h^{-1}\circ g\circ h=(P,Q)\in\UU_{\mu_0}\rtimes\Lambda_B$ (that is, letting $g=(P+h(1-Q), Q)$), 
we let also $P_j=\iota_j(P)$ and $Q_j=\iota_j(Q)$.
We claim that the following are equivalent:
\begin{itemize}
\item[(i)] $g\in\Aut^\bullet_B (\bar X,F)$;
\item[(ii)]  $\iota_j(g)\in\Stab_\fps \Tt_j$ for each $j=1,\ldots,n_s$;
\item[(iii)]  $\iota_j(h^{-1}\circ g\circ h)\in \left(\prod_{i=N_j}^{\infty}\G_a(i)\right)\rtimes\Q_j$ $\forall j$;
\item[(iv)]   $P_j\in \prod_{i=N_j}^{\infty}\G_a(i)$ and  $Q_j\in\Q(\Tt_j)$  $\forall j$;
\item[(v)] $\mathrm{div} P\ge D_0=\sum_{j=1}^{n_s}N_j[\beta_j]$ and $Q\in \bigcap_{j=1}^{n_s}\iota_j^{-1}(\iota_j(\Lambda_B)\cap\Q_j)$;
\item[(vi)] $P\in \UU_\mu$  and $Q\in\Lambda_{\mu,B}$.	
\end{itemize}
Indeed, it follows from Propositions~\ref{prop:stab-intersect}  and \ref{prop:aut-bullet-stab} that an element $g\in\Aut^\bullet_B (\bar X_0,F)$
belongs to $\Aut^\bullet_B (\bar X,F)$ if and only if $\iota_j(g)\in\Stab_\fps\Tt_j$ for each $j$, where $\Stab_\fps\Tt_j$ is defined as in (\ref{eq: stab-fib}).
 This proves the equivalence (i)$\Leftrightarrow$(ii). 

After replacing $h$ in (iii) by $h_j$, the equivalence (ii)$\Leftrightarrow$(iii) follows from Proposition~\ref{prop:stab-intersect}. 
By Corollary \ref{cor: h} this holds even without this replacement.

The equivalence (iii)$\Leftrightarrow$(iv) is immediate, and (iv)$\Leftrightarrow$(v)$\Leftrightarrow$(vi) follow from our definitions, 
see \ref{sit: Jonq0-local} and  \ref{nota: from-prop}. This proves the equivalence (i)$\Leftrightarrow$(vi). Now the proposition  follows. 
\end{proof}

\begin{theorem}\label{th:jonq-mu-general} 
Let $\mu\colon X\to B$ be an $\A^1$-fibration on a normal affine surface $X$ over a smooth affine curve $B$. 
If, in the notation  of \ref{nota: omega}, $\omega\ncong \AA_*^1,\AA^1$, then
the automorphism group $\Aut(X,\mu)$ is a finite extension of
\begin{equation}\label{eq: bullet-split}
\Aut_B^\bullet(\bar X,F) \cong\UU_\mu\rtimes (\Upsilon_\mu\rtimes\ZZ^l )\quad\mbox{for some}\quad l\ge 0\,,
\end{equation}
where $\UU_\mu=H^0(B,\mathcal{O}_B(-D_0))\,$ with $D_0$ as in \ref{nota: from-prop}, and $\Upsilon_\mu=\TTT_B\cap\Lambda_{\mu,B}$, see \ref{rem: 1-torus}.
 Furthermore, either $\Upsilon_\mu$  is a  finite cyclic group, or $\Upsilon_\mu=\TTT_B\cong \G_m$ and each $\mu$-fiber $\mu^{-1}(b)$, $b\in B$, is isomorphic to $\A^1$. 
\end{theorem}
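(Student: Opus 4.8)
The plan is to deduce the theorem directly from the two structural results already established, namely Lemma~\ref{lem: norm-sbgrp} and Proposition~\ref{prop: splitting}, and then to analyse the factor $\Lambda_{\mu,B}$ by elementary group theory before addressing the geometric dichotomy. Since $\omega\ncong\A^1_*,\A^1$, Lemma~\ref{lem: norm-sbgrp} gives at once that $\Aut^\bullet_B(\bar X,F)$ is a normal subgroup of finite index in $\Aut(X,\mu)$; this yields the first assertion, that $\Aut(X,\mu)$ is a finite extension of $\Aut^\bullet_B(\bar X,F)$. Proposition~\ref{prop: splitting} then provides the semidirect decomposition $\Aut^\bullet_B(\bar X,F)=\UU_\mu\rtimes\Lambda_{\mu,B}^h$, with $\UU_\mu=H^0(B,\cO_B(-D_0))$ by the definition in \ref{nota: from-prop}. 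Thus it remains to identify $\Lambda_{\mu,B}$ with $\Upsilon_\mu\rtimes\ZZ^l$ and to establish the dichotomy for $\Upsilon_\mu$.

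For the structure of $\Lambda_{\mu,B}$ I would use that $\Lambda_B=\Aut_B L_0\cong\cO_B^\times(B)\cong\G_m\times\ZZ^s$ by Proposition~\ref{pr:Aut-B-X0} (via Samuel's Units Lemma), an abelian group whose neutral component is the one-torus $\TTT_B\cong\G_m$ of Remark~\ref{rem: 1-torus}. The closed subgroup $\Lambda_{\mu,B}\subset\Lambda_B$ therefore projects onto a subgroup of $\ZZ^s$, which is free of some rank $l\le s$, and the kernel of this projection is exactly $\Lambda_{\mu,B}\cap\TTT_B=\Upsilon_\mu$. Hence there is a short exact sequence $1\to\Upsilon_\mu\to\Lambda_{\mu,B}\to\ZZ^l\to1$; since $\ZZ^l$ is free this sequence splits, so $\Lambda_{\mu,B}\cong\Upsilon_\mu\rtimes\ZZ^l$ (in fact a direct product, $\Lambda_B$ being abelian). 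Conjugation by $h$ is an isomorphism preserving this decomposition, so $\Aut^\bullet_B(\bar X,F)\cong\UU_\mu\rtimes(\Upsilon_\mu\rtimes\ZZ^l)$, which is exactly \eqref{eq: bullet-split}.

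The dichotomy for $\Upsilon_\mu$ is then immediate: $\Upsilon_\mu=\TTT_B\cap\Lambda_{\mu,B}$ is a closed subgroup of $\TTT_B\cong\G_m$, and the closed subgroups of $\G_m$ are precisely the finite cyclic groups and $\G_m$ itself. It remains to treat the case $\Upsilon_\mu=\TTT_B$ and to prove that then every fibre $\mu^{-1}(b)$ is isomorphic to $\A^1$. Here I would first observe that $\Upsilon_\mu=\TTT_B$ forces $\iota_j(\TTT_B)\subset\Q_j=\Q(\Tt_j)$ for every special fibre $\Tt_j$, by the definition of $\Lambda_{\mu,B}$ in \ref{nota: from-prop} and of $\iota_j$ in \ref{sit: nota-j}; Corollary~\ref{cor:Tt-linear} then says that each dual graph $\Gamma_{\Tt_j}$ is a linear chain. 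Geometrically, $\Upsilon_\mu=\TTT_B\subset\Lambda_B=\Aut_BL_0$ acts on $X$ by homotheties along the $\mu$-fibres, fixing the zero section, i.e.\ it is an effective \emph{vertical} (parabolic) $\G_m$-action for which $\mu$ is the associated quotient fibration.

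I expect the last geometric step to be the main obstacle: one must pass from linearity of the graphs $\Gamma_{\Tt_j}$ to the conclusion that the special fibres are reduced and irreducible, isomorphic to $\A^1$. I would handle it by combining the multiplicity bookkeeping of Corollary~\ref{cor: n=1} — a linear chain arises only from a sequence of outer blowups, so all fibre components carry multiplicity one and each special fibre is reduced — with the structure theory of vertical (parabolic) $\G_m$-surfaces recalled in Remark~\ref{rem: DPD-cases}.3, which identifies $\mu$ with the orbit map of the $\G_m$-action and forces each fibre to consist of a single affine line. The general fibres are $\A^1$ by the very definition of an $\A^1$-fibration, so together this gives $\mu^{-1}(b)\cong\A^1$ for all $b\in B$, completing the proof.
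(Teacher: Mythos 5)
Your first three steps are correct and coincide with the paper's own argument: Lemma~\ref{lem: norm-sbgrp} gives that $\Aut_B^\bullet(\bar X,F)$ is normal of finite index in $\Aut(X,\mu)$, Proposition~\ref{prop: splitting} gives the splitting $\Aut_B^\bullet(\bar X,F)=\UU_\mu\rtimes\Lambda_{\mu,B}^h$, and your analysis of $\Lambda_{\mu,B}$ inside $\Lambda_B\cong\TTT_B\times\ZZ^s$ (Proposition~\ref{pr:Aut-B-X0}) --- free image in $\ZZ^s$, split exact sequence, kernel $\Upsilon_\mu=\TTT_B\cap\Lambda_{\mu,B}$ --- supplies exactly the elementary bookkeeping that the paper leaves implicit. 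The dichotomy also begins correctly, except that you assert rather than prove that $\Upsilon_\mu$ is \emph{closed} in $\TTT_B\cong\G_m$; this is needed (an abstract proper subgroup of $\K^\times$ need not be finite) and follows from the fact that $\Lambda_{\mu,B}$ is cut out of the algebraic group $\Lambda_B$ by the algebraic conditions (\ref{eq:c-inters}).

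The genuine gap is the last geometric step, where both of your supporting claims fail. First, Corollary~\ref{cor: n=1} concerns a single component $T_j$; it does not say that a fiber with linear dual graph arises from outer blowups only, and indeed it does not: blowing up a point of $T_0$ and then the node $T_0\cap T_1$ produces the chain $[[-2,-1,-2]]$ with multiplicities $1,2,1$, so ``linear $\Rightarrow$ reduced'' is false. Moreover, reducedness is the wrong target altogether: parabolic $\G_m$-surfaces may have multiple fibers, which is precisely why the theorem claims only that the \emph{reduced} fibers $\mu^{-1}(b)$ are isomorphic to $\A^1$ (the paper's convention distinguishes $\mu^*(b)$ from $\mu^{-1}(b)$); what has to be proved is irreducibility. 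Second, Remark~\ref{rem: DPD-cases}.3 states only that $\G_a$-actions on parabolic $(\ML_1)$-surfaces are vertical; it does not identify $\mu$ with an orbit map and says nothing about fibers being single affine lines, so it cannot close the argument. The paper finishes this step by citation: once $\Upsilon_\mu\cong\G_m$, the conjugate $h\circ\TTT_B\circ h^{-1}$ is an effective vertical $\G_m$-action on $X$, so $X$ is a parabolic $\G_m$-surface with quotient map $\mu$, and \cite[\S 3]{FZ0} together with \cite[Rem.\ 3.13(iii)]{FZ-LND} gives that every fiber of such a surface is an irreducible affine line (with some multiplicity). If you want this self-contained, argue from the parabolic action directly: fibers of an $\A^1$-fibration on a normal affine surface are disjoint unions of affine lines; the fixed-point locus of a parabolic $\G_m$-action is a section of $\mu$, meeting each fiber in a single point; each fiber component is $\G_m$-stable and contains a fixed point, so two components of one fiber would have to meet there, contradicting disjointness. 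Note finally that even a corrected graph argument is lossy: two outer blowups at distinct points of $T_0$ yield a fiber whose dual graph is still a chain (with $T_0$ interior) but whose affine part is two disjoint lines; it is the hypothesis $\Upsilon_\mu=\TTT_B$ used directly, not linearity of $\Gamma_{\Tt_j}$ by itself, that rules this out.
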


\begin{proof} By Lemma \ref{lem: norm-sbgrp}, $\Aut_B^\bullet (\bar X, F)\cong\UU_\mu\rtimes \Lambda_{\mu,B}$ is  a normal subgroup of finite index in $\Aut (X, \mu)$. 
To deduce (\ref{eq: bullet-split}) it suffices to apply Proposition \ref{prop: splitting},
 where $$\Lambda_{\mu,B}\hookrightarrow\Lambda_B=\cO_B^\times(B)
\cong\TTT_B\times\ZZ^s\cong\G_m\times\ZZ^s\,,$$
see \ref{pr:Aut-B-X0} and \ref{rem: 1-torus}. For the last assertion, see \cite[\S 3]{FZ0} and \cite[Rem.\ 3.13(iii)]{FZ-LND}.
\end{proof}

The following corollary is immediate from Corollary \ref{cor: jonq=nested} and Theorems \ref{th:jonq-mu} and \ref{th:jonq-mu-general} (cf.\ Corollary \ref{cor: neutral-comp}).

\begin{corollary}\label{cor: nested} 
For any  $\A^1$-fibration $\mu\colon X\to B$ on a normal affine surface $X$ over a smooth affine curve $B$, the group $\Aut(X, \mu)$ is 
an extension of a metabelian nested ind-group of rank $\le 2$ by at most countable group. Any two maximal tori in $\Aut(X, \mu)$ are conjugated.
\end{corollary}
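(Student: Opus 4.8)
The plan is to deduce Corollary \ref{cor: nested} directly from the structural results already in hand, treating the two cases $\omega\cong\A^1_*$ (Theorem \ref{th:jonq-mu}) and $\omega\ncong\A^1_*,\A^1$ (Theorem \ref{th:jonq-mu-general}) in parallel, since both theorems present $\Aut(X,\mu)$ as a finite extension of an explicitly described group. First I would fix the $\A^1$-fibration $\mu\colon X\to B$ and recall that by Theorems \ref{th:jonq-mu} and \ref{th:jonq-mu-general} the group $\Aut(X,\mu)$ is a finite extension of a subgroup $N$ of the form
\[
N=\UU_\mu\rtimes\Lambda,
\]
where $\UU_\mu$ is (conjugate to) a group of the shape $H^0(Z,\mathcal{O}_Z(D))$ and $\Lambda$ is $\Lambda_\mu$ or $\Upsilon_\mu\rtimes\ZZ^l$, with the torus part of rank $\le 1$. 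A finite extension does not affect any of the three assertions (being an extension of a metabelian nested ind-group of rank $\le 2$ by an at most countable group, and conjugacy of maximal tori), so it suffices to establish them for $N$.

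Next I would verify the three claims for $N$ one at a time. For the metabelian claim: by Corollary \ref{cor: jonq=nested} the unipotent part $\UU_\mu$ is an Abelian (hence nested, unipotent) ind-group, and the acting group $\Lambda$ is abelian up to the free factor $\ZZ^l$; the relevant point is that $[N,N]\subset\UU_\mu$ since the quotient $N/\UU_\mu\cong\Lambda$ is abelian, and $\UU_\mu$ is itself abelian, whence $N$ is metabelian. For the rank bound: the torus contained in $N$ lives in $\Lambda$, whose neutral component is at most a $1$-torus ($\Lambda_\mu$ closed in $\TTT$ in the first case, $\Upsilon_\mu\subset\TTT_B$ in the second), so $\rk N\le 1\le 2$; I would record that the neutral component $N^\circ=\UU_\mu\rtimes\Lambda^\circ$ is a connected metabelian nested ind-group of rank $\le 2$, and that the free part $\ZZ^l$ together with the finite extension contributes only an at most countable quotient. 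For the conjugacy of maximal tori: since $\UU_\mu$ is unipotent and normal, and $\Lambda^\circ$ is a torus of rank $\le 1$, the maximal tori of $N^\circ$ are the maximal tori of the nested ind-group $N^\circ$; I would invoke Corollary \ref{cor: neutral-comp} (or directly Theorem \ref{thm: Mostow}, applicable because the ranks $\rk N_i$ are bounded by the dimension $2$) to conclude that any two maximal tori are conjugate.

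The main obstacle I anticipate is bookkeeping around the \emph{full} group $\Aut(X,\mu)$ versus its finite-index normal subgroup and the component group: conjugacy of maximal tori is a priori a statement about all of $\Aut(X,\mu)$, not merely about $N^\circ$, so I must argue that a maximal torus of $\Aut(X,\mu)$ is already a maximal torus of $N^\circ$. This follows because any algebraic torus $T\subset\Aut(X,\mu)$ is connected, hence contained in $\Aut^\circ(X,\mu)$, and the latter coincides with $N^\circ$ up to the same finite extension (tori being unaffected by passage to finite-index subgroups); one then applies the conjugacy already obtained inside the connected nested ind-group $N^\circ$. The remaining steps—checking that the $\ZZ^l$ and the finite extension together give an at most countable group, and that the whole package assembles into an ``extension of a metabelian nested ind-group of rank $\le 2$ by an at most countable group''—are routine once the decomposition $N=\UU_\mu\rtimes\Lambda$ and Corollary \ref{cor: neutral-comp} are in place.
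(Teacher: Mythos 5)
Your proposal is correct and is essentially the paper's own proof: the paper derives this corollary as ``immediate'' from Corollary \ref{cor: jonq=nested} and Theorems \ref{th:jonq-mu} and \ref{th:jonq-mu-general} (cf.\ Corollary \ref{cor: neutral-comp}), and your write-up simply makes explicit the same bookkeeping --- $\UU_\mu$ is abelian and nested, $\Lambda$ is abelian with torus part of rank $\le 2$ (so $\UU_\mu\rtimes\Lambda^\circ$ is a metabelian nested ind-group of rank $\le 2$), the quotient assembled from $\ZZ^l$ and the finite extension is at most countable, and torus conjugacy reduces to Theorem \ref{thm: Mostow} after observing that any algebraic torus lies in the neutral component. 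The only caveat, which you share with the paper, is that the two theorems cover $\omega\cong\A^1_*$ (with $B=\A^1$) and $\omega\ncong\A^1,\A^1_*$ but not the excluded cases $\omega=B\cong\A^1$ and $\omega=B\cong\A^1_*$ (i.e.\ $X\cong\A^2$ and $X\cong\A^1\times\A^1_*$, where $\Aut(X,\mu)$ is the full de Jonqui\`eres group or its analog), so strictly speaking both arguments establish the corollary only under that implicit restriction.
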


\begin{sit} If $\mu\colon X\to B$ admits an effective $\G_m$-action along the fibers of $\mu$, that is, $\Upsilon_\mu\cong\G_m$, 
then $\mu$ is the projection of a parabolic $\G_m$-surface (see Definition \ref{def: DPD-construction}).
 \end{sit}

\begin{proposition}\label{pr:T-lin}
 Under the assumptions of Theorem \ref{th:jonq-mu-general},
 $\Upsilon_\mu\cong\G_m$ if and only if $X$ is a parabolic $\G_m$-surface, if and only if the
connected 
components of the dual graph  $\Gamma_{\Tt}$ are linear chains. 
In the case $\omega\cong\AA^1_*$ we have the equivalences
$$\Upsilon_\mu=\G_m\Leftrightarrow \Lambda_{\mu,B}\cong \TTT\Leftrightarrow \rk\Aut X=2\,,$$ where the latter means that $X$ is an affine toric surface.
\end{proposition}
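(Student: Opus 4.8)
The plan is to prove the two chains of equivalences separately, relying on the structural description in Theorem~\ref{th:jonq-mu-general} together with the local analysis of special fibers in Corollary~\ref{cor:Tt-linear}. First I would treat the general equivalence $\Upsilon_\mu\cong\G_m\Leftrightarrow X$ is a parabolic $\G_m$-surface $\Leftrightarrow$ the connected components of $\Gamma_{\Tt}$ are linear chains. The key observation is that, by definition, $\Upsilon_\mu=\TTT_B\cap\Lambda_{\mu,B}$ and, by \ref{nota: from-prop}, $\Lambda_{\mu,B}=\bigcap_{j}\iota_j^{-1}(\iota_j(\Lambda_B)\cap\Q_j)$. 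Hence $\Upsilon_\mu\cong\G_m$ precisely when the neutral $1$-torus $\TTT_B$ survives inside every local stabilizer factor $\Q_j=\Q(\Tt_j)$. By Corollary~\ref{cor:Tt-linear}, $\Q(\Tt_j)\supset\TTT_B$ holds if and only if the dual graph $\Gamma_{\Tt_j}$ is a linear chain. Running this over all $j=1,\dots,n_s$ gives the equivalence of $\Upsilon_\mu\cong\G_m$ with linearity of every connected component $\Gamma_{\Tt_j}$ of $\Gamma_{\Tt}$.

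The bridge to ``$X$ is a parabolic $\G_m$-surface'' then proceeds as follows. If $\Upsilon_\mu\cong\G_m$, the $\G_m$-action on $X$ coming from $\TTT_B\subset\Lambda_B$ acts along the fibers of $\mu$ (it acts by homotheties on the line bundle $L_0$, fixing the zero section), so $\mathcal{O}(X)^{\G_m}=\mathcal{O}(X)^{\G_a}$ — the action is vertical — which is exactly the parabolic case in the DPD dichotomy of \ref{def: DPD-construction}; conversely, a parabolic $\G_m$-surface carries a fiberwise $\G_m$-action, and this torus must lie in $\Lambda_{\mu,B}$ since it preserves $\mu$ and the zero section, forcing $\Upsilon_\mu\cong\G_m$. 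I would invoke the last assertion of Theorem~\ref{th:jonq-mu-general}, which already records that $\Upsilon_\mu=\TTT_B$ forces each $\mu$-fiber to be $\cong\A^1$, so the total space is covered by the parabolic DPD description and its geometric fibers match.

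For the case $\omega\cong\AA^1_*$ (equivalently $B=\A^1$ with a single special fiber removed from the trivialization locus, so $\Tt=\Tt_1$ and $\Lambda_B\cong\TTT$ is the full $2$-torus by the setup of \ref{sit: standard-projection}--\ref{sit: Jonq0}), I would argue the three-way equivalence $\Upsilon_\mu=\G_m\Leftrightarrow\Lambda_{\mu,B}\cong\TTT\Leftrightarrow\rk\Aut X=2$. Here $\Lambda_{\mu,B}=\iota_1^{-1}(\iota_1(\Lambda_B)\cap\Q(\Tt_1))$, and since $\Lambda_B\cong\TTT$ is two-dimensional, $\Lambda_{\mu,B}\cong\TTT$ holds exactly when $\TTT\subset\Q(\Tt_1)$, which by Corollary~\ref{cor:Tt-linear} is again linearity of $\Gamma_{\Tt_1}$; by the first part this is the same as $\Upsilon_\mu=\G_m$. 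The implication $\Lambda_{\mu,B}\cong\TTT\Rightarrow\rk\Aut X=2$ follows because a $2$-torus then acts effectively, and by Proposition~\ref{prop: basic facts}(a) a rank-$2$ action on a surface forces $X$ to be toric; conversely $\rk\Aut X=2$ gives a $2$-torus which, being conjugate into a vertex group and stabilizing $(\bar X,D)$, realizes $\Lambda_{\mu,B}\cong\TTT$.

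The main obstacle I anticipate is the careful bookkeeping in the $\omega\cong\AA^1_*$ case: I must verify that under this trivialization $\Lambda_B$ is genuinely the whole $2$-torus $\TTT$ (rather than a rank-$1$ subgroup as in the generic $\omega\ncong\AA^1_*,\AA^1$ situation), and that the embedding $\iota_1$ identifies $\Lambda_B$ compatibly with the local torus $\TTT\subset\Q$ of \ref{sit: Jonq0-local}. This amounts to tracking the two characters $(a,b)$ of $\TTT$ through the coordinate changes of Notation~\ref{sit:coord} and matching them against the defining equations (\ref{eq: stab-Q}) of $\Q(\Tt_1)$; the linearity criterion of Corollary~\ref{cor:Tt-linear} then does the real work, but checking that $\rk\Aut X=2$ is equivalent to the global toric structure (via Proposition~\ref{prop: basic facts}(a) and the amalgam description of toric surfaces in \ref{sit: A2}--\ref{sit: A1star-2}) requires knowing that the algebraic $2$-torus in $\Aut X$ can indeed be conjugated so as to normalize $\mu$ and fix the chosen completion. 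I would handle this last point by appealing to Proposition~\ref{serre} together with the uniqueness statements for $\G_m$-surfaces in Proposition~\ref{prop-FZ-unique}.
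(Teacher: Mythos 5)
Your core reduction is exactly the paper's own proof: both arguments hinge on Corollary~\ref{cor:Tt-linear}, intersected over the special fibers via $\Lambda_{\mu,B}=\bigcap_j\iota_j^{-1}(\iota_j(\Lambda_B)\cap\Q_j)$ and $\Upsilon_\mu=\TTT_B\cap\Lambda_{\mu,B}$ (see \ref{nota: from-prop}), so that $\Upsilon_\mu\cong\G_m$ holds iff $\TTT_B\subset\Q(\Tt_j)$ for every $j$, iff every component of $\Gamma_{\Tt}$ is a linear chain. Your explicit treatment of the parabolic bridge, and of the implication $\Lambda_{\mu,B}\cong\TTT\Rightarrow\rk\Aut X=2$ via Proposition~\ref{prop: basic facts}(a), correctly fills in steps the paper compresses into ``now the proposition follows''; likewise, your reading of $\Lambda_{\mu,B}$ in the case $\omega\cong\A^1_*$ as the group $\Lambda_\mu$ of Theorem~\ref{th:jonq-mu} (so that the relevant section-preserving group is the full $\TTT$, not $\cO_B^\times(B)\cong\G_m$) is the right one, and your main three-way equivalence again comes down to Corollary~\ref{cor:Tt-linear}, just as in the paper.

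There is, however, a genuine gap in your converse implication $\rk\Aut X=2\Rightarrow\Lambda_{\mu,B}\cong\TTT$. Conjugating the $2$-torus into a vertex group of the toric amalgam (Proposition~\ref{serre}, legitimate here since $\Aut X$ is finitely bearable for toric $X$ by Theorem~\ref{thm: ex-bearable}) only shows that a conjugate of that torus preserves the standard toric fibration attached to that vertex; it does not make the torus ``normalize $\mu$ and fix the chosen completion'', because the vertex group stabilizes its own fibration, not the given $\mu$, and $\Lambda_{\mu,B}$ is defined inside $\Aut(X,\mu)$. Proposition~\ref{prop-FZ-unique} cannot close this gap: its conjugacy conclusion is explicitly restricted to non-toric surfaces, whereas rank $2$ means precisely that $X$ is toric. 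What is missing is the statement that on a toric surface every $\A^1$-fibration over $\A^1$ is equivalent to a standard (torus-stable) one. You can obtain it either by applying Proposition~\ref{serre} to the one-parameter unipotent subgroup $U_\mu$ acting along the $\mu$-fibers (its conjugate lies in a vertex group, hence acts along the standard fibration, so $\mu$ is equivalent to it and $\Aut(X,\mu)$ contains a $2$-torus) --- at the price of the standing uncountability hypothesis on $\K$ in that section --- or, more economically, by quoting Example~\ref{ex: DG-extended}.1: $X$ is toric if and only if the extended divisor of some, and then of \emph{any}, resolved standard completion is a linear chain; applied to the completion of \ref{sit: standard-projection} this yields linearity of $\Gamma_{\Tt}$ directly, after which your first chain of equivalences finishes the proof.
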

\begin{proof}
By Proposition~\ref{pr:T-lin}, the components of $\Gamma_\Tt$ are linear if and only if $\Q=\Q_j$ for every special fiber $\Tt_j$. 
The latter is true if and only if $\Q_j\supset\TTT_B$ for each $j$, which is equivalent to $\Gamma_\Tt\supset\TTT_B$.  Now the proposition follows.
\end{proof}

\bigskip
\noindent

\end{document}